\documentclass[11pt]{amsart}
\usepackage{amssymb}
\usepackage{amscd}
\usepackage{amsmath}
\usepackage{amsmath}
\usepackage[all]{xy}
\usepackage{mathrsfs}

\usepackage{amsfonts}
\usepackage{color}

\setlength{\textwidth}{6in}
\setlength{\oddsidemargin}{0in}
\setlength{\evensidemargin}{0in}
\setlength{\topmargin}{0in}
\setlength{\headheight}{0.50in}
\setlength{\headsep}{0.50in}
\setlength{\textheight}{8in}
\setlength{\footskip}{0.5in}
\setlength{\topskip}{0in}

\theoremstyle{plain}
\newtheorem{theorem}{Theorem}[section]
\newtheorem{lemma}[theorem]{Lemma}
\newtheorem{corollary}[theorem]{Corollary}
\newtheorem{proposition}[theorem]{Proposition}
\newtheorem{definition}[theorem]{Definition}

\newtheorem{conjecture}[theorem]{Conjecture}

\theoremstyle{definition}
\newtheorem{remark}[theorem]{Remark}

  1

\DeclareMathOperator{\Cl}{Cl}

\DeclareMathOperator{\Ext}{Ext}
\DeclareMathOperator{\Gal}{Gal}

\DeclareMathOperator{\Hom}{Hom}
\DeclareMathOperator{\End}{End}

\DeclareMathOperator{\Spec}{Spec}
\DeclareMathOperator{\N}{N}

\DeclareMathOperator{\im}{im}
\DeclareMathOperator{\coker}{coker}
\DeclareMathOperator{\Fit}{Fitt}
\DeclareMathOperator{\Fitt}{Fitt}

\newcommand{\CC}{\mathbb{C}}

\newcommand{\bbC}{\mathbb{C}}

\newcommand{\GG}{\mathbb{G}}

\newcommand{\QQ}{\mathbb{Q}}
\newcommand{\Q}{\mathbb{Q}}
\newcommand{\RR}{\mathbb{R}}
\newcommand{\br}{\mathbb{R}}
\newcommand{\bbR}{\mathbb{R}}
\newcommand{\ZZ}{\mathbb{Z}}

\newcommand{\Ann}{\mathrm{Ann}}
\newcommand{\bz}{\mathbb{Z}}
\newcommand{\bbZ}{\mathbb{Z}}

\newcommand{\bbQ}{\mathbb{Q}}
\newcommand{\Log}{\mathrm{Log}}
\newcommand{\ord}{\mathrm{ord}}
\newcommand{\Ord}{\mathrm{Ord}}
\newcommand{\Rec}{\mathrm{Rec}}
\newcommand{\rec}{\mathrm{rec}}

\begin{document}

\title[]{On zeta elements for $\GG_{m}$}

\author{David Burns, Masato Kurihara and Takamichi Sano}

\address{King's College London,
Department of Mathematics,
London WC2R 2LS,
U.K.}
\email{david.burns@kcl.ac.uk}

\address{Keio University,
Department of Mathematics,
3-14-1 Hiyoshi\\Kohoku-ku\\Yokohama\\223-8522,
Japan}
\email{kurihara@math.keio.ac.jp}

\address{Keio University,
Department of Mathematics,
3-14-1 Hiyoshi\\Kohoku-ku\\Yokohama\\223-8522,
Japan}
\email{tkmc310@a2.keio.jp}


\begin{abstract}
 In this paper, we present a unifying approach to the general theory of abelian Stark conjectures. 
To do so we define natural notions of `zeta element', of `Weil-\'etale cohomology complexes' and of `integral Selmer groups' for the multiplicative group $\GG_m$ over finite abelian extensions of number fields. We then conjecture a precise connection between zeta elements and  Weil-\'etale cohomology complexes, we show this conjecture is equivalent to a special case of the equivariant Tamagawa number conjecture and we give an unconditional proof of the analogous statement for global function fields. We also show that the conjecture entails much detailed information about the arithmetic properties of generalized Stark elements including a new family of integral congruence relations between Rubin-Stark elements (that refines recent conjectures of Mazur and Rubin and of the third author) and explicit formulas in terms of these elements for the higher Fitting ideals of the integral Selmer groups of $\mathbb{G}_m$, thereby obtaining a clear and very general approach to the theory of abelian Stark conjectures. As first applications of this approach, we derive, amongst other things, a proof of (a refinement of) a conjecture of Darmon concerning cyclotomic units, a proof of (a refinement of) Gross's `Conjecture for Tori' in the case that the base field is $\QQ$, a proof of new cases of the equivariant Tamagawa number conjecture in situations in which the relevant $p$-adic $L$-functions have trivial zeroes, explicit conjectural formulas for both annihilating elements and, in certain cases, the higher Fitting ideals (and hence explicit structures) of ideal class groups, a reinterpretation of the $p$-adic Gross-Stark Conjecture in terms of the properties of zeta elements and a strong refinement of many previous results (of several authors) concerning abelian Stark conjectures.

\end{abstract}

\maketitle

%
\section{Introduction} \label{Intro}

The study of the special values of zeta functions
and, more generally, of $L$-functions is a central theme in number theory that has a long tradition stretching back to Dirichlet and Kummer in the nineteenth century. In particular, much work has been done concerning the arithmetic properties of the
special values of $L$-functions and their incarnations in appropriate arithmetic cohomology groups, or `zeta elements' as they are commonly known.

The aim of our project is to systematically study the fine arithmetic properties of such zeta elements and thereby to obtain both generalizations and refinements of a wide range of well-known results and conjectures in the area. 

In this first article we shall concentrate, for primarily pedagogical reasons, on the classical and very concrete case of the $L$-functions that are attached to the multiplicative group $\GG_m$ over a finite
abelian extension $K/k$ of global fields. In subsequent articles we will then investigate the key Iwasawa-theoretic aspects of our approach (see \cite{aze2}) and also explain how the conjectures and results presented here naturally extend both to the case of Galois extensions that are not abelian and to the case of the zeta elements that are associated (in general conjecturally) to a wide class of motives over number fields.

The main results of the present article are given below as Theorems \ref{MT1}, \ref{MT2}, \ref{MC4} and
\ref{MT4}. In the rest of this introduction we state these results and also discuss a selection of interesting consequences.

\subsection{}To do this we fix a finite abelian extension of global fields $K/k$ with Galois group $G=\Gal(K/k)$. We then fix a finite non-empty set of places $S$ of $k$ containing both the set $S_{\rm ram}(K/k)$ of places which ramify in $K/k$ and the set $S_\infty(k)$ of archimedean places (if any).
 Lastly we fix an auxiliary finite non-empty set of places $T$ of $k$ which is disjoint from $S$ and such that the group $\mathcal{O}^\times_{K,S,T}$ of $S$-units of $K$ that are endowed with a trivialization at each place of $K$ above a place in $T$ is $\ZZ$-torsion-free
(for the precise definition of $\mathcal{O}^\times_{K,S,T}$, see \S\ref{notation section}).

In this article, we shall first formulate (as Conjecture \ref{ltc}) a precise `leading term conjecture' ${\rm LTC}(K/k)$ for the extension $K/k$. This conjecture predicts that the canonical {\it zeta element} $z_{K/k,S,T}$ that interpolates the leading terms at $s=0$ of the ($S$-truncated $T$-modified) $L$-functions $L_{k,S,T}(\chi, s)$ should generate the determinant module over $G$ of a canonical `$T$-modified Weil-\'etale cohomology' complex for $\GG_{m}$ that we introduce here (for details see \S\ref{wec}).

The main result of the first author in \cite{gmcrc} implies that {\rm LTC}$(K/k$) is valid if $k$ is a global function field.

In the number field case our formulation of ${\rm LTC}(K/k)$ is motivated by the `Tamagawa Number Conjecture' formulated by
Bloch and Kato in \cite{BK} and by the `generalized Iwasawa main conjecture' studied by Kato in \cite{K1} and \cite{K2}. In particular, we shall show that for extensions $K/k$ of number fields ${\rm LTC}(K/k)$ is equivalent to
the relevant special case of the `equivariant Tamagawa number conjecture' formulated in the article \cite{BF Tamagawa} of Flach and the first author. Taken in conjunction with previous work of several authors, this fact implies that ${\rm LTC}(K/k)$ is also unconditionally valid for several important families of number fields.

We assume now that $S$ contains a subset $V=\{v_{1},\ldots,v_{r}\}$ of places which split completely in $K$.
In this context,  one can use the values at $s=0$ of the $r$-th
derivatives of $S$-truncated $T$-modified $L$-functions to define a canonical element
$$\epsilon_{K/k,S,T}^V$$
in the exterior power module $\bigwedge_{\ZZ[G]}^r
\mathcal{O}_{K,S,T}^\times \otimes \RR$ (for the precise definition see \S \ref{secrs}).

As a natural generalization of a classical conjecture of Stark (dealing with the case $r=1$) Rubin conjectured in \cite{R} that the elements
 $\epsilon_{K/k,S,T}^V$ should always satisfy certain precise integrality conditions (for more details see Remark \ref{MT2 remark}). As in now common in the literature, in the sequel we shall refer to $\epsilon_{K/k,S,T}^V$ as the `Rubin-Stark element' (relative to the given data) and to the central conjecture of Rubin in \cite{R} as the `Rubin-Stark Conjecture'.


In some very special cases $\epsilon_{K/k,S,T}^V$ can be explicitly computed and the Rubin-Stark Conjecture verified. For example, this is the case if $r=0$ (so $V=\emptyset$) when $\epsilon_{K/k,S,T}^V$ can be described in terms of Stickelberger elements and if $k=\QQ$ and $V=\{\infty\}$ when $\epsilon_{K/k,S,T}^V$ can be described in terms of cyclotomic units.

As a key step in our approach we show that in all cases the validity of LTC$(K/k)$ implies that  $\epsilon_{K/k,S,T}^V$ can be computed as `the canonical projection' of the zeta element $z_{K/k,S,T}$.

This precise result is stated as Theorem \ref{zetars} and its proof will also incidentally show that LTC$(K/k)$ implies the validity of the Rubin-Stark conjecture for $K/k$. The latter implication was in fact already observed by the first author in \cite{burns} (and the techniques developed in loc. cit. have since been used by several other authors) but we would like to point out that the proof presented here is very much simpler than that given in \cite{burns} and is therefore much more amenable to subsequent generalization.

\subsection{}The first consequence of Theorem \ref{zetars} that we record here concerns a refined version of
a conjecture that was recently formulated independently by Mazur and Rubin in \cite{MR2} and
by the third author in \cite{sano}.

To discuss this we fix an intermediate field $L$ of $K/k$ and a subset $V'=\{v_{1},\ldots,v_{r'}\}$ of $S$ which contains $V$ and is such that every place in $V'$ splits completely in $L$.

In this context it is known that the elements $\epsilon_{K/k,S,T}^V$ naturally constitute an Euler system of rank $r$ and the elements
$\epsilon_{L/k,S,T}^{V'}$ an Euler system of rank $r'$. If $r<r'$, then the image of $\epsilon_{K/k,S,T}^V$ under the map induced by the field theoretic norm $K^\times\to L^\times$ vanishes. However, in this case Mazur and Rubin (see \cite[Conj. 5.2]{MR2}) and
the third author (see \cite[Conj. 3]{sano}) independently observed that the reciprocity maps
of local class field theory lead to an important conjectural relationship between the elements $\epsilon_{K/k,S,T}^V$ and $\epsilon_{L/k,S,T}^{V'}$.

We shall here formulate an interesting refinement ${\rm MRS}(K/L/k,S,T)$ of the central conjectures of \cite{MR2} and \cite{sano} (see Conjecture \ref{mrsconj} and the discussion of Remark \ref{refined sano}) and we shall then prove the following result.

\begin{theorem} \label{MT1} ${\rm LTC}(K/k)$ implies the validity of ${\rm MRS}(K/L/k,S,T)$.
\end{theorem}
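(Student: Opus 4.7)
The plan is to derive the theorem as a consequence of Theorem \ref{zetars}, applied to both the extensions $K/k$ and $L/k$, combined with a careful analysis of the functorial behaviour of the $T$-modified Weil-\'etale cohomology complexes under the passage $K \rightsquigarrow L$.

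First, the validity of LTC$(K/k)$ yields, via Theorem \ref{zetars} applied to the data $(K/k,S,T,V)$, an explicit cohomological formula expressing $\epsilon_{K/k,S,T}^V$ as the canonical projection of the zeta element $z_{K/k,S,T}$. Since the statement MRS$(K/L/k,S,T)$ concerns a relation between $\epsilon_{K/k,S,T}^V$ and $\epsilon_{L/k,S,T}^{V'}$, I will need a parallel formula for the second element. To obtain it, I would show that LTC$(K/k)$ descends to LTC$(L/k)$: since $L$ is an intermediate field of $K/k$, the $T$-modified Weil-\'etale complex for $(L/k,S,T)$ can be recovered from that for $(K/k,S,T)$ by an appropriate functorial construction (passage to (derived) $\Gal(K/L)$-coinvariants, compatibly with the normalization at $T$), and the corresponding map on determinant modules will send $z_{K/k,S,T}$ to $z_{L/k,S,T}$. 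A second application of Theorem \ref{zetars} — now to $(L/k,S,T,V')$ — then gives a cohomological formula for $\epsilon_{L/k,S,T}^{V'}$ as well.

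With both Rubin--Stark elements realized as canonical projections of the respective zeta elements, the MRS compatibility should follow by unwinding, in exterior-power form, the relationship between the two projections. The key point is that the places in $V'\setminus V$ split in $L$ but not in $K$, so their contributions to the local structure of the two Weil-\'etale complexes differ by terms governed by local duality at those places. These terms should match the combination of local reciprocity maps that enter the definition of MRS: concretely, I would isolate the boundary homomorphism in the long exact sequence arising from the comparison of the two complexes and identify it, place by place in $V'\setminus V$, with the local reciprocity map predicted by the conjecture. The MRS identity will then emerge after composing with the canonical projections.

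The main obstacle I expect lies in the final step: turning the abstract cohomological comparison between $z_{K/k,S,T}$ and $z_{L/k,S,T}$ into the precise, integrally normalized combination of local reciprocity maps that appears in the refinement MRS$(K/L/k,S,T)$. This requires a careful bookkeeping of integral lattices, of the exterior-power projection maps, and of signs and orderings of the places in $V'\setminus V$, rather than just a compatibility up to units or after inverting small primes. Everything else in the plan should reduce to formal properties of determinant modules and to the functoriality already built into the definition of the $T$-modified Weil-\'etale complexes.
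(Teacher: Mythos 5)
Your opening move — using Theorem~\ref{zetars} to realize $\epsilon_{K/k,S,T}^V$ as a projection of the zeta element — matches the paper, but the rest of your plan diverges from the actual proof and, more importantly, the mechanism you propose for extracting the reciprocity maps does not work. The paper does \emph{not} pass to a separate Weil-\'etale complex over $L$ and compare two zeta elements; it works throughout with a single representative $P\xrightarrow{\psi}F$ of $D_{K,S,T}^\bullet$ chosen so that the cohomology maps $\psi_i := b_i^\ast\circ\psi$ carry the relevant arithmetic. The formula for $\epsilon_{L/k,S,T}^{V'}$ is then obtained inside this same complex by applying the norm $\mathcal{N}_H^d$ to $z_b$ (Corollary~\ref{remzetars}), not by descending LTC to $L/k$ and invoking a separate instance of Theorem~\ref{zetars} for a complex constructed over $L$. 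This is a real structural difference, because the whole argument depends on comparing exterior-power projections of the \emph{same} element $z_b$ through the same maps $\psi_i$.

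The deeper gap is in your final step. You propose to identify the local reciprocity maps as "the boundary homomorphism in the long exact sequence arising from the comparison of the two complexes". But MRS is not a statement about a connecting map between cohomology groups in adjacent degrees; it is a rank-changing compatibility inside exterior powers and Rubin lattices, relating $\mathcal{N}_H(\epsilon_{K/k,S,T}^V) \in ({\bigcap}^r)\otimes\ZZ[H]/I(H)J_W$ to $\Rec_W(\epsilon_{L/k,S,T}^{V'}) \in ({\bigcap}^r)\otimes (J_W)_H$, where $\Rec_W$ lowers rank from $r'$ to $r$. No localization or base-change triangle supplies such a relation as a boundary map. The paper instead establishes three things you have not accounted for: (a) the class-field-theoretic identification of $\Rec_i$ with the restriction to $\mathcal{O}_{L,S,T}^\times$ of the map $\psi_i$ coming from the Tate/Weil-\'etale sequence (Lemma~\ref{lem1}, resting on \cite[Lem.~10.3]{burns}); (b) an explicit commutative diagram in exterior powers comparing $\bigwedge_{r<i\le d}\psi_i$, $\mathcal{N}_H$, $\nu^{-1}$ and the partial product $\bigwedge_{r'<i\le d}\psi_i^H$ together with $\Rec_W$ (Lemma~\ref{lemcomm}); and (c) the algebraic machinery of \S\ref{algpreliminary} (Propositions~\ref{propnorm} and~\ref{thminj}) that makes the norm-image and injectivity statements precise at the integral level and gives the equivalence of MRS with Conjecture~B (Theorem~\ref{thmequiv}). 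You explicitly flag the sign/lattice bookkeeping as the expected obstacle, but it is not merely bookkeeping: without (a)--(c) your plan produces no identity relating $\mathcal{N}_H(\epsilon^V_{K/k,S,T})$ to $\Rec_W(\epsilon^{V'}_{L/k,S,T})$ at all.
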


This result is both a generalization and strengthening of the main result of the third author in \cite[Th. 3.22]{sano} and provides strong evidence for
${\rm MRS}(K/L/k,S,T)$.

As already remarked earlier, if $k$ is a global function field, then the validity of ${\rm LTC}(K/k)$ is a consequence of the main result of \cite{gmcrc}. In addition, if $k = \QQ$, then the validity of ${\rm LTC}(K/k)$ follows from the work of Greither and the first author in \cite{bg} and of Flach in \cite{fg}.

Theorem \ref{MT1} therefore has the following consequence.

\begin{corollary} \label{MC1}
${\rm MRS}(K/L/k,S,T)$ is valid if $k=\QQ$ or if $k$ is a global function
field. \end{corollary}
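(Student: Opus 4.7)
The plan is essentially to chain together Theorem \ref{MT1} with the two known cases of ${\rm LTC}(K/k)$ that have already been recalled in the paragraph immediately preceding the statement. In more detail, Theorem \ref{MT1} provides the implication ${\rm LTC}(K/k) \Rightarrow {\rm MRS}(K/L/k,S,T)$, so the task reduces to verifying ${\rm LTC}(K/k)$ under either of the two hypotheses on $k$.

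For the function field case, I would simply cite the main result of \cite{gmcrc} (the first author's proof of the equivariant Tamagawa number conjecture in positive characteristic), which, in view of the equivalence established in the paper between ${\rm LTC}(K/k)$ and the relevant special case of the equivariant Tamagawa number conjecture, directly yields ${\rm LTC}(K/k)$ for every finite abelian extension $K/k$ of global function fields. For the case $k=\QQ$, I would combine this same equivalence with the results of Burns--Greither \cite{bg} and Flach \cite{fg}, which together establish the required special case of the equivariant Tamagawa number conjecture for all finite abelian extensions of $\QQ$. Each of these inputs gives ${\rm LTC}(K/k)$ independent of the choice of the auxiliary sets $S$ and $T$ (provided, of course, that they satisfy the standing hypotheses of \S\ref{notation section}), so no compatibility issue arises when applying Theorem \ref{MT1}.

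There is no substantive obstacle here: the corollary is a formal consequence of Theorem \ref{MT1} and the two cited unconditional results on ${\rm LTC}$. The only points that genuinely require checking are (a) that the hypotheses on $(S,T)$ under which Theorem \ref{MT1} is proved are compatible with the hypotheses under which the cited papers establish the equivariant Tamagawa number conjecture, and (b) that the formulation of ${\rm LTC}(K/k)$ used in the present article is indeed equivalent (over $\QQ$) to the formulation proved in \cite{bg} and \cite{fg}. Both (a) and (b) are, however, addressed by the general compatibilities already developed earlier in the paper, so the proof of the corollary consists essentially of invoking Theorem \ref{MT1} twice with the appropriate reference supplied in each case.
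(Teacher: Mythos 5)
Your argument is exactly the one the paper uses: Corollary \ref{MC1} (equivalently, Corollary \ref{cormrs} in the body) is obtained by combining Theorem \ref{MT1} (that is, Theorem \ref{ltcmrs}) with the known validity of ${\rm LTC}(K/k)$ recorded in Remark \ref{knownltc}, which in turn rests on \cite{gmcrc} for function fields and on \cite{bg}, \cite{fg} for $k=\QQ$ via the equivalence of Remark \ref{remltc}. Your points (a) and (b) are indeed covered by Remark \ref{remltc} and Proposition \ref{ltc prop2}, so nothing further is needed.
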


This result is of particular interest since it verifies the conjectures of Mazur and Rubin \cite{MR2} and of the third author \cite{sano} even in cases for which one has $r>1$.

A partial converse to Theorem \ref{MT1} will be given in Theorem \ref{MT4} below and this is then used in Corollary \ref{IntroConsequencesOfVentullo} to derive further new evidence in support of the conjecture  ${\rm LTC}(K/k)$, and hence also ${\rm MRS}(K/L/k,S,T)$.

Next we recall that in \cite{D} Darmon used the theory of cyclotomic units to formulate a refined version
of the class number formula for the class groups of real quadratic fields. We further recall that Mazur and Rubin in \cite{MR}, and later the third author in \cite{sano},
have proved the validity of the central conjecture of \cite{D} but only after inverting the prime $2$.

We shall formulate in \S \ref{dargro} a natural refinement of
Darmon's conjecture. By using Corollary \ref{MC1} we shall then give a full proof of
our refined version of Darmon's conjecture, thereby obtaining the following result (for a precise version of which see Theorem \ref{darconj}).

\begin{corollary} \label{MC2} A natural refinement of Darmon's conjecture in \cite{D} is valid.
\end{corollary}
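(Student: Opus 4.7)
The plan is to deduce the refined version of Darmon's conjecture from Corollary \ref{MC1} by exhibiting it as a specialization of the conjecture ${\rm MRS}(K/L/k,S,T)$ in a carefully chosen setting where the base field is $k=\QQ$.

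First I would set up the arithmetic data that translates Darmon's original formulation into the Rubin-Stark framework. Since Darmon's conjecture concerns cyclotomic units attached to real quadratic fields, I would take $k=\QQ$, let $L$ be the relevant real quadratic field of discriminant $d$, and let $K$ be a suitable cyclotomic extension $\QQ(\zeta_{f})^+$ (or the compositum of $L$ with such a field) chosen so that $L\subseteq K$. The set $S$ would comprise $\infty$ together with the ramified primes and an auxiliary split prime, and one takes $V=\{\infty\}$ (so $r=1$) for the top field $K$ and $V'=\{\infty,v\}$ (so $r'=2$) for $L$, where $v$ is a rational prime that splits in $L$. In this configuration the Rubin-Stark element $\epsilon_{K/\QQ,S,T}^{V}$ is explicitly a cyclotomic unit, while $\epsilon_{L/\QQ,S,T}^{V'}$ is (up to the usual regulator-style expression) the classical Stark regulator datum that enters Darmon's formula.

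Next I would verify that the refined Darmon conjecture, as formulated in \S\ref{dargro}, coincides with the statement ${\rm MRS}(K/L/\QQ,S,T)$ for the above choice of data. The key point here is that the conjectural congruence in Darmon's formulation is built from the local reciprocity map at the split place $v$ applied to a cyclotomic unit, and this is precisely the ingredient appearing in the ${\rm MRS}$ relation (see Remark \ref{refined sano}). This matching has to be carried out with care on two fronts: one must track the $T$-modification precisely so that the $\ZZ$-torsion-freeness hypothesis holds and so that the $T$-smoothed cyclotomic units recover Darmon's classical cyclotomic units (up to an Euler factor one can control); and one must identify Darmon's "derived" object with the appropriate Bockstein / reciprocity term appearing in the definition of the MRS map.

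Having made this identification, the conclusion is automatic: Corollary \ref{MC1} asserts ${\rm MRS}(K/L/\QQ,S,T)$ unconditionally for $k=\QQ$, and this is exactly the refined Darmon statement. The main obstacle, and where most of the real work will lie, is the bookkeeping in the second step — specifically showing that the abstract local-reciprocity term occurring in ${\rm MRS}(K/L/\QQ,S,T)$ reduces, in this cyclotomic setting, to Darmon's explicit $2$-adic expression without any spurious power of $2$. This is precisely the obstruction that forced the earlier proofs in \cite{MR} and \cite{sano} to invert $2$, so one must use the fact that the formulation of ${\rm MRS}$ given here is genuinely integral (rather than merely valid after inverting $2$) to obtain a refinement of Darmon's conjecture that holds over all of $\ZZ$.
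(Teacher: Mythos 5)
Your high-level strategy is the one the paper actually uses: specialize ${\rm MRS}(K/L/k,S,T)$ to $k=\QQ$, identify the MRS congruence with the refined Darmon statement, and then cite Corollary \ref{MC1} to conclude. However, your choice of data contains a concrete error. You set $V'=\{\infty,v\}$ with a single split prime, so $r'=2$. But Darmon's conjecture for the cyclotomic unit $\beta_n$ involves a regulator $R_n$ built from \emph{all} $\nu_+$ primes dividing $n$ that split in $L$, together with the infinite place, so the correct choice is $V'=\{\infty,\ell_1,\ldots,\ell_{\nu_+}\}$ and $r'=\nu_++1$ (see the proof of Theorem \ref{darconj}, where $\epsilon_{L/\QQ,S,T}^{V'}$ lives in $\bigcap_{G/H}^{\nu_++1}\mathcal{O}_{L,S,T}^\times$). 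Restricting to $r'=2$ only reproduces the case $\nu_+=1$ and would not recover Darmon's $(\nu_++1)\times(\nu_++1)$ regulator determinant. Similarly, $K$ must be the maximal real subfield of $L(\mu_n)$ rather than $\QQ(\zeta_f)^+$, since $\beta_n$ is a norm of a cyclotomic unit of conductor $nf$.

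The second point you only gesture at — the removal of the auxiliary $T$ — is actually where the improvement over \cite{MR} and \cite{sano} lives, and you mischaracterize it as a ``$2$-adic'' issue. What the paper does is choose a finite family $\mathscr{T}$ of admissible sets $T$ and coefficients $a_T\in\ZZ[G]$ with $\sum_{T\in\mathscr{T}}a_T\delta_T=2$ (by Tate's lemma \cite[Chap.\ IV, Lem.\ 1.1]{tate}), and then use the identities $(1-\tau)\sum_T a_T\epsilon_{K,T}=\beta_n$ in $K^\times/\{\pm1\}$ and the analogue for $L$. It is the linear combination summing to the integer $2$, combined with working in $K^\times/\{\pm1\}$ rather than $K^\times\otimes\ZZ[\tfrac12]$, that avoids inverting $2$; the MRS congruence itself is an integral equality in $(J_W)_H$, not a $2$-adic statement. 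Without naming this mechanism explicitly the final step of your argument — deducing the statement for $\beta_n$ from the $T$-smoothed MRS identity — does not go through.
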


Let now $K/k$ be an abelian extension as above and choose intermediate fields $L$ and $\widetilde L$ with $[L:k]=2$, $L \cap  \widetilde L=k$ and
$K=L  \widetilde L$. In this context Gross has formulated in \cite{G} a `conjecture for tori' regarding the value of the canonical Stickelberger element associated to $K/k$ modulo a certain ideal constructed from class numbers and a canonical integral regulator map. This conjecture has been widely studied in the literature, perhaps most notably by Hayward in \cite{Hay} and by Greither and Ku\v cera in \cite{GK0,GK}.

We shall formulate (as Conjecture \ref{tori conj}) a natural refinement of Gross's conjecture for tori and we shall then prove (in  Theorem \ref{mrstori}) that the validity of this refinement is a consequence of ${\rm MRS}(K/L/k,S,T)$.

As a consequence of Corollary \ref{MC1} we shall therefore obtain the following result.

\begin{corollary} \label{MC3}
A natural refinement of Gross's conjecture for tori is valid if
 $k=\QQ$ or if $k$ is a global function field.
\end{corollary}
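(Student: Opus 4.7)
The plan is to obtain Corollary \ref{MC3} as a direct composition of two assertions stated earlier in the paper. Theorem \ref{mrstori}, to be proved later, asserts that the conjecture ${\rm MRS}(K/L/k,S,T)$ implies the refinement of Gross's conjecture for tori formulated as Conjecture \ref{tori conj}. Corollary \ref{MC1} states that ${\rm MRS}(K/L/k,S,T)$ is unconditionally valid whenever $k=\QQ$ or $k$ is a global function field. Chaining these two implications will immediately yield the desired result.

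First I would verify that the setup of Gross's conjecture for tori genuinely fits the hypotheses of Theorem \ref{mrstori}. This requires choosing $S$ to contain $S_{\rm ram}(K/k)$, $S_\infty(k)$, together with the completely split places that appear in the formulation of Gross's conjecture, and then enlarging the auxiliary set $T$ disjointly so that $\mathcal{O}^\times_{K,S,T}$ is $\ZZ$-torsion-free. In the setting of Gross's conjecture one has a tower $k \subset L \subset K = L\widetilde{L}$ with $[L:k]=2$, so the parameters $r = |V|$ and $r' = |V'|$ required by the MRS formalism are determined by the splitting behaviour of the chosen places in $K$ and $L$, and the necessary inclusion $V \subseteq V'$ holds since every place that splits in $K$ splits a fortiori in the subfield $L$. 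With this data in place, Theorem \ref{mrstori} reduces the claim to ${\rm MRS}(K/L/k,S,T)$, and Corollary \ref{MC1} then disposes of both the $k=\QQ$ case and the function field case simultaneously. The input for Corollary \ref{MC1} ultimately rests, through Theorem \ref{MT1}, on the known cases of ${\rm LTC}(K/k)$ supplied respectively by the work of Burns--Greither \cite{bg} and Flach \cite{fg} over $\QQ$ and by the first author's result \cite{gmcrc} in the function field case.

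The substantive content of the overall argument lies in the reduction step and not in the present deduction, so I do not expect the deduction itself to present a real obstacle. The work that does need to be executed with care is the matching carried out inside the proof of Theorem \ref{mrstori}: one must identify the ideal generated by class numbers and the image of the canonical integral regulator map appearing in Gross's original formulation with the ideal that arises naturally on the MRS side from the reciprocity maps of local class field theory, and one must track how the Stickelberger element and the Rubin-Stark element for $L/k$ relate under the norm map from $K$ to $L$. I anticipate that this matching, rather than the combination of Theorem \ref{MT1} with Theorem \ref{mrstori}, will be the delicate part of the circle of ideas, but it is entirely contained in the proof of Theorem \ref{mrstori} and therefore need not be revisited in the derivation of Corollary \ref{MC3} from it.
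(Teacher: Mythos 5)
Your proposal is correct and follows exactly the paper's route: Corollary \ref{MC3} is obtained by combining Theorem \ref{mrstori} (Conjecture \ref{mrsconj} implies Conjecture \ref{tori conj}) with the unconditional validity of the MRS conjecture for $k=\QQ$ and for function fields (Corollary \ref{MC1}, equivalently Corollary \ref{cormrs}, which rests on Theorem \ref{MT1} and the known cases of ${\rm LTC}(K/k)$). You also correctly locate the substantive work in the proof of Theorem \ref{mrstori} itself, so nothing further is needed here.
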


This result is a significant improvement of the main results of Greither and Ku\v cera in \cite{GK0,GK}. In particular, whilst the latter articles only study the case that $k=\QQ$, $L$ is an imaginary quadratic field, and $\widetilde L/\QQ$ is
an abelian extension satisfying several technical conditions
(see Remark \ref{GreitherKucera}), Corollary \ref{MC3} now proves Gross's conjecture completely in the case $k=\QQ$
 and with no assumption on either $L$ or $\widetilde L$.

\subsection{}
In order to state our second main result, we introduce two new Galois modules which
are each finitely generated abelian groups and will play a key role in the arithmetic theory of zeta elements.

The first of these is
a canonical `($\Sigma$-truncated $T$-modified) integral dual Selmer group' $\mathcal{S}_{\Sigma,T}(\GG_{m/K})$ for the multiplicative group over $K$
for each finite non-empty set of places $\Sigma$ of $K$ that contains $S_\infty(K)$ and each finite set of places $T$ of $K$ that is disjoint from $\Sigma$.

If $\Sigma = S_\infty(K)$ and $T$ is empty, then $\mathcal{S}_{\Sigma,T}(\GG_{m/K})$ is simply defined to be the cokernel of the map
$$\prod_{w}\ZZ \longrightarrow \Hom_\ZZ(K^\times,\ZZ), \,\,\,\,\,\,(x_w)_w \mapsto (a \mapsto \sum_{w}\ord_w(a)x_w),$$
where in the product and sum $w$ runs over all finite places of $K$, and in this case constitutes a canonical integral structure on the Pontryagin dual
 of the Bloch-Kato Selmer group $H^{1}_{f}(K, \QQ/\ZZ(1))$ (see Remark \ref{notation2}(i)).

In general, the group $\mathcal{S}_{\Sigma,T}(\GG_{m/K})$ is defined to be a
 natural analogue for $\mathbb{G}_m$ of the `integral Selmer group' that was introduced for abelian varieties by Mazur and Tate in \cite{mt} and, in particular, lies in a canonical exact sequence of $G$-modules of the form
\begin{equation}\label{selseq}0 \longrightarrow \Hom_\ZZ({\rm Cl}_{\Sigma}^T(K), \QQ/\ZZ)
\longrightarrow
\mathcal{S}_{\Sigma,T}(\GG_{m/K}) \longrightarrow
\Hom_{\ZZ}(\mathcal{O}^\times_{K,\Sigma,T},\ZZ) \longrightarrow 0
\end{equation}
where ${\rm Cl}_{\Sigma}^T(K)$ is the ray class group of $\mathcal{O}_{K,\Sigma}$ modulo the product of all
places of $K$ above $T$ (see \S\ref{notation section}).

This Selmer group is also philosophically related to the theory of Weil-\'etale cohomology that is conjectured to exist by Lichtenbaum in \cite{L3}, and in this direction we show that in all cases there is a natural identification
$$\mathcal{S}_{\Sigma,T}(\GG_{m/K})=H^2_{\!c,T}((\mathcal{O}_{\!K,\Sigma})_{\mathcal{W}}, \ZZ)$$
where the right hand group denotes the cohomology in degree two of a canonical `$T$-modified compactly supported Weil-\'etale cohomology complex' that we introduce in \S\ref{wec}.

The second module $\mathcal{S}^{{\rm tr}}_{\Sigma,T}(\GG_{m/K})$ that we introduce is a canonical `transpose'
(in the sense of Jannsen's homotopy theory of modules \cite{jannsen}) for $\mathcal{S}_{\Sigma,T}(\GG_{m/K})$.

In terms of the complexes introduced in \S\ref{wec} this module can be described as a certain `$T$-modified Weil-\'etale cohomology group' of $\mathbb{G}_m$
$$\mathcal{S}^{{\rm tr}}_{\Sigma,T}(\GG_{m/K})=H^1_{T}((\mathcal{O}_{\!K,\Sigma})_{\mathcal{W}},\mathbb{G}_m)$$
and can also be shown to lie in a canonical exact sequence of $G$-modules of the form
\begin{equation}\label{dual ses}
0 \longrightarrow {\rm Cl}_{\Sigma}^T(K) \longrightarrow
\mathcal{S}^{{\rm tr}}_{\Sigma,T}(\GG_{m/K})
\longrightarrow X_{K,\Sigma} \longrightarrow 0.
\end{equation}
Here $X_{K,\Sigma}$ denotes the subgroup of the free abelian group
on the set $\Sigma_{K}$ of places of $K$ above $\Sigma$ comprising elements whose coefficients sum to zero.


We can now state our second main result.

In this result we write $\Fitt^{r}_{G}(M)$ for the $r$-th Fitting ideal of a finitely generated $G$-module $M$,
though the usual notation is $\Fitt_{r, \ZZ[G]}(M)$, in order to make the notation consistent with the exterior power $\bigwedge_{\ZZ[G]}^{r} M$.
 Note that we will review the definition of higher Fitting ideals in \S \ref{DefinitionRelativeFitt} and also introduce there for each finitely generated $G$-module $M$ and each pair of non-negative integers $r$ and $i$ a natural notion of `higher relative Fitting ideal'
\[ \Fitt^{(r,i)}_{G}(M)=\Fitt^{(r,i)}_{G}(M,M_{\rm tors}).\]
%

We write $x \mapsto x^{\#}$ for the $\CC$-linear involution of $\CC[G]$ which
inverts elements of $G$.

\begin{theorem} \label{MT2}
Let $K/k, S, T, V$ and $r$ be as above, and assume that ${\rm LTC}(K/k)$ is valid. Then all of the following claims are also valid.
\begin{itemize}
\item[(i)]{One has
$$\Fit_G^r(\mathcal{S}_{S,T}(\GG_{m/K}))=\{  \Phi(\epsilon_{K/k,S,T}^V)^{\#}
:  \Phi \in \bigwedge_{\ZZ[G]}^r \Hom_{\ZZ[G]}(\mathcal{O}_{K,S,T}^\times,\ZZ[G])\}.$$
}
\item[(ii)]{Let $\mathcal{P}_k(K)$ be
the set of all places which split completely in $K$. Fix a non-negative integer $i$ and set
$$\mathcal{V}_{i}=\{V' \subset \mathcal{P}_k(K)
: |V'|=i \ \mbox{and} \ V' \cap (S \cup T)=\emptyset \}.$$
Then one has
$$\Fitt_{G}^{(r,i)}(\mathcal{S}^{{\rm tr}}_{S,T}(\GG_{m/K}))
=
\{  \Phi(\epsilon_{K/k,S \cup V',T}^{V \cup V'})  :  V' \in \mathcal{V}_{i} \
\mbox{and} \
\Phi \in \bigwedge_{\ZZ[G]}^{r+i} \Hom_{\ZZ[G]}(\mathcal{O}_{K,S \cup V',T}^\times,\ZZ[G])\}.$$
In particular, if $i=0$, then one has
$$\Fit_G^r(\mathcal{S}^{{\rm tr}}_{S,T}(\GG_{m/K}))=\{  \Phi(\epsilon_{K/k,S,T}^V)  :  \Phi \in \bigwedge_{\ZZ[G]}^r \Hom_{\ZZ[G]}(\mathcal{O}_{K,S,T}^\times,\ZZ[G])\}.$$
}
\end{itemize}
\end{theorem}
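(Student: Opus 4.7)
The plan is to deduce both parts directly from ${\rm LTC}(K/k)$ via Theorem \ref{zetars} and the general principle that a basis of the determinant of a perfect complex controls, by exterior-power evaluation, the Fitting ideals of its cohomology.

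First I would fix a representative of the $T$-modified Weil-\'etale complex by a two-term complex $[P^0 \xrightarrow{d} P^1]$ of finitely generated projective $\ZZ[G]$-modules, chosen, using the descriptions in \S\ref{wec}, so that $H^0 = \mathcal{O}^\times_{K,S,T}$ and $H^1 = \mathcal{S}^{{\rm tr}}_{S,T}(\GG_{m/K})$ (the latter via the exact sequence (\ref{dual ses})). The compactly supported analogue is Artin--Verdier dual to this complex and computes $\mathcal{S}_{S,T}(\GG_{m/K})$ in its top degree (via (\ref{selseq})), which is the source of the involution $x \mapsto x^{\#}$ appearing in (i). By ${\rm LTC}(K/k)$ the zeta element $z_{K/k,S,T}$ is a basis of the relevant determinant module, and Theorem \ref{zetars} identifies $\epsilon^V_{K/k,S,T}$ with its canonical projection into $\bigwedge^r_{\ZZ[G]} \mathcal{O}^\times_{K,S,T} \otimes \RR$.

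For part (i), I would show that applying $\Phi \in \bigwedge^r_{\ZZ[G]} \Hom_{\ZZ[G]}(\mathcal{O}^\times_{K,S,T}, \ZZ[G])$ to $\epsilon^V_{K/k,S,T}$ corresponds, after the Artin--Verdier duality translation, to taking a rank-$r$ minor of the dual differential $d^{\ast}\colon (P^1)^{\ast} \to (P^0)^{\ast}$ viewed as a presentation matrix of $\mathcal{S}_{S,T}(\GG_{m/K})$, with $\#$ recording the Galois dualization. Since as $\Phi$ varies over $\bigwedge^r \Hom$ one obtains precisely all such minors, and since these generate the $r$-th Fitting ideal of the cokernel, the claimed equality follows from the standard minors-description of Fitting ideals.

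For part (ii) I would apply ${\rm LTC}(K/k)$ to the enlarged data $(S \cup V', T)$ for each $V' \in \mathcal{V}_i$. Since every place in $V'$ splits completely in $K$, the module $\mathcal{O}^\times_{K, S \cup V',T}$ differs from $\mathcal{O}^\times_{K,S,T}$ by an additional free $\ZZ[G]^{i}$-direct summand contributed by the $T$-trivialized units at the split places, and the associated change in $[P^0 \to P^1]$ is controlled in a completely analogous way. Evaluating $\Phi \in \bigwedge^{r+i}_{\ZZ[G]} \Hom(\mathcal{O}^\times_{K,S\cup V',T}, \ZZ[G])$ on the enlarged Rubin--Stark element $\epsilon^{V \cup V'}_{K/k, S \cup V',T}$ (which again is the canonical projection of $z_{K/k,S\cup V',T}$ by Theorem \ref{zetars}) then produces an $(r+i)$-minor of an enlarged presentation matrix for $\mathcal{S}^{{\rm tr}}_{S,T}(\GG_{m/K})$ with $i$ additional ``split-place'' columns. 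By the definition of $\Fitt^{(r,i)}_G$ recalled in \S\ref{DefinitionRelativeFitt}, the totality of such minors as $V'$ and $\Phi$ vary is exactly the desired relative Fitting ideal, and the case $i=0$ drops out as the specialisation $V'=\emptyset$. The main obstacle I expect is to make this dictionary entirely precise: one must verify both that restricting $V'$ to $\mathcal{V}_i \subset \mathcal{P}_k(K)$ (rather than to arbitrary enlargements of $S$) is the correct constraint to recover all of $\Fitt^{(r,i)}_G(\mathcal{S}^{{\rm tr}}_{S,T}(\GG_{m/K}))$ (which reflects the vanishing of $\epsilon^{V\cup V'}$ when some place in $V'$ fails to split completely in $K$) and that the enlargement of the complex interacts correctly with the definition of the higher relative Fitting ideal in both inclusions.
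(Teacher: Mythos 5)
For part (i) your outline is essentially the paper's own route, but it glosses over a point that cannot be skipped. When you say that varying $\Phi$ over $\bigwedge^r_{\ZZ[G]}\Hom_{\ZZ[G]}(\mathcal O^\times_{K,S,T},\ZZ[G])$ gives ``precisely all'' rank-$r$ minors of a presentation of $\mathcal S^{\rm tr}_{S,T}(\GG_{m/K})$, this is not literally true: it only yields the minors obtained from the submatrix in which the rows indexed by the $r$ places of $V$ have been deleted. The reason this suffices is that those rows are \emph{identically zero}, which in the paper is the content of Lemma \ref{lempsi}: since each $v\in V$ splits completely in $K$, the local-to-global map $\psi_i$ lands in the ideal $\mathcal I_i$ and in fact vanishes for $1\le i\le r$. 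Without this vanishing statement, minors that retain some of the first $r$ rows could \emph{a priori} be needed to generate $\Fitt^r_G$, and the equality would fail. So you need to insert the analogue of Lemma \ref{lempsi} explicitly; it is the lynchpin of the argument, not a detail.

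Part (ii) has a more serious gap. You assert that $\mathcal O^\times_{K,S\cup V',T}$ ``differs from $\mathcal O^\times_{K,S,T}$ by an additional free $\ZZ[G]^{i}$-direct summand contributed by the $T$-trivialized units at the split places.'' This is false in general: the valuation map gives an exact sequence
\begin{equation*}
0 \longrightarrow \mathcal O^\times_{K,S,T} \longrightarrow \mathcal O^\times_{K,S\cup V',T} \longrightarrow \ker\bigl(Y_{K,V'} \to {\rm Cl}^T_S(K)\bigr) \longrightarrow 0,
\end{equation*}
and the right-hand term is not free (nor a direct summand) unless the classes of the places above $V'$ vanish in ${\rm Cl}^T_S(K)$. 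This is exactly where the paper's proof of Theorem \ref{ThRelativeHigherFitt} diverges from your route: one first replaces $\mathcal S^{\rm tr}_{S,T}(\GG_{m/K})$ by the kernel $\mathcal S'$ of the projection to $Y_{K,V}$ (using Lemma \ref{LemmaRelFitt2}(iii)), identifies $\mathcal S'/X_{V'}\simeq \mathcal S'_{S\cup V'}$ where $X_{V'}\subset {\rm Cl}_S^T(K)$ is the $G$-submodule generated by the classes of places above $V'$, and then invokes \emph{Chebotarev's density theorem} to show that as $V'$ ranges over $\mathcal V_i$ the submodules $X_{V'}$ exhaust the submodules of ${\rm Cl}^T_S(K)$ generated by $i$ elements, which is precisely what is needed to match the alternative description of $\Fitt^{(0,i)}_G$ given in Lemma \ref{LemmaRelFitt}. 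Your closing paragraph flags that ``restricting $V'$ to $\mathcal V_i$'' is a point that needs verification, but the actual verification requires this Chebotarev step, and your proposed mechanism (extra free units summand) is not a correct substitute for it. Until that is repaired, the inclusion $\Fitt^{(r,i)}_G\subseteq\{\Phi(\epsilon^{V\cup V'}_{K/k,S\cup V',T})\}$ is not established.
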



\begin{remark}\label{MT2 remark} In terms of the notation of Theorem \ref{MT2}, the Rubin-Stark Conjecture asserts that $\Phi(\epsilon_{K/k,S,T}^V)$ belongs to $\ZZ[G]$ for every $\Phi$ in $\bigwedge_{\ZZ[G]}^r \Hom_{\ZZ[G]}(\mathcal{O}_{K,S,T}^\times,\ZZ[G])$. The property described in Theorem \ref{MT2} is deeper in that it shows the ideal generated by
$\Phi(\epsilon_{K/k,S,T}^V)$ as $\Phi$ runs over $\bigwedge_{\ZZ[G]}^r \Hom_{\ZZ[G]}(\mathcal{O}_{K,S,T}^\times,\ZZ[G])$
 should encode significant arithmetic information relating to integral (dual) Selmer groups. (See also Remark \ref{alb} in this regard.)\end{remark}

\subsection{} \label{IntroHigherFitt}
In this subsection we shall discuss some interesting consequences of Theorem \ref{MT2}
concerning the explicit Galois structure of ideal class groups.

To do this we fix an odd prime $p$ and suppose that $K/k$ is any finite abelian extension of global fields.
We write $L$ for the (unique) intermediate field of $K/k$ such that $K/L$ is a $p$-extension
and $[L:k]$ is prime to $p$. Then the group $\Gal(K/k)$ decomposes as a direct product $\Gal(L/k)\times \Gal(K/L)$ and we fix a non-trivial faithful character $\chi$ of $\Gal(L/k)$. We set ${\rm Cl}^{T}(K):={\rm Cl}_{\emptyset}^T(K)$ and define its `$(p,\chi)$-component' by setting
\[ A^{T}(K)^{\chi}:=({\rm Cl}^T(K) \otimes \ZZ_{p}) \otimes_{\ZZ_{p}[\Gal(L/k)]}
\mathcal{O}_{\chi}.\]
Here we write $\mathcal{O}_{\chi}$ for the module $\ZZ_{p}[\im (\chi)]$ upon which $\Gal(L/k)$ acts via $\chi$ so that $A^{T}(K)^{\chi}$ has an induced action of the group ring $R_{K}^{\chi}:=\mathcal{O}_{\chi}[\Gal(K/L)]$.

Then in Theorem \ref{ThCharacterComponent1} we shall derive the following results about the structure of $A^{T}(K)^{\chi}$ from the final assertion of Theorem \ref{MT2}(ii).

In this result we write `$\chi(v) \neq 1$' to mean that the decomposition group of $v$ in $\Gal(L/k)$ is non-trivial.

\begin{corollary} \label{ThCharacterComponent01}
Let $r$ be the number of archimedean places of $k$ that split completely in $K$.
Assume that
any ramifying place $v$ of $k$ in $K$ satisfies $\chi(v) \neq 1$. Assume also that the equality of ${\rm LTC}(K/k)$ is valid after applying the functor $-\otimes_{\ZZ_{p}[\Gal(L/k)]}
\mathcal{O}_{\chi}$.

Then for any non-negative integer $i$ one has an equality
$$\Fitt_{R_{K}^{\chi}}^{i}(A^T(K)^{\chi})
=
\{  \Phi(\epsilon_{K/k,S \cup V',T}^{V \cup V', \chi})  :  V' \in \mathcal{V}_{i} \
\mbox{and} \
\Phi \in \bigwedge_{R_{K}^{\chi}}^{r+i} \mathcal{H}_{\chi} \}$$
where we set $S:=S_{\infty}(k) \cup S_{\rm ram}(K/k)$ and $\mathcal{H}_{\chi}:=
\Hom_{R_{K}^{\chi}}((\mathcal{O}_{K,S \cup V',T}^\times
\otimes \ZZ_{p})^{\chi},R_{K}^{\chi})$.
\end{corollary}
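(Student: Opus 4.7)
The plan is to deduce the corollary directly from Theorem \ref{MT2}(ii) by passing to the $(p,\chi)$-component. Take $V$ to be the set of the $r$ archimedean places of $k$ that split completely in $K$, so that $V\subseteq S$ and $|V|=r$, and apply Theorem \ref{MT2}(ii) with this choice of $V$. Since the equality in that theorem is proved using only ${\rm LTC}(K/k)$ through a determinant module argument, it should survive the functor $-\otimes_{\ZZ_p[\Gal(L/k)]}\mathcal{O}_\chi$ under the sole hypothesis that the $\chi$-component of ${\rm LTC}(K/k)$ holds. The corollary will then follow once both sides of the equality in Theorem \ref{MT2}(ii) are correctly identified after $\chi$-localization.

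For the left-hand side, I would analyze the exact sequence (\ref{dual ses}) after applying $-\otimes_{\ZZ_p[\Gal(L/k)]}\mathcal{O}_\chi$. The ramification hypothesis, together with the faithfulness of $\chi$ on $\Gal(L/k)$, implies that for every $v\in S\setminus S_\infty(k)$ the $(p,\chi)$-component of the permutation module $\ZZ_p[G/G_v]$ vanishes. Combined with the standard presentation of ${\rm Cl}_S^T(K)$ as a quotient of ${\rm Cl}^T(K)$ by the classes of the finite places in $S$, this gives $({\rm Cl}_S^T(K))^\chi = A^T(K)^\chi$. For $X_{K,S}$, the same vanishing eliminates the contribution of ramified places, while the archimedean places that do not split completely in $L/k$ also drop out because their decomposition groups are non-trivial in $\Gal(L/k)$; here one uses that $p$ is odd, so that the order-at-most-two archimedean decomposition groups project isomorphically into $\Gal(L/k)$. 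The remaining contributors are exactly the $r$ archimedean places splitting completely in $K$, and since $\chi$ is non-trivial the augmentation condition defining $X_{K,S}$ becomes vacuous in the $\chi$-component, so $(X_{K,S})^\chi$ is free of rank $r$ over $R_K^\chi$. The $\chi$-component of (\ref{dual ses}) thus takes the shape
$$0 \longrightarrow A^T(K)^\chi \longrightarrow \mathcal{S}^{{\rm tr}}_{S,T}(\GG_{m/K})^\chi \longrightarrow (R_K^\chi)^r \longrightarrow 0,$$
and by the definition of the higher relative Fitting ideal to be reviewed in \S\ref{DefinitionRelativeFitt} this should identify $\Fitt^{(r,i)}_G(\mathcal{S}^{{\rm tr}}_{S,T}(\GG_{m/K}))\otimes_{\ZZ_p[\Gal(L/k)]}\mathcal{O}_\chi$ with $\Fitt^i_{R_K^\chi}(A^T(K)^\chi)$.

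For the right-hand side the identification is formal: the $\chi$-localization of $\Hom_{\ZZ[G]}(\mathcal{O}_{K,S\cup V',T}^\times,\ZZ[G])$ is $\mathcal{H}_\chi$, and the image of the Rubin-Stark element $\epsilon_{K/k,S\cup V',T}^{V\cup V'}$ under this $\chi$-localization is by definition $\epsilon_{K/k,S\cup V',T}^{V\cup V',\chi}$. The main obstacle is the careful bookkeeping in the second paragraph: one has to verify, using the precise definition of $\Fitt^{(r,i)}$, that $\chi$-localization interacts correctly with the relative Fitting ideal, and that the structural description of $(X_{K,S})^\chi$ as a free $R_K^\chi$-module of rank exactly $r$ matches the conventions embedded in that definition. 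Once this verification is in place, the required equality drops out of Theorem \ref{MT2}(ii) by direct $\chi$-localization.
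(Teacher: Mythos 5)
Your proposal is correct and follows the same route as the paper's proof of Theorem \ref{ThCharacterComponent1}: pass Theorem \ref{MT2}(ii) through the exact functor $-\otimes_{\ZZ_p[\Gal(L/k)]}\mathcal{O}_\chi$, use the hypothesis $\chi(v)\neq 1$ on ramifying places (together with faithfulness of $\chi$, non-triviality of $\chi$, and oddness of $p$) to kill the $\chi$-components of $Y_{K,S_{\rm ram}}$ and of $Y_{K,\{v\}}$ for the non-split archimedean $v$, so that $(X_{K,S}\otimes\ZZ_p)^\chi$ is free of rank $r$ over $R_K^\chi$, and then apply Lemma \ref{LemmaRelFitt2}(iii) to convert the relative Fitting ideal into the ordinary $i$-th Fitting ideal of the $\chi$-component of the class group. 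The one small point you make explicit that the paper leaves implicit is the identification $A_S^T(K)^\chi = A^T(K)^\chi$, which follows from the same vanishing of $(\ZZ_p[G/G_v])^\chi$ for finite ramifying $v$, applied to the presentation of $\Cl_S^T(K)$ as a quotient of $\Cl^T(K)$.
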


We remark that Corollary \ref{ThCharacterComponent01} specializes to give refinements of several results in the literature.

For example, if $k =\QQ$ and $K$ is equal to the maximal totally real subfield $\QQ(\zeta_m)^+$ of $\QQ(\zeta_{m})$ where $\zeta_{m}$
is a fixed choice of primitive $m$-th root of unity for some natural number $m$, then ${\rm LTC}(K/k)$ is known to be valid and so Corollary \ref{ThCharacterComponent01} gives an explicit description of the higher Fitting ideals of ideal class groups in terms of
cyclotomic units (which are the relevant Rubin-Stark elements in this case). In particular, if $m=p^{n}$ for any non-negative integer $n$, then the necessary condition
on $\chi$ is satisfied for all non-trivial $\chi$ and Corollary \ref{ThCharacterComponent01}
gives a strong refinement of Ohshita's theorem in \cite{ohshita} for the field $K=\QQ(\zeta_{p^{n}})^{+}$.

The result is also stronger than that of Mazur and Rubin in \cite[Theorem 4.5.9]{Koly} since the latter describes structures over a discrete valuation ring whilst Corollary \ref{ThCharacterComponent01} describes structures over the group ring $R_{K}^{\chi}$.

In addition, if $K$ is a CM extension of a totally real field $k$, then Corollary \ref{ThCharacterComponent01} constitutes a generalization of
the main results of the second author in both \cite{Ku3} and \cite{Ku5}. To explain this we suppose that $K/k$ is a CM-extension and that
$\chi$ is an odd character. Then classical Stickelberger elements can be used to define for each non-negative integer $i$ a `higher Stickelberger ideal'
\[ \Theta^{i}(K/k) \subseteq \ZZ_{p}[\Gal(K/k)]\]
(for details see \S \ref{strclassgroup}). By taking $T$ to be empty we can consider the $(p,\chi)$-component of the usual ideal class group

\[ A(K)^{\chi} := ({\rm Cl}^T(K) \otimes \ZZ_{p}) \otimes_{\ZZ_{p}[\Gal(L/k)]}
\mathcal{O}_{\chi}.\]
Then, by using both Theorem \ref{MT1} and Corollary \ref{ThCharacterComponent01} we shall derive the following result as a consequence of the more general Theorem \ref{ThHigherFittCM}.

In this result we write $\omega$ for the Teichm\"{u}ller character giving the Galois
action on the group of $p$-th roots of unity.

\begin{corollary} \label{ThCharacterComponent02}
Let $K$ be a CM-field, $k$ totally real, and $\chi$ an odd
character with $\chi \neq \omega$.
We assume that
any ramifying place $v$ of $k$ in $K$ satisfies $\chi(v) \neq 1$ and that
${\rm LTC}(F/k)$ is valid for certain extensions $F$ of $K$
(see Theorem \ref{ThHigherFittCM} for the precise conditions on $F$).

Then for any non-negative integer $i$ one has an equality
\[ \Fitt_{R_{K}^{\chi}}^{i}(A(K)^{\chi})
= \Theta^{i}(K/k)^{\chi}.\]
\end{corollary}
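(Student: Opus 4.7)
The strategy is to apply Corollary \ref{ThCharacterComponent01} directly to $K/k$, then to use Theorem \ref{MT1} for suitable auxiliary extensions $F/K$ to translate the higher-rank Rubin-Stark elements produced by the corollary into reciprocity images of rank-zero Stickelberger data, and finally to match the resulting description with the definition of $\Theta^i(K/k)^\chi$ from \S\ref{strclassgroup} and remove the auxiliary set $T$.

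In more detail, since $K$ is CM and $k$ totally real, no archimedean place of $k$ splits completely in $K$, so the integer $r$ of Corollary \ref{ThCharacterComponent01} equals $0$ and one takes $V=\emptyset$. With $S := S_\infty(k)\cup S_{\rm ram}(K/k)$, Corollary \ref{ThCharacterComponent01} then yields
\[
\Fitt_{R_K^\chi}^{i}\bigl(A^T(K)^\chi\bigr) \;=\; \bigl\{\,\Phi\bigl(\epsilon_{K/k,S\cup V',T}^{V',\chi}\bigr) \,:\, V'\in \mathcal{V}_i,\; \Phi \in \textstyle\bigwedge_{R_K^\chi}^{i}\mathcal{H}_\chi\,\bigr\}.
\]
For each $V'=\{v_1,\dots,v_i\}\in \mathcal{V}_i$ one picks the auxiliary abelian extension $F$ of $K$ prescribed in Theorem \ref{ThHigherFittCM}, and applies Theorem \ref{MT1} to the tower $F/K/k$ with data $(S\cup V', T, V=\emptyset, V'=V')$. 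Since LTC$(F/k)$ holds on the relevant $\chi$-component by hypothesis, so does MRS$(F/K/k,S\cup V',T)$, and the latter provides a precise formula expressing $\epsilon_{K/k,S\cup V',T}^{V',\chi}$ as the image, under a wedge of local reciprocity maps at the places in $V'$, of the $T$-modified Stickelberger element of $F/k$ (which is exactly the rank-zero Rubin-Stark element $\epsilon_{F/k,S\cup V',T}^{\emptyset,\chi}$).

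The assumption that every place $v$ ramifying in $K/k$ satisfies $\chi(v)\neq 1$ ensures that the $\chi$-components of the relevant unit and id\`ele structures simplify sufficiently for the homomorphisms $\Phi$ in the display above to correspond bijectively to the data entering the definition of $\Theta^i(K/k)^\chi$ in \S\ref{strclassgroup}. Combining these identifications yields the asserted equality up to $T$-modification, and the hypothesis $\chi\neq \omega$ then guarantees that the Euler factors at the places of $T$ are units in $R_K^\chi$ and so cancel simultaneously from both sides to complete the proof. The principal obstacle is the explicit matching of the MRS-formula for $\epsilon_{K/k,S\cup V',T}^{V',\chi}$ with the generators of $\Theta^i(K/k)^\chi$: this requires careful bookkeeping of signs and normalisations, together with a verification that the auxiliary extensions $F$ can indeed be chosen to satisfy the hypotheses of Theorem \ref{ThHigherFittCM} for every $V'\in \mathcal{V}_i$.
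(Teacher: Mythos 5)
Your overall strategy is the right one and matches the paper's: prove the finite-level analogue (Theorem \ref{ThHigherFittCM}) by combining the Fitting-ideal description of Corollary \ref{ThCharacterComponent01}/Theorem \ref{ThCharacterComponent1} with the MRS congruences of Theorem \ref{ltcmrs}, and then pass to the limit over $N$.  However, there are several places where the details as you state them are off or where a genuine gap is left open.

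First, the direction of the MRS formula is reversed in your description.  Conjecture \ref{mrsconj}, applied to the tower $KF/K/k$ (note: the big field is $KF$, not $F$; you must set $F\in\mathcal{F}_{t,N}$ totally real with $F\cap K=k$ and work with the compositum), asserts that the norm of the rank-zero element $\theta_{KF/k}(0)$ equals, via $\nu$, the image of the rank-$|V'|$ element $\epsilon_{K/k,S\cup V',T}^{V'}$ under the wedge of reciprocity maps $\Rec_{V'}$.  You have it as ``$\epsilon_{K/k}^{V'}$ is the reciprocity image of the Stickelberger element,'' which does not even typecheck since applying a wedge of reciprocity maps lowers exterior degree.

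Second, and more seriously, you treat the identification of arbitrary elements $\Phi\in\bigwedge^{i}\mathcal{H}_\chi$ with the specific reciprocity data appearing in the definition of $\Theta^{i}(K/k)^\chi$ as ``bookkeeping.''  This is in fact the heart of the argument.  Corollary \ref{ThCharacterComponent01} produces the $i$-th Fitting ideal as the set of $\Phi(\epsilon^{V\cup V'})$ for \emph{arbitrary} $\Phi$, whereas $\Theta^i$ is built from Stickelberger elements of auxiliary extensions $KF$, i.e.\ from \emph{reciprocity} maps.  The bridge is the approximation argument — working modulo $p^N$ and invoking the Chebotarev density theorem together with \cite[Lem.~10.1]{Ku5} to produce, for each component $\varphi_j$ of $\Phi$, a place $v_j'\in\mathcal{S}(K)$ so that (a) $\varphi_j$ and $\phi_{v_j'}$ agree on $(\mathcal{O}^\times_{K,S\cup V}\otimes\ZZ/p^N)^\chi$, (b) the class $[w_j']$ matches $[w_j]$ in $A(K)^\chi$, and (c) certain cross-terms vanish.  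Without this approximation (and the reduction mod $p^N$ that makes it possible), the two sides of the desired equality cannot be compared.  The condition $\chi(v)\neq 1$ at ramified places is used to kill $(Y_{K,S_{\rm ram}})^\chi$ so that $r=0$ and the relative Fitting ideal collapses, not to provide the bijection you describe.

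Third, your treatment of the auxiliary set $T$ is wrong.  You say the statement is obtained ``up to $T$-modification'' and that $\chi\neq\omega$ makes the Euler factors at $T$ units.  In the paper, for $\chi\neq\omega$, one simply takes $T=\emptyset$ throughout; the role of $\chi\neq\omega$ is that the $\chi$-component of $\theta_{KF/k}(0)$ is already integral (Deligne--Ribet), and that the $\chi$-component of the $p$-torsion in $\mathcal{O}^\times_{K,S}$ vanishes so the machinery requiring torsion-free units still applies componentwise.  There is no subsequent cancellation of Euler factors; the Euler-factor argument you allude to is what the paper does in Corollary \ref{CorCharacterComponent2}(ii), a different statement.

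So the skeleton of your argument is correct and is the paper's, but as written the proof proposal has the MRS formula backwards, omits the mod-$p^N$ Chebotarev approximation that makes the identification work, and mishandles $T$.
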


In the notation of Corollary \ref{ThCharacterComponent02} suppose that $K$ is the $n$-th layer of the cyclotomic $\ZZ_{p}$-extension
of $L$ for some non-negative integer $n$ and that every place $\mathfrak p$ above $p$
satisfies $\chi(\mathfrak p) \neq 1$. Then the conditions on $\chi(v)$ and ${\rm LTC}(F/k)$ that are stated in Corollary \ref{ThCharacterComponent02} are automatically satisfied and so Corollary \ref{ThCharacterComponent02} generalizes the main results of the second author in \cite{Ku5}.

To get a better feeling for Corollary \ref{ThCharacterComponent02} consider the simple case that $[K:k]$ is prime to $p$. In this case $K=L$, the ring $\ZZ_{p}[\Gal(K/k)]$ is semi-local and
$A(K)^{\chi}$ is a module over the discrete valuation ring
$\mathcal{O}_{\chi}=R_{K}^\chi$. Then the conclusion in Corollary \ref{ThCharacterComponent02} in the case $i=0$ implies that
\begin{equation} \label{OrderIntro}
|A(K)^{\chi}| =
|\mathcal{O}_{\chi}/L_{k}(\chi^{-1},0)|
\end{equation}
where $L_{k}(\chi^{-1},s)$ is the usual Artin $L$-function.
If every place $\mathfrak p$ above $p$ satisfies
$\chi(\mathfrak p) \neq 1$, then this equality is known to be a consequence of
the main conjecture for totally real fields proved by Wiles \cite{Wiles}. However, without any such restriction on the values $\chi(\mathfrak p)$, the equality (\ref{OrderIntro}) is as yet unproved.

In addition, the result of Corollary \ref{ThCharacterComponent02} is much stronger than (\ref{OrderIntro}) in that it shows the explicit structure of
$A(K)^{\chi}$ as a Galois module to be completely determined (conjecturally at least) by Stickelberger elements via the obvious (non-canonical) isomorphism of $\mathcal{O}_{\chi}$-modules
$$A(K)^{\chi} \simeq \bigoplus_{i \geq 1}
\Fitt_{\mathcal{O}_{\chi}}^{i}(A(K)^{\chi})/\Fitt_{\mathcal{O}_{\chi}}^{i-1}(A(K)^{\chi}).$$

Next we note that the proof of Corollary \ref{ThCharacterComponent02} also combines with the result of Theorem \ref{MT4} below to give the following result (which does not itself assume the validity of LTC$(K/k)$).

This result will be proved in Corollaries \ref{CorCharacterComponent3} and \ref{CorHigherFittCM2}. In it we write $\mu_{p^\infty}(k(\zeta_{p}))$ for the $p$-torsion subgroup of $k(\zeta_{p})^\times$.

\begin{corollary}
Assume that $K/k$ is a CM-extension, that the degree $[K:k]$ is prime to $p$, and that
$\chi$ is an odd character of $G$ such that
there is at most one $p$-adic place $\mathfrak{p}$ of $k$
with $\chi(\mathfrak p)=1$. Assume also that the $p$-adic $\mu$-invariant of $K_{\infty}/K$ vanishes.

Then one has both an equality
$$|A(K)^{\chi}| =
\left\{
\begin{array}{ll}
|\mathcal{O}_{\chi}/L_{k}(\chi^{-1},0)|  & \mbox{if} \ \chi \neq \omega, \\
|\mathcal{O}_{\chi}/(|\mu_{p^\infty}(k(\zeta_{p}))|\cdot L_{k}(\chi^{-1},0)) &
\mbox{if} \ \chi=\omega
\end{array}\right.
$$
and a (non-canonical) isomorphism of $\mathcal{O}_{\chi}$-modules

$$A(K)^{\chi} \simeq \bigoplus_{i \geq 1} \Theta^{i}(K/k)^{\chi}/\Theta^{i-1}(K/k)^{\chi}.$$

\end{corollary}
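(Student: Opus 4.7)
The plan is to extract both conclusions from the higher Fitting ideal identity
$$\Fitt^i_{R_K^\chi}(A(K)^\chi) = \Theta^i(K/k)^\chi \qquad \text{for all } i \geq 0,$$
which is the content of Corollary \ref{ThCharacterComponent02}. Since $[K:k]$ is prime to $p$ the ring $R_K^\chi$ coincides with the discrete valuation ring $\mathcal{O}_\chi$, so $A(K)^\chi$ is a finite module over a DVR and is therefore completely determined, up to isomorphism, by its chain of Fitting ideals.

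The first task, and the principal obstacle, is to establish the above identity under the present hypotheses without assuming LTC$(K/k)$. For this I would invoke Theorem \ref{MT4}, the partial converse to Theorem \ref{MT1}, which derives the relevant $(p,\chi)$-component of LTC$(F/k)$ from instances of MRS together with suitable Iwasawa-theoretic input. The vanishing of the $\mu$-invariant of $K_\infty/K$ supplies the Iwasawa input, while the assumption that at most one $p$-adic place $\mathfrak{p}$ of $k$ satisfies $\chi(\mathfrak{p}) = 1$ bounds the number of trivial zeros of the associated $p$-adic $L$-function, keeping the Gross--Stark-type contribution manageable. Combined with the Iwasawa main conjecture for totally real fields and the unconditional cases of MRS already recorded in Corollary \ref{MC1}, this should yield the $(p,\chi)$-component of LTC$(F/k)$ for each auxiliary field $F$ demanded by Corollary \ref{ThCharacterComponent02}, and hence the Fitting ideal identity displayed above.

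Granted this identity, the two conclusions are a formal DVR calculation. Writing $A(K)^\chi \cong \bigoplus_{j=1}^{s} \mathcal{O}_\chi/\pi^{n_j}$ with $n_1 \geq \cdots \geq n_s > 0$, one has $\Fitt^i_{\mathcal{O}_\chi}(A(K)^\chi) = \pi^{n_{i+1}+\cdots+n_s}\mathcal{O}_\chi$ for $0 \leq i \leq s$. The case $i=0$ gives
$$|A(K)^\chi| = |\mathcal{O}_\chi/\Theta^0(K/k)^\chi|,$$
and the Siegel--Klingen description of the $\chi$-part of the Stickelberger element identifies $\Theta^0(K/k)^\chi$ with $L_k(\chi^{-1},0)\,\mathcal{O}_\chi$ when $\chi \neq \omega$. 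For $\chi = \omega$ the $T$-modification used in the definition of $\Theta^0(K/k)$ introduces a supplementary Euler-factor contribution at the primes above $p$ that amounts to $|\mu_{p^\infty}(k(\zeta_p))|$, which accounts for the correction term in the order formula. Finally, the successive quotients $\Theta^i(K/k)^\chi/\Theta^{i-1}(K/k)^\chi$ are isomorphic to $\mathcal{O}_\chi/\pi^{n_i}$, and these reassemble into the asserted non-canonical isomorphism
$$A(K)^\chi \cong \bigoplus_{i\geq 1}\Theta^i(K/k)^\chi/\Theta^{i-1}(K/k)^\chi.$$
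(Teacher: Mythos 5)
Your proposal follows the paper's strategy essentially step for step: use Theorem \ref{MT4} (the partial converse to Theorem \ref{MT1}) together with the $\mu$-invariant hypothesis and the bound on the number of trivial zeros to obtain the $\chi$-component of LTC without assuming it, then feed this into the higher Fitting ideal machinery of Corollaries \ref{CorCharacterComponent2}--\ref{CorCharacterComponent3} and Theorem \ref{ThHigherFittCM}, and finally read off both the order formula and the non-canonical decomposition from the Fitting ideal filtration over the DVR $\mathcal{O}_\chi = R_K^\chi$. Your DVR computation and the observation that $\Theta^0(K/k)^\chi$ is governed by $L_k(\chi^{-1},0)$ up to a $T$-Euler-factor correction (giving the $\mu_{p^\infty}(k(\zeta_p))$ term when $\chi = \omega$) both match the argument in the proof of Corollary \ref{CorCharacterComponent2}.

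One imprecision worth flagging: you cite the unconditional cases of MRS from Corollary \ref{MC1} as input to Theorem \ref{MT4}, but Corollary \ref{MC1} only covers $k=\QQ$ or function fields, whereas here $k$ is an arbitrary totally real base. The correct unconditional input that makes Theorem \ref{MT4} effective is instead the combination of Gross's argument (via Brumer's $p$-adic Baker theorem, giving the predicted order of vanishing when at most one $p$-adic place has $\chi(\mathfrak{p})=1$) and Ventullo's recent proof of the Gross--Stark leading-term formula in this situation, precisely as invoked in the proof of Corollary \ref{IntroConsequencesOfVentullo} and then reused in Corollaries \ref{CorHigherFittCM1} and \ref{CorCharacterComponent3}. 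With that correction the argument is complete and coincides with the paper's.
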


This result is a generalization of the main theorem of the second author in \cite{Ku3} where it is assumed that $\chi(\mathfrak p) \neq 1$ for all $p$-adic places $\mathfrak{p}$. It also generalizes the main result of Rubin in \cite{RubinGaussSum} which deals only with the special case $K=\QQ(\mu_{p})$ and $k=\QQ$.

To end this subsection we note that Remark \ref{Remark021} below explains why Theorem \ref{MT2}(ii) also generalizes and refines the main result of Cornacchia and Greither in \cite{CorGrei}.

\subsection{}In this subsection we discuss further connections between Rubin-Stark elements and the structure of class groups of the form ${\rm Cl}_{\Sigma}^T(K)$ for `small' sets $\Sigma$ which do not seem to follow from Theorem \ref{MT2}. In particular, we do not assume here that $\Sigma$ contains $S_{\rm ram}(K/k)$.


To do this we denote the annihilator ideal of a $G$-module $M$ by $\Ann_{G}(M)$.

\begin{theorem} \label{MC4}
Assume that ${\rm LTC}(K/k)$ is valid. Fix $\Phi$ in
$\bigwedge_{\ZZ[G]}^r \Hom_{\ZZ[G]}(\mathcal{O}_{K,S,T}^\times,\ZZ[G])$
and any place $v$ in $S \setminus V$.

Then one has
$$\Phi(\epsilon_{K/k,S,T}^V) \in \Ann_{G}({\rm Cl}_{V \cup \{v\}}^T(K))$$
and, if $G$ is cyclic, also
$$\Phi(\epsilon_{K/k,S,T}^V) \in \Fitt_{G}^{0}({\rm Cl}_{V \cup \{v\}}^T(K)).$$
\end{theorem}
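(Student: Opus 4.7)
The plan is to deduce both assertions from Theorem \ref{MT2}(ii) specialized to $i=0$. That result, under $\mathrm{LTC}(K/k)$, already places $\Phi(\epsilon_{K/k,S,T}^V)$ in the Fitting ideal $\Fitt_G^r(\mathcal{S}^{\rm tr}_{S,T}(\GG_{m/K}))$, so it suffices to establish the purely structural containment
$$\Fitt_G^r(\mathcal{S}^{\rm tr}_{S,T}(\GG_{m/K})) \subseteq \Ann_G({\rm Cl}_{V \cup \{v\}}^T(K)),$$
together with its sharpening to $\Fitt_G^0({\rm Cl}_{V \cup \{v\}}^T(K))$ when $G$ is cyclic. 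I would prove this by comparing Selmer groups for the two sets $V \cup \{v\} \subseteq S$.

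The functoriality in $\Sigma$ of the $T$-modified Weil-\'etale complex introduced in \S\ref{wec} (corresponding to the open immersion $\Spec \mathcal{O}_{K,S} \hookrightarrow \Spec \mathcal{O}_{K, V \cup \{v\}}$) provides a canonical morphism of the defining exact sequences (\ref{dual ses}): on torsion it is the natural surjection ${\rm Cl}_{V \cup \{v\}}^T(K) \twoheadrightarrow {\rm Cl}_S^T(K)$, and on free parts it is the inclusion $X_{K, V \cup \{v\}} \hookrightarrow X_{K,S}$. An application of the snake lemma then gives the four-term exact sequence
$$0 \to K_{\rm cl} \to \mathcal{S}^{\rm tr}_{V \cup \{v\}, T}(\GG_{m/K}) \to \mathcal{S}^{\rm tr}_{S,T}(\GG_{m/K}) \to P \to 0,$$
where $K_{\rm cl}$ is the kernel of the class-group surjection (generated by the classes of primes above $S \setminus (V \cup \{v\})$) and $P := \ZZ[(S \setminus (V \cup \{v\}))_K]$ is a permutation $\ZZ[G]$-module.

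The key structural input is then a free rank-$r$ splitting inside $\mathcal{S}^{\rm tr}_{V \cup \{v\}, T}(\GG_{m/K})$: since each $v_i \in V$ splits completely in $K$, if we fix a place $w_i$ of $K$ above each $v_i$ and a place $w_0$ above $v$, then the $\ZZ[G]$-submodule $F \subseteq X_{K, V \cup \{v\}}$ spanned by the elements $w_i - w_0$ is free of rank $r$. Projectivity of $F$ allows it to be lifted and split off from $\mathcal{S}^{\rm tr}_{V \cup \{v\}, T}(\GG_{m/K}) \twoheadrightarrow X_{K, V \cup \{v\}}$ as a direct summand, and the complementary summand $Q$ has ${\rm Cl}_{V \cup \{v\}}^T(K)$ as its torsion subgroup. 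A standard Fitting ideal computation for extensions by free modules of rank $r$ then yields $\Fitt_G^r(\mathcal{S}^{\rm tr}_{V \cup \{v\}, T}(\GG_{m/K})) = \Fitt_G^0(Q) \subseteq \Ann_G({\rm Cl}_{V \cup \{v\}}^T(K))$.

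The main obstacle will be propagating this containment from $\mathcal{S}^{\rm tr}_{V \cup \{v\}, T}$ back to $\mathcal{S}^{\rm tr}_{S, T}$ through the four-term exact sequence above: since the linking map is neither injective nor surjective, one must split the sequence into two short exact pieces and separately control the effect of the torsion end-term $K_{\rm cl}$ and the permutation end-term $P$ on higher Fitting ideals. In the cyclic case the Gorenstein property of $\ZZ[G]$, combined with the well-understood Fitting structure of permutation modules, allows the sharper containment in $\Fitt_G^0({\rm Cl}_{V \cup \{v\}}^T(K))$ to persist through this propagation; in the general case the same bookkeeping only retains the weaker annihilator inclusion, which is precisely why the stronger conclusion requires the additional hypothesis of cyclicity.
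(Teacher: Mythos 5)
Your approach diverges substantially from the paper's and contains a genuine gap at precisely the point you flag as ``the main obstacle.'' You reduce the theorem to a single structural containment $\Fitt_G^r(\mathcal{S}^{\rm tr}_{S,T}(\GG_{m/K})) \subseteq \Ann_G({\rm Cl}_{V \cup \{v\}}^T(K))$, and your computation for the small set $\Sigma := V \cup \{v\} \cup S_\infty$ is fine: splitting off the free rank-$r$ summand does give $\Fitt_G^r(\mathcal{S}^{\rm tr}_{\Sigma,T}) = \Fitt_G^0(Q) \subseteq \Ann_G({\rm Cl}_{V\cup\{v\}}^T(K))$. But the propagation to $\mathcal{S}^{\rm tr}_{S,T}$ does not go through. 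The four-term sequence you write down only exhibits $\mathcal{S}^{\rm tr}_{\Sigma,T}/K_{\rm cl}$ as a \emph{submodule} of $\mathcal{S}^{\rm tr}_{S,T}$, and there is no general inclusion relating $\Fitt_G^r$ of a module to $\Fitt_G^r$ of a submodule, let alone to $\Fitt_G^r$ of a third module of which that submodule is a quotient. Worse, the torsion subgroup of $\mathcal{S}^{\rm tr}_{S,T}$ is ${\rm Cl}_S^T(K)$, not ${\rm Cl}_{V\cup\{v\}}^T(K)$; the direct route $\Fitt_G^r(\mathcal{S}^{\rm tr}_{S,T}) \subseteq \Ann_G(\mathcal{S}^{\rm tr}_{S,T}{}_{,\rm tors})$ only recovers annihilation of the \emph{smaller} group ${\rm Cl}_S^T(K)$, which is strictly weaker than the theorem's assertion. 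The closing appeal to ``Gorenstein property'' and ``well-understood Fitting structure of permutation modules'' does not supply the missing Fitting-ideal inequality.

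The paper avoids this entirely by not proving a uniform bound on $\Fitt_G^r(\mathcal{S}^{\rm tr}_{S,T})$. Instead it fixes $\Phi = \varphi_1 \wedge \cdots \wedge \varphi_r$, first running a genericity/deformation argument (Lemma \ref{tech2}) to arrange that the $\varphi_i$ span a free rank-$r$ module after restriction to $(\mathcal{O}_{K,V\cup\{v\},T}^\times)^\ast$, and then builds a $\Phi$-dependent exact triangle which shows $\Phi(\epsilon^V_{K/k,S,T})^{\#}$ lies in $\Fitt_G^0$ of the quotient $\mathcal{S}_{S,T}(\GG_{m/K})/\mathcal{E}_\Phi$ --- a \emph{finer} containment than $\Fitt_G^r(\mathcal{S}_{S,T})$. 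Crucially it works with the dual Selmer groups $\mathcal{S}_{\bullet,T}$ rather than the transposes $\mathcal{S}^{\rm tr}_{\bullet,T}$, so that enlarging $S$ to $\Sigma$ gives a \emph{surjection} $\mathcal{S}_{S,T} \twoheadrightarrow \mathcal{S}_{\Sigma,T}$; zeroth Fitting ideals decrease along surjections, and the resulting quotient $\mathcal{S}_{\Sigma,T}/f(\mathcal{E}_\Phi)$ contains ${\rm Cl}_{V\cup\{v\}}^T(K)^\vee$ as a submodule (this is exactly what the genericity buys). The annihilator statement, and in the cyclic case the monotonicity of $\Fitt^0_G$ under inclusion of modules, then finish the proof. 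To rescue your route you would at minimum need to replace the uniform containment with a $\Phi$-dependent one and switch to the dual Selmer groups, at which point you would essentially be reconstructing the paper's argument.
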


\begin{remark} \label{Remark02}
\begin{rm}
The first assertion of Theorem \ref{MC4} provides a common refinement
and wide-ranging generalization (to $L$-series of arbitrary order of vanishing) of several well-known
conjectures and results in the literature. To discuss this we write ${\rm Cl}^T(K)$ for the full ray class group modulo $T$
(namely, ${\rm Cl}^T(K)={\rm Cl}_{\emptyset}^T(K)$, see \S\ref{notation section}).

(i) We first assume that $r=0$ (so $V$ is empty) and that $k$ is a number field.
Then, without loss of generality (for our purposes), we can assume that $k$ is totally real and $K$ is a CM field.
In this case $\epsilon_{K/k,S,T}^{\emptyset}$ is the Stickelberger element
$\theta_{K/k,S,T}(0)$ of the extension $K/k$
(see \S \ref{L-functionsubsection}).
We take $v$ to be an archimedean place in $S$.
Then ${\rm Cl}_{\{v\}}^T(K)={\rm Cl}^T(K)$ and so the first assertion of Theorem \ref{MC4} shows that
${\rm LTC}(K/k)$ implies the classical Brumer-Stark Conjecture,
$$\theta_{K/k,S,T}(0)\cdot{\rm Cl}^T(K)=0.$$

(ii) We next consider the case that $K$ is totally real and take $V$ to be $S_{\infty}(k)$ so that $r = |V| = [k : \QQ]$. In this case Corollary \ref{MC4} implies that for any non-archimedean place $v$ in $S$, any element $\sigma_{v}$ of the decomposition subgroup $G_v$ of $v$ in $G$ and any element $\Phi$ of
$\bigwedge_{\ZZ[G]}^{[k:\Q]}\Hom_{\ZZ[G]}(\mathcal{O}_{K,S,T}^\times,\ZZ[G])$,
one has
\begin{equation} \label{Remark022}
(1-\sigma_{v})\cdot\Phi(\epsilon_{K/k,S,T}^{S_{\infty}})
\in \Ann_{G}({\rm Cl}^T(K)).
\end{equation}

To make this containment even more explicit we further specialize to the case that $k = \QQ$
and that $K$ is equal to $\QQ(\zeta_m)^+$ for some natural number $m$. We recall that ${\rm LTC}(K/k)$ has been verified in this case.
We take $S$ to be the
set comprising the (unique) archimedean place $\infty$ and all prime divisors of $m$, and
$V$ to be $S_\infty = \{\infty\}$ (so $r = 1$).
For a set $T$ which contains an odd prime, we set $\delta_{T}:=\prod_{v \in T}(1-{\N}v {\rm Fr}_{v}^{-1})$, where ${\rm Fr}_v\in G$ denotes the Frobenius automorphism at a place of $K$ above $v$.
In this case, we have
$$\epsilon_{K/k,S,T}^{\{\infty\}}=\epsilon_{m,T}:=
(1-\zeta_m)^{\delta_T} \in \mathcal{O}_{K,S,T}^\times$$
(see, for example, \cite[p.79]{tate} or \cite[\S 4.2]{P}) and so (\ref{Remark022}) implies that for any $\sigma_{v} \in G_{v}$ and any
$\Phi \in \Hom_{\ZZ[G]}(\mathcal{O}_{K,S,T}^\times,\ZZ[G])$ one has
$$
(1-\sigma_{v})\cdot\Phi(\epsilon_{m,T})
\in \Ann_{G}({\rm Cl}^T(K)).
$$
Now the group $G$ is generated by the decomposition subgroups $G_{v}$ of each
prime divisor $v$ of $m$, and so for any $\sigma \in G$
one has an equality $\sigma-1=\Sigma_{v \mid m} x_{v}$ for suitable elements $x_{v}$ of the ideal $I({G_{v}})$ of $\ZZ[G]$ that is generated by
 $\{ \sigma_v-1: \sigma_v \in G_{v}\}$. Hence, since $\epsilon_{m,T}^{\sigma-1}$ belongs to $\mathcal{O}_{K}^{\times}$ one finds that
for any $\varphi \in \Hom_{\ZZ[G]}(\mathcal{O}_{K}^\times,\ZZ[G])$ one has
$\varphi(\epsilon_{m,T}^{\sigma-1})=\Sigma_{v \mid m} x_{v} \widetilde \varphi(\epsilon_{m,T})$
where $\widetilde \varphi$ is any lift of $\varphi$ to
$\Hom_{\ZZ[G]}(\mathcal{O}_{K,S,T}^\times,\ZZ[G])$.
Therefore, for any $\varphi$ in $\Hom_{\ZZ[G]}(\mathcal{O}_{K}^\times,\ZZ[G])$
and any $\sigma$ in $G$, one has
\begin{equation} \label{RubinInv1987}
\varphi(\epsilon_{m,T}^{\sigma-1}) \in \Ann_{G}({\rm Cl}^T(K)).
\end{equation}
This containment is actually finer than the annihilation result proved by Rubin in
\cite[Th. 2.2 and the following Remark]{rubininv} since it deals with the group ${\rm Cl}^T(K)$ rather than ${\rm Cl}(K)$.
\end{rm}
\end{remark}

\begin{remark} \label{Remark03}
\begin{rm} We next consider the case that $K/k$ is a cyclic CM-extension and $V$ is empty. As remarked above, in this case
the Rubin-Stark element $\epsilon_{K/k,S,T}^{\emptyset}$ coincides with the Stickelberger element
$\theta_{K/k,S,T}(0)$.

The second assertion of Theorem \ref{MC4} therefore combines with the argument in Remark \ref{Remark02}(i) to show that  ${\rm LTC}(K/k)$ implies a containment
$$\theta_{K/k,S,T}(0) \in \Fitt_{G}^0({\rm Cl}^T(K)).$$
This is a strong refinement of the Brumer-Stark conjecture. To see this note that
${\rm Cl}^{T}(K)$ is equal to the ideal class group ${\rm Cl}(K)$ of $K$ when $T$ is empty. The above containment thus combines with \cite[Chap. IV, Lem. 1.1]{tate} to imply that if $G$ is cyclic, then one has
$$\theta_{K/k,S}(0)\cdot\Ann_{G}(\mu(K)) \subset \Fitt_{G}^0({\rm Cl}(K))$$
where $\mu(K)$ denotes the group of roots of unity in $K$. It is known that this inclusion is not in general valid without the hypothesis that $G$ is cyclic (see \cite{GreiKuri}). The possibility of such an explicit refinement of Brumer's Conjecture
was discussed by the second author in \cite{Ku8} and \cite{Ku9M}. In fact, in the terminology of \cite{Ku8}, the above argument shows that both properties (SB) and (DSB) follow from ${\rm LTC}(K/k)$ whenever $G$ is cyclic. For further results in the case that $G$ is cyclic see Corollary \ref{MC11}.
\end{rm}
\end{remark}

\begin{remark} \label{Remark021}
\begin{rm} Following the discussion of Remark \ref{Remark02}(ii) we can also now consider Theorem \ref{MT2} further in the case that $k = \QQ$ and $K=\QQ(\zeta_{p^{n}})^+$ for an odd prime $p$ and natural number $n$. Setting $S := \{\infty, p\}$ one has $\Cl^T_S(K) = \Cl^T(K)$ and the $G$-module $X_{K,S}$ is free of rank one and so the exact sequence (\ref{dual ses}) combines with the final assertion of Theorem \ref{MT2}(ii) (with $r=1$) to give equalities

\begin{align*}
\Fit_G^0(\Cl^T(K)) &=
\Fit_G^1(\mathcal{S}^{{\rm tr}}_{S,T}(\GG_{m/K}))
\\
&=
\{\Phi(\epsilon_{p^n,T}) \mid \Phi \in \Hom_{G}
(\mathcal{O}_{K,S,T}^\times,\ZZ[G])\}\\
&= \Fitt^{0}_{G}(\mathcal{O}_{K,S,T}^\times/(\ZZ[G]\cdot \epsilon_{p^n,T}))
\end{align*}
where the last equality follows from the fact that $G$ is cyclic. By a standard argument, this equality implies that
$\Fit_G^0(\Cl(K))=\Fitt^{0}_{G}(\mathcal{O}_{K}^\times/\mathcal{C}_{K})$ with $\mathcal{C}_{K}$ denoting the group $\ZZ[G]\cdot\{1-\zeta_{p^{n}}, \zeta_{p^{n}}\}\cap \mathcal{O}_{K}^\times$
 of cyclotomic units of $K$, and this is the main result of Cornacchia and Greither in \cite{CorGrei}.
\end{rm}
\end{remark}

For any finite group $\Gamma$ and any $\Gamma$-module $M$ we write $M^\vee$ for its
Pontryagin dual $\Hom_\ZZ(M,\QQ/\ZZ)$, endowed with the natural contragredient action of $\Gamma$.  

In \S\ref{ProofofRemark} we show that the proof of Theorem \ref{MC4} also implies the following result.

In this result we fix an odd prime $p$ and set ${\rm Cl}^T(K)^\vee_p :=
{\rm Cl}^T(K)^\vee\otimes\ZZ_p$.

\begin{corollary} \label{CorRemark03}
Let $K/k$ be any finite abelian CM-extension and $p$ any odd prime. If ${\rm LTC}(K/k)$ is valid, then one has a containment
$$\theta_{K/k,S,T}(0)^{\#} \in \Fitt_{\ZZ_p[G]}^0({\rm Cl}^T(K)^\vee_p).$$
\end{corollary}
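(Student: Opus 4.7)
My strategy is to specialize Theorem \ref{MT2}(i) to the case $V = \emptyset$ and $r = 0$ and then to exploit the CM structure together with the oddness of $p$ to translate the resulting Fitting ideal identity for the integral Selmer group into the claimed statement about $\Cl^{T}(K)^{\vee}_{p}$. Since the Rubin--Stark element $\epsilon_{K/k,S,T}^{\emptyset}$ is by construction the Stickelberger element $\theta_{K/k,S,T}(0)$ (see Remark \ref{Remark02}(i)), and since $\bigwedge_{\ZZ[G]}^{0}\Hom_{\ZZ[G]}(\mathcal{O}^{\times}_{K,S,T},\ZZ[G]) = \ZZ[G]$, Theorem \ref{MT2}(i) applied with $V = \emptyset$ under LTC$(K/k)$ yields the equality
$$\Fitt_{G}^{0}\!\left(\mathcal{S}_{S,T}(\GG_{m/K})\right) \;=\; \ZZ[G]\cdot\theta_{K/k,S,T}(0)^{\#}.$$
Tensoring with $\ZZ_{p}$ preserves this identity, so that $\theta^{\#} \in \Fitt_{\ZZ_{p}[G]}^{0}(\mathcal{S}_{S,T}(\GG_{m/K})_{p})$.

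Next I would restrict to the minus part under complex conjugation $\tau$. Because $p$ is odd, the idempotent $e^{-} = (1-\tau)/2$ splits every $\ZZ_{p}[G]$-module, and the standard vanishing of $L_{k,S,T}(\chi, 0)$ for every even character $\chi$ of $G$ (which uses the totally real base $k$ together with $|S| \geq 2$) implies $(1+\tau)\theta_{K/k,S,T}(0) = 0$; hence $\theta^{\#}$ lies in the minus component, and it suffices to establish the claimed containment after applying $e^{-}$. Taking the minus component of the exact sequence (\ref{selseq}) tensored with $\ZZ_{p}$, and using that $\tau$ fixes every archimedean place of $K$, one obtains a short exact sequence
$$0 \to (\Cl_{S}^{T}(K)^{\vee})_{p}^{-} \to \mathcal{S}_{S,T}(\GG_{m/K})_{p}^{-} \to \Hom_{\ZZ}(\mathcal{O}^{\times}_{K,S,T}, \ZZ)_{p}^{-} \to 0,$$
whose right-hand term reflects only the minus contributions of finite primes in $S$.

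Finally, to transfer from $(\Cl_{S}^{T}(K)^{\vee})_{p}^{-}$ to $(\Cl^{T}(K)^{\vee})_{p}^{-}$, I would invoke the Weil-\'etale cohomology realization $\mathcal{S}_{S,T}(\GG_{m/K}) = H^{2}_{c,T}((\mathcal{O}_{K,S})_{\mathcal{W}}, \ZZ)$ from \S\ref{wec}: the localization triangle comparing the Weil-\'etale complex of $\mathcal{O}_{K,S}$ with its analogue for the smaller set $S_{\infty}(k)$ contributes precisely the $S$-ramified-prime data that account for the difference between $\Cl_{S}^{T}(K)$ and $\Cl^{T}(K)$. Combined with the standard identity $\Fitt_{\ZZ_{p}[G]}^{0}(M^{\vee}) = \Fitt_{\ZZ_{p}[G]}^{0}(M)^{\#}$ for a finite $\ZZ_{p}[G]$-module $M$, this converts the Fitting-ideal containment for $\mathcal{S}_{S,T}(\GG_{m/K})_{p}^{-}$ into $\theta^{\#} \in \Fitt^{0}_{\ZZ_{p}[G]}(\Cl^{T}(K)^{\vee}_{p})$, as required. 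The main obstacle lies precisely in this final transfer step: one must verify that, in the minus part, the Weil-\'etale complex absorbs exactly the cokernel of the injection $(\Cl_{S}^{T}(K)^{\vee})_{p} \hookrightarrow (\Cl^{T}(K)^{\vee})_{p}$. This is the adaptation of the proof of Theorem \ref{MC4} indicated in the statement, the CM hypothesis together with $p$ odd being exactly what allows us to dispense with the cyclic-$G$ condition required in the second assertion of Theorem \ref{MC4}.
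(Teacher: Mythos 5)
Your opening move — specializing Theorem \ref{MT2}(i) to $V=\emptyset$, $r=0$ to get $\Fitt_G^0(\mathcal{S}_{S,T}(\GG_{m/K}))=\ZZ[G]\cdot\theta_{K/k,S,T}(0)^{\#}$ and then observing that $\theta$ lives in the minus component — is exactly how the paper begins, and those two steps are sound. The problem is the ``obstacle'' you yourself flag: the transfer from $\Cl_S^T(K)$ (with $S\supseteq S_\infty\cup S_{\rm ram}$) down to $\Cl^T(K)=\Cl^T_{S_\infty}(K)$. Your proposed mechanism is off in two ways. First, Fitting ideals are well-behaved under \emph{surjections} of modules, not under injections, so reasoning about the ``cokernel of the injection $(\Cl_S^T(K)^\vee)_p\hookrightarrow(\Cl^T(K)^\vee)_p$'' and expecting the Weil-\'etale complex to ``absorb'' it does not produce an inclusion of Fitting ideals in the direction you need. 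Second, the identity $\Fitt^0(M^\vee)=\Fitt^0(M)^{\#}$ is a red herring here: the module in the target of the corollary is already $\Cl^T(K)^\vee_p$, so there is no need to unwind a Pontryagin dual.

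The piece you are missing is the surjection of Selmer groups provided by the localization triangle in Proposition \ref{new one}(ii): the map $\mathcal{S}_{S,T}(\GG_{m/K})\twoheadrightarrow\mathcal{S}_{S_\infty,T}(\GG_{m/K})$. This immediately yields $\Fitt_G^0(\mathcal{S}_{S,T})\subseteq\Fitt_G^0(\mathcal{S}_{S_\infty,T})$, so $\theta^{\#}$ still lies in the latter. Now — and this is the point that removes the obstruction — for $\Sigma=S_\infty$ the unit group $\mathcal{O}_{K,\Sigma,T}^\times=\mathcal{O}_{K,T}^\times$ has finite minus part (being essentially roots of unity once one mods out the plus units of $K^+$), so $\Hom_\ZZ(\mathcal{O}_{K,\Sigma,T}^\times,\ZZ)_p^-=0$ when $p$ is odd. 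The exact sequence (\ref{selseq}) therefore degenerates in the minus part to an \emph{equality} $(\mathcal{S}_{S_\infty,T}(\GG_{m/K})\otimes\ZZ_p)^-=(\Cl^T(K)^\vee_p)^-$, and you finish exactly as the paper does: since $\theta^{\#}$ is supported in the minus component, the containment in $\Fitt^0_{\ZZ_p[G]}((\Cl^T(K)^\vee_p)^-)$ is equivalent to the required one in $\Fitt^0_{\ZZ_p[G]}(\Cl^T(K)^\vee_p)$. You correctly identified that CM plus $p$ odd is what replaces the cyclic-$G$ hypothesis of Theorem \ref{MC4}: there the map $\Cl_{V'}^T(K)^\vee\to\mathcal{S}_{V'\cup S_\infty,T}/f(\mathcal{E}_\Phi)$ is only an injection, which forces the cyclic hypothesis, whereas in the minus part it becomes an isomorphism — but the crucial enabling fact is the vanishing of the unit-Hom minus part for $S=S_\infty$, which your proposal never states.
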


\begin{remark}\begin{rm}\

(i) In \cite{GreitherPopescu} Greither and Popescu have proved the containment in Corollary \ref{CorRemark03} in the special case that $S$ contains all places above $p$ and the $p$-adic $\mu$-invariant of $K$ vanishes. By combining Corollary \ref{CorRemark03} with the result of Corollary \ref{IntroConsequencesOfVentullo} below one can now
prove this containment in cases that are not considered in \cite{GreitherPopescu}. For details see \cite{aze2}.

(ii) For any odd prime $p$ the group ${\rm Cl}(K)^\vee_p := {\rm Cl}(K)^\vee \otimes \ZZ_{p}$ is not a quotient of
${\rm Cl}^T(K)^\vee_p$ and so Corollary \ref{CorRemark03} does not imply $\theta_{K/k,S,T}(0)$ belongs to $\Fitt_{\ZZ_p[G]}^0({\rm Cl}(K)_p^\vee)$.

(iii) For any odd prime $p$ write ${\rm Cl}(K)_p^{\vee,-}$ for the submodule of ${\rm Cl}(K)\otimes \ZZ_p$ upon which complex conjugation acts as multiplication by $-1$. Then, under a certain technical hypothesis on $\mu(K)$, the main result of Greither in \cite{CG} shows that LTC$(K/k)$ also implies an explicit description of the Fitting ideal $\Fitt_{\ZZ_p[G]}^0({\rm Cl}(K)_p^{\vee,-})$ in terms of suitably normalized Stickelberger elements. By replacing the role of `Tate sequences for small $S$' in the argument of Greither by the `$T$-modified Weil-\'etale cohomology' complexes that we introduce in \S\ref{wec} one can in fact prove the same sort of result without any hypothesis on $\mu(K)$ and with ${\rm Cl}(K)$ replaced by the finer group ${\rm Cl}^T(K)$.
\end{rm}\end{remark}

\subsection{}We believe that, in certain cases, the conjecture ${\rm MRS}(K/L/k,S,T)$ could be easier to study than
 LTC$(K/k)$. It is therefore natural to investigate the converse implication to that given in Theorem \ref{MT1}.

The last of our main results that we record here both provides a converse to Theorem \ref{MT1} in a special case and also expresses an interesting connection between the conjectures LTC$(K/k)$ and ${\rm MRS}(K/L/k,S,T)$ and a well-known leading term formula for $p$-adic $L$-series that has been conjectured by Gross.

To state this we assume that
$k$ is totally real, that $K$ is a CM-field and that $p$ is an odd prime.
We consider the conjectures and the properties over
the minus part $\ZZ_{p}[G]^-$ of $\ZZ_p[G]$. We define an $\mathcal{L}$-invariant for arbitrary rank, and then
formulate in Conjecture \ref{GrossStark}
the Gross-Stark conjecture on the leading term at $s=0$ of a $p$-adic
$L$-function by using this $\mathcal{L}$-invariant.

In the following result we write $K_{\infty}$ for the cyclotomic $\ZZ_{p}$-extension of $K$ and $K_{n}$ for the the $n$-th
layer of this extension.

\begin{theorem} \label{MT4}
We assume that for any odd finite order character $\chi$ of
$\Gal(K_{\infty}/k)$ the $p$-adic $L$-function of $\chi$ has the order of vanishing at $s=0$ conjectured by Gross. We also assume that the $p$-adic $\mu$-invariant of $K_{\infty}/K$ vanishes.

Then the following assertions are equivalent.
\begin{itemize}
\item[(i)]{The minus part of the $p$-part of ${\rm LTC}(K_{n}/k)$ is valid for all non-negative integers $n$.}
\item[(ii)]{The minus part of the $p$-part
of ${\rm MRS}(F_{n}/F/k,S,T,\emptyset,V')$ is valid for all intermediate CM-fields $F$ of $K_{\infty}/k$ with $[F:k]<\infty$
and for all non-negative integers $n$, where $F_{n}$ is the $n$-th layer of the cyclotomic $\ZZ_{p}$-extension of $F$
and $V'$ consists of all completely splitting places in $F$ of $S$.}
\item[(iii)]{The leading term part
of the Gross-Stark conjecture (see Conjecture \ref{GrossStark}) is valid for all
intermediate CM-fields $F$ of $K_{\infty}/k$ with $[F:k]<\infty$.}
\end{itemize}
\end{theorem}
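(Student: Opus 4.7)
The plan is to establish the cycle of implications (i)$\Rightarrow$(ii)$\Rightarrow$(iii)$\Rightarrow$(i). The implication (i)$\Rightarrow$(ii) is the most direct: any intermediate CM-field $F$ of $K_{\infty}/k$ with $[F:k]<\infty$ and any $n\geq 0$ satisfy $F_n\subseteq K_m$ for some $m$, so hypothesis (i) combined with the standard functoriality of ${\rm LTC}$ under passage from $K_m/k$ to its sub-extension $F_n/k$ (which corresponds to forming invariants of the Weil-\'etale complex under the subgroup $\Gal(K_m/F_n)$, compatibly with the norm behaviour of the zeta element) yields the minus $p$-part of ${\rm LTC}(F_n/k)$. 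Applying Theorem \ref{MT1} to $F_n/F/k$ with the data $(S,T,\emptyset,V')$ then produces the minus $p$-part of ${\rm MRS}(F_n/F/k,S,T,\emptyset,V')$.

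For (ii)$\Rightarrow$(iii), the idea is to unwind the content of ${\rm MRS}(F_n/F/k,S,T,\emptyset,V')$ in the specific geometry of the cyclotomic tower $\{F_n\}_n$. Because $V=\emptyset$ at the top, the Rubin-Stark element $\epsilon_{F_n/k,S,T}^{\emptyset}$ equals the Stickelberger element $\theta_{F_n/k,S,T}(0)$. The local reciprocity maps at the $p$-adic places of $F$ that appear in the ${\rm MRS}$ identity are, after passing to the inverse limit along the cyclotomic tower, precisely the $p$-adic logarithmic regulator maps whose determinant defines the $\mathcal{L}$-invariant introduced prior to Conjecture \ref{GrossStark}. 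Consequently the limit of the ${\rm MRS}$ identities along $\{F_n\}_n$ takes exactly the shape of the leading term formula at $s=0$ for the associated $p$-adic $L$-function, which is the content of Conjecture \ref{GrossStark} at the CM intermediate field $F$; running $F$ over all finite CM intermediates of $K_{\infty}/k$ gives (iii).

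The crucial and deepest implication is (iii)$\Rightarrow$(i). Here one passes to the Iwasawa-theoretic setting along the cyclotomic $\ZZ_p$-extension. The vanishing of the $\mu$-invariant combined with the equivariant Iwasawa main conjecture for the minus part of $K_{\infty}/k$, which follows from Wiles' main conjecture for totally real fields under $\mu=0$, determines the Iwasawa-theoretic zeta element up to an error that is precisely controlled by the trivial zeros of the corresponding $p$-adic $L$-function at $s=0$. The hypothesis on the order of vanishing, together with the Gross-Stark leading term formula (iii), identifies this error exactly with the $\mathcal{L}$-invariant, which is the information needed to match the proposed zeta element $z_{K_n/k,S,T}$ with the determinant of the $T$-modified Weil-\'etale cohomology complex at each finite layer. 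A standard Iwasawa descent argument then delivers the minus $p$-part of ${\rm LTC}(K_n/k)$ for every $n\geq 0$.

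The main obstacle will be this final implication (iii)$\Rightarrow$(i): while the equivariant main conjecture and Gross-Stark together pin down all the ingredients, one must carefully verify that the system of zeta elements $\{z_{K_n/k,S,T}\}_{n\geq 0}$ is compatible with the projection maps along the cyclotomic tower, so that the Iwasawa descent yields the required equality of ${\rm LTC}(K_n/k)$ at each finite level and not merely after passage to the inverse limit; the trivial-zero contributions have to be matched integrally, not just up to units of the Iwasawa algebra. The first two implications are, by comparison, essentially formal translations between the ${\rm LTC}$, ${\rm MRS}$ and Gross-Stark languages in the cyclotomic setting.
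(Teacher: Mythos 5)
Your cycle (i)$\Rightarrow$(ii)$\Rightarrow$(iii)$\Rightarrow$(i) is exactly the paper's route: (i)$\Rightarrow$(ii) via descent of ${\rm LTC}$ to subextensions together with Theorem \ref{ltcmrs}, (ii)$\Rightarrow$(iii) via Proposition \ref{MRStoGS} (identifying the limit of the reciprocity maps with the $\mathcal{L}$-invariant along the cyclotomic tower, just as you describe), and (iii)$\Rightarrow$(i) via Iwasawa-theoretic descent. Note only that the paper does not argue (iii)$\Rightarrow$(i) internally but simply cites \cite[Cor.\ 3.9]{burns2}, so your sketch of the main-conjecture/Gross--Stark matching, while consistent with the expected strategy, supplies more detail than the paper itself provides for that step.
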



By combining this result with an argument of Gross (involving Brumer's $p$-adic version of Baker's Theorem) and important recent work of Darmon, Dasgupta and Pollack \cite{DDP}
and of Ventullo \cite{ventullo} we shall then obtain the following new evidence in support of the conjectures ${\rm LTC}(K/k)$ and ${\rm MRS}(K/L/k,S,T)$.

\begin{corollary} \label{IntroConsequencesOfVentullo}
If the $p$-adic Iwasawa $\mu$-invariant of $K$ vanishes and at most
one $p$-adic place splits in $K/K^{+}$, then the minus parts of the
$p$-parts of both ${\rm LTC}(K_{n}/k)$ and
${\rm MRS}(K_{n}/K/k,S,T)$ are valid for all non-negative integers $n$.
\end{corollary}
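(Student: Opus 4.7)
The strategy is to apply Theorem \ref{MT4} by verifying its assertion (iii) under the hypotheses of the corollary; by the equivalence of (i), (ii) and (iii) in that theorem, this simultaneously yields both claims. The vanishing of the $p$-adic $\mu$-invariant of $K_{\infty}/K$ is given. For the conjectured order of vanishing at $s=0$, any odd finite order character $\chi$ of $\Gal(K_{\infty}/k)$ factors through an intermediate CM-field of $K_{\infty}/k$ of finite degree over $k$; since the cyclotomic $\ZZ_{p}$-extension $K_{\infty}/K$ is unramified outside $p$, an elementary Galois-theoretic argument shows that our hypothesis that at most one $p$-adic place splits in $K/K^{+}$ forces at most one $p$-adic place of $k$ at which $\chi$ restricts trivially to the decomposition subgroup. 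In such a low-rank situation, the argument of Gross based on Brumer's $p$-adic analogue of Baker's Theorem, combined with the standard lower bound coming from Gross's factorization formula, yields the conjectured order of vanishing.

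Next I would verify (iii) of Theorem \ref{MT4} directly. Fix any intermediate CM-field $F$ of $K_{\infty}/k$ of finite degree; by the same Galois-theoretic considerations, $F$ inherits from $K$ both the splitting property (at most one $p$-adic place of $F^{+}$ splits in $F$) and the vanishing of the $\mu$-invariant of $F_{\infty}/F$. Under precisely these hypotheses, the leading term part of the Gross--Stark conjecture has been proved: first by Darmon, Dasgupta and Pollack \cite{DDP} under an additional non-vanishing hypothesis on a certain $\mathcal{L}$-invariant, and then unconditionally by Ventullo in \cite{ventullo}. Applying Theorem \ref{MT4} therefore yields both the minus part of the $p$-part of ${\rm LTC}(K_{n}/k)$ and of ${\rm MRS}(K_{n}/K/k,S,T)$ for every $n \geq 0$.

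The principal obstacle is to align the higher-rank formulation of the $p$-adic Gross--Stark conjecture given in Conjecture \ref{GrossStark} (in particular the definition of the $\mathcal{L}$-invariant in arbitrary rank) with the essentially rank-one statement actually proved in \cite{DDP} and \cite{ventullo}, and to confirm that the hypothesis ``at most one $p$-adic place splits'' does indeed pass uniformly to every intermediate CM-field $F$ of $K_{\infty}/k$ of finite degree, rather than only to $K$ itself. Once this bookkeeping is done, the corollary follows immediately from Theorem \ref{MT4}.
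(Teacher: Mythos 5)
Your proposal matches the paper's own argument: the hypotheses are used exactly as you describe, with Gross's application of Brumer's $p$-adic analogue of Baker's Theorem supplying the conjectured order of vanishing and Ventullo's theorem supplying the Gross--Stark leading term statement (condition (iii) of Theorem \ref{MT4}), after which the equivalence in Theorem \ref{MT4} yields both the minus $p$-parts of ${\rm LTC}(K_n/k)$ and of ${\rm MRS}(K_n/K/k,S,T)$. The bookkeeping points you flag (passage of the splitting and $\mu$-invariant hypotheses to intermediate CM-fields, and the rank-one nature of the proved cases) are precisely what the paper's short proof leaves implicit, so this is essentially the same route.
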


In a sequel to this article we shall make a much fuller investigation of the Iwasawa-theoretic aspects of our approach (see \cite{aze2}). By doing so we shall show, in particular, that, without any restriction to CM extensions (or to the `minus parts' of conjectures), under the assumed validity of a natural main conjecture of Iwasawa theory,  the validity of
the $p$-part of ${\rm MRS}(L/K/k,S,T)$ for all finite abelian extensions $L/k$ implies the $p$-part of ${\rm LTC}(L/k)$. Such a result provides both an important generalization of Theorem \ref{MT4} and a very useful partial converse to the result of Theorem \ref{MT1}.

\subsection{}\label{notation section} In this final subsection of the Introduction we collect together some important notation which will be used in the article.

For an abelian group $G$, a $\ZZ[G]$-module is simply called a $G$-module. Tensor products, Hom, exterior powers, and determinant modules over $\ZZ[G]$ are denoted by $\otimes_G$, $\Hom_G$, $\bigwedge_G$, and ${\det}_G$, respectively. We use similar notation for Ext-groups, Fitting ideals, etc. The augmentation ideal of $\ZZ[G]$ is denoted by $I(G)$. For any $G$-module $M$ and any subgroup $H \subset G$, we denote $M^H$ for the submodule of $M$ comprising elements fixed by $H$.
The norm element of $H$ is denoted by $\N_H$, namely,
$$\N_H=\sum_{\sigma \in H}\sigma \in \ZZ[G].$$

Let $E$ denote either $\QQ$, $\RR$ or $\CC$. For an abelian group $A$, we denote $E\otimes_\ZZ A$ by $EA$. The maximal $\ZZ$-torsion subgroup of $A$ is denoted by $A_{\rm tors}$. We write $A/A_{\rm tors}$ by $A_{\rm tf}$. The Pontryagin dual $\Hom_\ZZ(A,\QQ/\ZZ)$ of $A$ is
denoted by $A^\vee$ for discrete $A$.

Fix an algebraic closure $\overline \QQ$ of $\QQ$. For a positive integer $n$, we denote by $\mu_n$ the group of $n$-th roots of unity in $\overline \QQ ^\times$.

Let $k$ be a global field. The set of all infinite places of $k$ is denoted by $S_\infty(k)$ or simply by $S_\infty$ when $k$ is clear from the context. (If $k$ is a function field, then $S_\infty(k)$ is empty.) Consider a finite Galois extension $K/k$, and denote its Galois group by $G$. The set of all places of $k$ which ramify in $K$ is denoted by $S_{\rm ram}(K/k)$ or simply by $S_{\rm ram}$ when $K/k$ is clear from the context. For any non-empty finite set $S$ of places of $k$, we denote by $S_K$ the set of places of $K$ lying above places in $S$. The ring of $S$-integers of $K$ is defined by
$$\mathcal{O}_{K,S}:=\{ a\in K : \ord_w(a) \geq 0 \text{ for all finite places $w$ of $K$ not contained in $S_K$} \},$$
where $\ord_w$ denotes the normalized additive valuation at $w$.
The unit group of $\mathcal{O}_{K,S}$ is called the $S$-unit group of $K$.
Let $T$ be a finite set of finite places of $k$, which is disjoint from $S$. The $(S,T)$-unit group of $K$ is defined by
$$\mathcal{O}_{K,S,T}^\times:=\{ a\in \mathcal{O}_{K,S}^\times : a \equiv 1 \text{ (mod }w) \text{ for all $w\in T_K$} \}.$$
The ideal class group of $\mathcal{O}_{K,S}$ is denoted by ${\rm Cl}_S(K)$. This is called the $S$-class group of $K$.
The $(S,T)$-class group of $K$, which we denote by ${\rm Cl}_S^T(K)$, is defined to be the ray class group of $\mathcal{O}_{K,S}$ modulo $\prod_{w\in T_K}w$
(namely, the quotient of the group of fractional ideals
whose supports are coprime to all places above $S\cup T$ by the subgroup
of principal ideals with a generator congruent to $1$ modulo all
places in $T_{K}$).
When $S$ is in $S_{\infty}$, we omit $S$ and write ${\rm Cl}^T(K)$ for
${\rm Cl}_S^T(K)$.
When $S \subset S_{\infty}$ and $T=\emptyset$, we write ${\rm Cl}(K)$
which is the class group of the integer ring $\mathcal{O}_{K}$.

We denote by $X_{K,S}$ the augmentation kernel of the divisor group $Y_{K,S}:=\bigoplus_{w\in S_K}\ZZ w$. If $S$ contains $S_\infty(k)$, then the Dirichlet regulator map
$$\lambda_{K,S} : \RR \mathcal{O}_{K,S}^\times \longrightarrow \RR X_{K,S},$$
defined by $\lambda_{K,S}(a):=-\sum_{w \in S_K}\log |a|_w w$, is an isomorphism.

For a place $w$ of $K$, the decomposition subgroup of $w$ in $G$ is denoted by $G_w$. If $w$ is finite, the residue field of $w$ is denoted by $\kappa(w)$. The cardinality of $\kappa(w)$ is denoted by ${\N} w$. If the place $v$ of $k$ lying under $w$ is unramified in $K$, then the Frobenius automorphism at $w$ is denoted by ${\rm Fr}_{w} \in G_w$. When $G$ is abelian, then $G_w$ and ${\rm Fr}_w$ depend only on $v$, so in this case we often denote them by $G_v$ and ${\rm Fr}_v$ respectively.
The $\CC$-linear involution $\CC[G] \rightarrow \CC[G]$ induced by
$\sigma \mapsto \sigma^{-1}$ with $\sigma \in G$ is denoted by $x \mapsto x^{\#}$.

A complex of $G$-modules is said to be `perfect' if it is quasi-isomorphic to a bounded complex of finitely generated projective $G$-modules.

We denote by $D(\ZZ[G])$ the derived category of $G$-modules, and by $D^{\rm p}(\ZZ[G])$ the full subcategory of $D(\ZZ[G])$ consisting of perfect complexes.

\section{Canonical Selmer groups and complexes for $\mathbb{G}_m$} \label{ccsm}

In this section, we give a definition of `integral dual Selmer groups for $\GG_m$', as analogues of Mazur-Tate's `integral Selmer groups' defined for abelian varieties in \cite{mt}. We shall also review the construction of certain natural arithmetic complexes, which are used for the formulation of the leading term conjecture.

\subsection{Integral dual Selmer groups}\label{ism}
Let $K/k$ be a finite Galois extension of global fields with Galois group $G$. Let $S$ be a non-empty finite set of places which contains $S_\infty(k)$. Let $T$ be a finite set of places of $k$ which is disjoint from $S$.

\begin{definition}
\begin{rm}
We define the `($S$-relative $T$-trivialized) integral dual Selmer group for $\GG_m$' by setting
\begin{eqnarray}
\mathcal{S}_{S,T}(\GG_{m/K}) :=
\coker(\prod_{w \notin S_K\cup T_K}\ZZ \longrightarrow \Hom_\ZZ(K_T^\times,\ZZ) ), \nonumber
\end{eqnarray}
where $K_T^\times$ is the subgroup of $K^\times$ defined by
$$K_T^\times:=\{ a\in K^\times : \ord_w(a-1)>0 \text{ for all } w\in T_K \},$$
and the homomorphism on the right hand side is defined by
$$ (x_w)_w \mapsto (a \mapsto \sum_{w\notin S_K\cup T_K}\ord_w(a)x_w).$$
When $T$ is empty, we omit the subscript $T$ from this notation.
\end{rm}
\end{definition}

By the following proposition we see that our integral dual Selmer groups are actually like usual dual Selmer groups (see also Remark \ref{notation2} below).

\begin{proposition}
There is a canonical exact sequence
\begin{eqnarray*}
0 \longrightarrow {\rm Cl}_S^T(K)^\vee \longrightarrow \mathcal{S}_{S,T}(\GG_{m/K}) \longrightarrow \Hom_\ZZ(\mathcal{O}_{K,S,T}^\times, \ZZ) \longrightarrow 0\end{eqnarray*}
of the form (\ref{selseq}).
\end{proposition}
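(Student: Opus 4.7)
The proof would proceed by duality from the standard four-term exact sequence of $G$-modules
\begin{equation*}
0 \longrightarrow \mathcal{O}_{K,S,T}^\times \longrightarrow K_T^\times \xrightarrow{\mathrm{div}} \bigoplus_{w \notin S_K \cup T_K} \ZZ \longrightarrow \Cl_S^T(K) \longrightarrow 0,
\end{equation*}
where $\mathrm{div}$ sends $a$ to $(\ord_w(a))_w$. Exactness at the two left-hand terms follows from the observation that any $a \in K_T^\times$ automatically satisfies $\ord_w(a) = 0$ at each $w \in T_K$ (since $a \equiv 1 \pmod{w}$), so the kernel of $\mathrm{div}$ consists precisely of those $S$-units congruent to $1$ modulo every place of $K$ above $T$, namely $\mathcal{O}_{K,S,T}^\times$. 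Exactness at the two right-hand terms is the definition of the ray class group $\Cl_S^T(K)$ of $\mathcal{O}_{K,S}$ modulo $\prod_{w \in T_K} w$.

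The plan is to split this four-term sequence into two short exact sequences with common middle term $I := \mathrm{div}(K_T^\times)$ and to dualize each by applying $\Hom_\ZZ(-,\ZZ)$. Since $\Cl_S^T(K)$ is finite one has $\Hom_\ZZ(\Cl_S^T(K),\ZZ) = 0$ and $\Ext^1_\ZZ(\Cl_S^T(K),\ZZ) \cong \Cl_S^T(K)^\vee$, while $\bigoplus_w \ZZ$ is free, so the long exact Ext-sequence attached to $0 \to I \to \bigoplus_w \ZZ \to \Cl_S^T(K) \to 0$ collapses to
\begin{equation*}
0 \longrightarrow \prod_{w \notin S_K \cup T_K} \ZZ \longrightarrow \Hom_\ZZ(I,\ZZ) \longrightarrow \Cl_S^T(K)^\vee \longrightarrow 0.
\end{equation*}
On the other hand, $I$ is a subgroup of a free abelian group, hence itself free, so $\Ext^1_\ZZ(I,\ZZ) = 0$ and dualizing $0 \to \mathcal{O}_{K,S,T}^\times \to K_T^\times \to I \to 0$ produces
\begin{equation*}
0 \longrightarrow \Hom_\ZZ(I,\ZZ) \longrightarrow \Hom_\ZZ(K_T^\times,\ZZ) \longrightarrow \Hom_\ZZ(\mathcal{O}_{K,S,T}^\times,\ZZ) \longrightarrow 0.
\end{equation*}

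To conclude, I would splice these two sequences along $\Hom_\ZZ(I,\ZZ)$ and observe that the composite $\prod_w \ZZ \to \Hom_\ZZ(I,\ZZ) \hookrightarrow \Hom_\ZZ(K_T^\times,\ZZ)$ is precisely the map appearing in the definition of $\mathcal{S}_{S,T}(\GG_{m/K})$, since both send $(x_w)_w$ to the homomorphism $a \mapsto \sum_w \ord_w(a) x_w$. A short diagram chase (or a direct application of the snake lemma) then identifies $\mathcal{S}_{S,T}(\GG_{m/K})$, the cokernel of this composite, with the desired extension of $\Hom_\ZZ(\mathcal{O}_{K,S,T}^\times,\ZZ)$ by $\Cl_S^T(K)^\vee$. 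All the maps involved (the divisor map, the inclusion $\mathcal{O}_{K,S,T}^\times \hookrightarrow K_T^\times$, and dualization) are manifestly $G$-equivariant, so the resulting sequence is canonical and one of $G$-modules. The only point requiring any care is the freeness of $I$ invoked to obtain the Ext-vanishing above; this is automatic from the classical fact that every subgroup of a free abelian group is free.
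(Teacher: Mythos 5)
Your proof is correct, and it reaches the same conclusion by a closely related but mechanically different route. The paper sets up a single $2\times 3$ commutative diagram built from the short exact sequence $0\to\ZZ\to\QQ\to\QQ/\ZZ\to 0$, applies the snake lemma, and then identifies the kernels and cokernels of the three vertical maps using the same four-term valuation sequence you write down. You instead split that four-term sequence at $I:=\mathrm{div}(K_T^\times)$ into two short exact sequences, dualize each with the long exact $\Ext_\ZZ(-,\ZZ)$-sequence, and splice. The ingredients you invoke explicitly --- that $\Hom_\ZZ(A,\ZZ)=0$ and $\Ext^1_\ZZ(A,\ZZ)\cong A^\vee$ for finite $A$, and that a subgroup of a free abelian group is free so $\Ext^1_\ZZ(I,\ZZ)=0$ --- are exactly the facts the paper's snake-lemma diagram produces implicitly (since $\QQ$ and $\QQ/\ZZ$ are injective, the middle and right rows dualize exactly, and the connecting homomorphism manufactures $\Cl_S^T(K)^\vee$). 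Your final diagram chase, identifying $\coker(\prod_w\ZZ\to\Hom_\ZZ(K_T^\times,\ZZ))$ with the required extension of $\Hom_\ZZ(\mathcal{O}_{K,S,T}^\times,\ZZ)$ by $\Cl_S^T(K)^\vee$, is correct. The two approaches are of comparable length; yours is a little more modular, the paper's is more self-contained in that it never needs to name $\Ext^1$.
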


\begin{proof}
Consider the commutative diagram
$$
\xymatrix{
0 \ar[r] & \prod_{w \notin S_K\cup T_K} \ZZ \ar[r] \ar[d] & \prod_{w \notin S_K\cup T_K}\QQ \ar[r] \ar[d] & \prod_{w \notin S_K\cup T_K}\QQ/\ZZ \ar[r] \ar[d] & 0 \\
0 \ar[r] &\Hom_\ZZ(K_T^\times,\ZZ) \ar[r]& \Hom_\ZZ(K_T^\times,\QQ) \ar[r]& (K_T^\times)^\vee,  & \\
}
$$
where each row is the natural exact sequence, and each vertical arrow is given by $(x_w)_w \mapsto (a \mapsto \sum_{w\notin S_K\cup T_K}\ord_w(a)x_w)$. Using the exact sequence
$$0\longrightarrow \mathcal{O}_{K,S,T}^\times \longrightarrow K_T^\times \stackrel{\bigoplus \ord_w}{\longrightarrow} \bigoplus_{w\notin S_K\cup T_K}\ZZ \longrightarrow {\rm Cl}_S^T(K) \longrightarrow 0$$
and applying the snake lemma to the above commutative diagram, we obtain the exact sequence
$$0\longrightarrow {\rm Cl}_S^T(K)^\vee \longrightarrow \mathcal{S}_{S,T}(\GG_{m/K}) \longrightarrow \Hom_\ZZ(\mathcal{O}_{K,S,T}^\times,\QQ) \longrightarrow (\mathcal{O}_{K,S,T}^\times)^\vee.$$
Since the kernel of the last map is $\Hom_\ZZ(\mathcal{O}_{K,S,T}^\times, \ZZ)$, we obtain the desired conclusion.
\end{proof}

\begin{remark}\label{notation2}\

(i) The Bloch-Kato Selmer group $H^1_f(K, \QQ/\ZZ(1))$ is defined to be the kernel of the diagonal map
\[ K^{\times} \otimes \QQ/\ZZ \longrightarrow \bigoplus_{w} K_{w}^{\times}/\mathcal{O}_{K_{w}}^{\times}
\otimes \QQ/\ZZ\]
where $w$ runs over all finite places, and so lies in a canonical exact sequence
$$0 \longrightarrow \mathcal{O}_{K}^\times  \otimes \QQ/\ZZ \longrightarrow H^1_f(K, \QQ/\ZZ(1)) \longrightarrow
{\rm Cl}(K) \longrightarrow 0.$$
Its Pontryagin dual $H^1_f(K, \QQ/\ZZ(1))^{\vee}$ is a finitely generated $\hat{\ZZ}$-module and our integral dual Selmer group
$\mathcal{S}_{S_{\infty}}(\GG_{m/K})$ provides a canonical finitely generated $\ZZ$-structure on
$H^1_f(K, \QQ/\ZZ(1))^{\vee}$.

(ii) In general, the exact sequence (\ref{selseq}) also means that $\mathcal{S}_{S,T}(\GG_{m/K})$ is a natural analogue (relative to $S$ and $T$) for $\mathbb{G}_m$ over $K$ of the `integral Selmer group' that is defined for abelian varieties by Mazur and Tate in \cite[p.720]{mt}.

%
%
\end{remark}

In the next subsection we shall give a natural cohomological interpretation of the group $\mathcal{S}_{S,T}(\GG_{m/K})$ (see Proposition \ref{new one}(iii)) and also show that it has a canonical `transpose' (see Definition \ref{defseltr}). 

\subsection{`Weil-\'etale cohomology' complexes}\label{wec} In the following,
we construct two canonical complexes of $G$-modules, and use them to show that there exists a canonical transpose of the module $\mathcal{S}_{S,T}(\GG_{m/K})$. The motivation for our choice of notation (and terminology) for these complexes is explained in Remark \ref{notation} below.

We fix data $K/k,G,S,T$ as in the previous subsection.
We also write ${\mathbb F}_{T_K}^\times$
for the direct sum $\bigoplus_{w \in T_{K}} \kappa(w)^{\times}$ of the multiplicative groups of the residue fields of all places in $T_K$.

\begin{proposition}\label{new one} There exist canonical complexes of $G$-modules $R\Gamma_{\!c}((\mathcal{O}_{\!K,S})_{\mathcal{W}},\ZZ)$ and $R\Gamma_{\!c,T}((\mathcal{O}_{\!K,S})_{\mathcal{W}},\ZZ)$ which satisfy all of the following conditions.

\begin{itemize}
\item[(i)] There exists a  canonical commutative diagram of exact triangles in $D(\ZZ[G])$ 

\begin{equation}\label{diagram}\minCDarrowwidth1em\begin{CD}
\Hom_\ZZ(\mathcal{O}_{K,S}^\times,\QQ)[-3] @>\theta >> R\Gamma_{\!c}(\mathcal{O}_{K,S},\ZZ) @> >>  R\Gamma_{\!c}((\mathcal{O}_{\!K,S})_{\mathcal{W}},\ZZ)@> >> \\
@V VV @\vert @V VV \\
(\Hom_\ZZ(\mathcal{O}_{K,S}^\times,\QQ)\oplus (\mathbb{F}_{T_K}^\times)^\vee)[-3] @>\theta' >> R\Gamma_{\!c}(\mathcal{O}_{K,S},\ZZ) @> >> R\Gamma_{\!c,T}((\mathcal{O}_{\!K,S})_{\mathcal{W}},\ZZ)@> >> \\
@V VV @. @V VV\\
(\mathbb{F}^\times_{T_K})^\vee[-3]  @. @. (\mathbb{F}^\times_{T_K})^\vee[-2]\\
@V VV @. @VV\theta'' V\end{CD}\end{equation}
in which the first column is induced by the obvious exact sequence
\[ 0 \longrightarrow \Hom_\ZZ(\mathcal{O}_{K,S}^\times,\QQ) \longrightarrow \Hom_\ZZ(\mathcal{O}_{K,S}^\times,\QQ)\oplus (\mathbb{F}^\times_{T_K})^\vee \longrightarrow (\mathbb{F}^\times_{T_K})^\vee \longrightarrow 0\]
and $H^2(\theta'')$ is the Pontryagin dual of the natural injective homomorphism
\[ H^3(R\Gamma_{\!c}((\mathcal{O}_{\!K,S})_{\mathcal{W}},\ZZ))^\vee = \mathcal{O}_{K,{\rm tors}}^\times \longrightarrow \mathbb{F}^\times_{T_K}. \]
\item[(ii)] If $S'$ is a set of places of $k$ which contains $S$ and is disjoint from $T$, then there is a canonical exact triangle in $D(\ZZ[G])$
$$R\Gamma_{\!c,T}((\mathcal{O}_{\!K,S'})_{\mathcal{W}},\ZZ) \longrightarrow R\Gamma_{\!c,T}((\mathcal{O}_{\!K,S})_{\mathcal{W}},\ZZ) \longrightarrow \bigoplus_{w \in S'_K\setminus S_K} R\Gamma((\kappa(w))_\mathcal{W},\ZZ),$$
where $R\Gamma((\kappa(w))_\mathcal{W},\ZZ)$ is the complex of $G_w$-modules which lies in the exact triangle
$$\QQ[-2] \longrightarrow R\Gamma(\kappa(w),\ZZ) \longrightarrow R\Gamma((\kappa(w))_\mathcal{W},\ZZ) \longrightarrow,$$
where the $H^2$ of the first arrow is the natural map
$$\QQ \longrightarrow \QQ/\ZZ=H^2(\kappa(w),\ZZ).$$

\item[(iii)] The complex $R\Gamma_{\!\!c,T}((\mathcal{O}_{K,S})_{\mathcal{W}},\ZZ)$ is acyclic outside degrees one, two and three, and there are canonical isomorphisms
$$H^i(R\Gamma_{\!\!c,T}((\mathcal{O}_{K,S})_{\mathcal{W}},\ZZ))\simeq
\begin{cases}
Y_{K,S}/\Delta_S(\ZZ) &\text{ if }i=1,\\
 \mathcal{S}_{S,T}(\GG_{m/K}) &\text{ if }i=2,\\
(K_{T, {\rm tors}}^\times)^\vee &\text{ if } i=3,
\end{cases}$$
where $\Delta_S$ is the natural diagonal map.
\item[(iv)] If $S$ contains $S_{\rm ram}(K/k)$, then $R\Gamma_{\!c}((\mathcal{O}_{\!K,S})_{\mathcal{W}},\ZZ)$ and $R\Gamma_{\!c,T}((\mathcal{O}_{\!K,S})_{\mathcal{W}},\ZZ)$ are both perfect complexes of $G$-modules.
\end{itemize}
\end{proposition}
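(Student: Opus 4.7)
The plan is to construct both complexes as mapping cones of canonical morphisms into the classical \'etale compactly supported complex $R\Gamma_c(\mathcal{O}_{K,S},\ZZ)$, and to read off properties (i)--(iv) from the resulting triangle calculus. Recall that by Artin--Verdier duality the top cohomology $H^3$ of $R\Gamma_c(\mathcal{O}_{K,S},\ZZ)$ is canonically identified with $\Hom_\ZZ(\mathcal{O}_{K,S}^\times,\QQ/\ZZ)$, while its rationalisation $R\Gamma_c(\mathcal{O}_{K,S},\QQ)$ is concentrated in degree three where it is canonically $\Hom_\ZZ(\mathcal{O}_{K,S}^\times,\QQ)[-3]$.

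I would first define $\theta$ as the unique morphism in $D(\ZZ[G])$ whose induced map on $H^3$ is the canonical surjection $\Hom_\ZZ(\mathcal{O}_{K,S}^\times,\QQ)\twoheadrightarrow \Hom_\ZZ(\mathcal{O}_{K,S}^\times,\QQ/\ZZ)$ arising from $\QQ\to\QQ/\ZZ$, and set $R\Gamma_c((\mathcal{O}_{K,S})_{\mathcal{W}},\ZZ)$ equal to the cone of $\theta$. Next, $\theta'$ is obtained from $\theta$ by including an additional summand $(\mathbb{F}_{T_K}^\times)^\vee[-3]$ in the source, which targets $H^3_c$ via the Pontryagin dual of the reduction homomorphism $\mathcal{O}_{K,S}^\times\to\mathbb{F}_{T_K}^\times$, and I set $R\Gamma_{c,T}((\mathcal{O}_{K,S})_{\mathcal{W}},\ZZ)$ equal to the cone of $\theta'$. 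Property (i) then follows from the $3\times 3$ (octahedral) lemma applied to the evident short exact sequence of sources and the identity on $R\Gamma_c(\mathcal{O}_{K,S},\ZZ)$ in the middle column: the middle identity forces the third row to degenerate to $(\mathbb{F}_{T_K}^\times)^\vee[-3]\to 0\to (\mathbb{F}_{T_K}^\times)^\vee[-2]$, and a direct trace through the construction identifies $H^2(\theta'')$ with the Pontryagin dual of the injection $\mathcal{O}_{K,{\rm tors}}^\times\hookrightarrow \mathbb{F}_{T_K}^\times$.

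Property (ii) would follow from the classical localisation triangle $R\Gamma_c(\mathcal{O}_{K,S'},\ZZ)\to R\Gamma_c(\mathcal{O}_{K,S},\ZZ)\to \bigoplus_{w\in S_K'\setminus S_K} R\Gamma(\kappa(w),\ZZ)$ together with an analogous Weil modification at each finite residue field, whose effect is exactly to kill the rational contribution in $H^2(\kappa(w),\ZZ)$; naturality of the construction then propagates the localisation through the $T$-variant. For property (iii) I would read cohomology off the long exact sequence of the defining cone: a diagram chase identifies the kernel of $H^3(\theta')$ with $\Hom_\ZZ(K_T^\times,\ZZ)$ and its cokernel with $(K_{T,{\rm tors}}^\times)^\vee$, which together with the exact sequence (\ref{selseq}) recover $\mathcal{S}_{S,T}(\GG_{m/K})$ in degree two and the desired identification in degree three, while the degree-one part is supplied by the localisation sequence for the generic point. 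Finally, property (iv) under $S\supseteq S_{\rm ram}$ follows from the classical perfectness of $R\Gamma_c(\mathcal{O}_{K,S},\ZZ)$ over $\ZZ[G]$ (a standard consequence of Tate-sequence arguments), combined with the fact that (iii) makes the cohomology of the cones finitely generated; standard arguments, replacing the source by a perfect representative of the rational part of the \'etale complex and re-forming the cone, then deduce perfectness of the cones themselves. The principal technical obstacle in my view is realising $\theta$ and $\theta'$ as genuine $G$-equivariant morphisms in $D(\ZZ[G])$ rather than merely as prescriptions at the level of $H^3$: this is typically carried out by choosing an explicit perfect-complex representative for $R\Gamma_c(\mathcal{O}_{K,S},\ZZ)$, for instance via a Tate sequence, in which $\theta$ can be lifted cochainwise with its $G$-action transparent.
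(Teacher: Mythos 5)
Your broad strategy---realising both complexes as mapping cones of morphisms from $\Hom_\ZZ(\mathcal{O}_{K,S}^\times,\QQ)[-3]$ and $(\mathbb{F}_{T_K}^\times)^\vee[-3]$ into $R\Gamma_c(\mathcal{O}_{K,S},\ZZ)$---matches the paper's, but there is a genuine gap precisely at the step you flag as ``the principal technical obstacle'': you have not shown that prescribing $\theta$ and $\theta'$ at the level of $H^3$ determines a morphism in $D(\ZZ[G])$, much less a canonical one. The paper makes this precise with the Verdier hyper-Ext spectral sequence (\ref{ss}). For $\theta$ the passage-to-cohomology map is bijective because $\Ext^i_G(W,H^{3-i}(C^\bullet))$ vanishes for $i>0$, which uses that $W$ is a $\QQ[G]$-module. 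For the $(\mathbb{F}_{T_K}^\times)^\vee$-component of $\theta'$, however, this argument fails outright against $C^\bullet_S$ itself: both $(\mathbb{F}_{T_K}^\times)^\vee$ and $H^2(C^\bullet_S)={\rm Cl}_S(K)^\vee$ are finite, so $\Ext^1_G((\mathbb{F}_{T_K}^\times)^\vee,{\rm Cl}_S(K)^\vee)$ is in general nonzero, and specifying that this component should induce on $H^3$ the Pontryagin dual of the reduction $\mathcal{O}_{K,S}^\times\to\mathbb{F}_{T_K}^\times$ does not pin down a morphism. The paper circumvents this by passing to an auxiliary $S'\supseteq S$ with ${\rm Cl}_{S'}(K)=0$, so that $C^\bullet_{S'}$ is acyclic in degree two; using the two-term resolution (\ref{T seq}) to see $\Ext^i_G((\mathbb{F}_{T_K}^\times)^\vee,-)=0$ for $i>1$, the spectral sequence now gives a unique morphism $\theta_1'$ into $C^\bullet_{S'}$, and composition with $\theta_2'$ from Milne's triangle (\ref{milne tri}) produces the desired morphism into $C^\bullet_S$. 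Your alternative of lifting the $H^3$-prescription cochainwise on a Tate-sequence representative would produce some chain map, but leaves its canonicity unaddressed, which is the content of the proposition.

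A smaller point: your opening assertion that the rationalisation $R\Gamma_c(\mathcal{O}_{K,S},\QQ)$ is concentrated in degree three and equals $\Hom_\ZZ(\mathcal{O}_{K,S}^\times,\QQ)[-3]$ is false. By (\ref{isos}) one has $\QQ H^1(C^\bullet)\cong\QQ Y_{K,S}/\Delta_S(\QQ)$, nonzero whenever $|S_K|>1$, while $H^3(C^\bullet)=(\mathcal{O}_{K,S}^\times)^\vee$ is a torsion group, so its rationalisation vanishes. Your sketches for (ii) and (iv) are reasonable in outline; for (iii) the paper again exploits a large auxiliary $S'$ with ${\rm Cl}_{S'}^T(K)=0$, where the degree-two cohomology is simply $\Hom_\ZZ(\mathcal{O}_{K,S',T}^\times,\ZZ)$ and the identification with $\mathcal{S}_{S,T}(\GG_{m/K})$ follows from the localisation triangle of claim (ii)---this is considerably cleaner than the direct diagram chase against $C^\bullet_S$ that you propose.
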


\begin{proof} In this argument we use the fact that morphisms in $D(\ZZ[G])$ between bounded above complexes $K_1^\bullet$ and $K_2^\bullet$ can be computed by means of the spectral sequence
\begin{equation}\label{ss} E_2^{p,q} = \prod_{a\in\ZZ}
{\rm Ext}^p_{G}(H^a(K_1^\bullet),H^{q+a}(K_2^\bullet)) \Rightarrow H^{p+q}(R\Hom _G(K_1^\bullet,K_2^\bullet))\end{equation}
constructed by Verdier in \cite[III, 4.6.10]{verdier}.

Set $C^\bullet =C^\bullet_S:= R\Gamma_{c}(\mathcal{O}_{K,S},\ZZ)$ and $W := \Hom_\ZZ(\mathcal{O}_{K,S}^\times,\QQ)$ for simplicity.
 Then we recall first that $C^\bullet$ is acyclic outside degrees one, two and three, that there are canonical isomorphisms
\begin{equation}\label{isos} H^i(C^\bullet) \cong \begin{cases} Y_{K,S}/\Delta_S(\ZZ) &\text{ if $i =1$,}\\
{\rm Cl}_S(K)^\vee &\text{ if $i=2$,}\\
(\mathcal{O}_{K,S}^\times)^\vee &\text{ if $i=3$,}\end{cases}\end{equation}
where $\Delta_S$ is the map that occurs in the statement of claim (iii) and that, when $S$ contains $S_{\rm ram}(K/k)$, $C^\bullet$ is isomorphic to a bounded complex of cohomologically-trivial $G$-modules.

It is not difficult to see that the groups $\Ext^i_G(W,H^{3-i}(C^\bullet))$ vanish for all $i > 0$, and so the spectral sequence (\ref{ss}) implies that the `passage to cohomology' homomorphism
\[ H^0(R\Hom _G(W[-3],C^\bullet)) =
\Hom _{D(\ZZ[G])}(W[-3],C^\bullet) \longrightarrow \Hom_{G}(W,(\mathcal{O}_{K,S}^\times)^\vee)\]
is bijective. We may therefore define $\theta$ to be the unique morphism in $D(\ZZ[G])$ for which $H^3(\theta)$ is equal to the natural map
\[ W = \Hom_\ZZ(\mathcal{O}_{K,S}^\times,\QQ) \longrightarrow \Hom_\ZZ(\mathcal{O}_{K,S}^\times,\QQ/\ZZ) = (\mathcal{O}_{K,S}^\times)^\vee\]
and then take $C_\mathcal{W}^\bullet:= R\Gamma_{\!c}((\mathcal{O}_{\!K,S})_{\mathcal{W}},\ZZ)$ to be any complex which lies in an exact triangle of the form that occurs in the upper row of (\ref{diagram}). An analysis of the long exact cohomology sequence of this triangle then shows that $C_\mathcal{W}^\bullet$ is acyclic outside degrees one, two and three, that $H^1(C_\mathcal{W}^\bullet) = H^1(C^\bullet)$, that $H^2(C_\mathcal{W}^\bullet)_{\rm tors} = H^2(C^\bullet)$, that $H^2(C_\mathcal{W}^\bullet)_{\rm tf} = \Hom_\ZZ(\mathcal{O}_{K,S}^\times,\ZZ)$ and that $H^3(C_\mathcal{W}^\bullet) = (\mathcal{O}_{K,{\rm tors}}^\times)^\vee$. In particular, when $S$ contains $S_{\rm ram}(K/k)$, since each of these groups is finitely generated and both of the complexes $W[-3]$ and $C^\bullet$ are represented by bounded complexes of cohomologically-trivial $G$-modules, this implies that $C^\bullet_\mathcal{W}$ is perfect.

To define the morphism $\theta'$ we first choose a finite set $S''$ of places of $k$ which is disjoint from
$S\cup T$ and such that ${\rm Cl}_{S'}(K)$ vanishes for $S':= S\cup S''$. Note that (\ref{isos}) with $S$ replaced by $S'$ implies $C_{S'}^\bullet$ is acyclic outside degrees one and three. We also note that, since each place in $T$ is unramified in $K/k$, there is also an exact sequence of $G$-modules
\begin{equation}\label{T seq} 0 \longrightarrow \bigoplus_{v\in T}\ZZ [G]
\stackrel{(1-{\N} v{\rm Fr}_{w})_v}{\longrightarrow} \bigoplus_{v\in T}\ZZ
[G] \longrightarrow ({\mathbb F}_{T_K}^\times)^\vee \longrightarrow 0\end{equation}
where $w$ is any choice of place of $K$ above $v$. This sequence shows both that $({\mathbb F}_{T_K}^\times)^\vee[-3]$ is a perfect complex of $G$-modules and also that the functor $\Ext^i_G(({\mathbb F}_{T_K}^\times)^\vee,-)$ vanishes for all $i > 1$. In particular, the spectral sequence (\ref{ss}) implies that in this case the passage to cohomology homomorphism
\[ \Hom _{D(\ZZ[G])}(({\mathbb F}_{T_K}^\times)^\vee[-3],C_{S'}^\bullet) \longrightarrow \Hom_{G}(({\mathbb F}_{T_K}^\times)^\vee,(\mathcal{O}_{K,S'}^\times)^\vee)\]
is bijective. We may therefore define $\theta'$ to be the morphism which restricts on $W[-3]$ to give $\theta$ and on
 $({\mathbb F}_{T_K}^\times)^\vee[-3]$ to give the composite morphism
 \[ ({\mathbb F}_{T_K}^\times)^\vee[-3]\stackrel{\theta_1'}{\longrightarrow} R\Gamma_{\!\!c}(\mathcal{O}_{K,S'},\ZZ) \stackrel{\theta_2'}{\longrightarrow}   R\Gamma_{\!\!c}(\mathcal{O}_{K,S},\ZZ)\]
 where $\theta_1'$ is the unique morphism for which $H^3(\theta_1')$ is the Pontryagin dual of the natural map $\mathcal{O}_{K,S'}^\times \rightarrow {\mathbb F}_{T_K}^\times$ and $\theta_2'$ occurs in the canonical exact triangle
 \begin{equation}\label{milne tri} R\Gamma_{\!\!c}(\mathcal{O}_{K,S'},\ZZ) \stackrel{\theta_2'}{\longrightarrow} R\Gamma_{\!\!c}(\mathcal{O}_{K,S},\ZZ) \longrightarrow \bigoplus_{w \in S''_K}R\Gamma(\kappa(w),\ZZ) \longrightarrow \end{equation}
constructed by Milne in \cite[Chap. II, Prop. 2.3 (d)]{milne}.

We now take $C_{\mathcal{W},T}^\bullet:= R\Gamma_{\!c,T}((\mathcal{O}_{\!K,S})_{\mathcal{W}},\ZZ)$ to be any complex which lies in an exact triangle of the form that occurs in the second row of (\ref{diagram}) and then, just as above, an analysis of this triangle shows that $C_{\mathcal{W},T}^\bullet$ is a perfect complex of $G$-modules when $S$ contains $S_{\rm ram}(K/k)$. Note also that since for this choice of $\theta'$ the upper left hand square of (\ref{diagram}) commutes the diagram can then be completed to give the right hand vertical exact triangle. The claim (ii) follows easily from the above  constructions.

It only remains to prove claim (iii). It is easy to see that the groups $H^i(R\Gamma_{\!\!c,T}((\mathcal{O}_{K,S})_{\mathcal{W}},\ZZ))$ for $i=1$ and $3$ are as described in claim (iii), so we need only prove that there is a natural isomorphism
$$H^2(R\Gamma_{\!\!c,T}((\mathcal{O}_{K,S})_{\mathcal{W}},\ZZ))\simeq \mathcal{S}_{S,T}(\GG_{m/K}).$$
To do this we first apply claim (ii) for a set $S'$ that is large enough to ensure that ${\rm Cl}_{S'}^T(K)$ vanishes. Since in this case $$H^2(R\Gamma_{\!\!c,T}((\mathcal{O}_{K,S'})_{\mathcal{W}},\ZZ))=\Hom_\ZZ(\mathcal{O}_{K,S',T}^\times,\ZZ),$$
we obtain in this way a canonical isomorphism
\begin{eqnarray}
H^2(R\Gamma_{\!\!c,T}((\mathcal{O}_{K,S})_{\mathcal{W}},\ZZ))\simeq \coker(\bigoplus_{w \in S'_K\setminus S_K}\ZZ \longrightarrow \Hom_\ZZ(\mathcal{O}_{K,S',T}^\times,\ZZ)). \label{selisom1}
\end{eqnarray}
Consider next the commutative diagram
$$
\xymatrix{
0 \ar[r] & \prod_{w \notin S_K'\cup T_K} \ZZ \ar[r] \ar@{=}[d] & \prod_{w \notin S_K\cup T_K}\ZZ \ar[r] \ar[d] & \bigoplus_{w \in S_K'\setminus S_K}\ZZ \ar[r] \ar[d] & 0 \\
0 \ar[r] &\prod_{w \notin S_K'\cup T_K}\ZZ \ar[r]& \Hom_\ZZ(K_T^\times,\ZZ) \ar[r]& \Hom_\ZZ(\mathcal{O}_{K,S',T}^\times,\ZZ) \ar[r] & 0 \\
}
$$
with exact rows, where the first exact row is the obvious one, the second is the dual of the exact sequence
$$0 \longrightarrow \mathcal{O}_{K,S',T}^\times \longrightarrow K_T^\times \stackrel{\bigoplus\ord_w}{\longrightarrow} \bigoplus_{w \notin S_K'\cup T_K}\ZZ \longrightarrow 0,$$
and the vertical arrows are given by $(x_w)_w \mapsto (a \mapsto \sum_{w}\ord_w(a)x_w)$. From this we have the canonical isomorphism
\begin{eqnarray}
 \mathcal{S}_{S,T}(\GG_{m/K})\simeq\coker(\bigoplus_{w \in S'_K\setminus S_K}\ZZ \longrightarrow \Hom_\ZZ(\mathcal{O}_{K,S',T}^\times,\ZZ)). \label{selisom2}
 \end{eqnarray}
From (\ref{selisom1}) and (\ref{selisom2}) our claim follows.
%
%
\end{proof}

Given the constructions in Proposition \ref{new one}, in each degree $i$ we set
%
%
%
\[ H^i_{\!c,T}((\mathcal{O}_{\!K,S})_{\mathcal{W}},\ZZ) := H^i(R\Gamma_{\!c,T}((\mathcal{O}_{\!K,S})_{\mathcal{W}},\ZZ)).\]

We also define 
a complex
\[ R\Gamma_T((\mathcal{O}_{K,S})_{\mathcal{W}},\GG_m):=R\Hom_\ZZ(R\Gamma_{c,T}((\mathcal{O}_{K,S})_{\mathcal{W}}, \ZZ),\ZZ)[-2].\]
We endow this complex with the natural contragredient action of $G$ and then in each degree $i$ set
$$H^i_T((\mathcal{O}_{K,S})_\mathcal{W},\GG_m):=H^i(R\Gamma_T((\mathcal{O}_{K,S})_{\mathcal{W}},\GG_m)).$$

\begin{remark}\label{notation} Our notation for the above cohomology groups and complexes is motivated by the following facts.

(i) Assume that $k$ is a function field. Write $C_k$ for the corresponding smooth projective curve, $C_{k,W{\rm\acute e t}}$ for the Weil-\'etale site on $C_k$ that is defined by Lichtenbaum in \cite[\S2]{L2} and $j$ for the open immersion ${\rm Spec}(\mathcal{O}_{k,S}) \longrightarrow C_k$. Then the group $H^i_{\!c}((\mathcal{O}_{\!K,S})_{\mathcal{W}},\ZZ)$ defined above is canonically isomorphic to the Weil-\'etale cohomology group $H^i(C_{k,W{\rm\acute e t}},j_{!}\ZZ)$.

(ii) Assume that $k$ is a number field. In this case there has as yet been no construction of a `Weil-\'etale topology' for $Y_S:= {\rm Spec}(\mathcal{O}_{K,S})$ with all of the properties that are conjectured by Lichtenbaum in \cite{L3}. However, if $\overline{Y_S}$ is a compactification of $Y_S$ and $\phi$ is the natural inclusion $Y_S\subset \overline{Y_S}$, then the approach of \cite{pnp} can be used to show that, should such a topology exist with all of the expected properties, then the groups $H^i_{\!c}((\mathcal{O}_{\!K,S})_{\mathcal{W}},\ZZ)$ defined above would be canonically isomorphic to the group $H^i_c(Y_S,\ZZ) := H^i(\overline{Y_S},\phi_!\ZZ)$ that is discussed in \cite{L3}.

(iii) The definition of $R\Gamma_T((\mathcal{O}_{K,S})_{\mathcal{W}},\GG_m)$ as the (shifted) linear dual of the complex $R\Gamma_{c,T}((\mathcal{O}_{K,S})_{\mathcal{W}}, \ZZ)$ is motivated by \cite[Rem. 3.8]{pnp} and hence by the duality theorem in Weil-\'etale
cohomology for curves over finite fields that is proved by Lichtenbaum in \cite{L2}.
\end{remark}


%
%

An analysis of the complex $R\Gamma_{\!c,T}((\mathcal{O}_{\!K,S})_{\mathcal{W}},\ZZ)$ as in the proof of Lemma \ref{dual fitting} below then leads us to give the following definition. In this definition we use the notion of `transpose' in the sense of Jannsen's homotopy theory of modules \cite{jannsen}.

\begin{definition} \label{defseltr}
\begin{rm}
The `transpose' of $\mathcal{S}_{S,T}(\GG_{m/K})$ is the group
\[  \mathcal{S}^{{\rm tr}}_{S,T}(\GG_{m/K}) := H^1_T((\mathcal{O}_{K,S})_\mathcal{W},\GG_m) = H^{-1}(R\Hom_\ZZ(R\Gamma_{c,T}((\mathcal{O}_{K,S})_{\mathcal{W}}, \ZZ),\ZZ)).\]
When $T$ is empty, we omit the subscript $T$ from this notation.
\end{rm}
\end{definition}

\begin{remark}\label{defseltr rem} By using the spectral sequence
$$E_2^{p,q}=\Ext_\ZZ^p(H_{c,T}^{-q}((\mathcal{O}_{K,S})_\mathcal{W},\ZZ),\ZZ) \Rightarrow H^{p+q+2}_{T}((\mathcal{O}_{K,S})_\mathcal{W},\GG_m),$$
which is obtained from (\ref{ss}), one can check that $R\Gamma_T((\mathcal{O}_{K,S})_{\mathcal{W}},\GG_m)$ is acyclic outside degrees zero and one, that there is canonical isomorphism
$$H^0_T((\mathcal{O}_{K,S})_\mathcal{W},\GG_m)\simeq \mathcal{O}_{K,S,T}^\times, $$
and that there is a canonical exact sequence
\begin{equation*}
0 \longrightarrow {\rm Cl}_{S}^T(K) \longrightarrow
\mathcal{S}^{{\rm tr}}_{S,T}(\GG_{m/K})
\longrightarrow X_{K,S} \longrightarrow 0
\end{equation*}
of the form (\ref{dual ses}).
\end{remark}

In the sequel we shall say that a $G$-module $M$ has a `locally-quadratic presentation' if it lies in an exact sequence of finitely generated $G$-modules of the form
\[ P \rightarrow P' \rightarrow M \rightarrow 0\]
in which $P$ and $P'$ are projective and the $\QQ[G]$-modules $\QQ P$ and $\QQ P'$ are isomorphic. 

\begin{lemma}\label{dual fitting}
Assume that $G$ is abelian, that $S$ contains $S_{\infty}(k)\cup S_{\rm ram}(K/k)$,
and that $\mathcal{O}_{K,S,T}^\times$ is $\ZZ$-torsion-free.
Then each of the groups $\mathcal{S}_{S,T}(\GG_{m/K})$ and
$\mathcal{S}^{{\rm tr}}_{S,T}(\GG_{m/K})$ have locally-quadratic presentations, and for each non-negative integer $i$ one has an equality
\[ {\rm Fitt}_G^i(\mathcal{S}^{{\rm tr}}_{S,T}(\GG_{m/K})) = {\rm Fitt}_G^i(\mathcal{S}_{S,T}(\GG_{m/K}))^\#.\]
\end{lemma}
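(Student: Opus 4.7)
The assumed torsion-freeness of $\mathcal{O}_{K,S,T}^\times$ ensures $K_{T,\rm tors}^\times=0$, so by Proposition~\ref{new one}(iii),(iv) the complex $C^\bullet := R\Gamma_{c,T}((\mathcal{O}_{K,S})_{\mathcal{W}},\ZZ)$ is a perfect $\ZZ[G]$-complex with cohomology concentrated in degrees $1$ and $2$, given by $Y_{K,S}/\Delta_S(\ZZ)$ and $\mathcal{S}_{S,T}(\GG_{m/K})$ respectively. A standard argument for perfect complexes with cohomology in two consecutive degrees (for example via a Tate-sequence style construction, using that $R\Gamma_c(\mathcal{O}_{K,S},\ZZ)$ is represented by a bounded complex of cohomologically-trivial $G$-modules) allows one to represent $C^\bullet$ by a two-term complex $[P^1 \xrightarrow{d} P^2]$ of finitely generated projective $\ZZ[G]$-modules in degrees $1$ and $2$; the plan is to use this representation to obtain simultaneously the locally-quadratic presentations and the comparison of Fitting ideals.

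To see that the presentation $P^1 \to P^2 \to \mathcal{S}_{S,T}(\GG_{m/K}) \to 0$ is locally quadratic, I would verify $[\QQ P^1] = [\QQ P^2]$ in $K_0(\QQ[G])$, which is equivalent to $[\QQ H^1(C^\bullet)] = [\QQ H^2(C^\bullet)]$. Rationally, the natural inclusion $X_{K,S} \hookrightarrow Y_{K,S}/\Delta_S(\ZZ)$ becomes an isomorphism (since $|S_K|\neq 0$), so $\QQ H^1(C^\bullet) \simeq \QQ X_{K,S}$ as $\QQ[G]$-modules. The exact sequence (\ref{selseq}) together with the Dirichlet regulator $\RR\mathcal{O}_{K,S,T}^\times \simeq \RR X_{K,S}$ --- which descends to a $\QQ[G]$-isomorphism $\QQ\mathcal{O}_{K,S,T}^\times \simeq \QQ X_{K,S}$ since isomorphism classes of $\QQ[G]$-modules are detected after $\otimes_\QQ\RR$ --- gives $\QQ H^2(C^\bullet) \simeq \Hom_\QQ(\QQ X_{K,S},\QQ)$. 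Since $G$ is abelian, every Galois orbit of characters is closed under inversion, so every $\QQ[G]$-module is self-dual, and hence $\Hom_\QQ(\QQ X_{K,S},\QQ) \simeq \QQ X_{K,S}$, as required.

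For the transpose, each $P^i$ is $\ZZ$-free (being a $\ZZ[G]$-summand of $\ZZ[G]^n$), so $R\Hom_\ZZ(P^i,\ZZ)$ reduces to the ordinary dual $P^{i,*}$ concentrated in one degree. Under the natural $\ZZ[G]$-linear isomorphism $\ZZ[G] \simeq \Hom_\ZZ(\ZZ[G],\ZZ)$ given by $y \mapsto (x \mapsto \text{coefficient of } 1 \text{ in } y^\# x)$, each $P^{i,*}$ is again finitely generated projective over $\ZZ[G]$ with $\QQ P^{i,*} \simeq \QQ P^i$ by the self-duality just invoked. Thus $R\Hom_\ZZ(C^\bullet,\ZZ)[-2]$ is represented by $[P^{2,*} \xrightarrow{d^*} P^{1,*}]$ in degrees $0$ and $1$, and $\mathcal{S}^{{\rm tr}}_{S,T}(\GG_{m/K}) = \coker(d^*)$ inherits a locally-quadratic presentation.

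The Fitting ideal identity is then obtained by a local matrix computation: at each prime $\mathfrak{p}$ of $\ZZ[G]$, the projectives $P^1, P^2$ become free of a common rank $n$, and $d$ is represented by an $n \times n$ matrix $A$ over $\ZZ[G]_\mathfrak{p}$. A direct calculation using the pairing above shows that the dual of multiplication by $a \in \ZZ[G]$ on $\ZZ[G]$ is multiplication by $a^\#$, from which one deduces that $d^*$ is represented by $(A^\#)^T$. Since the $\#$-involution is a ring automorphism and transposition preserves minors, the $(n-i) \times (n-i)$ minors of $(A^\#)^T$ are precisely the $\#$-images of those of $A$, yielding $\Fitt_G^i(\mathcal{S}^{{\rm tr}}_{S,T}(\GG_{m/K})) = \Fitt_G^i(\mathcal{S}_{S,T}(\GG_{m/K}))^\#$ locally, and hence globally by comparison of all localisations. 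The main technical point in this argument will be the reduction to a two-term projective representation of $C^\bullet$ with $[P^1]=[P^2]$ in $K_0(\QQ[G])$ (so that the local ranks match and the matrix computation is globally consistent); once that is in place, all remaining steps are routine.
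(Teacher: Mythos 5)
Your argument is correct and follows essentially the same path as the paper's proof: represent $C^\bullet=R\Gamma_{c,T}((\mathcal{O}_{K,S})_\mathcal{W},\ZZ)$ by a two-term complex $P\xrightarrow{\delta}P'$ of finitely generated projective $\ZZ[G]$-modules in degrees one and two, deduce the locally-quadratic presentations from $\QQ P\cong\QQ P'$, and then compare Fitting ideals by the local (Swan's Theorem) matrix computation identifying the matrix of $\delta^*$ with the $\#$-transpose of that of $\delta$. The only minor difference of route is that you obtain $\QQ H^1(C^\bullet)\cong\QQ H^2(C^\bullet)$ by invoking self-duality of $\QQ[G]$-modules for abelian $G$, whereas the paper gets the isomorphism directly from the linear dual of the Dirichlet regulator map (using the canonical self-duality of the lattice $Y_{K,S}$); both are routine and yield the same conclusion.
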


\begin{proof} Set $C^\bullet := R\Gamma_{\!c,T}((\mathcal{O}_{\!K,S})_{\mathcal{W}},\ZZ)$ and $C^{\bullet,*}:= R\Hom_\ZZ(R\Gamma_{c,T}((\mathcal{O}_{\!K,S})_{\mathcal{W}},\ZZ),\ZZ)$.



From Proposition \ref{new one} we also know that $C^\bullet$ is a perfect complex of $G$-modules that is acyclic outside degree one and two and $\ZZ$-torsion-free in degree one. This implies, by a standard argument, that $C^\bullet$ can be represented by a complex $P\xrightarrow{\delta} P'$ of $G$-modules, where $P$ and $P'$ are finitely generated and projective and the first term is placed in degree one, and hence that there is a tautological exact sequence of $G$-modules
\begin{equation}\label{taut seq} 0 \longrightarrow H^1(C^\bullet) \longrightarrow P \stackrel{\delta}{\longrightarrow}P' \longrightarrow H^2(C^\bullet)\longrightarrow 0.\end{equation}
The descriptions in Proposition \ref{new one}(iii) imply that the linear dual of the Dirichlet regulator map $\lambda_{K,S}$ induces an isomorphism of $\RR[G]$-modules
\begin{equation}\label{dual reg} \lambda_{K,S}^*: \RR H^1(C^\bullet) \cong \RR H^2(C^\bullet).\end{equation}
Taken in conjunction with the sequence (\ref{taut seq}) this isomorphism implies that the $\QQ[G]$-modules $\QQ P$ and $\QQ P'$ are isomorphic and hence that $\mathcal{S}_{S,T}(\GG_{m/K}) = H^2(C^\bullet)$ has a locally-quadratic presentation, as claimed.

The complex $C^{\bullet,*}[-2]$ is represented by $\Hom_\ZZ(P',\ZZ)\stackrel{\delta^*}{\rightarrow}\Hom_\ZZ(P,\ZZ)$ where the linear duals are endowed with  contragredient action of $G$, the first term is placed in degree zero and $\delta^*$ is the map induced by $\delta$. There is therefore a tautological exact sequence
\begin{equation}\label{taut seq2} 0 \longrightarrow H^0(C^{\bullet,*}[-2]) \longrightarrow \Hom_\ZZ(P',\ZZ)\stackrel{\delta^*}{\longrightarrow}\Hom_\ZZ(P,\ZZ) \longrightarrow H^1(C^{\bullet,*}[-2])\longrightarrow 0,\end{equation}
and, since the above observations imply that $\Hom_\ZZ(P',\ZZ)$ and $\Hom_\ZZ(P,\ZZ)$ are projective $G$-modules that span isomorphic $\QQ[G]$-spaces, this sequence implies that the module $\mathcal{S}^{{\rm tr}}_{S,T}(\GG_{m/K}) = H^1(C^{\bullet,*}[-2])$ has a locally-quadratic presentation.

It now only remains to prove the final claim and it is enough to prove this after completion at each prime $p$. We shall denote for any abelian group $A$ the $p$-completion $A\otimes \ZZ_p$ of $A$ by $A_p$. By Swan's Theorem (cf. \cite[(32.1)]{curtisr}) one knows that for each prime $p$ the $\ZZ_p[G]$-modules $P_p$ and $P'_p$ are both free of rank, $d$ say, that is independent of $p$. In particular, after fixing bases of $P_p$ and $P'_p$ the homomorphism $P_p \stackrel{\delta}{\to} P_p'$ corresponds to a matrix $A_{\delta,p}$ in ${\rm M}_d(\ZZ_p[G])$ and the sequence (\ref{taut seq}) implies that the ideal ${\rm Fitt}_{G}^i(H^2(C^\bullet))_p$ is generated over $\ZZ_p[G]$ by the determinants of all $(d-i)\times(d-i)$ minors of $A_{\delta,p}$. The corresponding dual bases induce identifications of both $\Hom_\ZZ(P',\ZZ)_p$ and $\Hom_\ZZ(P,\ZZ)_p$ with $\ZZ_p[G]^{\oplus d}$, with respect to which the homomorphism $\Hom_\ZZ(P',\ZZ)_p \stackrel{\delta^\ast}{\to} \Hom_\ZZ(P,\ZZ)_p$ is represented by the matrix $A_{\delta,p}^{{\rm tr},\#}$ that is obtained by applying the involution $\#$ to each entry of the transpose of $A_{\delta,p}$. The exact sequence (\ref{taut seq2}) therefore implies that ${\rm Fitt}_{G}^i(H^1(C^{\bullet,*}[-2]))_p$ is generated over $\ZZ_p[G]$ by the determinants of all $(d-i)\times(d-i)$ minors of $A_{\delta,p}^{{\rm tr},\#}$. Hence one has an equality
$${\rm Fitt}_{G}^i(H^2(C^\bullet))_p={\rm Fitt}_{G}^i(H^1(C^{\bullet,*}[-2]))^{\#}_p,$$
as required.
\end{proof}





\subsection{Tate sequences}
In this subsection we review the construction of Tate's exact sequence, which is used in the formulation of the leading term conjecture in the next section.
Let $K/k,G,S$ be as in the previous subsection. We assume that $S_{\rm ram}(K/k) \subset S$. We assume only in this subsection
that $S$ is large enough so that ${\rm Cl}_S(K)$ vanishes.

In this setting, Tate constructed a `fundamental class' $\tau_{K/k,S}\in \Ext_G^2( X_{K,S}, \mathcal{O}_{K,S}^\times)$ using the class field theory \cite{tate-nag}. This class $\tau_{K/k,S}$ has the following property: if we regard $\tau_{K/k,S}$ as an element of $H^2(G,\Hom_\ZZ(X_{K,S},\mathcal{O}_{K,S}^\times))$ via the canonical isomorphism
$$\Ext_G^2(X_{K,S},\mathcal{O}_{K,S}^\times) \simeq \Ext_G^2(\ZZ, \Hom_\ZZ(X_{K,S},\mathcal{O}_{K,S}^\times))=H^2(G,\Hom_\ZZ(X_{K,S},\mathcal{O}_{K,S}^\times)),$$
then, for every integer $i$, the map between Tate cohomology groups
$$\widehat H^i(G,X_{K,S}) \stackrel{\sim}{\longrightarrow} \widehat H^{i+2}(G,\mathcal{O}_{K,S}^\times)$$
that is defined by taking cup product with $\tau_{K/k,S}$ is bijective.

The Yoneda extension class of $\tau_{K/k,S}$ is therefore represented by an exact sequence of the following sort:
\begin{eqnarray}
0\longrightarrow \mathcal{O}_{K,S}^\times \longrightarrow A \longrightarrow B \longrightarrow X_{K,S} \longrightarrow 0, \label{tateseqS}
\end{eqnarray}
where $A$ and $B$ are finitely generated cohomologically-trivial $G$-modules (see \cite[Chap. II, Th. 5.1]{tate}). We call this sequence a `Tate sequence' for $K/k$.
%
%
%
%

\begin{proposition} \label{proptateseq}
The complex $R\Gamma((\mathcal{O}_{K,S})_\mathcal{W},\GG_m)$ defines an element of
$$\Ext_{G}^2(\mathcal{S}^{{\rm tr}}_S(\GG_{m/K}),\mathcal{O}_{K,S}^\times).$$
This element is equal to Tate's fundamental class $\tau_{K/k,S}$.
\end{proposition}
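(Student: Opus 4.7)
The plan has two stages: first, to interpret the complex $R\Gamma((\mathcal{O}_{K,S})_\mathcal{W},\GG_m)$ as producing a canonical Yoneda 2-extension of $\mathcal{S}^{{\rm tr}}_S(\GG_{m/K})$ by $\mathcal{O}_{K,S}^\times$; and second, to identify this 2-extension with Tate's fundamental class $\tau_{K/k,S}$. For the first stage, since $S\supseteq S_{\rm ram}(K/k)$, Proposition \ref{new one}(iv) shows that $R\Gamma_{\!c}((\mathcal{O}_{K,S})_\mathcal{W},\ZZ)$ is perfect, and so therefore is its shifted linear dual $C^\bullet := R\Gamma((\mathcal{O}_{K,S})_\mathcal{W},\GG_m)$. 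By Remark \ref{defseltr rem} combined with the hypothesis $\Cl_S(K)=0$, the complex $C^\bullet$ is acyclic outside degrees $0$ and $1$, with $H^0(C^\bullet)\cong \mathcal{O}_{K,S}^\times$ and $H^1(C^\bullet)\cong \mathcal{S}^{{\rm tr}}_S(\GG_{m/K})\cong X_{K,S}$. Since $H^0(C^\bullet)$ is $\ZZ$-torsion-free and $C^\bullet$ is perfect, a standard argument produces a representative $[P\to Q]$ of $C^\bullet$ consisting of finitely generated projective $G$-modules in degrees $0$ and $1$, giving rise to a 4-term exact sequence
\[0\longrightarrow \mathcal{O}_{K,S}^\times\longrightarrow P\longrightarrow Q\longrightarrow X_{K,S}\longrightarrow 0\]
and hence a well-defined Yoneda class $\xi\in \Ext^2_G(X_{K,S},\mathcal{O}_{K,S}^\times)=\Ext^2_G(\mathcal{S}^{{\rm tr}}_S(\GG_{m/K}),\mathcal{O}_{K,S}^\times)$.

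For the second stage, the plan is to invoke Tate's local characterization of the fundamental class: $\tau_{K/k,S}$ is the unique element of $\Ext^2_G(X_{K,S},\mathcal{O}_{K,S}^\times)$ with the property that, for every place $v\in S$ and every place $w$ of $K$ above $v$, the restriction $\tau_{K/k,S}|_{G_w}$, pushed forward along the embedding $\mathcal{O}_{K,S}^\times\hookrightarrow K_w^\times$, coincides with the local fundamental class $u_{w/v}$ of local class field theory. This is the content of the canonical class formation associated to $K/k$ and reduces the global identification of $\xi$ to the computation of its local restrictions $\xi|_{G_w}$.

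To carry out the local computation, the plan is to apply the localization triangles of Proposition \ref{new one}(ii), choosing $S'$ large enough that $\Cl_{S'}(K)=0$, and then to dualize. The restriction of $C^\bullet$ to $G_w$ is then quasi-isomorphic to a complex built from the shifted linear dual of a local Weil-\'etale compactly-supported cohomology complex on $\Spec(\kappa(w))$ (with the analogous archimedean complex used at infinite places). By the duality motivating the definition of $R\Gamma((\mathcal{O}_{K,S})_\mathcal{W},\GG_m)$ as a shifted linear dual (cf.\ Remark \ref{notation}(iii)), this local dual is the Weil-\'etale incarnation of local Artin--Verdier duality, whose associated 2-extension class is the local invariant map and hence $u_{w/v}$. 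The main obstacle is making this local reduction rigorous at the archimedean places in the number field case, where the correction term $\Hom_\ZZ(\mathcal{O}_{K,S}^\times,\QQ)[-3]$ appearing in diagram (\ref{diagram}) must be tracked carefully to ensure that it reproduces the correct archimedean fundamental class. Once this local compatibility is verified at every $v\in S$, Tate's uniqueness yields $\xi=\tau_{K/k,S}$, completing the proof.
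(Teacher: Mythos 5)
Your first stage contains an error that propagates: you assert that $H^0(C^\bullet)\cong\mathcal{O}_{K,S}^\times$ is $\ZZ$-torsion-free and use this to represent $C^\bullet$ by a two-term complex of projectives. But when $k$ is a number field $\mathcal{O}_{K,S}^\times$ contains $\mu(K)\supseteq\{\pm1\}$ and so is never torsion-free; a projective $G$-module cannot contain a torsion submodule, so no representative $[P\to Q]$ with both terms projective can exist. (This is precisely why the paper works with $T$-modified complexes and $\mathcal{O}_{K,S,T}^\times$ in Lemma \ref{dual fitting}, and why the Tate sequence (\ref{tateseqS}) uses cohomologically-trivial rather than projective modules.) The Yoneda class itself still exists — one extracts it from the truncation triangle $H^0(C^\bullet)[0]\to C^\bullet\to H^1(C^\bullet)[-1]\to H^0(C^\bullet)[1]$, the last map being a morphism in $D(\ZZ[G])$ and hence an element of $\Ext^2_G(H^1(C^\bullet),H^0(C^\bullet))$ — which is exactly what the paper means by the phrase ``follows directly from the discussion of Remark \ref{defseltr rem}.'' Your specific mechanism for producing the class is wrong even though the conclusion is right.

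For the second claim, the paper simply cites \cite[Prop. 3.5(f)]{pnp}, so there is no internal argument for you to be compared against; you are attempting to reprove a substantial input. Your strategy (verify local compatibility with the local fundamental classes, then appeal to a uniqueness characterization of $\tau_{K/k,S}$) is in the right spirit, but as written it has genuine gaps. The ``local characterization'' you invoke is stated imprecisely: $\tau_{K/k,S}\in\Ext^2_G(X_{K,S},\mathcal{O}^\times_{K,S})$ restricted to $G_w$ and pushed along $\mathcal{O}^\times_{K,S}\hookrightarrow K_w^\times$ lands in $\Ext^2_{G_w}(X_{K,S},K_w^\times)$, not in $H^2(G_w,K_w^\times)$, so a further pullback along a map $\ZZ\to X_{K,S}$ is needed before comparison with $u_{w/v}$ even makes sense, and you do not specify it. More importantly, the claimed uniqueness (that an element of $\Ext^2_G(X_{K,S},\mathcal{O}^\times_{K,S})$ is determined by the collection of its local restrictions) is not established; the kernel of the product of restriction maps is a Tate–Shafarevich-type group whose vanishing you would have to argue. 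Finally, the local identification of the dual complex with ``the Weil-\'etale incarnation of local Artin–Verdier duality'' and the archimedean correction coming from the $\Hom_\ZZ(\mathcal{O}^\times_{K,S},\QQ)[-3]$ term are both left as declarations of intent rather than arguments; you explicitly acknowledge the archimedean case as the main obstacle without resolving it. In short: the first claim is essentially as in the paper once the torsion issue is repaired, but the second claim is a sketch with an imprecise characterizing statement, an unjustified uniqueness step, and unresolved local and archimedean computations.
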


\begin{proof} The first assertion follows directly from the discussion of Remark \ref{defseltr rem}.

The assumed vanishing of ${\rm Cl}_S(K)$ combines with the exact sequence (\ref{dual ses}) to imply that  $\mathcal{S}^{{\rm tr}}_S(\GG_{m/K})=X_{K,S}$. Given this, the second claim is proved by the first author in \cite[Prop. 3.5(f)]{pnp}
\end{proof}

\section{Zeta elements and the leading term conjecture} \label{secltc}

In this section, we suppose that
$K/k$ is a finite abelian extension of global fields with Galois group $G$.

We fix a finite non-empty set of places $S$ of $k$ which contains both $S_\infty(k)$ and $S_{\rm ram}(K/k)$ and an auxiliary finite set of places $T$ of $k$ that is disjoint from $S$.

\subsection{$L$-functions} \label{L-functionsubsection}
We recall the definition of (abelian) $L$-functions of global fields.
For any linear character $\chi \in \widehat G:=\Hom(G,\CC^\times)$, we define the $S$-truncated $T$-modified $L$-function for $K/k$ and $\chi$ by setting
$$L_{k,S,T}(\chi,s):=\prod_{v \in T}(1-\chi({\rm Fr}_v){\N} v^{1-s}) \prod_{v\notin S}(1-\chi({\rm Fr}_v){\N} v^{-s})^{-1}.$$
This is a complex function defined on ${\rm Re}(s)>1$ and is well-known to have a meromorphic continuation on $\CC$ and to be holomorphic at $s=0$. We denote by $r_{\chi,S}$ the order of vanishing of $L_{k,S,T}(\chi,s)$ at $s=0$ (this clearly does not depend on $T$). We denote the leading coefficient of the Taylor expansion of $L_{k,S,T}(\chi,s)$ at $s=0$ by
\[ L_{k,S,T}^\ast(\chi,0) := \lim_{s\to 0}s^{-r_{\chi,S}}L_{k,S,T}(\chi,s).\]
We then define the $S$-truncated $T$-modified equivariant $L$-function for $K/k$ by setting
$$\theta_{K/k,S,T}(s):=\sum_{\chi \in \widehat G}L_{k,S,T}(\chi^{-1},s)e_{\chi},$$
where $e_\chi:=\frac{1}{|G|}\sum_{\sigma \in G}\chi(\sigma)\sigma^{-1}$, and we define its leading term to be
\[ \theta_{K/k,S,T}^\ast(0):=\sum_{\chi\in \widehat G}L_{k,S,T}^\ast(\chi^{-1},0)e_\chi.\]
It is then easy to see that $\theta_{K/k,S,T}^\ast(0)$ belongs to $\RR[G]^\times$.

When $T=\emptyset$, we simply denote $L_{k,S,\emptyset}(\chi,s)$, $\theta_{K/k,S,\emptyset}(s)$, etc., by $L_{k,S}(\chi,s)$, $\theta_{K/k,S}(s)$, etc., respectively, and refer to them as the $S$-truncated $L$-function for $K/k$, $S$-truncated equivariant $L$-function for $K/k$, etc., respectively.

\subsection{The leading term lattice} \label{LTCandZeta}
 In this section we recall the explicit formulation of a conjectural description of the lattice $\theta_{K/k,S,T}^\ast(0)\cdot \ZZ[G]$ which involves Tate sequences.
In particular, up until Remark \ref{knownltc}, we always assume (without further explicit comment) that $S$ is large enough to ensure the group ${\rm Cl}_{S}(K)$ vanishes.

At the outset we also note that, as observed by Knudsen and Mumford in \cite{KM}, to avoid certain technical difficulties regarding signs, determinant modules must be regarded as graded invertible modules. Nevertheless, for simplicity of notation, in the following we have preferred to omit explicit reference to the grading of any graded invertible modules. Thus, for a finitely generated projective $G$-module $P$, we have abbreviated the graded invertible $G$-module $(\det_G(P), {\rm rk}_G(P))$ to ${\det}_G(P)$, where ${\rm rk}_G(P)$ is the rank of $P$. Since the notation ${\det}_G(P)$ explicitly indicates $P$, which in turn determines ${\rm rk}_G(P)$, we feel that this abbreviation should not cause difficulties.

We shall also use the following general notation. Suppose that we have a perfect complex $C^\bullet$ of $G$-modules, which is concentrated in degree $i$ and $i+1$ with some integer $i$, and an isomorphism $\lambda : \RR H^i(C^\bullet) \stackrel{\sim}{\rightarrow} \RR H^{i+1}(C^\bullet)$. Then we define an isomorphism
$$\vartheta_\lambda :  \RR{\det}_{G}(C^\bullet) \stackrel{\sim}{\longrightarrow} \RR[G]$$
as follows:

\begin{eqnarray}
 \RR{\det}_G(C^\bullet)&\stackrel{\sim}{\longrightarrow}& \bigotimes_{j\in \ZZ}{\det}_{\RR[G]}^{(-1)^{j}}(\RR C^{j})  \nonumber \\
&\stackrel{\sim}{\longrightarrow}&   \bigotimes_{j\in\ZZ}{\det}_{\RR[G]}^{(-1)^{j}}(\RR H^j(C^\bullet))\nonumber \\
&=& {\det}_{\bbR[G]}^{(-1)^i}(\bbR H^i(C^\bullet))\otimes_{\bbR[G]}{\det}_{\bbR[G]}^{(-1)^{i+1}}(\RR H^{i+1}(C^\bullet)) \nonumber \\
&\stackrel{\sim}{\longrightarrow}&  {\det}_{\bbR[G]}^{(-1)^i}(\bbR H^{i+1}(C^\bullet))\otimes_{\bbR[G]}{\det}_{\bbR[G]}^{(-1)^{i+1}}(\bbR H^{i+1}(C^\bullet)) \nonumber \\
&\stackrel{\sim}{\longrightarrow}& \bbR[G], \nonumber
\end{eqnarray}
where the fourth isomorphism is induced by $\lambda^{(-1)^i}$.

Let $A$ and $B$ be the $G$-modules which appear in the Tate sequence (\ref{tateseqS}). Since we have the regulator isomorphism
$$\lambda_{K,S}: \RR \mathcal{O}_{K,S}^\times \stackrel{\sim}{\longrightarrow} \RR X_{K,S},$$
the above construction for $C^\bullet=(A\rightarrow B)$, where $A$ is placed in degree $0$, gives the isomorphism
$$\vartheta_{\lambda_{K,S}} :  \RR{\det}_G(A) \otimes_{\RR[G]} \RR{\det}_G^{-1}(B) \stackrel{\sim}{\longrightarrow} \RR[G].$$


We study the following conjecture.

\begin{conjecture}\label{ltc0} In $\RR[G]$ one has
$$\vartheta_{\lambda_{K,S}}({\det}_G(A)\otimes_G{\det}_G^{-1}(B))=\theta_{K/k,S}^\ast(0)\cdot\ZZ[G].$$
\end{conjecture}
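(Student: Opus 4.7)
The plan is to prove Conjecture \ref{ltc0} by (i) reinterpreting the LHS intrinsically in terms of Weil-\'etale cohomology, (ii) verifying the equality up to $\QQ[G]^\times$ via Stark's conjecture, and (iii) controlling integrality through known cases of the ETNC.

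First I would argue that the LHS depends only on the class $\tau_{K/k,S} \in \Ext^2_G(X_{K,S}, \mathcal{O}^\times_{K,S})$ and not on the particular choice of two-term resolution $A \to B$: any two such resolutions differ by a chain of quasi-isomorphisms of perfect two-term complexes of cohomologically-trivial $G$-modules, and ${\det}_G$ is invariant under such operations. By Proposition \ref{proptateseq}, the class $\tau_{K/k,S}$ is represented by the complex $R\Gamma((\mathcal{O}_{K,S})_{\mathcal{W}}, \GG_m)$, so the LHS equals
\[ \vartheta_{\lambda_{K,S}}({\det}_G R\Gamma((\mathcal{O}_{K,S})_{\mathcal{W}}, \GG_m)). \]
This intrinsic reformulation replaces the ad-hoc Tate-sequence data with a canonical Weil-\'etale object and also makes the statement amenable to descent in Iwasawa towers and to functorial arguments (e.g.\ change of $S$, via the triangles in Proposition \ref{new one}(ii)).

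Second I would verify the equality rationally by decomposing $\RR[G]$ along characters. Projecting to the $\chi$-component for each $\chi \in \widehat G$, $\vartheta_{\lambda_{K,S}}$ sends an integral generator of ${\det}_G(A)\otimes_G{\det}_G^{-1}(B)$ to a nonzero rational multiple of $L^*_{k,S}(\chi^{-1}, 0)$, where the rational multiple is controlled by the Dirichlet regulator on the $\chi$-isotypic component of $\RR\mathcal{O}^\times_{K,S}$. The required rationality statement is exactly Stark's main conjecture (in Tate's equivariant formulation), which is known for abelian extensions of $\QQ$ and for global function fields but open in general. Granting it, one obtains that $\vartheta_{\lambda_{K,S}}(\det) \cdot \theta^*_{K/k,S}(0)^{-1}$ lies in $\QQ[G]^\times$.

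Finally, to upgrade the rational equality to the integral equality asserted by Conjecture \ref{ltc0}, I would invoke the equivalence of LTC$(K/k)$ with the ETNC for $h^0(\Spec K)(0)$ and appeal to the known cases: the function field case via \cite{gmcrc}, which exploits Weil-\'etale duality on the smooth projective curve attached to $k$; and the case $k = \QQ$ via the explicit analysis of the cyclotomic unit Euler system in \cite{bg} and \cite{fg}. The main obstacle is the integral statement in the general number field case: even granting Stark rationality, pinning down the precise $\ZZ[G]$-lattice generated by $\theta^*_{K/k,S}(0)$ at primes dividing $|G|$ would require either an Euler-system construction of the appropriate rank with controlled norm-coherence (currently available only in the cyclotomic setting) or an Iwasawa-theoretic descent argument resting on Main Conjectures that are themselves only conditionally known outside $\QQ$. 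This integrality is precisely the content of the ETNC and constitutes the genuinely hard part of the plan.
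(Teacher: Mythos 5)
You have correctly recognized that the statement labelled Conjecture~\ref{ltc0} is a conjecture, not a theorem: the paper does not prove it, and no unconditional proof exists. Your proposal is therefore not (and cannot be) a proof, but it is a faithful account of the paper's treatment of the conjecture. Your reformulation of the left-hand side in terms of $\vartheta_{\lambda_{K,S}}({\det}_G R\Gamma((\mathcal{O}_{K,S})_{\mathcal{W}},\GG_m))$ is exactly what Proposition~\ref{proptateseq} and Proposition~\ref{ltc prop2} establish, and your use of the triangles from Proposition~\ref{new one}(ii) for change of $S$ mirrors the argument given there. Your rational step (Stark's conjecture gives the equality up to $\QQ[G]^\times$) and your assessment that the integral refinement is precisely the content of the ETNC and is the genuinely hard part both match Remark~\ref{remltc}, and your list of known cases ($k=\QQ$ via \cite{bg}, \cite{fg}; $k$ a function field via \cite{gmcrc}) matches Remark~\ref{knownltc}. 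In short, you have described the conjecture's reductions and its status rather than proved it; since the paper does exactly the same, there is no gap relative to the paper --- only an honest admission that the general number field case remains open.
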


\begin{remark} \label{remltc}
\begin{rm}
 This conjecture coincides with the conjecture C($K/k$) stated in \cite[\S6.3]{burns}. The observations made in \cite[Rem. 6.2]{burns} therefore imply that Conjecture \ref{ltc0} is equivalent in the number field case to the `equivariant Tamagawa number conjecture' \cite[Conj. 4 (iv)]{BF Tamagawa} for the pair $(h^0(\Spec K),\ZZ[G])$, that the validity of Conjecture \ref{ltc0} is independent of $S$ and of the choice of Tate sequence and that its validity for the extension $K/k$ implies its validity for all extensions $F/E$ with $k \subseteq E \subseteq F\subseteq K$.
\end{rm}
\end{remark}

\begin{remark} \label{knownltc}
\begin{rm}
Conjecture \ref{ltc0} is known to be valid in each of the following cases:
\begin{itemize}
\item[(i)]{$K$ is an abelian extension of $\QQ$ (by Greither and the first author \cite{bg} and Flach \cite{fg}),}
\item[(ii)]{$k$ is a global function field (by the first author \cite{gmcrc}),}
\item[(iii)]{$[K:k]\leq 2$ (by Kim \cite[\S2.4, Rem. i)]{kim}).}
\end{itemize}
\end{rm}
\end{remark}

In the following result we do not assume that the group ${\rm Cl}_S(K)$ vanishes and we interpret the validity of Conjecture \ref{ltc0} in terms of the `Weil-\'etale cohomology' complexes $R\Gamma_{\!c,T}((\mathcal{O}_{\!K,S})_{\mathcal{W}},\ZZ)$ and $R\Gamma_T((\mathcal{O}_{K,S})_\mathcal{W},\GG_m)$ defined in \S\ref{wec}.

We note at the outset that $R\Gamma_{\!c,T}((\mathcal{O}_{\!K,S})_{\mathcal{W}},\ZZ)$ (resp. $R\Gamma_T((\mathcal{O}_{K,S})_\mathcal{W},\GG_m)$) is represented by a complex which is concentrated in degrees one and two (resp. zero and one), and so we can define the isomorphism
$$\vartheta_{\lambda_{K,S}^\ast} : \RR{\det}_G(R\Gamma_{\!c,T}((\mathcal{O}_{\!K,S})_{\mathcal{W}},\ZZ)) \stackrel{\sim}{\longrightarrow} \RR[G]$$
$$(\text{resp. } \vartheta_{\lambda_{K,S}} : \RR{\det}_G( R\Gamma_T((\mathcal{O}_{K,S})_\mathcal{W},\GG_m)) \stackrel{\sim}{\longrightarrow} \RR[G]).$$

%

\begin{proposition}\label{ltc prop2} Let $S$ be any finite non-empty set of places of $k$ containing both $S_\infty(k)$ and $S_{\rm ram}(K/k)$ and let $T$ be any finite set of places of $k$ that is disjoint from $S$. Then the following conditions on $K/k$ are equivalent.

\begin{itemize}
\item[(i)] Conjecture \ref{ltc0} is valid.
\item[(ii)] In $\mathbb{R}[G]$ one has an equality
\[ \vartheta_{\lambda_{K,S}^*}({\rm det}_G(R\Gamma_{c,T}((\mathcal{O}_{K,S})_\mathcal{W},\ZZ))) = {\theta_{K/k,S,T}^*(0)^{-1}}^\#\cdot\ZZ[G]. \]
\item[(iii)] In $\mathbb{R}[G]$ one has an equality
\[ \vartheta_{\lambda_{K,S}}({\rm det}_G(R\Gamma_{T}((\mathcal{O}_{K,S})_{\mathcal{W}},\GG_m))) = \theta_{K/k,S,T}^*(0)\cdot\ZZ[G]. \]
\end{itemize}
\end{proposition}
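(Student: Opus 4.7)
The plan is to prove the equivalences via (i)$\Leftrightarrow$(iii) and (ii)$\Leftrightarrow$(iii). The latter will follow from a duality argument using the very definition of $R\Gamma_T((\mathcal{O}_{K,S})_\mathcal{W},\GG_m)$ as the shifted linear dual of $R\Gamma_{c,T}((\mathcal{O}_{K,S})_\mathcal{W},\ZZ)$, while the former will rest on Proposition \ref{proptateseq}, combined with the $T$-modification triangle and the $S$-change triangle of Proposition \ref{new one}(i)(ii), together with the independence-of-$S$ statement recorded in Remark \ref{remltc}.

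For (ii)$\Leftrightarrow$(iii), set $C^\bullet := R\Gamma_{c,T}((\mathcal{O}_{K,S})_\mathcal{W},\ZZ)$ and $D^\bullet := R\Gamma_T((\mathcal{O}_{K,S})_\mathcal{W},\GG_m)$. The definition $D^\bullet = R\Hom_\ZZ(C^\bullet,\ZZ)[-2]$ yields a canonical isomorphism of graded invertible $G$-modules
\[ {\det}_G(D^\bullet) \cong {\det}_G^{-1}(C^\bullet)^{\#}, \]
where the superscript $\#$ signifies that $G$ acts via the contragredient representation. Using the cohomology computations in Proposition \ref{new one}(iii) and Remark \ref{defseltr rem}, together with the fact that the dual of the Dirichlet regulator $\lambda_{K,S}$ corresponds to the map $\lambda_{K,S}^{\ast}$, a direct check shows that $\vartheta_{\lambda_{K,S}}$ on the left side matches $(\vartheta_{\lambda_{K,S}^{\ast}})^{-1,\,\#}$ on the right. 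A lattice equality $\vartheta_{\lambda_{K,S}^\ast}({\det}_G(C^\bullet)) = \alpha \cdot \ZZ[G]$ therefore transports to $\vartheta_{\lambda_{K,S}}({\det}_G(D^\bullet)) = (\alpha^{-1})^{\#}\cdot\ZZ[G]$, and taking $\alpha = \theta_{K/k,S,T}^{\ast}(0)^{-1\,\#}$ gives $(\alpha^{-1})^{\#} = \theta_{K/k,S,T}^{\ast}(0)$, yielding (ii)$\Leftrightarrow$(iii).

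For (i)$\Leftrightarrow$(iii), Remark \ref{remltc} permits enlargement of $S$ until ${\rm Cl}_S(K)$ vanishes, and Proposition \ref{new one}(ii) shows that (iii) is invariant under such an enlargement: each cone term $R\Gamma((\kappa(w))_\mathcal{W},\ZZ)$ contributes a determinantal factor of $1-{\rm N}v\cdot{\rm Fr}_v^{-1}$, which is precisely the Euler factor linking $\theta^{\ast}_{K/k,S,T}(0)$ to $\theta^{\ast}_{K/k,S\cup S'',T}(0)$. The first column of diagram (\ref{diagram}), combined with the free resolution (\ref{T seq}) of $(\mathbb{F}_{T_K}^\times)^\vee$, analogously reduces us to $T = \emptyset$ by absorbing the product $\prod_{v\in T}(1-{\rm N}v\cdot{\rm Fr}_v^{-1})$ that relates $\theta^{\ast}_{K/k,S}(0)$ and $\theta^{\ast}_{K/k,S,T}(0)$. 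Once ${\rm Cl}_S(K) = 0$ and $T = \emptyset$, Proposition \ref{proptateseq} identifies the Yoneda extension class of $R\Gamma((\mathcal{O}_{K,S})_\mathcal{W},\GG_m)$ with Tate's fundamental class $\tau_{K/k,S}$, so any two-term projective representative $[A\to B]$ of (\ref{tateseqS}) also represents $R\Gamma((\mathcal{O}_{K,S})_\mathcal{W},\GG_m)$ in $D^{\rm p}(\ZZ[G])$; the induced isomorphism ${\det}_G(A)\otimes_G{\det}_G^{-1}(B) \cong {\det}_G(R\Gamma((\mathcal{O}_{K,S})_\mathcal{W},\GG_m))$ intertwines the two regulator-trivializations, completing (i)$\Leftrightarrow$(iii).

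The main obstacle will be the careful bookkeeping of gradings, signs and the involution $\#$ in the determinant modules, especially in the duality identification and in the repeated passage through the cones that encode the $S$- and $T$-modifications. In particular, one must verify that the Euler-factor contributions arising from the various cones multiply up consistently to give exactly the ratios of equivariant leading terms required, so that the three lattice equalities really do correspond to one another, and that the sign/grading shifts introduced by the duality functor $R\Hom_\ZZ(-,\ZZ)[-2]$ produce the precise inverse-and-$\#$ relationship predicted by comparing (ii) with (iii).
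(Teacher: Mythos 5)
Your overall strategy matches the paper's: prove (ii) $\Leftrightarrow$ (iii) by dualizing the determinant lattice, and reduce (iii) to Conjecture \ref{ltc0} by peeling off the $T$-modification and enlarging $S$ until ${\rm Cl}_S(K)$ vanishes, at which point Proposition \ref{proptateseq} closes the argument via the Tate sequence. The duality step is correct, and your identification ${\det}_G(D^\bullet) \cong {\det}_G^{-1}(C^\bullet)^{\#}$, together with the observation that $(\alpha^{-1})^{\#}$ applied to $\alpha = \theta^{\ast,-1,\#}$ recovers $\theta^{\ast}$, is exactly what the paper uses.

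However, there is a genuine error in the $S$-enlargement step. You assert that "each cone term $R\Gamma((\kappa(w))_\mathcal{W},\ZZ)$ contributes a determinantal factor of $1-{\rm N}v\cdot{\rm Fr}_v^{-1}$, which is precisely the Euler factor linking $\theta^{\ast}_{K/k,S,T}(0)$ to $\theta^{\ast}_{K/k,S\cup S'',T}(0)$." This factor is wrong: $1-{\rm N}v\,{\rm Fr}_v^{-1}$ is the modification coming from adding $v$ to $T$ (via the resolution (\ref{T seq}) of $(\mathbb{F}_{T_K}^\times)^\vee$), not from adding $v$ to $S$. Adding $v\notin S$ to $S$ removes the Euler factor $(1-{\rm N}v^{-s}\,{\rm Fr}_v^{-1})^{-1}$; at $s=0$ this contributes $1-{\rm Fr}_v^{-1}$ to the leading term when ${\rm Fr}_v\neq 1$, and a factor of $\log({\rm N}v)$ when $v$ splits completely (since in that case the order of vanishing increases by one and the leading term picks up the derivative). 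The paper sidesteps the awkward non-split case entirely by using the Chebotarev density theorem to choose $S''$ to consist only of completely split places; for those, the relevant factor is $\prod_{v\in S''}\log({\rm N}v)$, and it arises not from the determinant of the cone $R\Gamma((\kappa(w))_\mathcal{W},\ZZ)$ itself (whose cohomology is $\ZZ[G]$ in degrees $0$ and $1$, with trivially determined determinant) but from the comparison of the two regulator trivializations $\lambda_{K,S'}^*$ and $\lambda_{K,S}^*$, mediated by the map $\eta_{S''}$ that multiplies the $w$-component by $\log({\rm N}v)$. Without the split-place choice and the correct tracking of the regulator, the lattice bookkeeping does not close up, so as written this step of your argument would fail.

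A minor slip: it is the third column of diagram (\ref{diagram}), not the first, whose associated exact triangle isolates the complex $(\mathbb{F}^\times_{T_K})^\vee[-2]$ sitting between $R\Gamma_{c}((\mathcal{O}_{K,S})_\mathcal{W},\ZZ)$ and $R\Gamma_{c,T}((\mathcal{O}_{K,S})_\mathcal{W},\ZZ)$, which is what is needed to strip off the $T$-modification.
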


\begin{proof}

For any finitely generated projective $G$-module $P$ of (constant) rank $d$ there is a natural identification
\[ \bigwedge^d_{G}\Hom_\ZZ(P,\ZZ) \cong  \bigwedge^d_{G}\Hom_{G}(P,\ZZ[G])^{\#} \cong \Hom_{G}(\bigwedge^d_{G}P,\ZZ[G])^{\#},\]
where $G$ acts on $\Hom_\ZZ(P,\ZZ)$ contragrediently and on $\Hom_{G}(P,\ZZ[G])$ via right multiplication. The equivalence of the equalities in claims (ii) and (iii) is therefore a consequence of the fact that for any element $\Delta$ of the mutliplicative group of invertible $\ZZ[G]$-lattices in $\RR[G]$ the evaluation pairing identifies $\Hom_{G}(\Delta,\ZZ[G])^{\#}$ with the image under the involution $\#$ of the inverse lattice $\Delta^{-1}$.

To relate the equalities in claims (ii) and (iii) to Conjecture \ref{ltc0} we note first that the third column of (\ref{diagram}) implies that
\[ \vartheta_{\lambda_{K,S}^*}({\rm det}_G(R\Gamma_{c,T}((\mathcal{O}_{K,S})_\mathcal{W},\ZZ))) = {\rm det}_G((\mathbb{F}_{T_{K}}^\times)^\vee[-2])\cdot\vartheta_{\lambda_{K,S}^*}({\rm det}_G(R\Gamma_{c}((\mathcal{O}_{K,S})_\mathcal{W},\ZZ))),\]
whilst the resolution (\ref{T seq}) implies that

\begin{align*}{\rm det}_G((\mathbb{F}_{T_{K}}^\times)^\vee[-2]) &= (\prod_{v \in T}(1-{\N}v \mathrm{Fr}_w))^{-1}\cdot\ZZ[G]\\
&= {(\theta_{K/k,S,T}^*(0)/\theta_{K/k,S}^*(0))^{-1}}^{\#}\cdot\ZZ[G].\end{align*}
The equality in claim (ii) is therefore equivalent to an equality
\begin{equation}\label{no T} \vartheta_{\lambda_{K,S}^*}({\rm det}_G(R\Gamma_{c}((\mathcal{O}_{K,S})_\mathcal{W},\ZZ))) = {\theta_{K/k,S}^*(0)^{-1}}^\#\cdot\ZZ[G]. \end{equation}

We now choose an auxiliary set of places $S''$ as in the proof of Proposition \ref{new one} and set $S':= S\cup S''$. By Chebotarev density theorem we can even assume that all places in $S''$ split completely in $K/k$ and, for simplicity, this is what we shall do. Then, in this case, the exact triangle (\ref{milne tri}) combines with the upper triangle in (\ref{diagram}) to give an exact triangle in $D(\ZZ[G])$ of the form
\begin{equation}\label{milne tri2} Y_{K,S''}[-1]\oplus Y_{K,S''}[-2] \stackrel{\alpha}{\longrightarrow} R\Gamma_{\!\!c}((\mathcal{O}_{K,S'})_\mathcal{W},\ZZ) \stackrel{\beta}{\longrightarrow} R\Gamma_{\!\!c}((\mathcal{O}_{K,S})_\mathcal{W},\ZZ)
\longrightarrow . \end{equation}
After identifying the cohomology groups of the second and the
third occurring complexes by using Proposition \ref{new one}(iii) the long exact cohomology sequence of this triangle induces (after scaler extension) the sequence
\begin{multline*} 0 \longrightarrow  \QQ Y_{K,S''} \longrightarrow \QQ Y_{K,S'}/\Delta_{S'}(\QQ) \longrightarrow \QQ Y_{K,S}/\Delta_S(\QQ)
\\ \stackrel{0}{\longrightarrow} \QQ Y_{K,S''} \stackrel{{\ord}_{S''}^*}{\longrightarrow}  \Hom_\ZZ(\mathcal{O}^\times_{K,S'},\QQ) \xrightarrow{\pi_{\!S''}} \Hom_\ZZ(\mathcal{O}^\times_{K,S},\QQ)\longrightarrow 0.\end{multline*}
Here ${\ord}_{S''}^*$ is induced by the linear dual of the map $\mathcal{O}^\times_{K,S'}\rightarrow Y_{K,S''}$ induced by taking valuations at each place in $S''_K$ and $\pi_{S''}$ by the linear dual of the inclusion $\mathcal{O}^\times_{K,S}\subseteq \mathcal{O}^\times_{K,S'}$ and all other maps are obvious. This sequence implies that there is an exact commutative diagram
\[\minCDarrowwidth1em\begin{CD}
0 @> >> \RR Y_{K,S''} @>  H^1(\alpha)>> \mathbb{R} H^1_{c}((\mathcal{O}_{K,S'})_\mathcal{W},\ZZ) @>  H^1(\beta)>> \mathbb{R} H^1_{c}((\mathcal{O}_{K,S})_\mathcal{W},\ZZ) @> >> 0\\
@. @V \eta_{S''}VV @V \lambda_{K,S'}^*VV @V \lambda_{K,S}^*VV \\
0 @> >> \RR Y_{K,S''} @>  H^2(\alpha)>> \mathbb{R}H^2_{c}((\mathcal{O}_{K,S'})_\mathcal{W},\ZZ) @>  H^2(\beta)>> \mathbb{R} H^2_{c}((\mathcal{O}_{K,S})_\mathcal{W},\ZZ) @> >> 0\end{CD}\]
where $\eta_{S''}$ sends each sum $\sum_{v \in S''}\sum_{w\mid v}x_w w$ to $\sum_{v \in S''}\sum_{w\mid v}{\rm log}({\rm N}v) x_w w$.

This diagram combines with the triangle (\ref{milne tri2}) to imply that
\begin{align*}\vartheta_{\lambda_{K,S'}^*}({\rm det}_G(R\Gamma_{c}((\mathcal{O}_{K,S'})_\mathcal{W},\ZZ))) &= {\rm det}_{\RR[G]}(\eta_{S''})^{-1}  \vartheta_{\lambda_{K,S}^*}({\rm det}_G(R\Gamma_{c}((\mathcal{O}_{K,S})_\mathcal{W},\ZZ)))\\
&= \bigl(\prod_{v\in S''}{\rm log}({\rm N}v)\bigr)^{-1}  \vartheta_{\lambda_{K,S}^*}({\rm det}_G(R\Gamma_{c}((\mathcal{O}_{K,S})_\mathcal{W},\ZZ))).\end{align*}
Since $\theta_{K/k,S'}^*(0) = \bigl(\prod_{v\in S''}{\rm log}({\rm N}v)\bigr)\theta_{K/k,S}^*(0)$ this equality shows that (after changing $S$ if necessary) we may assume that ${\rm Cl}_{S}(K)$ vanishes when verifying (\ref{no T}). Given this, the proposition follows from Proposition \ref{proptateseq}.
%
%
\end{proof}

\subsection{`Zeta elements'}\label{zeta els}
We now use the above results to reinterpret Conjecture \ref{ltc0} in terms of the existence of a canonical `zeta element'. This interpretation will then play a key role in the proofs of Theorem \ref{ltcrs}, \ref{ltcmrs} and \ref{ltcfit} given below.

The following definition of zeta element is in the same spirit as that used by Kato in \cite{K1} and \cite{K2}.

\begin{definition}
\begin{rm}
The `zeta element' $z_{K/k,S,T}$ of $\mathbb{G}_m$ relative to the data $K/k, S$ and $T$ is the unique element of
\[ \RR{\det}_G(R\Gamma_T((\mathcal{O}_{K,S})_\mathcal{W},\GG_m)) \cong {\det}_{\RR[G]}(\RR\mathcal{O}_{K,S}^\times) \otimes_{\RR[G]} {\det}_{\RR[G]}^{-1}(\RR X_{K,S})\]
which satisfies $\vartheta_{\lambda_{K,S}}(z_{K/k,S,T})=\theta_{K/k,S,T}^\ast(0).$
\end{rm}
\end{definition}

The following `leading term conjecture' is then our main object of study.

\begin{conjecture}[${\rm LTC}(K/k)$]\label{ltc} In $\RR{\det}_G(R\Gamma_T((\mathcal{O}_{K,S})_\mathcal{W},\GG_m))$ one has an equality
\[\ZZ[G]\cdot z_{K/k,S,T}={\det}_G(R\Gamma_T((\mathcal{O}_{K,S})_{\mathcal{W}},\GG_m)).\]
\end{conjecture}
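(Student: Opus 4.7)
The plan is to recognize Conjecture \ref{ltc} as a repackaging of the already-studied Conjecture \ref{ltc0}, and then to invoke the equivalences laid down in Proposition \ref{ltc prop2} together with the known cases listed in Remark \ref{knownltc}. Explicitly, the steps are: (i) unfold the definition of $z_{K/k,S,T}$ to convert the integral determinant equality of Conjecture \ref{ltc} into a statement about $\theta_{K/k,S,T}^*(0)\cdot\ZZ[G]$; (ii) identify the resulting statement with condition (iii) of Proposition \ref{ltc prop2}; (iii) use Proposition \ref{ltc prop2} to conclude that LTC$(K/k)$ is equivalent to Conjecture \ref{ltc0}, and then appeal to known cases of the latter.

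For step (i), I observe that $\vartheta_{\lambda_{K,S}}$ is a bijection of invertible $\RR[G]$-modules and that, by definition, $\vartheta_{\lambda_{K,S}}(z_{K/k,S,T})=\theta_{K/k,S,T}^*(0)$. The asserted equality of Conjecture \ref{ltc} therefore holds if and only if
\[\vartheta_{\lambda_{K,S}}\bigl({\det}_G(R\Gamma_T((\mathcal{O}_{K,S})_\mathcal{W},\GG_m))\bigr)=\theta_{K/k,S,T}^*(0)\cdot\ZZ[G],\]
which is exactly condition (iii) of Proposition \ref{ltc prop2}. That proposition therefore upgrades to a direct equivalence between Conjecture \ref{ltc} and Conjecture \ref{ltc0}; Remark \ref{remltc} then immediately implies that validity of LTC$(K/k)$ is independent of the auxiliary data $S$ and $T$ (subject to the standing conditions) and is inherited by every intermediate extension $F/E$ with $k\subseteq E\subseteq F\subseteq K$.

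For step (iii), Remark \ref{knownltc} records three families in which Conjecture \ref{ltc0} is unconditionally established: $K$ abelian over $\QQ$ (Burns--Greither and Flach), $k$ a global function field (the first author), and $[K:k]\le 2$ (Kim). By the equivalence above, LTC$(K/k)$ is unconditionally valid in these cases, which already suffices for the downstream applications described in the Introduction (for example Corollary \ref{MC1}). The main obstacle to a fully general proof is intrinsic rather than technical: since LTC$(K/k)$ is equivalent to the equivariant Tamagawa number conjecture for the pair $(h^0(\Spec K),\ZZ[G])$, no proof valid for arbitrary number fields is presently available. The natural route to further cases---announced by the authors for the sequel \cite{aze2} and foreshadowed by Theorem \ref{MT4}---is Iwasawa-theoretic: one deduces LTC$(K/k)$ from a suitable main conjecture together with the Mazur--Rubin--Sano type congruences ${\rm MRS}(K/L/k,S,T)$, thereby providing a partial converse to Theorem \ref{MT1}.
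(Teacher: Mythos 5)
Your proposal matches the paper exactly: since the statement is a conjecture, the paper offers no general proof, only the remark immediately following it that, by the very definition of $z_{K/k,S,T}$ (i.e.\ $\vartheta_{\lambda_{K,S}}(z_{K/k,S,T})=\theta_{K/k,S,T}^\ast(0)$), Proposition \ref{ltc prop2}(iii) shows ${\rm LTC}(K/k)$ is equivalent to Conjecture \ref{ltc0}, with unconditional validity in the cases of Remark \ref{knownltc}. Your steps (i)--(iii) reproduce precisely this reasoning, and your closing caveat that no fully general proof exists (the statement being equivalent to the relevant case of the eTNC) is consistent with the paper's treatment.
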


Given the definition of $z_{K/k,S,T}$, Proposition \ref{ltc prop2} implies immediately that this conjecture is equivalent to Conjecture \ref{ltc0} and hence is independent of the choices of $S$ and $T$.


\section{Preliminaries concerning exterior powers} \label{algpreliminary}
In this section, we recall certain useful constructions concerning exterior powers and also prove algebraic results that are to be used in later sections.

\subsection{Exterior powers}

Let $G$ be a finite abelian group. For a $G$-module $M$ and $f \in\Hom_G(M,\ZZ[G])$, there is a $G$-homomorphism
$$\bigwedge_G^rM \longrightarrow \bigwedge_G^{r-1}M$$
for all $r\in\ZZ_{\geq1}$, defined by
$$m_1\wedge\cdots\wedge m_r \mapsto \sum_{i=1}^r (-1)^{i-1}f(m_i)m_1\wedge\cdots\wedge m_{i-1}\wedge m_{i+1}\wedge\cdots \wedge m_r.$$
This morphism is also denoted by $f$.

This construction gives a homomorphism
\begin{eqnarray}
\bigwedge_G^s\Hom_G(M,\ZZ[G]) \longrightarrow \Hom_G(\bigwedge_G^rM, \bigwedge_G^{r-s}M) \label{extmap}
\end{eqnarray}
for all $r, s\in \ZZ_{\geq0}$ such that $r\geq s$, defined by
$$f_1\wedge\cdots\wedge f_s \mapsto (m \mapsto f_s \circ \cdots \circ f_1(m)).$$
By using this homomorphism we often regard an element of $\bigwedge_G^s\Hom_G(M,\ZZ[G])$ as an element of $\Hom_G(\bigwedge_G^rM, \bigwedge_G^{r-s}M)$.

For a $G$-algebra $Q$ and a homomorphism $f$ in $\Hom_G(M,Q)$, there is a $G$-homomorphism
$$\bigwedge_G^rM \longrightarrow (\bigwedge_G^{r-1}M) \otimes_{G}Q$$
defined by
$$m_1\wedge\cdots\wedge m_r \mapsto \sum_{i=1}^r (-1)^{i-1}m_1\wedge\cdots\wedge m_{i-1}\wedge m_{i+1}\wedge\cdots \wedge m_r \otimes f(m_i).$$
By the same method as the construction of (\ref{extmap}), we have a homomorphism
\begin{eqnarray}
\bigwedge_G^s\Hom_G(M,Q) \longrightarrow \Hom_G(\bigwedge_G^rM, (\bigwedge_G^{r-s}M)\otimes_G Q). \label{extmap2}
\end{eqnarray}

In the sequel we will find an explicit description of this homomorphism to be useful. This description is well-known and given by the following proposition, the proof of which we omit.

\begin{proposition} \label{propformula}
Let $m_1,\ldots,m_r \in M$ and $f_1,\ldots,f_s\in \Hom_G(M,Q)$. Then we have
$$(f_1\wedge\cdots\wedge f_s)(m_1\wedge\cdots\wedge m_r) =\sum_{\sigma \in {\mathfrak{S}_{r,s}} }{\rm{sgn}}(\sigma) m_{\sigma(s+1)}\wedge\cdots\wedge m_{\sigma(r)}\otimes \det(f_i(m_{\sigma(j)}))_{1\leq i,j\leq s},$$
where
$$\mathfrak{S}_{r,s}:=\{ \sigma \in \mathfrak{S}_r  : \sigma(1) < \cdots < \sigma(s) \text{ and } \sigma(s+1) <\cdots<\sigma(r) \}.$$
In particular, if $r=s$, then we have
$$(f_1 \wedge \cdots \wedge f_r)(m_1 \wedge \cdots \wedge m_r)=\det(f_i(m_j))_{1\leq i,j \leq r}.$$
\end{proposition}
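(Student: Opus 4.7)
The plan is to prove the formula by induction on $s$, leaving the integer $r \geq s$ fixed. The base case $s=1$ is immediate: each $\sigma \in \mathfrak{S}_{r,1}$ is determined by the value $i:=\sigma(1) \in \{1,\ldots,r\}$ (with $\sigma(2) < \cdots < \sigma(r)$ forced to be the remaining indices in increasing order), has sign $(-1)^{i-1}$, and gives $\det(f_1(m_{\sigma(1)}))=f_1(m_i)$. Hence the right-hand side recovers exactly the defining formula for the map $f_1 : \bigwedge^r_G M \to (\bigwedge^{r-1}_G M) \otimes_G Q$ recalled just before Proposition \ref{propformula}.

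For the inductive step I would use the fact that, by the construction of (\ref{extmap2}), the map $f_1 \wedge \cdots \wedge f_s$ factors as $f_1 \wedge \cdots \wedge f_{s-1}$ followed by the $Q$-linear extension of $f_s$. The induction hypothesis then gives
\[
(f_1 \wedge \cdots \wedge f_{s-1})(m_1 \wedge \cdots \wedge m_r) = \sum_{\rho} \mathrm{sgn}(\rho)\, m_{\rho(s)} \wedge \cdots \wedge m_{\rho(r)} \otimes f_1(m_{\rho(1)})\cdots f_{s-1}(m_{\rho(s-1)}),
\]
where I have already used the Leibniz expansion of the $(s-1)\times(s-1)$ determinants to rewrite the sum as one over the larger set $A_{s-1} := \{\rho \in \mathfrak{S}_r : \rho(s) < \cdots < \rho(r)\}$. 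Applying the base case to $f_s$ on each summand produces a double sum indexed by $\rho \in A_{s-1}$ and $l \in \{s,\ldots,r\}$, with a sign $(-1)^{l-s}$ coming from the basic formula for $f_s$.

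The heart of the argument is then the reparametrization $(\rho,l) \leftrightarrow \sigma'$, where $\sigma' \in A_s := \{\sigma' \in \mathfrak{S}_r : \sigma'(s+1) < \cdots < \sigma'(r)\}$ is defined by $\sigma'(i)=\rho(i)$ for $i<s$, $\sigma'(s)=\rho(l)$, and $\sigma'(s+1),\ldots,\sigma'(r) = \rho(s),\ldots,\widehat{\rho(l)},\ldots,\rho(r)$. One checks this is a bijection and that $\sigma' = \rho \circ c_{s,l}$ where $c_{s,l}$ is a cycle of length $l-s+1$, giving $\mathrm{sgn}(\sigma') = \mathrm{sgn}(\rho)\cdot(-1)^{l-s}$. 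After this reindexing, the double sum becomes
\[
\sum_{\sigma' \in A_s} \mathrm{sgn}(\sigma')\, m_{\sigma'(s+1)} \wedge \cdots \wedge m_{\sigma'(r)} \otimes f_1(m_{\sigma'(1)}) \cdots f_s(m_{\sigma'(s)}).
\]
Finally, writing each $\sigma' \in A_s$ uniquely as $\sigma \circ \tilde\pi$ with $\sigma \in \mathfrak{S}_{r,s}$ and $\tilde\pi \in \mathfrak{S}_s$ (extended by the identity on $\{s+1,\ldots,r\}$), collecting terms over $\tilde\pi$ gives the Leibniz expansion of $\det(f_i(m_{\sigma(j)}))_{1\le i,j\le s}$, which yields the asserted formula.

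The main obstacle is the careful bookkeeping of signs in three places at once: the sign in the inductive application of $f_s$, the cycle decomposition $\sigma' = \rho \circ c_{s,l}$, and the Leibniz expansion of the determinant. Once these three sign contributions are correctly aligned, the identity reduces to the standard observation that Laplace-type bundling of permutations $\sigma' \in A_s$ into pairs $(\sigma,\tilde\pi)$ reconstructs precisely the determinant of the relevant $s \times s$ minor.
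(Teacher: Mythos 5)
The paper omits the proof of this proposition (it is stated as ``well-known... the proof of which we omit''), so there is no argument in the text to compare against. Your induction on $s$ is a correct and complete proof: the base case $s=1$ matches the defining formula for $f:\bigwedge^r_G M \to (\bigwedge^{r-1}_G M)\otimes_G Q$, and the inductive step correctly tracks the three sign contributions --- the $(-1)^{l-s}$ from applying $f_s$ to the $(l-s+1)$-th slot, the sign $(-1)^{l-s}$ of the $(l-s+1)$-cycle $c_{s,l}$ so that $\mathrm{sgn}(\sigma')=\mathrm{sgn}(\rho)\,\mathrm{sgn}(c_{s,l})$, and the Leibniz sign $\mathrm{sgn}(\pi)$ recovered when splitting $\sigma'=\sigma\circ\tilde\pi$ --- and the two reparametrizations $(\rho,l)\leftrightarrow\sigma'$ and $\sigma'\leftrightarrow(\sigma,\pi)$ are genuine bijections, as you verify. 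One point worth making explicit if you write this up: the ``$Q$-linear extension of $f_s$'' implicitly uses the ring multiplication $Q\otimes_G Q\to Q$ to combine the scalar produced by $f_s$ with the scalar already present from $f_1,\dots,f_{s-1}$, which is exactly why the right-hand side ends up with a single determinant in $Q$ rather than a tensor of $s$ copies of $Q$.
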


We will also find the technical observations that are contained in the next two results to be very useful.

\begin{lemma} \label{leme}
Let $E$ be a field and $A$ an $n$-dimensional $E$-vector space. If we have an $E$-linear map
$$\Psi:A\longrightarrow E^{\oplus m},$$
where $\Psi=\bigoplus_{i=1}^m \psi_i$ with $\psi_1,\ldots,\psi_m \in \Hom_E(A,E)$ ($m\leq n$), then we have
$$\im(\bigwedge_{1\leq i\leq m}\psi_i : \bigwedge_E^n A \longrightarrow \bigwedge_E^{n-m}A)=\begin{cases}
\bigwedge_E^{n-m}\ker (\Psi), & \text{if $\Psi$ is surjective},  \\
0, &\text{if $\Psi$ is not surjective.}
\end{cases}
$$
\end{lemma}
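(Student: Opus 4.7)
The plan is to handle the two cases separately by reducing to a concrete basis computation via Proposition \ref{propformula}.

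First I would dispose of the non-surjective case. Note that $\Psi$ is surjective if and only if $\psi_{1},\ldots,\psi_{m}$ are $E$-linearly independent in $\Hom_{E}(A,E)$. If they are not, then the element $\psi_{1}\wedge\cdots\wedge\psi_{m}$ vanishes already in $\bigwedge_{E}^{m}\Hom_{E}(A,E)$ (using the standard fact that an exterior product of linearly dependent vectors is zero), and so its image under the homomorphism \eqref{extmap} is the zero map. This gives the second case.

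For the surjective case, the strategy is to pick a basis adapted to $\Psi$. Extend $\psi_{1},\ldots,\psi_{m}$ to an $E$-basis $\psi_{1},\ldots,\psi_{n}$ of $\Hom_{E}(A,E)$ and take $e_{1},\ldots,e_{n}$ to be the dual basis of $A$, so $\psi_{i}(e_{j})=\delta_{ij}$. A direct check shows that $\ker(\Psi)=\bigcap_{i=1}^{m}\ker(\psi_{i})$ equals the $E$-span of $e_{m+1},\ldots,e_{n}$, so $\bigwedge_{E}^{n-m}\ker(\Psi)$ is the one-dimensional subspace of $\bigwedge_{E}^{n-m}A$ generated by $e_{m+1}\wedge\cdots\wedge e_{n}$.

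The core computation is then to evaluate $(\psi_{1}\wedge\cdots\wedge\psi_{m})(e_{1}\wedge\cdots\wedge e_{n})$ by applying Proposition \ref{propformula}. In the sum indexed by $\mathfrak{S}_{n,m}$, the matrix $(\psi_{i}(e_{\sigma(j)}))_{1\le i,j\le m}$ has nonzero determinant only when $\{\sigma(1),\ldots,\sigma(m)\}=\{1,\ldots,m\}$; combined with the increasing condition $\sigma(1)<\cdots<\sigma(m)$ this forces $\sigma$ to be the identity, and the only surviving term is $e_{m+1}\wedge\cdots\wedge e_{n}$. Since $\bigwedge_{E}^{n}A$ is one-dimensional with generator $e_{1}\wedge\cdots\wedge e_{n}$, the image of $\psi_{1}\wedge\cdots\wedge\psi_{m}$ is exactly $E\cdot(e_{m+1}\wedge\cdots\wedge e_{n})=\bigwedge_{E}^{n-m}\ker(\Psi)$, as required.

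There is no real obstacle here; the only thing that requires care is the bookkeeping in the sum over $\mathfrak{S}_{n,m}$, which collapses cleanly once the dual basis is chosen. The whole argument is essentially a change-of-basis reduction to the model case $A=E^{n}$ with $\psi_{i}$ the coordinate projections.
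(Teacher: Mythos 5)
Your proof is correct, and it is essentially the same argument as the paper's: in the non-surjective case you both observe that $\psi_1\wedge\cdots\wedge\psi_m$ vanishes because the $\psi_i$ are linearly dependent, and in the surjective case you both exploit a decomposition of $A$ adapted to $\Psi$. The only cosmetic difference is that the paper phrases the surjective case coordinate-free via $A=\ker(\Psi)\oplus B$ and the identification $\bigwedge_E^n A\cong\bigwedge_E^{n-m}\ker(\Psi)\otimes_E\bigwedge_E^m B$, whereas you realize the same decomposition concretely by extending $\psi_1,\ldots,\psi_m$ to a dual basis and evaluating with Proposition \ref{propformula}; the two are interchangeable.
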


\begin{proof}
Suppose first that $\Psi$ is surjective. Then there exists a subspace $B \subset A$ such that $A=\ker (\Psi) \oplus B$ and $\Psi$ maps $B$ isomorphically onto $E^{\oplus m}$.
We see that $\bigwedge_{1\leq i \leq m}\psi_i$ induces an isomorphism
$$\bigwedge_E^m B \stackrel{\sim}{\longrightarrow} E.$$
Hence we have an isomorphism
$$\bigwedge_{1\leq i\leq m}\psi_i:\bigwedge_E^nA=\bigwedge_E^{n-m}\ker (\Psi) \otimes_E \bigwedge_E^m B \stackrel{\sim}{\longrightarrow} \bigwedge_E^{n-m}\ker (\Psi).$$
In particular, we have
$$\im(\bigwedge_{1\leq i\leq m}\psi_i : \bigwedge_E^n A \longrightarrow \bigwedge_E^{n-m}A)=\bigwedge_E^{n-m}\ker (\Psi).$$
Next, suppose that $\Psi$ is not surjective. Then $\psi_1,\ldots,\psi_m \in \Hom_E(A,E)$ are linearly dependent. In fact,
since each $\psi_i$ is contained in $\Hom_E(A/ \ker (\Psi),E)$, we have
$$\dim_E(\langle\psi_1,\ldots,\psi_m\rangle) \leq \dim_E(A/\ker (\Psi)) =\dim_E(\im (\Psi)) ,$$
so $\dim_E(\langle\psi_1,\ldots,\psi_m\rangle) <m$ if $\dim_E(\im (\Psi))<m$.
This shows that the element $\bigwedge_{1\leq i \leq m}\psi_i$ vanishes, as required.
\end{proof}

Using the same notation as in Lemma \ref{leme}, we now consider an endomorphism $\psi \in \End_E(A)$. We write $r_\psi$ for the dimension over $E$ of $\ker(\psi)$ and consider the composite isomorphism
\begin{eqnarray}
F_\psi: \bigwedge_E^n A \otimes_E \bigwedge_E^n \Hom_E(A,E)
&\simeq& {\det}_E(A) \otimes_E {\det}_E^{-1}(A) \nonumber \\
&\stackrel{\sim}{\longrightarrow}& {\det}_E( \ker (\psi)) \otimes_E {\det}_E^{-1} (\coker (\psi)) \nonumber \\
&\simeq& \bigwedge_E^{r_\psi} \ker(\psi) \otimes_E \bigwedge_E^{r_\psi}\Hom_E(\coker(\psi),E), \nonumber
\end{eqnarray}
where the second isomorphism is induced by the tautological exact sequence
$$0 \longrightarrow \ker(\psi) \longrightarrow A \stackrel{\psi}{\longrightarrow} A \longrightarrow \coker(\psi) \longrightarrow 0.$$

Then the proof of Lemma \ref{leme} leads directly to the following useful description of this isomorphism $F_\psi$.

\begin{lemma}\label{remarkpsi} With $E, A$ and $\psi$ as above, we fix an $E$-basis $\{b_1,\ldots, b_n\}$ of $A$ so that $\im (\psi)=\langle b_{r_\psi+1},\ldots,b_n \rangle$ and write $\{b_1^\ast,\ldots, b_n^\ast\}$ for the corresponding dual basis of
 $\Hom_E(A,E)$. For each index $i$ we also set $\psi_i:=b_i^\ast \circ \psi$. 

Then for every $a$ in $\bigwedge_E^n A$ the element $(\bigwedge_{r_\psi < i \leq n} \psi_i)(a)$ belongs to $\bigwedge_E^{r_\psi} \ker(\psi) $ and one has
\[ F_\psi(a \otimes (b_1^\ast \wedge\cdots \wedge b_n^\ast)) = (-1)^{r_\psi (n-r_\psi)}(\bigwedge_{r_\psi < i \leq n} \psi_i)(a) \otimes  (b_1^\ast \wedge \cdots \wedge b_{r_\psi}^\ast).\]
Here, on the right hand side of the equation, we use the equality $\im (\psi)=\langle b_{r_\psi+1},\ldots,b_n \rangle$ to regard $b_i^\ast$ for each $i$ with $1\le i \le r_\psi$ as an element of $\Hom_E(\coker(\psi),E)$. \end{lemma}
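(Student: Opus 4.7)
The first claim follows directly from Lemma \ref{leme} applied to the map $\Psi := \bigoplus_{r_\psi<i\le n}\psi_i \colon A \to E^{n-r_\psi}$. By the choice of basis, for each $a \in A$ the components of $\psi(a) \in \im(\psi) = \langle b_{r_\psi+1},\ldots,b_n\rangle$ in this basis are precisely the values $\psi_{r_\psi+1}(a),\ldots,\psi_n(a)$, so $\ker(\Psi) = \ker(\psi)$; and surjectivity of $\Psi$ follows from that of $\psi$ onto $\im(\psi)$. Lemma \ref{leme} then yields $\im(\bigwedge_{r_\psi<i\le n}\psi_i \colon \bigwedge_E^n A \to \bigwedge_E^{r_\psi}A) = \bigwedge_E^{r_\psi}\ker(\psi)$, as required.

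To prove the displayed identity, note that both sides are $E$-linear in $a$ and that $\bigwedge_E^n A$ is one-dimensional, so it suffices to verify the identity on a single nonzero element. Choose a basis $a_1,\ldots,a_{r_\psi}$ of $\ker(\psi)$ and, for each $j = 1,\ldots,n-r_\psi$, an element $a_{r_\psi+j}\in A$ with $\psi(a_{r_\psi+j}) = b_{r_\psi+j}$ (such lifts exist since $\psi$ surjects onto $\im(\psi)$). Set $\tilde u := a_1\wedge\cdots\wedge a_n$. Then $\psi_{r_\psi+i}(a_j) = b_{r_\psi+i}^{*}(\psi(a_j))$ vanishes for $j \le r_\psi$ and equals $\delta_{r_\psi+i,\,j}$ for $j > r_\psi$. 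Applying Proposition \ref{propformula} with $f_i = \psi_{r_\psi+i}$, $m_j = a_j$, and $s = n - r_\psi$, the only surviving term in the resulting sum corresponds to the unique $\sigma \in \mathfrak{S}_{n,n-r_\psi}$ satisfying $\sigma(j) = r_\psi + j$ for $j \le n - r_\psi$; this $\sigma$ is the block transposition of $\{1,\ldots,n-r_\psi\}$ with $\{n-r_\psi+1,\ldots,n\}$, and hence $\mathrm{sgn}(\sigma) = (-1)^{r_\psi(n-r_\psi)}$. Consequently $(\psi_{r_\psi+1}\wedge\cdots\wedge\psi_n)(\tilde u) = (-1)^{r_\psi(n-r_\psi)}(a_1\wedge\cdots\wedge a_{r_\psi})$, and the right-hand side of the claimed identity at $a = \tilde u$ collapses to $(a_1\wedge\cdots\wedge a_{r_\psi}) \otimes (b_1^{*}\wedge\cdots\wedge b_{r_\psi}^{*})$.

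It remains to evaluate the left-hand side $F_\psi(\tilde u \otimes (b_1^{*}\wedge\cdots\wedge b_n^{*}))$ by unwinding the definition of $F_\psi$. Split the tautological four-term sequence into the two short exact sequences $0 \to \ker\psi \to A \to \im\psi \to 0$ and $0 \to \im\psi \to A \to \coker\psi \to 0$, which yield canonical isomorphisms $\det_E(A) \cong \det_E(\ker\psi)\otimes_E\det_E(\im\psi)$ and $\det_E(A) \cong \det_E(\im\psi)\otimes_E\det_E(\coker\psi)$. The basis $a_1,\ldots,a_n$ is adapted to the first splitting, under which $\tilde u$ maps to $(a_1\wedge\cdots\wedge a_{r_\psi})\otimes(b_{r_\psi+1}\wedge\cdots\wedge b_n)$, whereas $b_1,\ldots,b_n$ is adapted to the second splitting only after the block reordering to $(b_{r_\psi+1},\ldots,b_n,b_1,\ldots,b_{r_\psi})$, which contributes a sign $(-1)^{r_\psi(n-r_\psi)}$ when the dual $b_1^{*}\wedge\cdots\wedge b_n^{*}$ is decomposed accordingly in $\det_E^{-1}(A)$. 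Using the identification $b_i^{*} \leftrightarrow \bar b_i^{*}$ for $i \le r_\psi$ (valid since each such $b_i^{*}$ vanishes on $\im\psi = \langle b_{r_\psi+1},\ldots,b_n\rangle$ and so factors through $\coker\psi$) and contracting the middle factors via the evaluation pairing $\det_E(\im\psi)\otimes_E\det_E^{-1}(\im\psi) \to E$ then produces the asserted formula. The principal technical point will be this last sign bookkeeping in the composite $F_\psi$, where the $(-1)^{r_\psi(n-r_\psi)}$ arising from the block reordering combines with the standard normalizations of the dual exterior algebra to match exactly the prefactor stated in the lemma.
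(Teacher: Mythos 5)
Your overall strategy is sound and is essentially the fleshing-out that the paper intends (the paper simply says the lemma follows from the proof of Lemma \ref{leme}): the first assertion via Lemma \ref{leme} applied to $\Psi=\bigoplus_{r_\psi<i\le n}\psi_i$ is correct, the reduction to a single nonzero element of the one-dimensional space $\bigwedge^n_E A$ is legitimate, and your evaluation of the right-hand side is right — with $a_1,\ldots,a_{r_\psi}$ a basis of $\ker(\psi)$ and $\psi(a_{r_\psi+j})=b_{r_\psi+j}$, Proposition \ref{propformula} indeed gives $(\bigwedge_{r_\psi<i\le n}\psi_i)(\tilde u)=(-1)^{r_\psi(n-r_\psi)}a_1\wedge\cdots\wedge a_{r_\psi}$, so the right-hand side of the lemma at $a=\tilde u$ is $(a_1\wedge\cdots\wedge a_{r_\psi})\otimes(\bar b_1^\ast\wedge\cdots\wedge \bar b_{r_\psi}^\ast)$.

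The gap is in the last step, which you defer as ``sign bookkeeping'' — and that step is the only place where the sign $(-1)^{r_\psi(n-r_\psi)}$, i.e.\ the entire content of the lemma beyond Lemma \ref{leme}, is actually decided. As literally described (decompose $b_1^\ast\wedge\cdots\wedge b_n^\ast$ with the block-reordering sign $(-1)^{r_\psi(n-r_\psi)}$, then contract $\det_E(\im \psi)\otimes_E\det_E^{-1}(\im\psi)$ by the evaluation pairing with no further sign), your recipe gives $F_\psi(\tilde u\otimes(b_1^\ast\wedge\cdots\wedge b_n^\ast))=(-1)^{r_\psi(n-r_\psi)}(a_1\wedge\cdots\wedge a_{r_\psi})\otimes(\bar b_1^\ast\wedge\cdots\wedge\bar b_{r_\psi}^\ast)$, which is off by $(-1)^{r_\psi(n-r_\psi)}$ from the value your own right-hand side computation requires. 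Concretely, for $n=2$, $r_\psi=1$ and $\psi$ the projection onto $\langle b_2\rangle$, your recipe yields $-b_1\otimes\bar b_1^\ast$ while the lemma (and your RHS evaluation) give $+b_1\otimes\bar b_1^\ast$. The compensating $(-1)^{r_\psi(n-r_\psi)}$ is not a ``standard normalization of the dual exterior algebra'' in the naive ungraded sense; it comes from the graded (Knudsen--Mumford) determinant conventions the paper is implicitly using — for instance the Koszul sign incurred when commuting $\det_E^{-1}(\im\psi)$ (degree $-(n-r_\psi)$) past $\det_E^{-1}(\coker\psi)$ (degree $-r_\psi$) so as to contract it against $\det_E(\im\psi)$, or equivalently the convention for dualizing the two-step filtration of $A$. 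To complete the proof you must fix the normalization of the second isomorphism in the definition of $F_\psi$ and track this sign explicitly; as written, the factor the lemma asserts is exactly the factor left unverified.
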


\subsection{The Rubin lattices}

The following definition is due to Rubin \cite[\S 1.2]{R}.
We adopt the notation in \cite{sano} for the lattice.
Note that the notation $\bigcap$ does not mean the intersection.

\begin{definition} \label{deflat}
\begin{rm} For a finitely generated $G$-module $M$ and a non-negative integer $r$ we define the `$r$-th Rubin lattice' by setting
$$\bigcap_G^r M =\{ m\in\QQ\bigwedge_G^rM  :  \Phi(m)\in\ZZ[G] \mbox{ for all }\Phi\in\bigwedge_G^r\Hom_G(M, \ZZ[G]) \}.$$
In particular, one has $\bigcap_G^0 M=\ZZ[G].$
\end{rm}
\end{definition}

\begin{remark} \label{rlat}
\begin{rm}
We define the homomorphism
$\iota : \bigwedge_G^r\Hom_G(M,\bbZ[G]) \rightarrow \Hom_G(\bigwedge_G^rM,\bbZ[G])$
by sending each element $\varphi_1 \wedge \cdots \wedge \varphi_r $ to $\varphi_r \circ \cdots \circ \varphi_1$ (see (\ref{extmap})). Then it is not difficult to see that the map
$$\bigcap_G^r M \stackrel{\sim}{\longrightarrow} \Hom_G(\im (\iota), \bbZ[G]) \quad ; \quad m \mapsto (\Phi \mapsto \Phi(m))$$
is an isomorphism (see \cite[\S 1.2]{R}).
\end{rm}
\end{remark}

By this remark, one obtains the following result.

\begin{proposition} \label{propproj}
Let $P$ be a finitely generated projective $G$-module. Then we have
$$\bigcap_G^r P=\bigwedge_G^r P$$
for all non-negative integers $r$.
\end{proposition}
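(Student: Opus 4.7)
The inclusion $\bigwedge_G^r P \subseteq \bigcap_G^r P$ is immediate from the definition, since any $\Phi \in \bigwedge_G^r\Hom_G(P,\ZZ[G])$ induces via (\ref{extmap}) a $G$-homomorphism $\bigwedge_G^r P \to \ZZ[G]$. The plan for the reverse inclusion is first to handle the free case directly and then to reduce the projective case to it by embedding $P$ as a direct summand of a finitely generated free module.

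For the free case $P = \ZZ[G]^n$, I would fix a $\ZZ[G]$-basis $e_1,\ldots,e_n$ with dual basis $e_1^*,\ldots,e_n^*$ of $\Hom_G(P,\ZZ[G])$, and consider the induced basis $\{e_I := e_{i_1}\wedge\cdots\wedge e_{i_r} : i_1 < \cdots < i_r\}$ of $\bigwedge_G^r P$. For $J = (j_1 < \cdots < j_r)$, the $r=s$ case of Proposition \ref{propformula} gives $(e_{j_1}^*\wedge\cdots\wedge e_{j_r}^*)(e_I) = \det(e_{j_k}^*(e_{i_l})) = \delta_{I,J}$. Hence if $m = \sum_I a_I e_I \in \QQ\bigwedge_G^r P$ has $\Phi(m) \in \ZZ[G]$ for every $\Phi$ of the form $e_{j_1}^*\wedge\cdots\wedge e_{j_r}^*$, then each $a_I$ lies in $\ZZ[G]$, so $m \in \bigwedge_G^r P$.

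For the general case, let $Q$ be a finitely generated projective $G$-module with $F := P \oplus Q$ free of finite rank. The standard decomposition of the exterior algebra $\bigwedge_G(F) \cong \bigwedge_G(P)\otimes_G\bigwedge_G(Q)$ of graded algebras gives a canonical isomorphism
\[ \bigwedge_G^r F \;\cong\; \bigoplus_{i+j=r}\bigwedge_G^i P\otimes_G\bigwedge_G^j Q \]
which in particular exhibits $\bigwedge_G^r P$ as a $\ZZ[G]$-direct summand of $\bigwedge_G^r F$. Consequently one has $\bigwedge_G^r F \,\cap\, \QQ\bigwedge_G^r P = \bigwedge_G^r P$ inside $\QQ\bigwedge_G^r F$. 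It therefore suffices to show that every $m \in \bigcap_G^r P$, regarded as an element of $\QQ\bigwedge_G^r F$, lies in $\bigcap_G^r F$; the free case then forces $m \in \bigwedge_G^r F$, and the above intersection identity concludes the proof.

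The final step is to show the claimed membership $m \in \bigcap_G^r F$. Using the splitting $\Hom_G(F,\ZZ[G]) = \Hom_G(P,\ZZ[G])\oplus\Hom_G(Q,\ZZ[G])$ and the analogous exterior algebra decomposition, any $\Psi \in \bigwedge_G^r\Hom_G(F,\ZZ[G])$ is a $\ZZ[G]$-linear combination of pure wedges $\phi_1\wedge\cdots\wedge\phi_i\wedge\phi'_1\wedge\cdots\wedge\phi'_j$ with $i+j=r$, where each $\phi_k$ vanishes on $Q$ and each $\phi'_l$ vanishes on $P$. By the $r=s$ case of Proposition \ref{propformula}, applying such a pure wedge to any $p_1\wedge\cdots\wedge p_r$ with $p_s \in P$ yields the determinant of a matrix with $j$ zero rows; this vanishes whenever $j>0$. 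Consequently the pairing of $\Psi$ with $m \in \QQ\bigwedge_G^r P$ only sees the $j=0$ summand, which lies in $\bigwedge_G^r\Hom_G(P,\ZZ[G])$ and evaluates into $\ZZ[G]$ on $m$ by hypothesis. Thus $\Psi(m) \in \ZZ[G]$, completing the proof. The only mildly delicate point is the compatibility of the exterior algebra decomposition with the map (\ref{extmap}), but this is purely formal and follows from the functoriality of $\bigwedge_G$ applied to the summand inclusions $P \hookrightarrow F$ and $Q \hookrightarrow F$.
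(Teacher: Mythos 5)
Your proof is correct, but it takes a different route from the paper. The paper obtains Proposition \ref{propproj} essentially as a one-line consequence of Remark \ref{rlat} (following Rubin \cite[\S 1.2]{R}): the Rubin lattice is identified with $\Hom_G(\im(\iota),\ZZ[G])$, and for projective $P$ this dual description collapses to $\bigwedge_G^r P$ itself, with the verification reduced to the free case by localization and Swan's theorem, as is done elsewhere in the paper (e.g.\ in the proofs of Lemma \ref{dual fitting} and Proposition \ref{propnorm}). You instead argue integrally and directly: the free case via dual bases and the $r=s$ case of Proposition \ref{propformula}, and the projective case by writing $F=P\oplus Q$ free and using the decomposition $\bigwedge_G^r F\cong\bigoplus_{i+j=r}\bigwedge_G^i P\otimes_G\bigwedge_G^j Q$ together with the corresponding splitting of $\Hom_G(F,\ZZ[G])$, so that functionals with a factor vanishing on $P$ kill $\QQ\bigwedge_G^r P$ and the remaining component restricts to $\bigwedge_G^r\Hom_G(P,\ZZ[G])$. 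All the steps check out: the inclusion $\bigwedge_G^r P\subseteq\bigcap_G^r P$ uses only that $\bigwedge_G^r P$ is torsion-free (being a summand of the free module $\bigwedge_G^r F$), the identity $\bigwedge_G^r F\cap\QQ\bigwedge_G^r P=\bigwedge_G^r P$ follows from the split inclusion, and the vanishing of the mixed wedges on $\QQ\bigwedge_G^r P$ is exactly the zero-row determinant computation. What your approach buys is self-containedness — no appeal to the duality isomorphism of Remark \ref{rlat}, no localization, no Swan — at the cost of a slightly longer bookkeeping argument; the paper's route is shorter because it leans on Rubin's structural description of the lattice, which is in any case needed later (e.g.\ for Lemma \ref{lemr}).
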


\begin{lemma} \label{lemr}
Let $M$ be a $G$-module. Suppose that there is a finitely generated projective $G$-module $P$ and an injection $j : M \hookrightarrow P$ whose cokernel is $\ZZ$-torsion-free.
\begin{itemize}
\item[(i)]{The map
$$\Hom_G(P,\ZZ[G]) \longrightarrow \Hom_G(M,\ZZ[G])$$
induced by $j$ is surjective.}
\item[(ii)]
{If we regard $M$ as a submodule of $P$ via $j$, then we have
$$\bigcap_G^rM = (\bbQ\bigwedge_G^rM) \cap \bigwedge_G^rP.$$}
\end{itemize}
\end{lemma}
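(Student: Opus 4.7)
The plan is to establish (i) first and then deploy it, in combination with Proposition \ref{propproj}, to prove both inclusions of (ii). For (i), I would apply $\Hom_G(-,\ZZ[G])$ to the short exact sequence $0 \to M \to P \to P/M \to 0$, which reduces the desired surjectivity to the vanishing of $\Ext^1_G(P/M,\ZZ[G])$. To get this vanishing I would invoke the Frobenius structure of $\ZZ[G]$ over $\ZZ$, namely the canonical isomorphism $\ZZ[G] \cong \Hom_\ZZ(\ZZ[G],\ZZ)$ of $G$-modules, combined with adjunction. This gives natural isomorphisms $\Ext^i_G(N,\ZZ[G]) \cong \Ext^i_\ZZ(N,\ZZ)$ for every $G$-module $N$, and taking $N = P/M$ the right-hand side vanishes in degree $i=1$ because $P/M$ is finitely generated and $\ZZ$-torsion-free, hence $\ZZ$-free.

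For (ii), I would regard $M$ as a submodule of $P$ via $j$ so that $\QQ\bigwedge_G^r M$ embeds in $\QQ\bigwedge_G^r P$, and then prove the two inclusions separately. For the inclusion $(\QQ\bigwedge_G^r M) \cap \bigwedge_G^r P \subseteq \bigcap_G^r M$, take $m$ on the left and any element of the form $\Phi = \phi_1 \wedge \cdots \wedge \phi_r$ in $\bigwedge_G^r \Hom_G(M,\ZZ[G])$. By (i), each $\phi_i$ admits a lift $\tilde\phi_i \in \Hom_G(P,\ZZ[G])$, and the resulting $\tilde\Phi := \tilde\phi_1 \wedge \cdots \wedge \tilde\phi_r$ restricts to $\Phi$ on $\QQ\bigwedge_G^r M$. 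Hence $\Phi(m) = \tilde\Phi(m)$, and the latter lies in $\ZZ[G]$ since $m \in \bigwedge_G^r P$. For the reverse inclusion, take $m \in \bigcap_G^r M$ and view it in $\QQ\bigwedge_G^r P$; by Proposition \ref{propproj} it suffices to show that $m$ lies in $\bigcap_G^r P$. For any $\Psi \in \bigwedge_G^r \Hom_G(P,\ZZ[G])$, restriction along $j$ produces a $\Phi \in \bigwedge_G^r \Hom_G(M,\ZZ[G])$ with $\Psi(m) = \Phi(m) \in \ZZ[G]$, which gives the required conclusion.

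The main obstacle, though mild, is (i): the only non-formal input is the Frobenius identification (or, equivalently, a direct verification that $\Ext^1_G(P/M,\ZZ[G]) = 0$ for finitely generated $\ZZ$-torsion-free $G$-modules). Once (i) is in hand, part (ii) reduces to a straightforward manipulation of the definition of the Rubin lattice, and one needs only to be careful to track that evaluation commutes with restriction along $j$ and with the embedding $\QQ\bigwedge_G^r M \hookrightarrow \QQ\bigwedge_G^r P$.
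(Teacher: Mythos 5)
Your proof is correct, and its core coincides with the paper's: for the containment $(\QQ\bigwedge_G^r M)\cap\bigwedge_G^r P\subseteq\bigcap_G^r M$ you lift functionals through the surjection of part (i) and evaluate, exactly as the paper does. The differences are in the remaining two ingredients. For (i) the paper simply cites Rubin's \cite[Prop. 1.1 (ii)]{R}, whereas you give the underlying argument: applying $\Hom_G(-,\ZZ[G])$ to $0\to M\to P\to P/M\to 0$ and killing $\Ext^1_G(P/M,\ZZ[G])$ via the identification $\ZZ[G]\cong\Hom_\ZZ(\ZZ[G],\ZZ)$ and the resulting isomorphism $\Ext^i_G(N,\ZZ[G])\cong\Ext^i_\ZZ(N,\ZZ)$; this is valid (note $P/M$ is finitely generated over $\ZZ$ because $P$ is and $G$ is finite, so torsion-free forces $\ZZ$-freeness), and it makes the lemma self-contained. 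For the inclusion $\bigcap_G^r M\subseteq\bigwedge_G^r P$ the paper dualizes the surjection $\im(\iota_P)\to\im(\iota_M)$ using the identification $\bigcap_G^r M\simeq\Hom_G(\im(\iota_M),\ZZ[G])$ of Remark \ref{rlat} and then invokes Proposition \ref{propproj}; you instead verify the defining property of $\bigcap_G^r P$ directly, by restricting each $\Psi\in\bigwedge_G^r\Hom_G(P,\ZZ[G])$ along $j$ and using $\Psi(m)=(\Psi|_M)(m)$ (a formal consequence of Proposition \ref{propformula} and bilinearity), before applying Proposition \ref{propproj}. Your route is slightly more explicit and avoids having to check that the abstract dual injection of the paper agrees with the natural embedding $\QQ\bigwedge_G^r M\hookrightarrow\QQ\bigwedge_G^r P$ (whose injectivity, used by both arguments, follows from the semisimplicity of $\QQ[G]$); the paper's route is shorter on the page because it leans on Remark \ref{rlat} and the citation to Rubin.
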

\begin{proof}
The assertion (i) follows from \cite[Prop. 1.1 (ii)]{R}. Note that
$$\bigwedge_G^r\Hom_G(P,\ZZ[G]) \longrightarrow \bigwedge_G^r\Hom_G(M,\ZZ[G])$$
is also surjective. This induces a surjection
$$\im (\iota_P) \longrightarrow \im (\iota_M),$$
where $\iota_P$ and $\iota_M$ denote the maps defined in Remark \ref{rlat} for $P$ and $M$, respectively. Hence, taking the dual, we have an injection
$$\bigcap_G^rM\simeq \Hom_G(\im (\iota_M),\ZZ[G]) \longrightarrow \Hom_G(\im (\iota_P),\ZZ[G])\simeq \bigcap_G^rP.$$
Since $P$ is projective, we have $\bigcap_G^rP=\bigwedge_G^rP$ by Proposition \ref{propproj}. Hence we have
$$\bigcap_G^rM\subset \bigwedge_G^rP.$$
Next, we show the reverse inclusion `$\supset$'. To do this we fix $a$ in $(\bbQ\bigwedge_G^rM) \cap \bigwedge_G^rP$ and $\Phi$ in $\bigwedge_G^r\Hom_G(M,\ZZ[G])$. By (i),
we can take a lift $\widetilde \Phi \in \bigwedge_G^r\Hom_G(P,\ZZ[G])$ of $\Phi$. Since $a\in \bigwedge_G^rP$, we have
$$\Phi(a)=\widetilde \Phi(a)\in \bbZ[G].$$
This shows that $a$ belongs to $\bigcap_G^rM$, as required.
\end{proof}

\begin{remark} \label{remr}
\begin{rm}
The proof of Lemma \ref{lemr} shows that the cokernel of the injection
$$\bigcap_G^rM \longrightarrow \bigwedge_G^r P$$
is $\ZZ$-torsion-free.
This implies that for any abelian group $A$, the map
$$(\bigcap_G^rM) \otimes_\ZZ A \longrightarrow( \bigwedge_G^r P)\otimes_\ZZ A$$
is injective.
\end{rm}
\end{remark}

\subsection{}

In the sequel we fix a subgroup $H$ of $G$ and an ideal $J$ of $\ZZ[H]$.
Recall that we denote the augmentation ideal of $\ZZ[H]$ by $I(H)$. Put $J_H:=J/I(H)J$. We also put $\mathcal{J}:=J\ZZ[G],$
and $\mathcal{J}_H := \mathcal{J}/I(H)\mathcal{J}.$

\begin{proposition} \label{propisom}
We have a natural isomorphism of $G/H$-modules
$$\mathcal{J}_H \simeq  \ZZ[G/H] \otimes_\ZZ J_H.$$
\end{proposition}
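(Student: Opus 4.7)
The plan is to use the free $\ZZ[H]$-module structure of $\ZZ[G]$ to split both sides into compatible direct sums indexed by a set of coset representatives, and then match them up.

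First I would fix a set $R \subset G$ of coset representatives for $G/H$, so that $\ZZ[G] = \bigoplus_{g \in R} \ZZ[H] \cdot g$ as a $\ZZ[H]$-module. Multiplying by $J \subset \ZZ[H]$ (and using that each summand $J\cdot g$ is $H$-stable inside $\ZZ[G]$, since $h \cdot jg = (hj)g$ with $hj \in J$) gives the direct sum decomposition
\[
\mathcal{J} = J\ZZ[G] = \bigoplus_{g \in R} J \cdot g,
\]
together with the analogous decomposition
\[
I(H)\mathcal{J} = \bigoplus_{g \in R} I(H) J \cdot g.
\]
Taking quotients term-by-term yields $\mathcal{J}_H = \bigoplus_{g \in R} J_H \cdot g$, realising $\mathcal{J}_H$ as a free $J_H$-module of rank $[G:H]$ with basis $R$.

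Second I would exhibit $\Phi : \mathcal{J}_H \to \ZZ[G/H] \otimes_\ZZ J_H$ by $\bar j \cdot g \mapsto gH \otimes \bar j$ for $g \in R$ and $j \in J$. By the preceding decomposition this is a $\ZZ$-module isomorphism onto $\bigoplus_{gH \in G/H} \ZZ \cdot gH \otimes_\ZZ J_H = \ZZ[G/H] \otimes_\ZZ J_H$. A routine check shows that $\Phi$ is $G/H$-equivariant and independent of the choice of $R$: if $g \in R$ is replaced by $hg$ for some $h \in H$ then, on the source side, $j \cdot g$ is rewritten as $(jh^{-1})\cdot(hg)$, and on the target side one uses $\overline{hg} = \bar g$ in $G/H$ together with $\overline{jh^{-1}} = \bar j$ in $J_H$ (since $j(h^{-1}-1) \in J \cdot I(H) = I(H) J$).

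I do not anticipate any serious obstacle: the entire argument is bookkeeping driven by the freeness of $\ZZ[G]$ over $\ZZ[H]$. The only point worth pausing on is the compatibility of forming the quotient by $I(H)$ with the direct sum decomposition of $\mathcal{J}$, but this is immediate once one observes that each summand $Jg$ is $H$-stable and that $G$ is abelian, so $I(H)$ acts on $Jg$ exactly as it does on $J$ itself.
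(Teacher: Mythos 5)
Your proof is correct and takes essentially the same approach as the paper: the paper simply writes down the inverse of your $\Phi$, namely the map $\ZZ[G/H]\otimes_\ZZ J_H \to \mathcal{J}_H$ sending $\tau\otimes\overline a$ to $\overline{\widetilde\tau a}$ (with $\widetilde\tau$ a lift of $\tau$), and asserts without elaboration that it is well-defined and bijective. Your decomposition of $\mathcal{J}_H$ as a free $J_H$-module on coset representatives, coming from $\ZZ[G]=\bigoplus_{g\in R}\ZZ[H]g$, is exactly the bookkeeping that justifies that assertion.
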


\begin{proof}
Define a homomorphism
$$\ZZ[G/H]\otimes_\ZZ J_H \longrightarrow \mathcal{J}_H$$
by $\tau \otimes \overline a \mapsto \overline {\widetilde \tau a}$, where $\tau \in G/H$, $a \in J$, and $\widetilde \tau \in G$ is a lift of $\tau$. One can easily check that this homomorphism is well-defined, and bijective.
\end{proof}

\begin{definition}
\begin{rm}
Let $M$ be a $G$-module. For $\varphi \in \Hom_G(M,\ZZ[G])$, we define $\varphi^H \in \Hom_{G/H}(M^H,\ZZ[G/H])$ by
$$M^H \stackrel{\varphi}{\longrightarrow} \ZZ[G]^H \simeq \ZZ[G/H],$$
where the last isomorphism is given by
$\N_H=\sum_{\sigma \in H} \sigma \mapsto 1$. Let $r$ be a non-negative integer. For $\Phi \in \bigwedge_G^r \Hom_G(M,\ZZ[G])$, we define $\Phi^H \in \bigwedge_{G/H}^r \Hom_{G/H}(M^H,\ZZ[G/H])$ to be the image of $\Phi$ under the map
$$\varphi_1\wedge\cdots\wedge \varphi_r \mapsto \varphi_1^H \wedge\cdots \wedge \varphi_r^H.$$
For convention, if $r=0$, then we define $\Phi^H \in \ZZ[G/H]$ to be the image of $\Phi \in \ZZ[G]$ under the natural map $: \ZZ[G] \longrightarrow \ZZ[G/H]$.
\end{rm}
\end{definition}

\begin{proposition} \label{propphi}
Let $M$ be a $G$-module and $r \in \ZZ_{\geq 0}$. For any $m \in \QQ \bigwedge_G^r M$ and $\Phi \in \bigwedge_G^r \Hom_G(M,\ZZ[G])$, we have
$$\Phi(m)=\Phi^H(\N_H^r m) \ in \ \QQ[G/H],$$
where $\N_H^r$ denote the map $\QQ \bigwedge_G^r M \to \QQ \bigwedge_{G/H}^rM^H$ induced by $\N_H: M \to M^H$.
\end{proposition}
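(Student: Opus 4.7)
The plan is to reduce the identity to a determinant computation by exploiting multilinearity and then invoking Proposition \ref{propformula}. Since both sides of the claimed equality are $\QQ$-linear in $m$ and $\ZZ$-linear in $\Phi$, I may assume without loss of generality that $\Phi$ is a pure wedge $\varphi_1 \wedge \cdots \wedge \varphi_r$ with each $\varphi_i \in \Hom_G(M,\ZZ[G])$ and that $m$ is a pure wedge $m_1 \wedge \cdots \wedge m_r$ with $m_i \in \QQ M$.

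The central step is to identify $\varphi_i^H \circ \N_H$ with the composition of $\varphi_i$ and the natural projection $\pi \colon \ZZ[G] \twoheadrightarrow \ZZ[G/H]$. Concretely, for $m_j \in M$ one has
\[
\varphi_i(\N_H m_j) = \N_H \varphi_i(m_j) \in \ZZ[G]^H,
\]
by the $G$-equivariance of $\varphi_i$. Writing $\varphi_i(m_j) = \sum_{g \in G} a_g g$ and grouping by cosets of $H$, the element $\N_H \varphi_i(m_j)$ expands as $\sum_{c \in G/H}\bigl(\sum_{\sigma \in H} a_{\sigma \tilde c}\bigr) \N_H \tilde c$ in the $\ZZ$-basis $\{\N_H \tilde c\}_{c \in G/H}$ of $\ZZ[G]^H$. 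Under the isomorphism $\ZZ[G]^H \simeq \ZZ[G/H]$ sending $\N_H \tilde c \mapsto \bar c$, this image coincides precisely with $\pi(\varphi_i(m_j))$. Hence
\[
\varphi_i^H(\N_H m_j) = \pi\bigl(\varphi_i(m_j)\bigr).
\]

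Combining this with Proposition \ref{propformula} (applied twice, over $G$ and over $G/H$) gives
\[
\Phi^H(\N_H^r m) = \det\bigl(\varphi_i^H(\N_H m_j)\bigr)_{1\le i,j \le r} = \det\bigl(\pi(\varphi_i(m_j))\bigr)_{1\le i,j \le r},
\]
and since $\pi$ is a ring homomorphism the latter equals $\pi(\det(\varphi_i(m_j))) = \pi(\Phi(m))$, which is exactly $\Phi(m)$ viewed in $\QQ[G/H]$. This settles the case $r \geq 1$; the case $r = 0$ is immediate from the definition, as both sides reduce to the image of $\Phi \in \ZZ[G]$ in $\ZZ[G/H]$.

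I expect no serious obstacle in this proof: the whole argument is essentially the bookkeeping verification that the norm element $\N_H$ is compatible on one side of the pairing with the canonical projection $\pi$ on the other side. The only point that requires care is the precise normalization of the isomorphism $\ZZ[G]^H \simeq \ZZ[G/H]$ (sending $\N_H \mapsto 1$), since a misidentification there would introduce a spurious factor of $|H|$; the coset-basis expansion above is exactly what eliminates that factor.
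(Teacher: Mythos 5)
Your proof is correct and is essentially the argument the paper has in mind: the paper's own proof reads simply ``This follows directly from the definition of $\Phi^H$,'' and your verification (that $\varphi_i^H\circ\N_H = \pi\circ\varphi_i$ under the normalization $\N_H\mapsto 1$, followed by the determinant formula of Proposition~\ref{propformula} and the fact that $\pi$ is a ring homomorphism) is precisely the spelled-out version of that one-line claim.
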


\begin{proof}
This follows directly from the definition of $\Phi^H$.
\end{proof}

We consider the canonical map
$$\nu : \bigcap_{G/H}^r M^H \longrightarrow \bigcap_{G}^r M$$
which is defined as follows.
Let
$$\iota : \bigwedge_G^r \Hom_G(M,\ZZ[G]) \longrightarrow \Hom_G(\bigwedge_G^r M,\ZZ[G])$$
and
$$\iota_H : \bigwedge_{G/H}^r \Hom_{G/H}(M^H,\ZZ[G/H] ) \longrightarrow \Hom_{G/H}(\bigwedge_{G/H}^r M^H,\ZZ[G/H])$$
be the homomorphisms defined in Remark \ref{rlat}. The map
$$\im (\iota) \longrightarrow \im (\iota_H) \ ; \ \iota(\Phi) \mapsto \iota_H(\Phi^H)$$
induces a map
$$\alpha: \Hom_G(\im (\iota_H),\ZZ[G]) \longrightarrow \Hom_G(\im (\iota),\ZZ[G]) \simeq \bigcap_G^r M.$$
Note that we have a canonical isomorphism
$$\beta: \Hom_G(\im (\iota_H),\ZZ[G]) \stackrel{\sim}{\longrightarrow} \Hom_{G/H}(\im (\iota_H),\ZZ[G/H])\simeq \bigcap_{G/H}^rM^H \ ; \ \varphi \mapsto \varphi^H.$$
We define a map $\nu$ by
$$\nu:=\alpha\circ \beta^{-1} : \bigcap_{G/H}^rM^H \longrightarrow \bigcap_{G}^r M.$$

\begin{proposition} \label{propinj}
Let $M$ be a finitely generated $G$-module which is $\ZZ$-torsion-free. For any $r \in \ZZ_{\geq 0}$, the map
$\nu : \bigcap_{G/H}^r M^H \rightarrow \bigcap_{G}^r M$ is injective.
Furthermore, the maps
$$(\bigcap_{G/H}^r M^H) \otimes_\ZZ J_H \longrightarrow (\bigcap_{G}^r M) \otimes_\ZZ J_H \longrightarrow( \bigcap_G^r M) \otimes_\ZZ \ZZ[H]/I(H)J$$
are both injective, where the first map is induced by $\nu$, and the second by inclusion $J_H \hookrightarrow \ZZ[H]/I(H)J$.
\end{proposition}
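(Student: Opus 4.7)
The strategy is to reduce to the case of a finitely generated free $G$-module, in which $\nu$ admits a fully explicit description, and to deduce both parts of the proposition simultaneously from a single $\ZZ$-torsion-free cokernel assertion.

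First I would choose a $G$-equivariant injection $j\colon M\hookrightarrow F$ with $F$ finitely generated free over $\ZZ[G]$ and with $F/M$ $\ZZ$-torsion-free. Such an embedding exists; for instance, one may take $F = \ZZ[G]\otimes_{\ZZ} M$ (with $G$ acting only on the first factor), which is $\ZZ[G]$-free of rank $d := \mathrm{rank}_{\ZZ}(M)$, together with $j(m) = \sum_{g \in G} g \otimes g^{-1}m$; both the $G$-equivariance and the fact that the cokernel is $\ZZ$-torsion-free are verified by a direct computation on $\ZZ$-bases. Since $F$ is free over $\ZZ[G]$, $F^{H} \cong \ZZ[G/H]^{d}$ is free over $\ZZ[G/H]$, and the left-exactness of $(-)^{H}$ shows that $F^{H}/M^{H}$ embeds in $F/M$ and is likewise $\ZZ$-torsion-free. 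By Lemma \ref{lemr}(ii), Remark \ref{remr} and Proposition \ref{propproj}, this produces canonical inclusions $\bigcap_{G/H}^{r}M^{H} \hookrightarrow \bigwedge_{G/H}^{r}F^{H}$ and $\bigcap_{G}^{r}M \hookrightarrow \bigwedge_{G}^{r}F$, each with $\ZZ$-torsion-free cokernel.

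Next I would verify the naturality square
\[ \begin{CD}
\bigcap_{G/H}^{r}M^{H} @>\nu_{M}>> \bigcap_{G}^{r}M \\
@VVV @VVV \\
\bigwedge_{G/H}^{r}F^{H} @>\nu_{F}>> \bigwedge_{G}^{r}F
\end{CD} \]
in which the vertical arrows are the inclusions just described and $\nu_M$, $\nu_F$ denote the map $\nu$ for $M$ and $F$ respectively. Commutativity follows formally from the construction of $\nu$ (specifically, from the relation $\Phi(\nu(m)) = \N_{H}\widetilde{\Phi^{H}(m)}$ built into its definition), from the fact that any $\Phi_{F} \in \bigwedge_{G}^{r}\Hom_{G}(F,\ZZ[G])$ restricts to an element of $\bigwedge_{G}^{r}\Hom_{G}(M,\ZZ[G])$ compatibly with the formation of $H$-invariants, and from the non-degeneracy of the evaluation pairing between $\bigwedge_{G}^{r}F$ and $\bigwedge_{G}^{r}\Hom_{G}(F,\ZZ[G])$ (which is perfect because $F$ is free). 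The heart of the argument is then an explicit computation of $\nu_{F}$: fixing a $\ZZ[G]$-basis $e_{1},\ldots,e_{d}$ of $F$ and the induced bases of $F^{H}$, $\bigwedge_{G/H}^{r}F^{H}$ and $\bigwedge_{G}^{r}F$, Proposition \ref{propformula} yields the coordinate-by-coordinate description that $\nu_{F}$ sends $(\N_{H}e_{i_{1}}) \wedge \cdots \wedge (\N_{H}e_{i_{r}})$ to $\N_{H}\cdot(e_{i_{1}} \wedge \cdots \wedge e_{i_{r}})$. Under the identifications $\bigwedge_{G/H}^{r}F^{H} \cong \ZZ[G/H]^{\binom{d}{r}}$ and $\bigwedge_{G}^{r}F \cong \ZZ[G]^{\binom{d}{r}}$ this is simply the map $\bar x \mapsto \N_{H}\widetilde{x}$ in each coordinate, with cokernel $(\ZZ[G]/\ZZ[G]^{H})^{\binom{d}{r}}$. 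Writing $\ZZ[G]$ as the free $\ZZ[H]$-module $\bigoplus_{gH \in G/H}\ZZ[H]\cdot g$ one sees that $\ZZ[G]/\ZZ[G]^{H}$ splits as $\bigoplus_{gH \in G/H}\ZZ[H]/\ZZ \N_{H}$, which is $\ZZ$-free since $\N_{H}$ is part of a $\ZZ$-basis of $\ZZ[H]$; in particular $\coker(\nu_{F})$ is $\ZZ$-torsion-free.

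Combining these observations, the composite $\bigcap_{G/H}^{r}M^{H} \hookrightarrow \bigwedge_{G/H}^{r}F^{H} \xrightarrow{\nu_{F}} \bigwedge_{G}^{r}F$ is an injection with $\ZZ$-torsion-free cokernel, and so remains injective after tensoring with any abelian group. Taking this abelian group to be $\ZZ$ yields the injectivity of $\nu$ itself, and taking it to be $J_{H}$ yields the injectivity of $\nu \otimes \mathrm{id}_{J_{H}}$, i.e.\ of the first of the two claimed maps. For the second map, I would use that $\bigcap_{G}^{r}M$ is $\ZZ$-torsion-free (being a submodule of $\QQ \bigwedge_{G}^{r}M$) and hence $\ZZ$-flat, so that tensoring with it preserves the injection $J_{H} \hookrightarrow \ZZ[H]/I(H)J$ arising from the short exact sequence $0 \to J_{H} \to \ZZ[H]/I(H)J \to \ZZ[H]/J \to 0$. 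The principal technical obstacle is the coordinate-wise formula for $\nu_{F}$ together with the verification that $\ZZ[G]/\ZZ[G]^{H}$ is $\ZZ$-torsion-free.
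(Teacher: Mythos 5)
Your proof is correct, and it takes essentially the approach the paper intends: the paper itself omits the argument, deferring to the cited lemma of the third author, but the preparatory machinery in this section (Lemma \ref{lemr}(ii), Remark \ref{remr}, Proposition \ref{propproj}, Remark \ref{remnorm}) is set up precisely to carry out your reduction, namely embedding $M$ in a free module $F$ with $\ZZ$-torsion-free cokernel, identifying $\nu_F$ with the map whose image is $\N_H\bigwedge_G^r F$, and observing that $\bigwedge_G^r F/\N_H\bigwedge_G^r F \cong (\ZZ[G]/\N_H\ZZ[G])^{\binom{d}{r}}$ is $\ZZ$-free.
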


\begin{proof}
The proof is the same as \cite[Lem. 2.11]{sano}, so we omit it.
\end{proof}

\begin{remark} \label{remnu}
\begin{rm}
The inclusion $M^H \subset M$ induces a map
$$\xi: \bigcap_{G/H}^r M^H \longrightarrow \bigcap_{G}^rM.$$
We note that this map {\it does not coincide} with
the above map $\nu$ if $r >1$. Indeed, one can check that
$\im (\xi) \subset |H|^{{\rm max} \{0,r-1 \}} \bigcap_G^r M$
(see \cite[Lem. 4.8]{MR2}), and
$$\nu =  |H|^{-{\rm max} \{0,r-1 \}} \xi. $$
\end{rm}
\end{remark}

\begin{remark} \label{remnorm}
\begin{rm}
Let $P$ be a finitely generated projective $G$-module. Then, any element of $P^H$ is written as $\N_H a$ with some $a \in P$, since $P$ is cohomologically trivial. One can check that, if $r>0$ (resp. $r=0$), then the map $\nu :\bigwedge_{G/H}^r P^H \rightarrow \bigwedge_G^r P$ constructed above coincides with the map
$$\N_H a_1\wedge\cdots\wedge \N_H a_r \mapsto \N_H a_1 \wedge \cdots \wedge a_r$$
$$(\mbox{resp. } \ZZ[G/H] \simeq \ZZ[G]^H \hookrightarrow \ZZ[G]).$$
In particular, we know that $\im (\nu) = \N_H \bigwedge_G^r P$.
\end{rm}
\end{remark}

\begin{proposition} \label{thminj}
Let $M$ be a finitely generated $G$-module which is $\ZZ$-torsion-free, and $r \in \bbZ_{\geq0}$. Then the map
$$(\bigcap_{G/H}^rM^H)\otimes_\bbZ J_H \longrightarrow \Hom_G(\bigwedge_G^r\Hom_G(M,\bbZ[G]),\mathcal{J}_H) \quad ; \quad \alpha \mapsto (\Phi \mapsto \Phi^H(\alpha))$$
is injective. (We regard $\Phi^H(\alpha) \in \ZZ[G/H]\otimes_\ZZ J_H$ as an element of $\mathcal{J}_H$ via the isomorphism $\mathcal{J}_H \simeq \ZZ[G/H] \otimes_\ZZ J_H$ in Proposition \ref{propisom}.)
\end{proposition}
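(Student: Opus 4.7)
The plan is to reduce the desired injectivity to the injectivity of the map $\nu \otimes \mathrm{id}: (\bigcap_{G/H}^r M^H) \otimes J_H \to (\bigcap_G^r M) \otimes J_H$ provided by Proposition \ref{propinj}. Thus, given $\alpha \in (\bigcap_{G/H}^r M^H) \otimes J_H$ with $\Phi^H(\alpha)=0$ in $\mathcal{J}_H$ for every $\Phi \in \bigwedge_G^r \Hom_G(M,\ZZ[G])$, I aim to deduce $\nu(\alpha)=0$ in $(\bigcap_G^r M) \otimes J_H$.

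The first main step will be to establish the identity
\[
\Phi(\nu(\alpha_0)) = \N_H \cdot \widetilde{\Phi^H(\alpha_0)} \quad \text{in } \ZZ[G]
\]
for $\alpha_0 \in \bigcap_{G/H}^r M^H$ and $\Phi \in \bigwedge_G^r \Hom_G(M,\ZZ[G])$, where $\widetilde{\phantom{x}}$ denotes any $\ZZ$-linear lift of an element of $\ZZ[G/H]$ to $\ZZ[G]$ (the product with $\N_H$ is independent of this choice because $\N_H \cdot I(H)=0$). By Remark \ref{remnu}, $\nu(\alpha_0)$ is $H$-fixed, so $\Phi(\nu(\alpha_0)) \in \ZZ[G]^H = \N_H \cdot \ZZ[G]$ and equals $\N_H \cdot z$ for some $z \in \ZZ[G]$ unique modulo $I(H)\ZZ[G]$. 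Applying Proposition \ref{propphi} to $m = \nu(\alpha_0)$, combined with the direct computation $\N_H^r \nu(\alpha_0) = |H|\,\alpha_0$ (using the normalization $\nu = |H|^{-\max\{0,r-1\}} \xi$ and that $\N_H$ acts as $|H|$ on $M^H$), forces $\bar z = \Phi^H(\alpha_0)$, proving the identity. Extending the identity linearly to $(\bigcap_{G/H}^r M^H) \otimes J_H$ and using the isomorphism $\mathcal{J}_H \cong \ZZ[G/H] \otimes J_H$ of Proposition \ref{propisom}, the hypothesis $\Phi^H(\alpha) = 0$ in $\mathcal{J}_H$ implies that $\widetilde{\Phi^H(\alpha)}$ lies in the image of $I(H)\ZZ[G] \otimes J_H$ in $\ZZ[G] \otimes J_H$, which is annihilated by $\N_H$. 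Hence $\Phi(\nu(\alpha)) = 0$ in $\ZZ[G] \otimes J_H$ for every $\Phi$.

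To conclude $\nu(\alpha) = 0$, I will embed $M$ in a finitely generated free $G$-module $P$ with $\ZZ$-torsion-free cokernel (which exists, for instance by dualizing a surjection $\ZZ[G]^n \twoheadrightarrow \Hom_\ZZ(M,\ZZ)$). Then Remark \ref{remr} yields an injection $(\bigcap_G^r M) \otimes J_H \hookrightarrow (\bigwedge_G^r P) \otimes J_H$. Since $\bigwedge_G^r P$ is a finitely generated projective $G$-module, the bidual evaluation is an isomorphism $(\bigwedge_G^r P) \otimes J_H \cong \Hom_G(\Hom_G(\bigwedge_G^r P,\ZZ[G]), \ZZ[G] \otimes J_H)$. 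Moreover, Lemma \ref{lemr}(i) implies that every $\Phi \in \bigwedge_G^r \Hom_G(M,\ZZ[G])$ is the restriction of some element of $\bigwedge_G^r \Hom_G(P,\ZZ[G]) \cong \Hom_G(\bigwedge_G^r P, \ZZ[G])$, and conversely every element of the latter restricts to some such $\Phi$. The vanishing of $\Phi(\nu(\alpha))$ for all $\Phi$ therefore forces $\nu(\alpha) = 0$ in $(\bigwedge_G^r P) \otimes J_H$, hence in $(\bigcap_G^r M) \otimes J_H$, and Proposition \ref{propinj} then gives $\alpha = 0$. The main technical obstacle is the careful verification of the key identity in the first step, since the $|H|$-normalization distinguishing $\nu$ from $\xi$ must be tracked consistently through both Proposition \ref{propphi} and the decomposition $\ZZ[G]^H = \N_H \cdot \ZZ[G]$.
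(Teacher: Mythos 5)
Your proof is correct. The central identity $\Phi(\nu(\alpha_0)) = \N_H\,\widetilde{\Phi^H(\alpha_0)}$ in $\ZZ[G]$, which you derive by combining Proposition \ref{propphi} with the computation $\N_H^r\nu(\alpha_0)=|H|\alpha_0$, is in fact essentially the \emph{definition} of $\nu=\alpha\circ\beta^{-1}$: unwinding that definition, $\Phi(\nu(\alpha_0))$ is, via the identification $\ZZ[G]^H\cong\ZZ[G/H]$ given by $\N_H\mapsto 1$, precisely $\Phi^H(\alpha_0)$. So while your route through a $\QQ$-linear cancellation of $|H|$ is a slight detour, it lands exactly where it should, and the subsequent steps (the exactness of $I(H)\ZZ[G]\otimes J_H\hookrightarrow\ZZ[G]\otimes J_H\twoheadrightarrow\ZZ[G/H]\otimes J_H$, the vanishing $\N_H\cdot I(H)\ZZ[G]=0$, the embedding $\bigcap_G^r M\otimes J_H\hookrightarrow\bigwedge_G^r P\otimes J_H$ from Remark \ref{remr}, the surjectivity from Lemma \ref{lemr}(i), and the duality argument for free $\bigwedge_G^r P$) are all sound. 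This is the expected reduction to Proposition \ref{propinj} via an embedding of $M$ into a projective module, which is the same strategy the paper defers to in citing Sano's Theorem 2.12. Your explicit construction of $P$ by dualizing a presentation of $\Hom_\ZZ(M,\ZZ)$ is a welcome detail that the paper leaves implicit.
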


\begin{proof}
The proof is the same as \cite[Th. 2.12]{sano}.
\end{proof}

\subsection{}

The following definition is originally due to Darmon \cite{D}, and used in \cite[Def. 2.13]{sano} and \cite[Def. 5.1]{MR2}.

\begin{definition} \label{defnorm}
\begin{rm}
Let $M$ be a $G$-module. For $m \in M$, define
$$\mathcal{N}_H(m)=\sum_{\sigma \in H} \sigma m \otimes \sigma^{-1} \in M \otimes_\ZZ \ZZ[H]/I(H)J.$$
\end{rm}
\end{definition}

The following proposition is an improvement of the result of the third author in \cite[Prop. 2.15]{sano}.

\begin{proposition} \label{propnorm}
Let $P$ be a finitely generated projective $G$-module, $r \in \ZZ_{\geq 0}$, and
$$\nu : (\bigwedge_{G/H}^r P^H) \otimes_{\ZZ} J_H \longrightarrow (\bigwedge_{G}^r P) \otimes_\ZZ \ZZ[H]/I(H)J$$
be the injection in Proposition \ref{propinj}.
For an element $a \in \bigwedge_G^r P$, the following are equivalent.
\begin{itemize}
\item[(i)]{$a \in \mathcal{J}\bigwedge_G^r P$,}
\item[(ii)]{$\mathcal{N}_H(a) \in \im (\nu)$,}
\item[(iii)]{$\Phi(a)\in \mathcal{J}$ for every $\Phi \in \bigwedge_G^r \Hom_G(P,\ZZ[G])$.}
\end{itemize}
Furthermore, if the above equivalent conditions are satisfied, then for every $\Phi \in \bigwedge_G^r \Hom_G(P,\ZZ[G])$ we have
$$\Phi(a)=\Phi^H (\nu^{-1}(\mathcal{N}_H(a))) \ in \ \mathcal{J}_H,$$
where we regard $\Phi^H(\nu^{-1}(\mathcal{N}_H(a))) \in \ZZ[G/H]\otimes_\ZZ J_H$ as an element of $\mathcal{J}_H$ via the isomorphism $\mathcal{J}_H \simeq \ZZ[G/H] \otimes_\ZZ J_H$ in Proposition \ref{propisom}.
\end{proposition}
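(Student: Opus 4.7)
The plan is to reduce the proposition first to the case that $P$ is a free $\ZZ[G]$-module, and then to an essentially scalar computation in $\ZZ[H]$. For the reduction, choose $Q$ so that $F := P \oplus Q$ is free; the standard decompositions $\bigwedge_G^r F = \bigoplus_{i+j=r} \bigwedge_G^i P \otimes_{\ZZ[G]} \bigwedge_G^j Q$, $F^H = P^H \oplus Q^H$ and $\Hom_G(F, \ZZ[G]) = \Hom_G(P, \ZZ[G]) \oplus \Hom_G(Q, \ZZ[G])$, together with their exterior-power analogues, imply that each of (i), (ii), (iii) and the asserted formula for $a \in \bigwedge_G^r P$ is equivalent to the same statement for $a$ viewed as an element of $\bigwedge_G^r F$. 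So I fix a $\ZZ[G]$-basis $\{e_1, \ldots, e_n\}$ of $P$ and write $a = \sum_I a_I e_I$ with $a_I \in \ZZ[G]$ indexed by the $r$-element subsets $I$ of $\{1, \ldots, n\}$.

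In this setting the equivalence (i) $\Leftrightarrow$ (iii) is immediate: since the dual basis $\{e_I^{\ast}\}$ of $\bigwedge_G^r \Hom_G(P, \ZZ[G])$ satisfies $e_I^\ast(e_J) = \delta_{IJ}$ by Proposition \ref{propformula}, both conditions simply amount to $a_I \in \mathcal{J}$ for every $I$. For (i) $\Leftrightarrow$ (ii), using Remark \ref{remnorm} (which gives $\im(\nu_0) = \N_H \bigwedge_G^r P$) together with the componentwise decomposition $\mathcal{N}_H(a) = \sum_I \mathcal{N}_H(a_I) e_I$, the problem reduces to the scalar statement: for $x \in \ZZ[G]$, $x \in \mathcal{J}$ if and only if $\mathcal{N}_H(x)$ lies in the image of $\N_H \ZZ[G] \otimes_\ZZ J_H$ in $\ZZ[G] \otimes_\ZZ \ZZ[H]/I(H)J$. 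Then, using the $\ZZ[H]$-decomposition $\ZZ[G] = \bigoplus_{t \in T} t \ZZ[H]$ over a set $T$ of coset representatives of $H$ in $G$ and the identity $\mathcal{N}_H(t x_t) = t \cdot \mathcal{N}_H(x_t)$ (from commutativity of $G$), matters reduce further to the case $x \in \ZZ[H]$: $x \in J$ if and only if $\mathcal{N}_H(x) \in \N_H \otimes_\ZZ J_H \subset \ZZ[H] \otimes_\ZZ \ZZ[H]/I(H)J$.

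I expect this $\ZZ[H]$-level equivalence to be the core technical point, but it is elementary. For the forward direction, for $j \in J$ a direct computation (exploiting the commutativity of $\ZZ[H]$) yields
\[ \mathcal{N}_H(j) - \N_H \otimes \overline{j} = \sum_{\sigma \in H} \sigma \otimes j(\sigma^{-1} - 1), \]
and each summand on the right vanishes in $\ZZ[H] \otimes_\ZZ \ZZ[H]/I(H)J$ because $j(\sigma^{-1} - 1) \in J \cdot I(H) = I(H) J$. For the converse, if $\mathcal{N}_H(x) = \N_H \otimes \overline{j}$ with $x = \sum_\tau a_\tau \tau$, then the substitution $\rho = \sigma \tau$ rewrites $\mathcal{N}_H(x)$ as $\sum_\rho \rho \otimes \overline{x \rho^{-1}}$, and comparing coefficients in the $\ZZ$-basis $\{\rho\}_{\rho \in H}$ of the first tensor factor forces $\overline{x \rho^{-1}} = \overline{j}$ in $\ZZ[H]/I(H) J$ for every $\rho$; the case $\rho = 1$ then gives $x - j \in I(H) J \subset J$, whence $x \in J$. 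The explicit formula $\Phi(a) = \Phi^H(\nu^{-1}(\mathcal{N}_H(a)))$ drops out of the same analysis: via Proposition \ref{propisom}, $\nu^{-1}(\mathcal{N}_H(a))$ corresponds to $\sum_I f_I \otimes \overline{a_I}$ in $\bigwedge_{G/H}^r P^H \otimes_{\ZZ[G/H]} \mathcal{J}_H$ (with $f_I := \N_H e_{i_1} \wedge \cdots \wedge \N_H e_{i_r}$), and evaluating $\Phi^H = \sum_I \overline{c_I}\, f_I^{\ast}$ on this element, via $f_I^\ast(f_J) = \delta_{IJ}$, yields $\sum_I \overline{c_I}\,\overline{a_I} = \overline{\Phi(a)}$ in $\mathcal{J}_H$.
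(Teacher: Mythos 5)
Your argument is correct and reaches the same end computation as the paper, but the reduction to the free case is genuinely different. The paper localizes at each prime $p$ and invokes Swan's theorem to conclude that $P_p$ is free over $\ZZ_p[G]$, then works locally; this requires the (ultimately unproblematic, but unspoken) observation that membership in $\mathcal{J}\bigwedge_G^r P$, in $\im(\nu)$, in $\mathcal{J}$, and the final equality in $\mathcal{J}_H$ can all be checked prime-by-prime, and that these ideals and image modules localize well. You instead embed $P$ as a direct summand of a free module $F=P\oplus Q$ and use the resulting splittings of $\bigwedge^r$, $(-)^H$, $\Hom_G(-,\ZZ[G])$ (and of $\nu$) to transfer each of (i), (ii), (iii) and the formula between $P$ and $F$; this avoids the localization entirely and is, if anything, a cleaner reduction. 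Beyond that, your reorganization of the computation — first decomposing along the free basis $\{e_I\}$, then along coset representatives of $H$ in $G$, until one is left with a purely scalar statement in $\ZZ[H]\otimes_\ZZ\ZZ[H]/I(H)J$ — is a matter of bookkeeping rather than a different key idea: the change of variable $\sigma\mapsto\sigma\rho$ and the observation that $j(\sigma^{-1}-1)\in I(H)J$ are exactly what is hidden in the paper's middle display, where it passes from $\sum_\sigma \sigma y_{\tau\mu}\widetilde\tau b_{\mu(1)}\wedge\cdots\wedge b_{\mu(r)}\otimes\sigma^{-1}$ to $\sum_\sigma\sigma\widetilde\tau b_{\mu(1)}\wedge\cdots\wedge b_{\mu(r)}\otimes\sigma^{-1}y_{\tau\mu}$ over a tensor taken over $\ZZ$. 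Making that step explicit, as you do, is a small improvement in exposition. The identification $\nu^{-1}(\mathcal{N}_H(a))=\sum_I f_I\otimes\overline{a_I}$ and the evaluation of $\Phi^H$ against it are both correct, and match the paper's computation after unwinding the isomorphism $\mathcal{J}_H\simeq\ZZ[G/H]\otimes_\ZZ J_H$.
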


\begin{proof}
By Swan's Theorem (see \cite[(32.1)]{curtisr}), for every prime $p$, $P_p$ is a free $\bbZ_p[G]$-module of rank, $d$ say, independent of $p$.
Considering locally, we may assume that $P$ is a
free $G$-module of rank $d$. We may assume $r\leq d$.
Clearly, (i) implies (iii). We shall show that (iii) implies (ii).
Suppose $\Phi(a) \in \mathcal{J}$ for all
$\Phi\in \bigwedge_G^r\Hom_G(P,\bbZ[G])$. Fix a basis $\{ b_1,\ldots,b_d \}$ of $P$. Write
$$a=\sum_{\mu\in \mathfrak{S}_{d,r}}x_\mu b_{\mu(1)}\wedge\cdots\wedge b_{\mu(r)},$$
with some $x_\mu \in \bbZ[G]$. For each $\mu$, by Proposition \ref{propformula}, we have
$$x_\mu =(b_{\mu(1)}^\ast \wedge\cdots\wedge b_{\mu(r)}^\ast)(a) \in \mathcal{J},$$
where $b_i^\ast \in \Hom_G(P,\bbZ[G])$ is the dual basis of $b_i$. For each $\tau \in G/H$, fix a lift $\widetilde \tau \in G$. Note that we have a direct sum decomposition
$$\mathcal{J} = \bigoplus_{\tau\in G/H}J \widetilde \tau.$$
Therefore, we can write each $x_\mu$ as follows:
$$x_\mu=\sum_{\tau \in G/H}y_{\tau\mu}\widetilde \tau,$$
where $y_{\tau\mu} \in J$. Hence we have
\begin{eqnarray}
\mathcal{N}_H(a) &=& \sum_{\sigma \in H} \sum_{\mu \in \mathfrak{S}_{d,r}} \sum_{\tau\in G/H} \sigma y_{\tau\mu} \widetilde \tau b_{\mu(1)}\wedge\cdots \wedge b_{\mu(r)}\otimes \sigma^{-1} \nonumber \\
&=& \sum_{\sigma \in H} \sum_{\mu \in \mathfrak{S}_{d,r}} \sum_{\tau\in G/H} \sigma \widetilde \tau b_{\mu(1)}\wedge\cdots \wedge b_{\mu(r)}\otimes \sigma^{-1}y_{\tau\mu} \nonumber \\
&=& \sum_{\mu\in\mathfrak{S}_{d,r}} \sum_{\tau\in G/H} \N_H  \widetilde \tau b_{\mu(1)}\wedge\cdots \wedge b_{\mu(r)}\otimes y_{\tau\mu} \nonumber \\
&\in & \N_H \bigwedge_G^r P \otimes_\bbZ J_H=\im (\nu) \nonumber
\end{eqnarray}
(see Remark \ref{remnorm}). This shows (ii). We also see by Remark \ref{remnorm} that
$$\nu^{-1}(\mathcal{N}_H(a))= \sum_{\mu \in \mathfrak{S}_{d,r}} \sum_{\tau\in G/H}\tau \N_H b_{\mu(1)}\wedge\cdots \wedge \N_H b_{\mu(r)}\otimes y_{\tau\mu} \in   (\bigwedge_{G/H}^rP^H)\otimes_\bbZ J_H.$$
Hence, by Proposition \ref{propphi}, we have
$$\Phi(a)=\Phi^H(\nu^{-1}(\mathcal{N}_H(a))) \quad in \quad \mathcal{J}_H$$
for all $\Phi\in\bigwedge_G^r\Hom_G(P,\bbZ[G])$.

Finally, we show that (ii) implies (i). Suppose $\mathcal{N}_H(a) \in \im (\nu)=(\N_H\bigwedge_G^r P) \otimes_\bbZ J_H$. As before, we write
$$a=\sum_{\mu\in\mathfrak{S}_{d,r}}\sum_{\tau \in G/H}y_{\tau\mu}\widetilde \tau b_{\mu(1)}\wedge \cdots \wedge b_{\mu(r)}$$
with $y_{\tau\mu} \in \bbZ[H]$. We have
$$\mathcal{N}_H(a)=\sum_{\sigma \in H} \sum_{\mu\in\mathfrak{S}_{d,r}} \sum_{\tau\in G/H} \sigma \widetilde \tau b_{\mu(1)}\wedge\cdots \wedge b_{\mu(r)}\otimes \sigma^{-1}y_{\tau\mu} \in (\N_H\bigwedge_G^r P) \otimes_\bbZ J_H.$$
Since $(\bigwedge_G^r P) \otimes_\bbZ \bbZ[H]/I(H)J \simeq \bigoplus_{\sigma,\mu,\tau} \bbZ[H]/I(H)J$ as abelian groups, we must have $y_{\tau\mu} \in J$.
This shows that $a \in \mathcal{J}\bigwedge_G^r P$.
\end{proof}

\section{Congruences for Rubin-Stark elements}\label{sano conj}

For a finite abelian extension $K/k$, and an intermediate field
$L$, a conjecture which describes a congruence relation between two
Rubin-Stark elements for $K/k$ and $L/k$ was
formulated by the third author in \cite[Conj. 3]{sano}.
Mazur and Rubin also formulated in \cite[Conj. 5.2]{MR2}
essentially the same conjecture.
In this section, we formulate a refined version (see Conjecture \ref{mrsconj}) of
these conjectures. We also recall a conjecture formulated by the first author, which was studied in \cite{Hay}, \cite{burns}, \cite{GK0}, \cite{GK}, \cite{tan}, and \cite{sano} (see Conjecture \ref{burnsconj}).
In \cite[Th. 3.15]{sano}, the third author proved a link between Conjecture \ref{mrsconj} and Conjecture \ref{burnsconj}.
We now improve the argument given there to show that Conjecture \ref{mrsconj} and Conjecture \ref{burnsconj} are in fact equivalent (see Theorem \ref{thmequiv}). Finally we prove that the natural equivariant leading term conjecture (Conjecture \ref{ltc}) implies both
Conjecture \ref{mrsconj} and
Conjecture \ref{burnsconj} (see Theorem \ref{ltcmrs}).

\subsection{The Rubin-Stark conjecture} \label{secrs}

In this subsection, we recall the formulation of the Rubin-Stark conjecture \cite[Conj. B$'$]{R}.

Let $K/k,G,S,T$ be as in \S \ref{secltc}, namely, $K/k$ is a finite abelian extension of global fields, $G$ is its Galois group, $S$ is a non-empty finite set of places of $k$ such that $S_{\infty}(k)\cup S_{\rm ram}(K/k) \subset S$, and $T$ is a finite set of places of $k$ which is disjoint from $S$. In this section, we assume that $\mathcal{O}_{K,S,T}^\times$ is $\ZZ$-torsion-free.

Following Rubin \cite[Hyp. 2.1]{R} we assume that $S$ satisfies
the following hypothesis with respect to some chosen integer $r$
with $0 \le r < |S|$:
there exists a subset $V \subset S$ of order $r$ such that each place in $V$ splits completely in $K/k$.

Recall that for any $\chi \in\widehat G$ we denote by $r_{\chi,S}$ the order of vanishing of $L_{k,S,T}(\chi,s)$ at $s=0$. We know by \cite[Chap. I, Prop. 3.4]{tate} that
\begin{eqnarray}
r_{\chi,S}=\dim_\CC (e_\chi \CC X_{K,S})=
\begin{cases}
|\{v\in S : \chi(G_v)=1\}| &\text{ if }\chi\neq 1,\\
|S|-1 &\text{ if }\chi=1.
\end{cases} \label{chi rank}
\end{eqnarray}
Therefore, the existence of $V$ ensures that $r \leq r_{\chi,S}$ for every $\chi$ and hence  the function
$s^{-r}L_{k,S,T}(\chi,s)$
is holomorphic at $s=0$. We define the `$r$-th order Stickelberger element' by
$$\theta_{K/k,S,T}^{(r)}:=\lim_{s\to 0}\sum_{\chi \in \widehat G}s^{-r}L_{k,S,T}(\chi^{-1},s)e_\chi \in \RR[G].$$
Note that the $0$-th order Stickelberger element $\theta_{K/k,S,T}^{(0)}(=\theta_{K/k,S,T}(0)$) is the usual Stickelberger element.

Recall that we have the regulator isomorphism
\begin{equation}\label{lambdadef} \lambda_{K,S} : \br \mathcal{O}^\times_{K,S,T}
\stackrel{\sim}{\longrightarrow} \br X_{K,S} \nonumber \end{equation}
defined by
\[\lambda_{K,S}(a) = -\sum_{w\in S_K}\mathrm{log}|a|_w
w.\]
This map $\lambda_{K,S}$ induces the isomorphism
$$\bigwedge_{\RR[G]}^r \RR \mathcal{O}_{K,S,T}^\times \stackrel{\sim}{\longrightarrow}
\bigwedge_{\RR[G]}^r \RR X_{K,S},$$
which we also denote by $\lambda_{K,S}$.
For each place $v\in S$, fix a place $w$ of $K$ lying above $v$.
Take any $v_0 \in S \setminus V$, and define the `($r$-th order) Rubin-Stark element'
\begin{equation}\label{nars}
\epsilon^V_{K/k,S,T} \in \bigwedge^r_{\RR [G]}\RR\mathcal{O}_{K,S,T}^\times
=
\RR\bigwedge^r_{G}\mathcal{O}_{K,S,T}^\times.\nonumber \end{equation}
by
\begin{equation}\label{RSE}
\lambda_{K,S}(\epsilon_{K/k,S,T}^V)=\theta_{K/k,S,T}^{(r)} \bigwedge_{v\in V}(w-w_0),
\end{equation}
where $\bigwedge_{v\in V}(w-w_0)$ is arranged by some chosen order of the elements in $V$.
One can show that the Rubin-Stark element $\epsilon_{K/k,S,T}^V$ does not depend on the choice of $v_0 \in S\setminus V$.

We consider the Rubin lattice
$$\bigcap_G^r \mathcal{O}_{K,S,T}^\times \subset \QQ \bigwedge_G^r \mathcal{O}_{K,S,T}^\times$$
(see Definition \ref{deflat}).
The Rubin-Stark conjecture claims

\begin{conjecture}[The Rubin-Stark conjecture for $(K/k,S,T,V)$] \label{rsconj}
One has
$$\epsilon_{K/k,S,T}^V \in \bigcap_G^r \mathcal{O}_{K,S,T}^\times.$$
\end{conjecture}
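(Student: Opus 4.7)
The plan is to derive the conjecture conditionally on LTC$(K/k)$, as the authors indicate in the introduction that LTC$(K/k)$ implies the Rubin-Stark conjecture.

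First, I would represent $R\Gamma_T((\mathcal{O}_{K,S})_{\mathcal{W}}, \GG_m)$ by a two-term complex $P^0 \xrightarrow{\delta} P^1$ of finitely generated projective $G$-modules placed in degrees zero and one, with $H^0 = \mathcal{O}_{K,S,T}^\times$ and $H^1 = \mathcal{S}^{\mathrm{tr}}_{S,T}(\GG_{m/K})$ (such a representation exists by Proposition \ref{new one}(iv), cf. the proof of Lemma \ref{dual fitting}). Since $\mathcal{O}_{K,S,T}^\times$ is $\ZZ$-torsion-free by hypothesis, the inclusion $\mathcal{O}_{K,S,T}^\times \hookrightarrow P^0$ has $\ZZ$-torsion-free cokernel $\im(\delta)$, so Lemma \ref{lemr}(i) lifts any $\Phi \in \bigwedge_G^r \Hom_G(\mathcal{O}_{K,S,T}^\times, \ZZ[G])$ to some $\widetilde\Phi \in \bigwedge_G^r \Hom_G(P^0, \ZZ[G])$.

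Next I would use the splitting hypothesis on $V$ to build a canonical $G$-linear map $f_V \colon P^1 \to \ZZ[G]^{\oplus r}$: compose the surjection $P^1 \twoheadrightarrow \mathcal{S}^{\mathrm{tr}}_{S,T}(\GG_{m/K})$ with the surjection $\mathcal{S}^{\mathrm{tr}}_{S,T}(\GG_{m/K}) \twoheadrightarrow X_{K,S}$ from (\ref{dual ses}), and then with the $G$-surjection $X_{K,S} \twoheadrightarrow \bigoplus_{v\in V}\ZZ[G]$ sending the fixed $w-w_0$ above $v \in V$ to the $v$-th standard basis vector. Forming the top exterior power $\bigwedge^r f_V \in \bigwedge_G^r \Hom_G(P^1, \ZZ[G])$ and pairing with $\widetilde\Phi$ via the determinant contractions (using that $\bigwedge_G^{\mathrm{rk}_G(P^i)} P^i$ is an invertible $\ZZ[G]$-module by Proposition \ref{propproj}) yields a $\ZZ[G]$-linear map
$$\Psi_{\Phi,V} \colon {\det}_G(P^0) \otimes_G {\det}_G^{-1}(P^1) \longrightarrow \ZZ[G].$$

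Then, combining the explicit description of the trivialization $\vartheta_{\lambda_{K,S}}$ from \S\ref{LTCandZeta} with Lemma \ref{remarkpsi}, applied to the map $\RR P^0 \to \RR P^1$ induced by the regulator with bases compatible with the $V$-splitting of $\RR X_{K,S}$, a direct calculation shows $\Psi_{\Phi,V}(z_{K/k,S,T}) = \pm\Phi(\epsilon_{K/k,S,T}^V)$ via the defining formula (\ref{RSE}). Since LTC$(K/k)$ asserts that $z_{K/k,S,T}$ generates ${\det}_G(P^0)\otimes_G{\det}_G^{-1}(P^1)$ over $\ZZ[G]$, this forces $\Phi(\epsilon_{K/k,S,T}^V) \in \ZZ[G]$ for every $\Phi$, which is precisely the statement $\epsilon_{K/k,S,T}^V \in \bigcap_G^r \mathcal{O}_{K,S,T}^\times$. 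The main obstacle is the explicit identification $\Psi_{\Phi,V}(z) = \pm\Phi(\epsilon^V)$: one must translate the implicit description $\vartheta_{\lambda_{K,S}}(z) = \theta^\ast_{K/k,S,T}(0)$ into a formula matching (\ref{RSE}), handling signs, the auxiliary place $v_0 \in S\setminus V$, and independence of the chosen resolution $P^0 \to P^1$. Lemma \ref{remarkpsi} is precisely engineered for this unwinding, while Proposition \ref{propproj} and Lemma \ref{lemr} ensure that the resulting pairing lands in the Rubin lattice rather than merely in $\QQ\bigwedge_G^r \mathcal{O}_{K,S,T}^\times$.
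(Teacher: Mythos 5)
Your overall strategy is the paper's: realize $R\Gamma_T((\mathcal{O}_{K,S})_\mathcal{W},\GG_m)$ by a two-term complex of projectives, identify the Rubin-Stark element with a contraction of the zeta element, and let ${\rm LTC}(K/k)$ force integrality. You also correctly single out Lemmas \ref{lemr} and \ref{remarkpsi} as the key technical inputs. However, the specific construction of $\Psi_{\Phi,V}$ contains a genuine gap.

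The map $f_V \colon P^1 \to \ZZ[G]^{\oplus r}$ that you build factors through $\pi \colon P^1 \twoheadrightarrow \mathcal{S}^{\rm tr}_{S,T}(\GG_{m/K})$, and $\ker(\pi)=\im(\delta)$; thus $f_V \circ \delta = 0$. (This is precisely the content of Lemma \ref{lempsi}: the $\psi_i = b_i^\ast\circ\psi$ vanish for $1\le i\le r$.) Consequently there is no natural way to ``pair'' $\widetilde\Phi \in \bigwedge^r_G\Hom_G(P^0,\ZZ[G])$ with $\bigwedge^r f_V \in \bigwedge^r_G\Hom_G(P^1,\ZZ[G])$ by passing through $\delta$ — the composite degenerates — and without the differential entering, two degree-$r$ wedges on modules of rank $d$ do not combine to a functional on ${\det}_G(P^0)\otimes_G{\det}_G^{-1}(P^1)$; the degrees simply do not add up to $d$. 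In other words, $f_V$ is the \emph{wrong} piece of the dual basis: you have isolated exactly the $r$ dual coordinates $b_1^\ast,\ldots,b_r^\ast$ on $P^1$ that kill $\im(\delta)$, whereas what one must use is the \emph{complementary} set $\{b_i^\ast : r<i\le d\}$, contracted against $\delta$ to give $\psi_i := b_i^\ast\circ\psi$. The paper does exactly this: with $F=P^1$ free with a basis $\{b_i\}$ adapted to the places so that $\pi(b_i)\mapsto w_i-w_0$, Theorem \ref{zetars} shows directly that $(-1)^{r(d-r)}\bigl(\bigwedge_{r<i\le d}\psi_i\bigr)(z_b)=\epsilon^V_{K/k,S,T}$ and, via Lemma \ref{lemr}(ii), that this element lies in $\bigcap_G^r\mathcal{O}_{K,S,T}^\times=(\QQ\bigwedge_G^r\mathcal{O}_{K,S,T}^\times)\cap\bigwedge_G^r P$. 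Note that this version dispenses with fixing $\Phi$ and constructing a pairing $\Psi_{\Phi,V}$ altogether: once the contracted zeta element is shown to lie in $\bigwedge_G^r P$ and in $\QQ\bigwedge_G^r\mathcal{O}_{K,S,T}^\times$, integrality against every $\Phi$ is automatic from the definition of the Rubin lattice, with no lifting of $\Phi$ required. This is the sense in which the paper's proof is ``very much simpler'' than the original pairing-based argument in \cite{burns}, and your proposal should be adjusted to follow it: replace $f_V$ by the complementary family $\psi_i$ ($r<i\le d$), contract $z_b$, and invoke Lemma \ref{leme} and Lemma \ref{remarkpsi} to match the result with the defining formula (\ref{RSE}).
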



\begin{remark}
\begin{rm}
One can check that the above Rubin-Stark conjecture is equivalent to \cite[Conj. B$'$]{R} for the data $(K/k,S,T,V)$, and that our Rubin-Stark element $\epsilon_{K/k,S,T}^V$ coincides with the unique element predicted by \cite[Conj. B$'$]{R}. This shows, in particular, that the validity of the  conjecture does not depend on the choice of the places lying above $v\in S$ or on the ordering of the elements in $V$.
\end{rm}
\end{remark}

\begin{remark} \label{knownrs}
\begin{rm}
The Rubin-Stark conjecture for $(K/k,S,T,V)$ is known to be true in the following cases:
\begin{itemize}
\item[(i)]{$r=0$. In this case $\epsilon_{K/k,S,T}^\emptyset
=\theta_{K/k,S,T}^{(0)}=\theta_{K/k,S,T}(0) \in \RR[G]$ so the Rubin-Stark conjecture claims only that $\theta_{K/k,S,T}(0) \in \ZZ[G]$ which is a  celebrated result of Deligne-Ribet, Cassou-Nogu\`es, and Barsky.}
\item[(ii)]{$[K:k ]\leq 2$. This is due to Rubin \cite[Cor. 3.2 and Th. 3.5]{R}.}
\item[(iii)]{$K$ is an abelian extension over $\QQ$. This is due to the first author \cite[Th. A]{burns}.}
\item[(iv)]{$k$ is a global function field. This is due to the first author \cite[Th. A]{burns}.}
\end{itemize}
\end{rm}
\end{remark}

\subsection{Conventions for Rubin-Stark elements} \label{convention}
The notation $\epsilon_{K/k,S,T}^V$ has some ambiguities, since $\epsilon_{K/k,S,T}^V$ depends on the choice of the places lying above $v\in S$, and on the choice of the order of the elements in $V$. To avoid this ambiguity, we use the following convention:
when we consider the Rubin-Stark element $\epsilon_{K/k,S,T}^V$, we always fix a place $w$ of $K$ lying above each $v\in S$, and label the elements of $S$ as
$$S=\{ v_0, v_1,\ldots,v_n \}$$
so that $V=\{ v_1,\ldots , v_r\}$, and thus we fix the order of the elements in $V$. So, under this convention, the Rubin-Stark element $\epsilon_{K/k,S,T}^V$ is the element characterized by
$$\lambda_{K,S}(\epsilon_{K/k,S,T}^V)=\theta_{K/k,S,T}^{(r)}\bigwedge_{1\leq i \leq r}(w_i-w_0).$$

\subsection{Conjectures on Rubin-Stark elements} \label{secmrs}
In this subsection, we give a refinement of the conjecture
formulated by the third author \cite[Conj. 3]{sano}, and Mazur and Rubin \cite[Conj. 5.2]{MR2}.
Let $K/k,G,S,T$ be as before, and
we assume that, for a non-negative integer $r$, there exists a subset $V\subset S$ of order $r$ such that each place in $V$ splits completely in $K$. We fix a subgroup $H$ of $G$ for which, for some integer $r'$ with $r'\ge r$, there exists a subset $V' \subset S$ of order $r'$, which contains $V$, and satisfies that each place in $V'$ splits completely in the field $L := K^H$.

Following the convention in \S \ref{convention}, we fix, for each place $v\in S$, a place $w$ of $K$ lying above $v$, and label the elements of $S$ as $S=\{ v_0, \ldots,v_n\}$ so that $V=\{ v_1,\ldots,v_r\}$ and $V'=\{ v_1,\ldots,v_{r'} \}$. We consider the Rubin-Stark elements $\epsilon_{K/k,S,T}^V$ and $\epsilon_{L/k,S,T}^{V'}$ characterized by
$$\lambda_{K,S}(\epsilon_{K/k,S,T}^V)=\theta_{K/k,S,T}^{(r)} \bigwedge_{1\leq i\leq r}(w_i-w_0)$$
and
$$\lambda_{L,S}(\epsilon_{L/k,S,T}^{V'})=\theta_{L/k,S,T}^{(r')} \bigwedge_{1\leq i \leq r'}(w_i-w_0)$$
respectively, where we denote the place of $L$ lying under $w$ also by $w$.

For each integer $i$ with $1 \leq i \le n$ we write $G_i$ for the decomposition group of $v_i$ in $G$.
For any subgroup $U \subset G$, recall that the augmentation ideal of $\ZZ[U]$ is denoted by $I(U)$.
Put $\mathcal{I}_i:=I(G_i)\ZZ[G]$ and $I_{H}:=I(H)\ZZ[G]$.
We define
$$\Rec_i: \mathcal{O}_{L,S,T}^\times \longrightarrow
(\mathcal{I}_i)_{H}=\mathcal{I}_i/I_{H}\mathcal{I}_i$$
by
$$\Rec_i(a)=\sum_{\tau \in G/H}\tau^{-1}({\rm{rec}}_{w_i}(\tau a)-1).$$
Here, ${\rm{rec}}_{w_i}$ is the reciprocity map
$L_{w_{i}}^{\times} \rightarrow G_{i}$ at $w_i$.
Note that $\tau^{-1}({\rm{rec}}_{w_i}(\tau a)-1)$ is well-defined
for $\tau \in G/H$ in $(\mathcal{I}_i)_{H}$.

We put $W:=V'\setminus V=\{v_{r+1},\ldots,v_{r'}\}$.
We define an ideal $J_{W}$ of $\ZZ[H]$ by
\[J_{W} := \begin{cases} (\prod_{r<i \leq r'} I(G_{i})) \ZZ[H],
&\text{ if $W \neq \emptyset$},\\
 \ZZ[H], &\text{ if $W = \emptyset$,}\end{cases}\]
and put $(J_{W})_{H}:=J_{W}/I(H)J_{W}$.
We also define an ideal $\mathcal{J}_{W}$ of $\ZZ[G]$ by
\[\mathcal{J}_{W} := \begin{cases} \prod_{r<i\leq r'}\mathcal{I}_i,
&\text{ if $W \neq \emptyset$},\\
 \ZZ[G], &\text{ if $W = \emptyset$,}\end{cases}\]
and put $(\mathcal{J}_{W})_{H} :=\mathcal{J}_{W}/I_H\mathcal{J}_{W}$. Note that $\mathcal{J}_W=J_W\ZZ[G]$.
By Proposition \ref{propisom}, we have a natural isomorphism of $G/H$-modules
$\ZZ[G/H] \otimes_\ZZ (J_{W})_{H} \simeq (\mathcal{J}_{W})_{H}$.
We consider the graded $G/H$-algebra
$$\mathcal{Q}_W:=\bigoplus_{a_1, \ldots, a_{r'-r} \in \ZZ_{\geq 0}} (\mathcal{I}_{r+1}^{a_1}\cdots\mathcal{I}_{r'}^{a_{r'-r}})_H,$$
where
\[ (\mathcal{I}_{r+1}^{a_1}\cdots\mathcal{I}_{r'}^{a_{r'-r}})_H :=\mathcal{I}_{r+1}^{a_1}\cdots\mathcal{I}_{r'}^{a_{r'-r}}/I_H\mathcal{I}_{r+1}^{a_1}\cdots\mathcal{I}_{r'}^{a_{r'-r}},\]
and we define the $0$-th power of any ideal of $\ZZ[G]$ to be $\ZZ[G]$.

For any integer $i$ with $r<i\leq r'$ we regard $\Rec_i$ as an element of $\Hom_{G/H}(\mathcal{O}_{L,S,T}^\times,\mathcal{Q}_W)$ via the natural embedding $(\mathcal{I}_i)_H \hookrightarrow \mathcal{Q}_W$.
Then by the same method as in \cite[Prop. 2.7]{sano} (or \cite[Cor. 2.1]{MR2}),
one shows that $\bigwedge_{r<i\leq r'} \Rec_i \in \bigwedge_{G/H}^{r'-r} \Hom_{G/H}(\mathcal{O}_{L,S,T}^\times, \mathcal{Q}_W)$ induces
the map
\begin{equation} \label{Map1}
\bigcap_{G/H}^{r'} \mathcal{O}_{L,S,T}^\times \longrightarrow (\bigcap_{G/H}^{r} \mathcal{O}_{L,S,T}^\times) \otimes_{G/H} (\mathcal{J}_{W})_{H} \simeq (\bigcap_{G/H}^{r} \mathcal{O}_{L,S,T}^\times) \otimes_{\ZZ} (J_{W})_{H},
\end{equation}
which we denote by $\Rec_W$.

Following Definition \ref{defnorm}, we define
$$\mathcal{N}_H : \bigcap_{G}^r \mathcal{O}_{K,S,T}^\times \longrightarrow (\bigcap_{G}^r \mathcal{O}_{K,S,T}^\times) \otimes_{\ZZ} \ZZ[H]/I(H)J_{W}$$
by
$\mathcal{N}_H(a)=\sum_{\sigma \in H} \sigma a \otimes \sigma^{-1}$.

Note that since
$(\mathcal{O}_{K,S,T}^\times)^{H}=\mathcal{O}_{L,S,T}^\times$,
there is a natural injective homomorphism
$$
\nu : (\bigcap_{G/H}^{r} \mathcal{O}_{L,S,T}^\times) \otimes_{\ZZ} (J_{W})_{H}
\longrightarrow
(\bigcap_{G}^r \mathcal{O}_{K,S,T}^\times) \otimes_{\ZZ} \ZZ[H]/I(H)J_{W}
$$
by Proposition \ref{propinj}.

\vspace{5mm}

To state the following conjecture we assume the validity of the Rubin-Stark conjecture (Conjecture \ref{rsconj}) for both $(K/k,S,T,V)$ and $(L/k,S,T,V')$.

\begin{conjecture} [${\rm MRS}(K/L/k,S,T,V,V')$] \label{mrsconj}
The element $\mathcal{N}_H(\epsilon_{K/k,S,T}^V)$ belongs to
$\im (\nu)$,
and satisfies
$$\mathcal{N}_H(\epsilon_{K/k,S,T}^V)=(-1)^{r(r'-r)}\cdot
\nu(\Rec_W(\epsilon_{L/k,S,T}^{V'})).$$
\end{conjecture}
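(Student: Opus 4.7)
The plan is to deduce Conjecture \ref{mrsconj} from ${\rm LTC}(K/k)$, which will simultaneously give Theorem \ref{MT1}. The starting point is the identification promised by Theorem \ref{zetars} of each Rubin-Stark element with an explicit determinant-theoretic projection of the corresponding zeta element: applying this to both $K/k$ and $L/k$ expresses $\epsilon_{K/k,S,T}^V$ and $\epsilon_{L/k,S,T}^{V'}$ as images of $z_{K/k,S,T}$ and $z_{L/k,S,T}$, respectively. The next step is to compare the two zeta elements via the natural descent morphism linking $R\Gamma_T((\mathcal{O}_{L,S})_{\mathcal{W}},\GG_m)$ with (a suitable derived variant of the $H$-invariants of) $R\Gamma_T((\mathcal{O}_{K,S})_{\mathcal{W}},\GG_m)$, using crucially that both complexes are perfect.

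The heart of the argument is the identification of the reciprocity maps $\Rec_i$. Because each $v_i \in W := V' \setminus V$ splits completely in $L$ but has non-trivial decomposition group $G_i \subset H$ in $K$, the local discrepancy between the two Weil-\'etale complexes at $v_i$ is controlled, via local class field theory, by the canonical isomorphism $G_i \simeq (\mathcal{I}_i)_H$ together with the local reciprocity map ${\rm rec}_{w_i}:L_{w_i}^\times \to G_i$. Assembling these Bockstein-style connecting maps across $W$ and applying Proposition \ref{propnorm} should identify the resulting composite with the wedge $\bigwedge_{r<i\leq r'}\Rec_i$ evaluated on $\epsilon_{L/k,S,T}^{V'}$. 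Combined with the formula of Proposition \ref{propnorm} for $\mathcal{N}_H(\epsilon_{K/k,S,T}^V)$ in terms of a free projective lift, this should yield exactly the predicted equality, the sign $(-1)^{r(r'-r)}$ arising from the wedge-factor permutation needed to align the two descriptions.

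The main obstacle will be the sign-and-orientation calculation: one must precisely track the effect on graded determinants of the cohomological shifts and permutations produced by descent from $K$ to $L$, and normalize the local reciprocity contributions so that their iterated composition matches $\Rec_W$ on the nose (rather than up to a twist). A secondary subtlety is integrality, i.e., verifying $\mathcal{N}_H(\epsilon_{K/k,S,T}^V) \in \im(\nu)$; this should follow from the locally-quadratic presentation of $\mathcal{S}^{\rm tr}_{S,T}(\GG_{m/K})$ provided by Lemma \ref{dual fitting} together with the equivalence of conditions (i) and (ii) in Proposition \ref{propnorm} applied to a projective resolution of the relevant Weil-\'etale complex.
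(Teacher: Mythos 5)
Your overall strategy tracks the paper's own proof of Theorem \ref{ltcmrs} quite closely: both proceed by using Theorem \ref{zetars} (together with its $H$-descent, Corollary \ref{remzetars}) to express both Rubin--Stark elements as determinantal projections of the single zeta-element basis $z_b \in \bigwedge_G^d P$ attached to a chosen two-term representative $P \xrightarrow{\psi} F$ of $R\Gamma_T((\mathcal{O}_{K,S})_\mathcal{W},\GG_m)$; both identify each $\Rec_i$ with the matrix coefficient $\psi_i$ of this resolution via local class field theory (this is Lemma \ref{lem1}, following \cite[Lem.\ 10.3]{burns}); and both close the argument with Proposition \ref{propnorm} to relate $\mathcal{N}_H$ to $\nu$ and $\Phi^H$. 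The sign $(-1)^{r(r'-r)}$ does emerge from exactly the permutation bookkeeping you flag, which the paper carries out in Lemma \ref{lemcomm}.

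The one genuine gap is in your proposed source of integrality, i.e.\ of the containment $\mathcal{N}_H(\epsilon_{K/k,S,T}^V) \in \im(\nu)$. You attribute this to the locally-quadratic presentation of $\mathcal{S}^{{\rm tr}}_{S,T}(\GG_{m/K})$ from Lemma \ref{dual fitting}, but that lemma concerns Fitting ideals and gives no information of the required kind. The ingredient that is actually needed is Lemma \ref{lempsi} (the analogue of \cite[Lem.\ 7.4]{burns}): for each place $v_i$ in $S$ with $1 \le i \le n$ one has $\im(\psi_i) \subset \mathcal{I}_i$ (and $\psi_i=0$ for $1\le i\le r$), whence $(\bigwedge_{r<i\le d}\psi_i)(z_b)$ lands in $\mathcal{J}_W\bigwedge_G^r P$. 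Only once this containment is available can Proposition \ref{propnorm}, applied to the projective module $P$, deliver both $\mathcal{N}_H(\epsilon^V_{K/k,S,T})\in\im(\nu)$ and the identity $\Phi(\epsilon^V_{K/k,S,T}) = \Phi^H(\nu^{-1}\mathcal{N}_H(\epsilon^V_{K/k,S,T}))$. Without Lemma \ref{lempsi} the hypothesis of Proposition \ref{propnorm} is not verified and the argument stalls at exactly the point you describe as a ``secondary subtlety.''
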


\begin{remark}\begin{rm} In this article we write `${\rm MRS}(K/L/k,S,T)$ is valid' to mean that the statement of Conjecture \ref{mrsconj} is valid for
all possible choices of  $V$ and $V'$.\end{rm}\end{remark}

\begin{remark}
\begin{rm} In \S\S \ref{dargro} and \ref{grost} we show that Conjecture \ref{mrsconj} is a natural refinement and generalization of a conjecture of Darmon from \cite{D} and of conjectures of Gross from \cite{Gp} and \cite{G}. In a subsequent article we will show that the known validity of Conjecture \ref{mrsconj} in the case $k = \QQ$ also implies a refinement of the main result of Solomon in \cite{solomon} concerning the `wild Euler system' that he constructs in loc. cit. \end{rm}
\end{remark}

\begin{remark}
\begin{rm}
\label{refined sano}
One has $I(G_{i})\ZZ[H] \subset I(H)$, so
$J_{W} \subseteq I(H)^e$ where $e := r'-r\ge 0$.
Thus there is a natural homomorphism
%
%
\[ (\bigcap^r_{G}\mathcal{O}^\times_{K,S,T})\otimes _\ZZ (J_{W})_{H}
\longrightarrow
(\bigcap^r_{G}\mathcal{O}^\times_{K,S,T})\otimes _\ZZ I(H)^{e}/I(H)^{e+1}
.  \]
Conjecture \ref{mrsconj} is therefore a strengthening of
the central conjecture of the third author in \cite[Conj. 3]{sano}
and of the conjecture formulated by Mazur and Rubin in \cite[Conj. 5.2]{MR2}, both of which claim only that the given equality is valid after projection to the group
$({\bigcap}^r_{G}\mathcal{O}^\times_{K,S,T})\otimes _\ZZ I(H)^{e}/I(H)^{e+1}$. This refinement is in the same spirit as  Tate's strengthening in \cite{tatebalt} of the `refined class number formula' formulated by Gross in  \cite{G}.
\end{rm}
\end{remark}

\begin{remark}
\begin{rm}
Note that, when $r=0$, following \cite[Def. 2.13]{sano} $\mathcal{N}_H$ would be defined to be the natural map $\ZZ[G]\rightarrow \ZZ[G]/I_H\mathcal{J}_{W}$, but this does not make any change because of the observation of Mazur and Rubin
in \cite[Lem. 5.6 (iv)]{MR2}.
Note also that, by Remark \ref{remnu}, the map ${\bf j}_{L/K}$ in \cite[Lem. 4.9]{MR2} (where our $K/L$ is denoted by $L/K$) is essentially the same as our homomorphism $\nu$.
We note that Mazur and Rubin does not use the fact
that ${\bf j}_{L/K}$ is injective, so the formulation of \cite[Conj. 3]{sano} is slightly stronger than the conjecture \cite[Conj. 5.2]{MR2}.
\end{rm}
\end{remark}

\vspace{5mm}

We next state a refinement of a conjecture that was formulated
by the first author in \cite{burns} (the original version of which has been studied in many subsequent articles of different authors including \cite{Hay}, \cite{GK0}, \cite{GK}, \cite{tan}, and \cite{sano}).

\begin{conjecture} [${\rm B}(K/L/k,S,T,V,V')$] \label{burnsconj}
For every $\Phi \in \bigwedge_G^r \Hom_G(\mathcal{O}_{K,S,T}^\times,\ZZ[G])$, we have
$$\Phi(\epsilon_{K/k,S,T}^V)\in \mathcal{J}_{W}$$
and an equality
$$\Phi(\epsilon_{K/k,S,T}^V)=(-1)^{r(r'-r)}\Phi^H(\Rec_W(\epsilon_{L/k,S,T}^{V'})) \ in \ (\mathcal{J}_{W})_H.$$
\end{conjecture}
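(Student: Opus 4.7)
The plan is to derive Conjecture \ref{burnsconj} directly from Conjecture \ref{mrsconj} (which may be assumed, since by hypothesis the Rubin-Stark Conjecture holds for both $(K/k,S,T,V)$ and $(L/k,S,T,V')$, and the leading-term conjecture $\mathrm{LTC}(K/k)$ would imply $\mathrm{MRS}$ via Theorem \ref{MT1}). The entire content of B is a statement about how the functional $\Phi$ evaluates on the Rubin-Stark element, while MRS is a statement about the norm element $\mathcal{N}_H(\epsilon^V_{K/k,S,T})$; the key technical bridge between these two formulations is Proposition \ref{propnorm}, and the job is simply to transport it from a projective module $P$ to the possibly non-projective $G$-module $\mathcal{O}_{K,S,T}^\times$.

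First I would choose, via Lemma \ref{lemr}, a finitely generated free $G$-module $P$ together with an embedding $j\colon \mathcal{O}_{K,S,T}^\times \hookrightarrow P$ whose cokernel is $\ZZ$-torsion-free. By Lemma \ref{lemr}(ii) and Proposition \ref{propproj} one has $\bigcap_G^r \mathcal{O}_{K,S,T}^\times \subseteq \bigwedge_G^r P$, so the Rubin-Stark element $\epsilon^V_{K/k,S,T}$ may be viewed as an element of $\bigwedge_G^r P$; by Lemma \ref{lemr}(i) every $\Phi\in\bigwedge_G^r\Hom_G(\mathcal{O}_{K,S,T}^\times,\ZZ[G])$ lifts to some $\widetilde\Phi\in\bigwedge_G^r\Hom_G(P,\ZZ[G])$, and the lift behaves compatibly under the $H$-fixed-point construction, so $\widetilde\Phi^H$ restricts to $\Phi^H$ on $(\mathcal{O}_{L,S,T}^\times)$. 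Now MRS asserts
\[
\mathcal{N}_H(\epsilon_{K/k,S,T}^V)=(-1)^{r(r'-r)}\,\nu\bigl(\Rec_W(\epsilon_{L/k,S,T}^{V'})\bigr)\in\im(\nu),
\]
so the equivalence (ii)$\Leftrightarrow$(iii) of Proposition \ref{propnorm}, applied to $P$, $\widetilde\Phi$ and the ideal $J_W$, yields $\widetilde\Phi(\epsilon_{K/k,S,T}^V)\in\mathcal{J}_W$; since $\widetilde\Phi(\epsilon_{K/k,S,T}^V)=\Phi(\epsilon_{K/k,S,T}^V)$, the integrality half of B follows.

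For the congruence, I would invoke the last assertion of Proposition \ref{propnorm}, which gives in $(\mathcal{J}_W)_H$ the identity $\widetilde\Phi(\epsilon_{K/k,S,T}^V)=\widetilde\Phi^H\bigl(\nu^{-1}(\mathcal{N}_H(\epsilon_{K/k,S,T}^V))\bigr)$. Substituting the MRS formula and using the compatibility $\widetilde\Phi^H|_{(\mathcal{O}_{L,S,T}^\times)}=\Phi^H$ collapses the right side to $(-1)^{r(r'-r)}\Phi^H(\Rec_W(\epsilon_{L/k,S,T}^{V'}))$, giving the desired equality. The main (and really only) obstacle is the bookkeeping between the isomorphisms of Proposition \ref{propisom} that identify $(\mathcal{J}_W)_H$ with $\ZZ[G/H]\otimes_\ZZ (J_W)_H$: one must check that $\widetilde\Phi^H$ acts on the image of $\nu$ through the second factor in the way dictated by $\Rec_W$, which is precisely the content (for projective $P$) of Proposition \ref{propnorm}. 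Injectivity of the natural map $(\bigcap_{G/H}^r \mathcal{O}_{L,S,T}^\times)\otimes_\ZZ (J_W)_H \to (\bigcap_G^r \mathcal{O}_{K,S,T}^\times)\otimes_\ZZ \ZZ[H]/I(H)J_W$ from Proposition \ref{propinj} ensures that the identification with the reciprocity image is unambiguous, so nothing further is required beyond these transport arguments.
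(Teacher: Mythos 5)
Your proposal follows essentially the same route as the paper's own proof of the implication ${\rm MRS}\Rightarrow{\rm B}$ in Theorem \ref{thmequiv}: embed $\mathcal{O}_{K,S,T}^\times$ into a finitely generated projective $G$-module $P$ with $\ZZ$-torsion-free cokernel, lift $\Phi$ via the surjectivity of Lemma \ref{lemr}(i), and apply Proposition \ref{propnorm} to transfer the norm-compatibility statement of MRS into the functional statement of B. The only cosmetic difference is that the paper produces the embedding from the specific exact sequence (\ref{tateseq}) arising from the Weil-\'etale complex, whereas you invoke its existence abstractly; since (\ref{tateseq}) does supply exactly such a $P$, this is immaterial.
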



In this article we improve an argument
of the third author in \cite{sano} to prove the following result.

\begin{theorem} \label{thmequiv} The conjectures {\rm MRS}$(K/L/k,S,T,V,V')$ and {\rm B}$(K/L/k,S,T,V,V')$ are equivalent.
\end{theorem}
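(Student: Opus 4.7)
To prove the equivalence, the central tool will be Proposition \ref{propnorm}, but I must first pass from the module $\mathcal{O}_{K,S,T}^\times$ to an auxiliary projective resolution, since Proposition \ref{propnorm} is only stated for projective modules.

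I would begin by applying Lemma \ref{lemr} to fix an injection $j\colon \mathcal{O}_{K,S,T}^\times \hookrightarrow P$ into a finitely generated projective $G$-module $P$ with $\ZZ$-torsion-free cokernel. By Remark \ref{remr} this induces an embedding $\bigcap_G^r \mathcal{O}_{K,S,T}^\times \hookrightarrow \bigwedge_G^r P$ with $\ZZ$-torsion-free cokernel, so the element $\mathcal{N}_H(\epsilon_{K/k,S,T}^V)$ can be viewed in $(\bigwedge_G^r P) \otimes_{\ZZ} \ZZ[H]/I(H)J_W$ without loss of information. Moreover, since $P$ is a direct summand of $\ZZ[G]^n$ and $(\ZZ[G]^n)^H \cong \ZZ[G/H]^n$, the module $P^H$ is $G/H$-projective; thus by Proposition \ref{propproj} one has $\bigcap_{G/H}^r P^H = \bigwedge_{G/H}^r P^H$. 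The embedding $j$ produces a commutative square comparing the map $\nu$ occurring in MRS with its analogue $\nu_P$ for $P$.

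Next I would apply Proposition \ref{propnorm} to the element $\epsilon_{K/k,S,T}^V$, regarded in $\bigwedge_G^r P$, with $J=J_W$. This yields the equivalence of
\begin{itemize}
\item[(a)] $\mathcal{N}_H(\epsilon_{K/k,S,T}^V) \in \im(\nu_P)$, and
\item[(b)] $\Phi(\epsilon_{K/k,S,T}^V) \in \mathcal{J}_W$ for every $\Phi \in \bigwedge_G^r \Hom_G(P,\ZZ[G])$,
\end{itemize}
together with the identity $\Phi(\epsilon_{K/k,S,T}^V) = \Phi^H(\nu_P^{-1}(\mathcal{N}_H(\epsilon_{K/k,S,T}^V)))$ in $(\mathcal{J}_W)_H$ whenever these conditions hold. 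Lemma \ref{lemr}(i) ensures that every $\Phi \in \bigwedge_G^r \Hom_G(\mathcal{O}_{K,S,T}^\times,\ZZ[G])$ lifts to $\bigwedge_G^r \Hom_G(P,\ZZ[G])$, so (b) is equivalent to the integrality assertion in Conjecture B.

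The two implications now follow. For MRS $\Rightarrow$ B, the integrality in MRS gives (a), hence (b), and the MRS equality identifies $\nu_P^{-1}(\mathcal{N}_H(\epsilon_{K/k,S,T}^V))$ with the image of $(-1)^{r(r'-r)}\Rec_W(\epsilon_{L/k,S,T}^{V'})$ in $(\bigwedge_{G/H}^r P^H) \otimes (J_W)_H$, so the formula from Proposition \ref{propnorm} immediately yields the B-equality. For B $\Rightarrow$ MRS, the integrality in B gives (b), hence (a); set $x := \nu_P^{-1}(\mathcal{N}_H(\epsilon_{K/k,S,T}^V)) \in (\bigwedge_{G/H}^r P^H)\otimes (J_W)_H$. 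The B-equality combined with the formula from Proposition \ref{propnorm} gives $\Phi^H(x) = (-1)^{r(r'-r)} \Phi^H(\Rec_W(\epsilon_{L/k,S,T}^{V'}))$ in $(\mathcal{J}_W)_H$ for every $\Phi \in \bigwedge_G^r \Hom_G(P,\ZZ[G])$. Applying Proposition \ref{thminj} with $M=P$ (valid since $P^H$ is $G/H$-projective) then forces $x = (-1)^{r(r'-r)}\Rec_W(\epsilon_{L/k,S,T}^{V'})$ in $(\bigwedge_{G/H}^r P^H) \otimes (J_W)_H$. In particular $x$ lies in the image of $(\bigcap_{G/H}^r \mathcal{O}_{L,S,T}^\times) \otimes (J_W)_H$, and applying $\nu_P$ and using the commutative square produced in the first step shows that $\mathcal{N}_H(\epsilon_{K/k,S,T}^V) \in \im(\nu)$ with $\nu^{-1}(\mathcal{N}_H(\epsilon_{K/k,S,T}^V)) = (-1)^{r(r'-r)}\Rec_W(\epsilon_{L/k,S,T}^{V'})$, as required.

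The main obstacle is the bookkeeping involved in reconciling the two maps $\nu$ and $\nu_P$, and in particular showing that the preimage $\nu_P^{-1}(\mathcal{N}_H(\epsilon_{K/k,S,T}^V))$, a priori only an element of $(\bigwedge_{G/H}^r P^H)\otimes (J_W)_H$, actually descends to the much smaller Rubin lattice $(\bigcap_{G/H}^r \mathcal{O}_{L,S,T}^\times)\otimes (J_W)_H$. This descent is forced in the B $\Rightarrow$ MRS direction because $\Rec_W(\epsilon_{L/k,S,T}^{V'})$ is by its very construction already a member of this smaller lattice, and the injectivity in Proposition \ref{thminj} rigidifies the identification; this is the place where the improvement over the argument of \cite[Th. 3.15]{sano} is crucial.
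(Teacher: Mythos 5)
Your proposal is correct and follows essentially the same route as the paper's proof: embed $\mathcal{O}_{K,S,T}^\times$ into a projective $G$-module $P$ with $\ZZ$-torsion-free cokernel, lift functionals via Lemma \ref{lemr}(i), carry out the MRS $\Leftrightarrow$ B translation at the level of $P$ via Proposition \ref{propnorm}, and then use Proposition \ref{thminj} (applied with $M=P$) together with Remark \ref{remr} to descend and rigidify the identity for the converse direction. The paper's proof is stated very tersely and takes the injection $j$ from the explicit Tate-type sequence (\ref{tateseq}), where Lemma \ref{lemr} merely records properties of a given $j$ rather than producing one; you should make the source of $P$ explicit, but beyond that your account supplies exactly the bookkeeping the paper leaves implicit.
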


The proof of this result will be given in \S \ref{secequiv}.

\subsection{}
As a preliminary step, we choose a useful representative of the complex
\[ D_{K,S,T}^\bullet:=R\Gamma_T((\mathcal{O}_{K,S})_\mathcal{W},\GG_m) \in D^{\rm p}(\ZZ[G]).\]
To do this we follow the method used in \cite[\S 7]{burns}.

Let $d$ be a sufficiently large integer, and $F$ be a free $G$-module of rank $d$ with basis $b=\{b_i\}_{1\leq i \leq d}$. We define a surjection
$$\pi: F\longrightarrow \mathcal{S}^{{\rm tr}}_{S,T}(\GG_{m/K})(=H^1(D_{K,S,T}^\bullet))$$
as follows. Recall that $S=\{v_0,\ldots,v_n \}$. Let $F_{\leq n}$ be a free $\ZZ[G]$-module
generated by $\{ b_i \}_{1\leq i \leq n}$.
First, choose a homomorphism
$$\pi_1 : F_{\leq n} \longrightarrow \mathcal{S}^{{\rm tr}}_{S,T}(\GG_{m/K})$$
such that the composition map
$$F_{\leq n} \stackrel{\pi_1}{\longrightarrow} \mathcal{S}^{{\rm tr}}_{S,T}(\GG_{m/K})\longrightarrow X_{K,S}$$
sends $b_i$ to $w_i-w_0$. (Such a homomorphism exists since $F_{\leq n}$ is free.)  Next, let $A$ denote the kernel of the composition map
$$\mathcal{S}^{{\rm tr}}_{S,T}(\GG_{m/K}) \longrightarrow X_{K,S} \longrightarrow Y_{K,S\setminus \{v_0 \}},$$
where the last map sends the places above $v_0$ to $0$. Since $d$ is sufficiently large, we can choose a surjection
$$\pi_2 : F_{>n} \longrightarrow A, $$
where $F_{>n}$ is the free $\ZZ[G]$-module generated by $\{ b_i\}_{n<i\leq d}$.
Define
$$\pi:=\pi_1\oplus\pi_2 : F=F_{\leq n}\oplus F_{>n} \longrightarrow
\mathcal{S}^{{\rm tr}}_{S,T}(\GG_{m/K}).$$
One can easily show that $\pi $ is surjective.

$D_{K,S,T}^\bullet$ defines a Yoneda extension class in
$\Ext_G^2(\mathcal{S}^{{\rm tr}}_{S,T}(\GG_{m/K}),\mathcal{O}_{K,S,T}^\times)$. Since $D_{K,S,T}^\bullet$ is perfect, this class is represented by an exact sequence of the following form:
\begin{eqnarray}
0\longrightarrow \mathcal{O}_{K,S,T}^\times \longrightarrow P \stackrel{\psi}{\longrightarrow} F \stackrel{\pi}{\longrightarrow} \mathcal{S}^{{\rm tr}}_{S,T}(\GG_{m/K}) \longrightarrow 0, \label{tateseq}
\end{eqnarray}
where $\pi$ is the above map and $P$ is a  cohomologically-trivial $G$-module. Since $\mathcal{O}_{K,S,T}^\times$ is $\ZZ$-torsion-free, it follows that $P$ is also $\ZZ$-torsion-free. Hence, $P$ is projective. Note that the complex
$$P \stackrel{\psi}{\longrightarrow} F,$$
where $P$ is placed in degree $0$, is quasi-isomorphic to $D_{K,S,T}^\bullet$. Hence we have an isomorphism
\begin{eqnarray}
{\det}_G(D_{K,S,T}^\bullet) \simeq {\det}_G(P) \otimes_G {\det}^{-1}_G(F). \label{detisom}
\end{eqnarray}

For each $1\leq i \leq d$, we define
$$\psi_i:=b_i^\ast \circ \psi \in \Hom_G(P,\ZZ[G]),$$
where $b_i^\ast \in \Hom_G(F,\ZZ[G])$ is the dual basis of $b_i\in F$.

\subsection{The equivalence of Conjectures \ref{mrsconj} and \ref{burnsconj}} \label{secequiv}
In this subsection, we prove Theorem \ref{thmequiv}. The existence of the exact sequence (\ref{tateseq}) is essential in the proof.

\begin{proof}[Proof of Theorem \ref{thmequiv}]
We regard $\mathcal{O}_{K,S,T}^\times \subset P$ by the exact sequence (\ref{tateseq}). Note that, by Lemma \ref{lemr} (i),  the map
$$\bigwedge_G^r \Hom_G(P,\ZZ[G]) \longrightarrow \bigwedge_G^r \Hom_G(\mathcal{O}_{K,S,T}^\times, \ZZ[G])$$
is surjective, since $P/\mathcal{O}_{K,S,T}^\times \simeq \im (\psi) \subset F$ is $\ZZ$-torsion-free.
By Proposition \ref{propnorm}, Conjecture \ref{mrsconj} implies Conjecture \ref{burnsconj}. The converse follows from Proposition \ref{propnorm}, Proposition \ref{thminj}, and Remark \ref{remr}.
\end{proof}

\subsection{The leading term conjecture implies the Rubin-Stark conjecture} \label{secltcrs}

The following result was first proved by the first author in \cite[Cor. 4.1]{burns} but the proof given here is very much simpler than that given in loc. cit.

\begin{theorem}\label{ltcrs}
{\rm ${\rm LTC}(K/k)$} implies the Rubin-Stark conjecture for both
$(K/k,S,T,V)$ and $(L/k,S,T,V')$.
\end{theorem}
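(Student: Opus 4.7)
The plan is to exploit the explicit representative of $D^\bullet_{K,S,T}=R\Gamma_T((\mathcal{O}_{K,S})_\mathcal{W},\GG_m)$ of the form $P\xrightarrow{\psi}F$ constructed in the paragraph preceding \S\ref{secequiv}: here $P$ is a finitely generated projective $G$-module placed in degree zero, $F$ is $G$-free of rank $d$ with fixed basis $\{b_i\}_{1\le i\le d}$ placed in degree one, and the identification $H^1(P\to F)=\mathcal{S}^{{\rm tr}}_{S,T}(\GG_{m/K})$ is arranged so that the composite $F\to \mathcal{S}^{{\rm tr}}_{S,T}(\GG_{m/K})\to X_{K,S}$ sends $b_i$ to $w_i-w_0$ for $1\le i\le n$. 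Writing $\psi_i:=b_i^*\circ\psi$ for each $i$, the identification (\ref{detisom}) turns LTC$(K/k)$ into the assertion that some $\ZZ[G]$-basis $x$ of $\bigwedge_G^d P={\det}_G(P)$ satisfies $z_{K/k,S,T}=x\otimes(b_1^*\wedge\cdots\wedge b_d^*)$.

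I would then introduce $\eta:=(\psi_{r+1}\wedge\cdots\wedge\psi_d)(x)\in\bigwedge_G^r P$ and claim that $\eta$ equals, up to the sign $(-1)^{r(d-r)}$, the image of $\epsilon^V_{K/k,S,T}$ under the map $\QQ\bigwedge_G^r \mathcal{O}^\times_{K,S,T}\hookrightarrow \QQ\bigwedge_G^r P$ induced by the inclusion $\mathcal{O}^\times_{K,S,T}\hookrightarrow P$ coming from (\ref{tateseq}) (this inclusion is injective on Rubin lattices because $P/\mathcal{O}^\times_{K,S,T}$ embeds into the free module $F$ and is thus $\ZZ$-torsion-free, cf.\ Lemma \ref{lemr}).

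To verify the claim, I would proceed isotypic component by isotypic component. For $\chi\in\widehat G$, the restriction $e_\chi\psi$ is a $\CC$-linear map between $d$-dimensional spaces with kernel $e_\chi(\mathcal{O}^\times_{K,S,T}\otimes\CC)$ of dimension $r_{\chi,S}$. If $r_{\chi,S}>r$, then Lemma \ref{leme} forces $e_\chi\eta=0$ since the functionals $\psi_{r+1},\ldots,\psi_d$ factor through the $(d-r_{\chi,S})$-dimensional image of the dual of $e_\chi\psi$ and so are linearly dependent; simultaneously $e_\chi\epsilon^V_{K/k,S,T}=0$ because $\theta^{(r)}_{K/k,S,T}e_\chi=0$. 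If $r_{\chi,S}=r$, then after fixing a compatible basis of $e_\chi(P\otimes\CC)$ one is precisely in the situation of Lemma \ref{remarkpsi}, and unwinding the definition of $\vartheta_{\lambda_{K,S}}$ on the two-term complex $P\to F$ shows that the passage-to-cohomology trivialization sends $e_\chi z_{K/k,S,T}$ to a scalar determined by applying $\lambda_{K,S}$ to $(-1)^{r(d-r)}e_\chi\eta$, using the dual-basis identification of $\coker(e_\chi\psi)$ with the span of $\{w_i-w_0\}_{1\le i\le r}$ inside $e_\chi(X_{K,S}\otimes\CC)$. Comparing this with the defining equations $\vartheta_{\lambda_{K,S}}(z_{K/k,S,T})=\theta^*_{K/k,S,T}(0)$ and (\ref{RSE}) then yields $e_\chi\eta=(-1)^{r(d-r)}e_\chi\epsilon^V_{K/k,S,T}$ on each component, proving the claim.

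Once the claim is established, Rubin-Stark integrality for $(K/k,S,T,V)$ follows immediately: since $P$ is projective, Proposition \ref{propproj} gives $\bigwedge_G^r P=\bigcap_G^r P$, and Lemma \ref{lemr}(ii) identifies $\bigcap_G^r\mathcal{O}^\times_{K,S,T}$ with $(\QQ\bigwedge_G^r\mathcal{O}^\times_{K,S,T})\cap \bigwedge_G^r P$, so $\epsilon^V_{K/k,S,T}=\pm\eta$ indeed lies in $\bigcap_G^r\mathcal{O}^\times_{K,S,T}$. The conjecture for $(L/k,S,T,V')$ then follows by applying the same argument verbatim to $L/k$ with $V'$ in place of $V$, which is legitimate because LTC$(K/k)$ implies LTC$(L/k)$ by Remark \ref{remltc}. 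The main obstacle will be the character-by-character analysis described in the previous paragraph: carefully tracking signs and normalizations through Lemma \ref{remarkpsi} and the definition of $\vartheta_{\lambda_{K,S}}$ so as to match, with the correct sign, the precise leading-term normalization (\ref{RSE}) defining $\epsilon^V_{K/k,S,T}$.
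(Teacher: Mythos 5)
Your proposal is correct and follows essentially the same route as the paper: it reconstructs the content of Theorem \ref{zetars}, identifying $(-1)^{r(d-r)}(\bigwedge_{r<i\le d}\psi_i)(z_b)$ with $\epsilon^V_{K/k,S,T}$ by the character-by-character use of Lemma \ref{leme} and Lemma \ref{remarkpsi}, and then deducing integrality from Proposition \ref{propproj} and Lemma \ref{lemr}(ii). The only (harmless) divergence is for $(L/k,S,T,V')$, where you invoke Remark \ref{remltc} to get ${\rm LTC}(L/k)$ and rerun the argument over $L/k$, while the paper instead descends within the same complex via Corollary \ref{remzetars}; both are valid.
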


\begin{proof} 

Assume that ${\rm LTC}(K/k)$ is valid so the zeta element $z_{K/k,S,T}$ is a $\ZZ[G]$-basis of ${\det}_G(D_{K,S,T}^\bullet)$. In this case one also knows that $P$ must be free of rank $d$ and we define $z_b \in \bigwedge_G^d P$ to be the element corresponding to the zeta element $z_{K/k,S,T} \in {\det}_G(D_{K,S,T}^\bullet)$ via the isomorphism
$$\bigwedge_G^d P \stackrel{\sim}{\longrightarrow} \bigwedge_G^d P \otimes \bigwedge_G^d\Hom_G(F,\ZZ[G]) \simeq {\det}_G(D_{K,S,T}^\bullet),$$
where the first isomorphism  is defined by
$$a \mapsto a \otimes \bigwedge_{1\leq i\leq d } b_i^\ast,$$
and the second isomorphism is given by (\ref{detisom}).

Then Theorem \ref{ltcrs} follows immediately from the next theorem (see also
Corollary \ref{remzetars} below for $(L/k,S,T,V')$).
\end{proof}

\begin{remark}\label{alb} In \cite{vallieres} Valli\`eres closely follows the proof of \cite[Cor. 4.1]{burns} to show that Conjecture \ref{ltc0} (and hence also LTC$(K/k$) by virtue of Proposition \ref{ltc prop2}) implies the extension of the Rubin-Stark Conjecture formulated by Emmons and Popescu in \cite{EP}. The arguments used here can be used to show that LTC$(K/k$) implies a refinement of the main result of Valli\`eres, and hence also of the conjecture of Emmons and Popescu, that is in the spirit of Theorem \ref{MT2}. This result is to be explained in forthcoming work of Livingstone-Boomla. \end{remark}

The following theorem was essentially obtained in \cite{burns} by the first author.
This theorem describes the Rubin-Stark element in terms of
the zeta elements.
It is a key to prove Theorem \ref{ltcrs}, and also
plays important roles in the proofs of Theorem \ref{ltcmrs} and Theorem \ref{ltcfit} given below.

\begin{theorem} \label{zetars}
Assume that ${\rm LTC}(K/k)$ holds. Then, regarding $\mathcal{O}_{K,S,T}^\times$ as a submodule of $P$, one has
$$(\bigwedge_{r <i\leq d}\psi_i)(z_b) \in \bigcap_G^r \mathcal{O}_{K,S,T}^\times (\subset \bigwedge_G^r P)$$
(see Lemma \ref{lemr} (ii)) and also
$$(-1)^{r(d-r)}(\bigwedge_{r <i\leq d}\psi_i)(z_b)=\epsilon_{K/k,S,T}^V.$$
\end{theorem}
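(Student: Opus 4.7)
The strategy is to work over $\RR$, exploiting the representative (\ref{tateseq}) of $D^\bullet_{K,S,T}$, the canonical isomorphism $\RR \otimes \mathcal{S}^{{\rm tr}}_{S,T}(\GG_{m/K}) \cong \RR X_{K,S}$, and the Dirichlet regulator $\lambda_{K,S}$. Under ${\rm LTC}(K/k)$, $P$ is free of rank $d$ and $\vartheta_{\lambda_{K,S}}(z_{K/k,S,T}) = \theta^*_{K/k,S,T}(0)$.

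To verify the lattice condition, set $F_V := \bigoplus_{i=1}^r \ZZ[G] b_i$. I would first show that
$$\bigcap_{i>r}\ker(\psi_i|_{\RR P}) \;=\; \psi_\RR^{-1}(\RR F_V) \;=\; \RR \mathcal{O}_{K,S,T}^\times.$$
The second equality reduces to $\im(\psi_\RR) \cap \RR F_V = \ker(\RR F \to \RR X_{K,S}) \cap \RR F_V = 0$, which holds because $V$ consists of places splitting completely in $K/k$, so $w_1-w_0, \ldots, w_r-w_0$ are $\RR[G]$-linearly independent in $\RR X_{K,S}$. Applying Lemma \ref{leme} in each character component then shows that $\im(\bigwedge_{r<i\le d}\psi_i|_{\RR P}) \subseteq \bigwedge^r_{\RR[G]} \RR \mathcal{O}_{K,S,T}^\times$. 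Combined with the integrality $(\bigwedge_{r<i\le d}\psi_i)(z_b) \in \bigwedge^r_G P$ (which follows from $z_b \in \bigwedge^d_G P$), Lemma \ref{lemr}(ii) together with Remark \ref{remr} will yield the membership $(\bigwedge_{r<i\le d}\psi_i)(z_b) \in \bigcap^r_G \mathcal{O}_{K,S,T}^\times$.

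For the equality, by the characterization (\ref{RSE}) of $\epsilon^V_{K/k,S,T}$ and the injectivity of $\lambda_{K,S}$, it suffices to establish
$$\lambda_{K,S}\bigl((\bigwedge_{r<i\le d}\psi_i)(z_b)\bigr) \;=\; (-1)^{r(d-r)}\,\theta^{(r)}_{K/k,S,T}\,\bigwedge_{1\le i\le r}(w_i-w_0).$$
I would verify this character by character. For $\chi\in\widehat{G}$ with $r_\chi := r_{\chi, S} > r$, both sides vanish in the $\chi$-component: the left hand side by Lemma \ref{leme} (since $\dim \ker(e_\chi\psi) = r_\chi > r$), and the right hand side because the coefficient of $s^r$ in $L_{k,S,T}(\chi^{-1},s)$ is zero. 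For $\chi$ with $r_\chi = r$, the map $e_\chi\psi$ has kernel and cokernel each of dimension $r$, and the images $\bar b_1, \ldots, \bar b_r$ of $b_1, \ldots, b_r$ form a $\CC$-basis of $e_\chi\CC X_{K,S}$ by the linear independence noted above. Applying the natural analogue of Lemma \ref{remarkpsi} to $e_\chi\psi$ (a map between spaces of equal dimension with the first $r$ basis vectors of the target projecting to a basis of the cokernel) shows that the canonical isomorphism
$$F_{e_\chi\psi}: \det(e_\chi\CC P) \otimes \det{}^{-1}(e_\chi \CC F) \xrightarrow{\sim} \det(e_\chi\CC\mathcal{O}_{K,S,T}^\times) \otimes \det{}^{-1}(e_\chi \CC X_{K,S})$$
sends $z_b \otimes (b_1^*\wedge\cdots\wedge b_d^*)$ to $(-1)^{r(d-r)}\,e_\chi(\bigwedge_{r<i\le d}\psi_i)(z_b) \otimes (\bar b_1^*\wedge\cdots\wedge \bar b_r^*)$, with $(\bar b_i^*)_{1\le i\le r}$ dual to $(w_i - w_0)_{1\le i\le r}$. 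Summing over all $\chi$ and using $\theta^{(r)}_{K/k,S,T} = \sum_{\chi:\,r_\chi = r} L^*_{k,S,T}(\chi^{-1},0)\,e_\chi$ together with $\vartheta_{\lambda_{K,S}}(z_{K/k,S,T}) = \theta^*_{K/k,S,T}(0)$ will give the required identity.

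The principal obstacle is the character-wise application of the analogue of Lemma \ref{remarkpsi}: correctly tracking the sign $(-1)^{r(d-r)}$ arising from the block shift of indices in the wedge, and identifying the local isomorphism $F_{e_\chi\psi}$ with the corresponding piece of the trivialization $\vartheta_{\lambda_{K,S}}$ that enters the definition of the zeta element $z_{K/k,S,T}$.
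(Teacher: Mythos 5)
Your proposal is correct and follows the same two steps as the paper's proof: the lattice membership is established by applying Lemma \ref{leme} in each character component $e_\chi$ (surjectivity of $\bigoplus_{r<i\le d}\psi_i$ precisely when $r_{\chi,S}=r$) together with Lemma \ref{lemr}(ii), and the equality is read off by unwinding the trivialization $\vartheta_{\lambda_{K,S}}$ through Lemma \ref{remarkpsi}.

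One remark on the ``principal obstacle'' you identify at the end. You phrase your application of Lemma \ref{remarkpsi} as requiring only the weaker hypothesis that $e_\chi b_1,\ldots,e_\chi b_r$ project to a basis of $\coker(e_\chi\psi)$, and you then note that you need an ``analogue'' of the lemma (since the lemma as stated assumes $\im(\psi)=\langle b_{r_\psi+1},\ldots,b_n\rangle$). In fact the paper's construction of $\pi$ is precisely designed so that the stronger, stated hypothesis holds in each relevant character component, making the analogue unnecessary. Indeed, fix $\chi$ with $r_{\chi,S}=r$; since $V$ consists of places splitting completely, the $r$ places $v$ of $S$ with $\chi(G_v)=1$ are exactly those in $V$, and in particular $\chi(G_{v_0})\ne 1$ and $\chi(G_{v_i})\ne 1$ for $r<i\le n$. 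Hence $e_\chi w_0=0$ and $e_\chi w_i=0$ for $r<i\le n$, so $e_\chi\pi(b_i)=0$ for those $i$ (via the isomorphism $\RR\mathcal{S}^{\rm tr}_{S,T}(\GG_{m/K})\cong \RR X_{K,S}$). Similarly $e_\chi\CC A=0$ because $\chi(G_{v_0})\ne 1$ makes the projection $e_\chi\CC X_{K,S}\to e_\chi\CC Y_{K,S\setminus\{v_0\}}$ injective, so $e_\chi\pi(b_i)=0$ for $i>n$ as well. It follows that $\im(e_\chi\psi)=\langle e_\chi b_{r+1},\ldots,e_\chi b_d\rangle$ exactly, so Lemma \ref{remarkpsi} applies directly and the sign $(-1)^{r(d-r)}$ comes straight from its statement. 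Your weaker hypothesis is also sufficient (the generalized lemma holds after a routine determinant computation), but the paper's careful choice of $\pi_1$ (lifting $b_i\mapsto w_i-w_0$) and $\pi_2$ (valued in $A=\ker(\mathcal{S}^{\rm tr}_{S,T}(\GG_{m/K})\to Y_{K,S\setminus\{v_0\}})$) sidesteps that computation entirely.
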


\begin{proof}
Take any $\chi \in \widehat G$. Recall from (\ref{chi rank}) that
\[ r_{\chi,S}=\dim_\CC(e_\chi\CC X_{K,S})=\dim_\CC(e_\chi\CC \mathcal{O}_{K,S,T}^\times)\]
(the last equality follows from $\CC \mathcal{O}_{K,S,T}^\times \simeq \CC X_{K,S}$). Consider the map
$$\Psi:=\bigoplus_{r<i \leq d}\psi_i : e_\chi \bbC P \longrightarrow e_\chi \bbC[G]^{\oplus (d-r)}.$$
This map is surjective if and only if $r_{\chi,S}=r$. Indeed, if $r_{\chi,S}=r$, then $\{ e_\chi(w_i-w_0) \}_{1\leq i \leq r}$ is a $\CC$-basis of $e_\chi \bbC X_{K,S}$,
so $e_\chi\bbC\im (\psi)=e_\chi\bbC \ker (\pi)=\bigoplus_{r<i\leq d}e_\chi\bbC[G]b_i$. In this case, $\Psi$ is surjective.
If $r_{\chi,S}>r$, then $\dim_\bbC(e_\chi\bbC\im(\psi))=d-r_{\chi,S}<d-r$, so $\Psi$ is not surjective. Applying Lemma \ref{leme}, we have
$$e_\chi (\bigwedge_{r<i\leq d}\psi_i)(z_b)
\begin{cases}
\in e_\chi\bbC\bigwedge_G^r\mathcal{O}_{K,S,T}^\times, &\text{if $r_{\chi,S}=r$,} \\
=0, &\text{if $r_{\chi,S}>r$.}
\end{cases}
$$
From this and Lemma \ref{lemr} (ii), we have
$$(\bigwedge_{r< i \leq d}\psi_i)(z_b)\in (\bbQ \bigwedge_G^r\mathcal{O}_{K,S,T}^\times)\cap \bigwedge_G^rP=\bigcap_G^r\mathcal{O}_{K,S,T}^\times.$$
By Lemma \ref{remarkpsi} and the definition of $z_b$, we have
$$\lambda_{K,S}((-1)^{r(d-r)}(\bigwedge_{r<i\leq d}\psi_i)(z_b))=\theta_{K/k,S,T}^{(r)}\bigwedge_{1\leq i \leq r}(w_i-w_0).$$
By the characterization of the
Rubin-Stark element, we have
$$(-1)^{r(d-r)}(\bigwedge_{r <i\leq d}\psi_i)(z_b)=\epsilon_{K/k,S,T}^V.$$
This completes the proof.
\end{proof}

By the same argument as above, one obtains the following result.

\begin{corollary} \label{remzetars}
Assume that ${\rm LTC}(K/k)$ holds. Then we have an equality
$$(-1)^{r'(d-r')}(\bigwedge_{r' <i\leq d}\psi_i^H)(\N_H^d z_b)=
\epsilon_{L/k,S,T}^{V'}$$
in $\bigcap_{G/H}^{r'}\mathcal{O}_{L,S,T}^\times$.
\end{corollary}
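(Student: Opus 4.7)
The proof proceeds by applying Theorem \ref{zetars} to the extension $L/k$ with data $(S,T,V')$, using a representative of $D_{L,S,T}^\bullet$ obtained by taking $H$-invariants of the chosen representative $P \xrightarrow{\psi} F$ of $D_{K,S,T}^\bullet$. Since Remark \ref{remltc} records that ${\rm LTC}(K/k)$ implies ${\rm LTC}(L/k)$, the zeta element $z_{L/k,S,T}$ exists and generates ${\det}_{G/H}(D_{L,S,T}^\bullet)$ over $\ZZ[G/H]$, so Theorem \ref{zetars} is available for $L/k$.

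First I would verify that $P^H \xrightarrow{\psi^H} F^H$ (placed in degrees zero and one) is a perfect representative of $D_{L,S,T}^\bullet$ over $\ZZ[G/H]$. The module $F^H$ is free over $\ZZ[G/H]$ of rank $d$ with basis $\{\N_H b_i\}_{1 \le i \le d}$, via the canonical isomorphism $\ZZ[G]^H = \N_H\ZZ[G] \cong \ZZ[G/H]$ sending $\N_H$ to $1$, and $P^H$ is $\ZZ[G/H]$-projective because $P$ is cohomologically trivial over $\ZZ[H]$. That the cohomology of this complex agrees with that of $D_{L,S,T}^\bullet$ (namely $\mathcal{O}_{L,S,T}^\times$ in degree zero and $\mathcal{S}^{{\rm tr}}_{S,T}(\GG_{m/L})$ in degree one) follows from a Galois descent identification $D_{L,S,T}^\bullet \cong R\Hom_{\ZZ[H]}(\ZZ, D_{K,S,T}^\bullet)$, computed directly by taking $H$-invariants of the projective complex $P \xrightarrow{\psi} F$.

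The crux of the argument is to identify $\N_H^d z_b$ with the element of $\QQ\bigwedge_{G/H}^d P^H$ corresponding to $z_{L/k,S,T}$ under the isomorphism
$$\bigwedge\nolimits_{G/H}^d P^H \xrightarrow{\sim} {\det}_{G/H}(D_{L,S,T}^\bullet),\quad a \mapsto a \otimes \bigwedge\nolimits_{1\le i\le d}(\N_H b_i)^\ast$$
that is analogous to the isomorphism used in the proof of Theorem \ref{ltcrs}. This rests on the Galois-descent compatibility of determinant modules combined with the identity $\theta_{L/k,S,T}^\ast(0) = \pi_H(\theta_{K/k,S,T}^\ast(0))$, where $\pi_H \colon \CC[G] \to \CC[G/H]$ is the natural projection; the identity itself is elementary, since $L_{k,S,T}(\chi,s)$ for any character $\chi$ factoring through $G/H$ is the same whether computed for $K/k$ or $L/k$, while $\pi_H(e_\chi) = 0$ when $\chi$ is non-trivial on $H$. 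Having made this identification, one then observes that in the basis $\{\N_H b_i\}$ the coordinate functional $(\N_H b_i)^\ast \circ \psi^H \in \Hom_{G/H}(P^H, \ZZ[G/H])$ coincides with $\psi_i^H$, so that Theorem \ref{zetars} applied to the data $(L/k, S, T, V')$ yields the desired equality. The principal technical obstacle is the identification of $\N_H^d z_b$ as the relevant determinantal element for $L/k$, which requires a careful comparison between the norm map $\N_H$ acting on $\bigwedge_G^d P$ and the Galois-descent isomorphism between the determinant modules of the two perfect complexes.
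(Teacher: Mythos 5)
Your proposal is correct and follows essentially the route the paper intends: the paper proves this corollary simply "by the same argument" as Theorem \ref{zetars}, i.e.\ by running that computation over $\ZZ[G/H]$ with the $H$-invariant representative $P^H \xrightarrow{\psi^H} F^H$, the functionals $\psi_i^H$ and the element $\N_H^d z_b$, which is exactly what you do (with ${\rm LTC}(L/k)$ supplied by Remark \ref{remltc} and the leading-term compatibility $\pi_H(\theta_{K/k,S,T}^\ast(0)) = \theta_{L/k,S,T}^\ast(0)$). You have also correctly isolated the only real work, namely the descent identification of $P^H \to F^H$ with a representative of $D_{L,S,T}^\bullet$ and of $\N_H^d z_b$ with the element corresponding to $z_{L/k,S,T}$, which is precisely the bookkeeping the paper leaves implicit.
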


\subsection{The leading term conjecture implies Conjecture \ref{mrsconj}}

In this subsection we prove the following result.

\begin{theorem} \label{ltcmrs}
{\rm ${\rm LTC}(K/k)$} implies {\rm ${\rm MRS}(K/L/k,S,T,V,V')$}.
\end{theorem}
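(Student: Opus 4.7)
The plan is to prove the equivalent formulation $\mathrm{B}(K/L/k,S,T,V,V')$ guaranteed by Theorem \ref{thmequiv}. Fix $\Phi \in \bigwedge_G^r \Hom_G(\mathcal{O}_{K,S,T}^\times,\ZZ[G])$. Since the cokernel of $\mathcal{O}_{K,S,T}^\times \hookrightarrow P$ from the resolution (\ref{tateseq}) embeds into $F$ and is therefore $\ZZ$-torsion-free, Lemma \ref{lemr}(i) lifts $\Phi$ to $\widetilde\Phi \in \bigwedge_G^r \Hom_G(P,\ZZ[G])$. Because ${\rm LTC}(K/k)$ forces $P$ to be free of rank $d$, the perfect pairing $\bigwedge_G^d \Hom_G(P,\ZZ[G]) \otimes \bigwedge_G^d P \to \ZZ[G]$ together with Theorem \ref{zetars} and the standard identity $\Theta((\psi_{r+1} \wedge \cdots \wedge \psi_d)(x)) = (\Theta \wedge \psi_{r+1} \wedge \cdots \wedge \psi_d)(x)$ will yield the crucial formula
\[
\Phi(\epsilon^V_{K/k,S,T}) \;=\; (-1)^{r(d-r)}\,(\widetilde\Phi \wedge \psi_{r+1} \wedge \cdots \wedge \psi_d)(z_b).
\]

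The first task is to verify the integrality statement $\Phi(\epsilon^V_{K/k,S,T}) \in \mathcal{J}_W = \prod_{r<i\le r'} \mathcal{I}_i$. For each $i$ with $r<i\le r'$, the place $v_i$ splits completely in $L/k$, so $G_i \subset H$, and the image $\pi(b_i) = w_i - w_0$ in $\mathcal{S}_{S,T}^{\mathrm{tr}}(\GG_{m/K})$ is fixed by $G_i$. A diagram chase in (\ref{tateseq}) — using that $b_i - \sigma b_i \in \ker\pi = \im \psi$ for every $\sigma \in G_i$ — will show that $\psi_i(P) \subset \mathcal{I}_i$. Since $\psi_{r+1},\ldots,\psi_{r'}$ each land in $\mathcal{I}_{r+1},\ldots,\mathcal{I}_{r'}$ respectively, the multilinearity of the wedge evaluation gives the required containment in $\mathcal{J}_W$.

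The congruence modulo $I_H\mathcal{J}_W$ is the heart of the argument. The key local input will be an identity of the shape
\[
\psi_i \;\equiv\; -\mathrm{Rec}_i \circ (\text{projection to } \mathcal{O}^\times_{L,S,T}) \pmod{I_H \mathcal{I}_i}
\]
valid on $P^H$ for each $r < i \le r'$. This reflects the local reciprocity data at $v_i$ carried by $D^\bullet_{K,S,T}$ and ultimately by the Tate class (Proposition \ref{proptateseq}); it will be deduced by taking $H$-invariants of (\ref{tateseq}) and comparing with the corresponding resolution of the Selmer group for $L/k$, together with the classical description of the reciprocity map in terms of the Tate class. Granting this identity, substitution into the displayed formula above, together with the expression for $\epsilon^{V'}_{L/k,S,T}$ provided by Corollary \ref{remzetars} (applied to the factors $\psi_{r'+1}^H \wedge \cdots \wedge \psi_d^H$ acting on $N_H^d z_b$), will rewrite the right hand side in the form $\pm\, \Phi^H(\mathrm{Rec}_W(\epsilon^{V'}_{L/k,S,T}))$ modulo $I_H\mathcal{J}_W$. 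A direct bookkeeping of the two sign contributions $(-1)^{r(d-r)}$ and $(-1)^{r'(d-r')}$ together with the Koszul sign incurred when interleaving $\mathrm{Rec}_{r+1},\ldots,\mathrm{Rec}_{r'}$ into the wedge will produce the predicted factor $(-1)^{r(r'-r)}$. The main obstacle is precisely the local reciprocity identity for $\psi_i$, since everything else is essentially exterior algebra; this step is where class field theory enters and where care is required to reconcile the $G$-module structure on the two sides of the comparison.
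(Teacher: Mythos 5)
Your proposal follows essentially the same path as the paper — expressing $\epsilon^V_{K/k,S,T}$ via Theorem \ref{zetars} and $z_b$, exploiting Lemma \ref{lempsi} to get $\psi_i(P)\subset\mathcal{I}_i$, and converting $\psi_i$ into reciprocity data to compare with $\epsilon^{V'}_{L/k,S,T}$. The only structural difference is that you aim at the $\mathrm{B}$-form rather than the $\mathrm{MRS}$-form directly, which is legitimate via Theorem \ref{thmequiv}. However, the ``key local input'' as you have stated it is not correct, and this is not a cosmetic issue.

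You propose the identity
\[
\psi_i \;\equiv\; -\Rec_i\circ(\text{projection to }\mathcal{O}^\times_{L,S,T})\pmod{I_H\mathcal{I}_i}\quad\text{on }P^H.
\]
There are two problems. First, there is no projection $P^H\to\mathcal{O}^\times_{L,S,T}$; the map goes the other way. More seriously, $\psi_i$ restricted to $P^H$ does \emph{not} compute reciprocity. For $a\in P^H$ one has $a=\N_H\widetilde a$ for some $\widetilde a\in P$ (by cohomological triviality of $P$), and then $\psi_i(a)=\N_H\psi_i(\widetilde a)$. Since $\N_H\equiv|H|\pmod{I(H)}$, this means $\psi_i(a)\equiv|H|\cdot\psi_i(\widetilde a)\pmod{I_H\mathcal{I}_i}$. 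The factor of $|H|$ is not a unit in $\ZZ[G]$ and cannot be discarded. The correct statement — which is the paper's Lemma \ref{lem1} — is that the map $\widetilde\Rec_i(a):=\psi_i(\widetilde a)\bmod I_H\mathcal{I}_i$ (using the norm \emph{preimage} $\widetilde a$, not $a$ itself) restricts to $\Rec_i$ on $\mathcal{O}^\times_{L,S,T}$; there is also no minus sign. The spurious $|H|$-factors propagate through the wedge product, and the way the paper neutralizes them is precisely via the definitions of $\mathcal{N}_H$ (Definition \ref{defnorm}) and the normalized map $\nu$ (note Remark \ref{remnu}: $\nu=|H|^{-\max\{0,r-1\}}\xi$), culminating in the explicit commutative diagram of Lemma \ref{lemcomm}. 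Your proposal sweeps this into ``bookkeeping'' and ``reconciling the $G$-module structure,'' but this cancellation is the essential content that makes the equality hold integrally rather than only up to $|H|$-torsion; without tracking it the congruence in $\mathrm{B}(K/L/k,S,T,V,V')$ is not established.
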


By Remark \ref{knownltc}, this directly implies the following result.

\begin{corollary} \label{cormrs}
{\rm ${\rm MRS}(K/L/k,S,T,V,V')$} is valid if $K$ is an abelian extension over $\QQ$ or if $k$ is a function field.
\end{corollary}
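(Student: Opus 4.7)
The plan is to use Theorem \ref{thmequiv}, which reduces the task to proving that LTC$(K/k)$ implies Conjecture B$(K/L/k,S,T,V,V')$, and then to read off this conjecture from Theorem \ref{zetars} and Corollary \ref{remzetars} combined with a local compatibility between the maps $\psi_i$ of the Tate representative (\ref{tateseq}) and the reciprocity maps $\Rec_i$ used in the definition of $\Rec_W$.

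Concretely, I would fix $\Phi\in\bigwedge^r_G\Hom_G(\mathcal{O}_{K,S,T}^\times,\ZZ[G])$. Under LTC$(K/k)$ the module $P$ in the representative (\ref{tateseq}) is $G$-free of rank $d$ and the element $z_b\in\bigwedge^d_GP$ is well defined. By Lemma \ref{lemr}(i), $\Phi$ lifts to some $\widetilde\Phi\in\bigwedge^r_G\Hom_G(P,\ZZ[G])$, and Theorem \ref{zetars} then gives
$$\Phi(\epsilon_{K/k,S,T}^V)=(-1)^{r(d-r)}\bigl(\widetilde\Phi\wedge\bigwedge_{r<i\leq d}\psi_i\bigr)(z_b).$$
I would then split the wedge as $\bigl(\bigwedge_{r<i\leq r'}\psi_i\bigr)\wedge\bigl(\bigwedge_{r'<i\leq d}\psi_i\bigr)$ and apply Corollary \ref{remzetars} to identify the tail $(\bigwedge_{r'<i\leq d}\psi_i^H)(\mathcal{N}_H^d z_b)$ with $(-1)^{r'(d-r')}\epsilon^{V'}_{L/k,S,T}$ in $\bigcap^{r'}_{G/H}\mathcal{O}^\times_{L,S,T}$.

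The crux of the argument, and in my view the principal obstacle, is the compatibility statement that for each $i$ with $r<i\leq r'$ the map $\psi_i$, restricted to $\mathcal{O}_{K,S,T}^\times\subset P$ and reduced modulo $I(H)\mathcal{I}_i$, reproduces the reciprocity map
$$\Rec_i(a)=\sum_{\tau\in G/H}\tau^{-1}({\rm rec}_{w_i}(\tau a)-1)\in(\mathcal{I}_i)_H.$$
This is a reflection of Proposition \ref{proptateseq}: the extension class of (\ref{tateseq}) equals Tate's canonical class $\tau_{K/k,S}$, which is assembled from local reciprocity at the places of $S$. Since $v_i$ splits completely in $L=K^H$ the decomposition group $G_i$ lies in $H$, and tracing through the construction of the Tate class at $w_i$ produces precisely the right hand side above, the indeterminacy $I(H)\mathcal{I}_i$ absorbing the freedom in the choice of $\pi_1$.

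Granting this compatibility, Propositions \ref{propnorm} and \ref{propphi} allow us to pass from $\psi_i$ on $P$ to $\Rec_i$ on $\mathcal{O}^\times_{L,S,T}$ via the norm $\mathcal{N}_H$. A careful telescoping then yields
$$\Phi(\epsilon^V_{K/k,S,T})\equiv(-1)^{r(r'-r)}\Phi^H\bigl(\Rec_W(\epsilon^{V'}_{L/k,S,T})\bigr)\pmod{I(H)\mathcal{J}_W},$$
with the overall sign obtained by combining $(-1)^{r(d-r)}$ from Theorem \ref{zetars}, $(-1)^{r'(d-r')}$ from Corollary \ref{remzetars}, and the sign $(-1)^{(r'-r)(d-r')}$ arising from commuting the two wedge blocks past $\widetilde\Phi$; this bookkeeping is routine but delicate. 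The integrality statement $\Phi(\epsilon^V_{K/k,S,T})\in\mathcal{J}_W$ is then automatic since the right hand side visibly lies in $\mathcal{J}_W$. Applying Theorem \ref{thmequiv} finally upgrades Conjecture B to Conjecture MRS, completing the proof.
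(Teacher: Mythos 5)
Your proposal is essentially a re-derivation of Theorem \ref{ltcmrs}, which is stated immediately before the corollary; the paper's proof of Corollary \ref{cormrs} is a single line combining Theorem \ref{ltcmrs} with Remark \ref{knownltc} (which records the known validity of Conjecture \ref{ltc0} when $K/\QQ$ is abelian, by Burns--Greither and Flach, and when $k$ is a global function field, by Burns). Your route to the implication ${\rm LTC}\Rightarrow{\rm MRS}$ differs in packaging from the paper's: you propose to prove ${\rm LTC}\Rightarrow{\rm B}$ and then upgrade via Theorem \ref{thmequiv}, whereas the paper's Theorem \ref{ltcmrs} goes directly to MRS through a three-step computation chaining Corollary \ref{remzetars}, Lemma \ref{lemcomm}, and Theorem \ref{zetars}. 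The two routes are logically interchangeable once \ref{thmequiv} is available, and you correctly identify the same core ingredients; in particular, the compatibility you single out as the crux --- that the maps $\psi_i$ of the representative (\ref{tateseq}) reproduce the reciprocity maps $\Rec_i$ modulo $I(H)\mathcal{I}_i$ --- is precisely the content of Lemma \ref{lem1}, which is established via \cite[Lem. 10.3]{burns} using exactly the identification of the extension class with the Tate class recorded in Proposition \ref{proptateseq} that you cite.

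Two small gaps remain. First, you never explicitly invoke Remark \ref{knownltc}, so in your argument LTC$(K/k)$ is still a hypothesis and the corollary cannot be concluded; this is the one ingredient the paper's one-line proof makes explicit. Second, the integrality $\Phi(\epsilon^V_{K/k,S,T})\in\mathcal{J}_W$ is not ``automatic since the right hand side visibly lies in $\mathcal{J}_W$'': the right-hand side of your congruence lives in the quotient $(\mathcal{J}_W)_H=\mathcal{J}_W/I_H\mathcal{J}_W$, which does not produce an element of $\mathcal{J}_W$. The integrality should instead be deduced from Lemma \ref{lempsi}, which gives $\im(\psi_i)\subset\mathcal{I}_i$ for $1\le i\le n$, so that $(\bigwedge_{r<i\le d}\psi_i)(z_b)$ lies in $\mathcal{J}_W\bigwedge_G^rP$; your ``routine but delicate'' sign bookkeeping also needs care, since as written your first display appears to carry an extraneous factor of $(-1)^{r(d-r)}$.
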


\begin{remark}
\begin{rm}
Theorem \ref{ltcmrs} is an improvement of the main result in \cite[Th. 3.22]{sano} by the third author, which asserts that under some hypotheses ${\rm LTC}(K/k)$ implies most of Conjecture \ref{mrsconj}.
In \cite[Th. 3.1]{burns}, the first author proved that ${\rm LTC}(K/k)$ implies most of Conjecture \ref{burnsconj}. Since we know by Theorem \ref{thmequiv} that Conjecture \ref{mrsconj} and Conjecture \ref{burnsconj} are equivalent, Theorem \ref{ltcmrs} is also an improvement of \cite[Th. 3.1]{burns}.
\end{rm}
\end{remark}

\begin{remark}
\begin{rm}
In \cite[\S 4]{sano}, by using a weak version of Corollary \ref{cormrs}, the third author gave another proof of the `except $2$-part' of Darmon's conjecture on cyclotomic units \cite{D}, which was first proved by Mazur and Rubin in \cite{MR} via Kolyvagin systems.
In \S \ref{dargro}, we shall use Corollary \ref{cormrs} to give a full proof of a refined version of Darmon's conjecture, and also give a new evidence for Gross's conjecture on tori \cite{G}, which was studied by Hayward \cite{Hay}, Greither and Ku\v cera \cite{GK0}, \cite{GK}.
\end{rm}
\end{remark}

We prove Theorem \ref{ltcmrs} after proving some lemmas. The following lemma is a restatement of \cite[Lem. 7.4]{burns}.

\begin{lemma} \label{lempsi}
If $1\leq i \leq n$, then we have an inclusion
$$\im (\psi_i) \subset \mathcal{I}_i.$$
In particular, $\psi_i=0$ for $1\leq i \leq r$.
\end{lemma}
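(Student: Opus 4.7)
The plan is to work directly from the defining exact sequence (\ref{tateseq}) and the explicit construction of the surjection $\pi=\pi_1\oplus\pi_2$ that precedes it. Since $\im(\psi)=\ker(\pi)$ and $\psi_i=b_i^\ast\circ\psi$, the assertion $\im(\psi_i)\subset \mathcal{I}_i$ reduces to showing that every $x=\sum_{j=1}^d c_j b_j$ in $\ker(\pi)$ satisfies $c_i\in\mathcal{I}_i$ for $1\leq i\leq n$.

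To isolate the relevant information, I would first push $\pi$ forward along the canonical surjection $\mathcal{S}^{{\rm tr}}_{S,T}(\GG_{m/K})\twoheadrightarrow X_{K,S}$ supplied by (\ref{dual ses}), obtaining a $G$-homomorphism $\bar\pi\colon F\to X_{K,S}$. By the construction of $\pi_1$ and $\pi_2$, one has $\bar\pi(b_j)=w_j-w_0$ for $1\leq j\leq n$, while each $\bar\pi(b_j)$ with $j>n$ lies in the kernel of $X_{K,S}\to Y_{K,S\setminus\{v_0\}}$ (since $\pi_2$ factors through $A$), and so is supported only at places of $K$ above $v_0$.

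Next, I would use the $G$-equivariant decomposition $Y_{K,S}=\bigoplus_{v\in S} Y_{K,\{v\}}$ together with the canonical isomorphism $Y_{K,\{v_i\}}\cong\ZZ[G]/\mathcal{I}_i$ of $G$-modules that sends the chosen place $w_i$ to the class of $1$. Projecting $\bar\pi(x)$ along $X_{K,S}\hookrightarrow Y_{K,S}\twoheadrightarrow Y_{K,\{v_i\}}$ for $1\leq i\leq n$ then picks out precisely the class of $c_i$ modulo $\mathcal{I}_i$: the contributions from $b_j$ with $j>n$ are killed because they sit in the $v_0$-coordinate, and the contributions from $b_j$ with $1\leq j\leq n$ and $j\neq i$ do not touch the $v_i$-coordinate. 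Since $x\in\ker(\pi)\subset\ker(\bar\pi)$, this class vanishes, giving $c_i\in\mathcal{I}_i$, as required.

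The second assertion is then immediate: for $1\leq i\leq r$ the place $v_i$ splits completely in $K$, so $G_i$ is trivial and $\mathcal{I}_i=0$, forcing $\psi_i=0$. There is no real obstacle in this argument beyond unwinding the explicit choice of $\pi$; the only thing to be careful about is setting up the identification $Y_{K,\{v\}}\cong\ZZ[G]/\mathcal{I}_v$ so that the chosen place $w_v$ corresponds to the class of $1$, which matches exactly the convention used in defining $\bar\pi(b_i)=w_i-w_0$.
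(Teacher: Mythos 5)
Your proof is correct and uses the same essential ingredient as the paper — the explicit construction of $\pi = \pi_1\oplus\pi_2$ preceding (\ref{tateseq}), and in particular the facts that $\bar\pi(b_j)=w_j-w_0$ for $1\le j\le n$ while $\bar\pi(b_j)$ is supported only above $v_0$ for $j>n$. The packaging, however, is genuinely a little different: the paper detects the condition $c_i\in\mathcal{I}_i$ by applying the norm $\N_{G_i}$, passing to the free $\ZZ[G/G_i]$-module $F^{G_i}$, and invoking the isomorphism $\N_{G_i}X_{K,S}\simeq X_{K^{G_i},S}$ together with the fact that $v_i$ splits completely in $K^{G_i}$, whereas you simply project $\bar\pi(x)$ onto the $v_i$-component $Y_{K,\{v_i\}}\cong\ZZ[G]/\mathcal{I}_i$ of $Y_{K,S}$ (taking $w_i\mapsto 1$) and read off that the $v_i$-component of $\bar\pi(x)$ is exactly the class of $c_i$ modulo $\mathcal{I}_i$. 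Your version avoids the norm reduction and the auxiliary field $K^{G_i}$, and makes the vanishing of the contributions from $b_j$ with $j\neq i$ fully explicit rather than implicit; this is a slightly more transparent way of reaching the same conclusion, and the final observation that $\mathcal{I}_i=0$ for $1\le i\le r$ (since $v_i$ splits completely in $K$, so $G_i$ is trivial) is exactly the right way to get the second assertion.
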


\begin{proof}
Take any $a\in P$. Write
$$\psi(a)=\sum_{j=1}^d x_j b_j$$
with some $x_j \in \ZZ[G]$. For each $i$ with $1\leq i \leq n$, we show that $x_i \in \mathcal{I}_i$, or equivalently, ${\N_{G_i}} x_i=0$. Noting that $F^{G_i}$ is a free $G/G_i$-module with basis $\{ {\N_{G_i}}b_j\}_{1\leq j \leq d}$, it is sufficient to show that
$$\sum_{j=1}^d {\N_{G_i}}x_j b_j \in \langle{\N_{G_i}}b_j  :  1\leq j \leq d, j\neq i\rangle_{G/G_i}.$$
The left hand side is equal to $\psi({\N_{G_i}}a)$. By the exact sequence (\ref{tateseq}), this is contained in $\ker (\pi |_{F^{G_i}})$. Note that we have a natural isomorphism
$${\N_{G_i}}X_{K,S} \simeq X_{K^{G_i},S}.$$
Since $v_i$ splits completely in $K^{G_i}$, the $G/G_i$-submodule of ${\N_{G_i}}X_{K,S}$ generated by ${\N_{G_i}}(w_i-w_0)$ is isomorphic to $\ZZ [G/G_i]$. This shows that
$$\ker(\pi |_{F^{G_i}}) \subset \langle{\N_{G_i}}b_j  :  1\leq j \leq d, j\neq i\rangle_{G/G_i}.$$
\end{proof}

For each integer $i$ with $r<i\leq r'$,  we define a map
$$\widetilde \Rec_i : P^H \longrightarrow (\mathcal{I}_i)_H$$
as follows. For $a\in P^H$, take $\widetilde a \in P$ such that $\N_H\widetilde a =a$ (this is possible since $P$ is cohomologically-trivial). Define
$$\widetilde \Rec_i(a):={ \psi_i(\widetilde a)} \ {\rm mod} \
I_H\mathcal{I}_i \
\in (\mathcal{I}_i)_H.$$
(Note that $\im (\psi_i) \subset \mathcal{I}_i$ by Lemma \ref{lempsi}.)
One can easily check that this is well-defined.

\begin{lemma} \label{lem1}
On $\mathcal{O}_{L,S,T}^\times$, which we regard as a submodule of $P^H$,
$\widetilde\Rec_i$ coincides with the map $\Rec_i$. In particular, by the construction of (\ref{extmap2}), we can extend the map
$$\Rec_W : \bigcap_{G/H}^{r'}\mathcal{O}_{L,S,T}^\times \longrightarrow (\bigcap_{G/H}^r \mathcal{O}_{L,S,T}^\times) \otimes_\ZZ (J_{W})_H$$
to
$$\widetilde \Rec_W :=\bigwedge_{r <i\leq r'}\widetilde \Rec_i : \bigwedge_{G/H}^{r'} P^H \longrightarrow (\bigwedge_{G/H}^r P^H) \otimes_{G/H} (\mathcal{J}_{W})_H \simeq(\bigwedge_{G/H}^r P^H) \otimes_\ZZ (J_{W})_H \ .$$
\end{lemma}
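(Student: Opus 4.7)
To prove Lemma \ref{lem1}, I would first verify that $\widetilde{\Rec}_i$ is well-defined on $P^H$. Take $a \in P^H$ and a lift $\widetilde a \in P$ with $\N_H \widetilde a = a$, which exists by cohomological triviality of the projective module $P$. If in addition $a \in \mathcal{O}_{L,S,T}^\times \subset \ker\psi$, then $\N_H \psi(\widetilde a) = \psi(\N_H \widetilde a) = \psi(a) = 0$. Since $F$ is a free $\ZZ[G]$-module and $\ker(\N_H \colon \ZZ[G]\to\ZZ[G]) = I(H)\ZZ[G] = I_H$, we conclude $\psi(\widetilde a) \in I_H F$ and hence $\psi_i(\widetilde a) = b_i^{\ast}(\psi(\widetilde a)) \in I_H \ZZ[G]$; combined with Lemma \ref{lempsi} this gives $\psi_i(\widetilde a) \in I_H\ZZ[G] \cap \mathcal{I}_i$, which projects to a class in $(\mathcal{I}_i)_H$. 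Independence of the lift follows because $\ker(\N_H|_P) = I_H P$ (using that $P$ is projective), so any two lifts differ by an element of $I_H P$, whose image under $\psi_i$ lies in $I_H \psi_i(P) \subseteq I_H \mathcal{I}_i$.

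Next I would interpret both sides of the claimed equality cohomologically. By Proposition \ref{proptateseq} and the $T$-modified analogue implicit in the construction of $R\Gamma_T((\mathcal{O}_{K,S})_\mathcal{W}, \GG_m)$ in \S\ref{wec}, the Yoneda class of the four-term sequence (\ref{tateseq}) equals the Tate fundamental class $\tau_{K/k,S,T}$. Splitting (\ref{tateseq}) into two short exact sequences
\[
0 \to \mathcal{O}_{K,S,T}^\times \to P \to \im(\psi) \to 0, \qquad 0 \to \im(\psi) \to F \to \mathcal{S}^{\rm tr}_{S,T}(\GG_{m/K}) \to 0,
\]
and using that $P$ and $F$ are cohomologically trivial, the composite connecting map in $H$-Tate cohomology is cup product with $\tau_{K/k,S,T}|_H$, and projection onto the $b_i$-component identifies this with $\widetilde{\Rec}_i$ on $\mathcal{O}_{L,S,T}^\times$. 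By the standard local-global compatibility of global class field theory, the restriction of $\tau_{K/k,S,T}$ to the decomposition subgroup $G_i$ — which is contained in $H$ precisely because $v_i$ splits completely in $L$ for $r<i\le r'$ — is the local fundamental class $\tau_{K_{w_i}/k_{v_i}} \in H^2(G_i, K_{w_i}^\times)$, the class governing local reciprocity $\rec_{w_i}$.

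Finally, the explicit formula $\Rec_i(a) = \sum_{\tau\in G/H} \tau^{-1}(\rec_{w_i}(\tau a)-1)$ emerges from unraveling this identification. The $[L:k] = |G/H|$ places of $L$ above $v_i$ are indexed by $\tau \in G/H$ via $\tau \mapsto \tau w_i'$ (where $w_i'$ is the place of $L$ below $w_i$), and each contributes the term $\tau^{-1}(\rec_{w_i}(\tau a)-1)$ to $\psi_i(\widetilde a)$ modulo $I_H \mathcal{I}_i$; summing yields $\widetilde{\Rec}_i(a) = \Rec_i(a)$ in $(\mathcal{I}_i)_H$. The extension to the wedge-power map $\widetilde{\Rec}_W := \bigwedge_{r<i\le r'}\widetilde{\Rec}_i$ on $\bigwedge^{r'}_{G/H} P^H$ is then formal from (\ref{extmap2}).

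The main obstacle is the cohomological comparison in the second paragraph — making precise the identification of the abstract connecting map arising from (\ref{tateseq}) with local reciprocity at $v_i$. One clean way to navigate this is to note that the Yoneda class of (\ref{tateseq}) is independent of the specific representative, so one may replace (\ref{tateseq}) with an explicit Tate-type sequence whose construction manifestly incorporates local reciprocity at each $v_i$, and perform the calculation with that representative; this is the strategy implicit in \cite[\S 7]{burns}, and the present lemma may be viewed as a refinement of computations carried out there.
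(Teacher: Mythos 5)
Your approach is essentially the paper's approach, and your first paragraph is a correct and more detailed account of well-definedness than the paper gives (the paper simply says ``one can easily check that this is well-defined''). Your reindexing of the free basis of $F$ over $\ZZ[H]$ by pairs $(\tau, i)$ and the observation that $\psi_i(\widetilde a) = \sum_{\tau\in G/H}\widetilde\tau\cdot\bigl((\widetilde\tau b_i)^\ast\circ\psi(\widetilde a)\bigr)$ is exactly the paper's mechanism; with the identity $(\widetilde\tau b_i)^\ast\circ\psi(\widetilde a) = \mathrm{rec}_{w_i}(\tau^{-1}a)-1$ in hand, the conclusion is immediate, and the extension to wedge powers via (\ref{extmap2}) is formal.

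However, you do not actually prove that identity, and you are right to flag this as the ``main obstacle.'' Your cohomological detour via Proposition \ref{proptateseq} and local-global compatibility of fundamental classes is a reasonable heuristic, but it is not close to being a proof: Proposition \ref{proptateseq} is stated for the complex without $T$-modification and under the hypothesis that ${\rm Cl}_S(K)=0$, so a $T$-modified analogue would need to be established; and the Yoneda class only determines the sequence (\ref{tateseq}) up to equivalence, whereas $\psi_i$ depends on the chosen surjection $\pi$ and the chosen representative, so identifying $\psi_i|_{\mathcal{O}_{L,S,T}^\times}$ with a specific connecting map requires an argument that the class-theoretic data is preserved along the equivalence. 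The paper does not attempt this detour at all: it simply cites the concrete computation in \cite[Lem. 10.3]{burns}, which establishes $(\widetilde\tau b_i)^\ast\circ\psi(\widetilde a) = \mathrm{rec}_{\widetilde\tau w_i}(a)-1$ directly from the construction of the representative of the Tate class. Your closing suggestion --- to pass to an explicit Tate-type sequence built from local reciprocity, following \cite[\S 7]{burns} --- is indeed exactly what that citation does; so your proposal and the paper ultimately agree that the burden rests on that explicit computation, but your write-up leaves it undone where the paper delegates it to a specific prior lemma.
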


\begin{proof}
The proof is essentially the same as \cite[Prop. 10.1]{burns}. For $a \in \mathcal{O}_{L,S,T}^\times$, take $\widetilde a \in P$ such that $\N_H \widetilde a =a$ in $P^H$. For each $\tau \in G/H$, fix a lift $\widetilde \tau \in G$.
Regard $F$ as the free $H$-module with basis $\{ \widetilde \tau b_i \}_{i,\tau}$. By the argument in the proof of \cite[Lem. 10.3]{burns}, we know for every $r<i\leq r'$ that
$$(\widetilde \tau b_i)^\ast \circ \psi (\widetilde a) = {\rm rec}_{\widetilde \tau w_i}(a)-1={\rm rec}_{w_i}(\tau^{-1}a) -1,$$
where $(\widetilde \tau b_i)^\ast \in \Hom_{H}(F,\ZZ[H])$ is the dual basis of $F$ as a free $H$-module. We easily see that
$$\widetilde \Rec_i(a)=\psi_i(\widetilde a)=\sum_{\tau \in G/H} \widetilde \tau ( (\widetilde \tau b_i)^\ast \circ \psi (\widetilde a)).$$
Hence we have
$$\widetilde \Rec_i(a)= \sum_{\tau \in G/H} \widetilde \tau ({\rm rec}_{w_i}(\tau^{-1}a)-1)=\Rec_i(a).$$
\end{proof}

Note that, by Lemma \ref{lempsi}, $\bigwedge_{r < i\leq d}\psi_i$ defines a map
$$\bigwedge_G^dP \longrightarrow \mathcal{J}_{W}\bigwedge_G^r P.$$
Let $\nu$ be the injection
$$\nu : (\bigwedge_{G/H}^r P^H) \otimes_\ZZ (J_{W})_H \longrightarrow (\bigwedge_G^r P) \otimes_\ZZ \ZZ[H]/I(H)J_{W} $$
in Proposition \ref{propinj}.
By Proposition \ref{propnorm}, we have
$$\mathcal{N}_H (\mathcal{J}_{W}\bigwedge_G^r P) \subset \im (\nu),$$
so we can define a map
$$\nu^{-1}\circ \mathcal{N}_H : \mathcal{J}_{W}\bigwedge_G^r P \longrightarrow (\bigwedge_{G/H}^rP^H )\otimes_\ZZ (J_{W})_H. $$

\begin{lemma} \label{lemcomm}
We have the following commutative diagram:
\[\xymatrix{
{\bigwedge_G^d P} \ar[d]_{\N_H^d} \ar[r]^{} &  {\mathcal{J}_{W}\bigwedge_G^rP} \ar[d]^{\nu^{-1}\circ \mathcal{N}_H} \\
{\bigwedge_{G/H}^dP^H} \ar[r]^{} & {(\bigwedge_{G/H}^rP^H) \otimes_\bbZ (J_{W})_H,}  \\
}\]
where the top arrow is $(-1)^{r(d-r)}\bigwedge_{r <i \leq d}\psi_i$, and
the bottom arrow is the composition of $(-1)^{r(r'-r)}\widetilde\Rec_W$ and
$(-1)^{r'(d-r')}\bigwedge_{r'<i \leq d}\psi_i^H$.
\end{lemma}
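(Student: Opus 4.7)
The plan is to verify the commutativity of the diagram by reducing to the case of a free $P$ and testing against arbitrary $\Phi \in \bigwedge_G^r\Hom_G(P,\ZZ[G])$. By Swan's theorem, $P_p$ is free of rank $d$ over $\ZZ_p[G]$ for every prime $p$, and since both routes of the diagram commute with localization it suffices to assume $P$ is $\ZZ[G]$-free of rank $d$ with a fixed basis $\{c_1,\ldots,c_d\}$ and dual basis $\{c_1^\ast,\ldots,c_d^\ast\}$. By $\ZZ[G]$-linearity it then further suffices to check the equality on the single generator $a = c_1\wedge\cdots\wedge c_d$ of $\bigwedge_G^d P$.

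Write $b(a)$ and $c(a)$ for the images of $a$ under the top and bottom routes respectively. Lemma \ref{lempsi} gives $\psi_i \in \mathcal{I}_i$ for $r < i \leq r'$, and this combined with the implication (iii)$\Rightarrow$(i) of Proposition \ref{propnorm} shows that $b(a) \in \mathcal{J}_W\bigwedge_G^r P$, while (i)$\Rightarrow$(ii) of the same proposition shows $\mathcal{N}_H(b(a)) \in \im(\nu)$, so the top route is well-defined. The final clause of Proposition \ref{propnorm} also yields the identity $\Phi(b(a)) = \Phi^H(\nu^{-1}\mathcal{N}_H(b(a)))$ in $(\mathcal{J}_W)_H$ for every $\Phi \in \bigwedge_G^r\Hom_G(P,\ZZ[G])$. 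On the other hand, Proposition \ref{thminj} applied to $M = P^H$ (which is $\ZZ$-torsion-free since $P$ is) shows that an element of $(\bigwedge_{G/H}^r P^H)\otimes(J_W)_H$ is determined by its images under all $\Phi^H$. The lemma therefore reduces to verifying the equality $\Phi(b(a)) = \Phi^H(c(a))$ in $(\mathcal{J}_W)_H$ for every $\Phi = \varphi_1\wedge\cdots\wedge\varphi_r$.

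This last step is a determinantal identity computed via Proposition \ref{propformula}. A direct Laplace expansion gives $\Phi(b(a)) = \det M$ in $\mathcal{J}_W$, where $M$ is the $d\times d$ matrix whose rows are $(\varphi_k(c_j))_j$ for $1\leq k\leq r$ followed by $(\psi_{r+k}(c_j))_j$ for $1\leq k\leq d-r$. The analogous two-step expansion of $\Phi^H(c(a))$ uses Lemma \ref{lem1} together with the identities $\psi_i^H(\N_H\tilde x) \equiv \psi_i(\tilde x) \pmod{I(H)\ZZ[G]}$ and $\widetilde\Rec_i(\N_H\tilde x) \equiv \psi_i(\tilde x) \pmod{I_H\mathcal{I}_i}$, together with the identification $(\mathcal{J}_W)_H \simeq \ZZ[G/H]\otimes(J_W)_H$ of Proposition \ref{propisom}, to express $\Phi^H(c(a))$ as the image of $\det M$ in $(\mathcal{J}_W)_H$ up to an overall sign. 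The main obstacle is checking that the sign $(-1)^{r(d-r)}$ contributed by the top route agrees with the combined signs of the bottom route, namely $(-1)^{r'(d-r')}\cdot(-1)^{r(r'-r)}\cdot(-1)^{(r'-r)(d-r')}$, the last factor arising from the antisymmetry of interior products when one splits $\bigwedge_{r<i\leq d}\psi_i = \bigwedge_{r<i\leq r'}\psi_i \wedge \bigwedge_{r'<i\leq d}\psi_i$ in $\bigwedge^{d-r}_G \Hom_G(P,\ZZ[G])$. This reduces to the elementary parity identity $r(d-r) \equiv r'(d-r') + r(r'-r) + (r'-r)(d-r') \pmod 2$, which is easily verified directly, and the lemma follows.
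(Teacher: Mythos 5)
Your proposal is correct in substance and reaches the conclusion by a computation that is essentially the one the paper intends when it tersely cites Propositions \ref{propformula}, \ref{propphi} and Remark \ref{remnorm}; the sign bookkeeping, in particular the parity identity $r(d-r)\equiv r'(d-r')+r(r'-r)+(r'-r)(d-r')\pmod 2$ that accounts for the reordering of the interior products, is exactly what makes the stated sign conventions consistent and you have verified it correctly. The one genuine difference in packaging is that you avoid a direct verification of the equality inside $(\bigwedge_{G/H}^r P^H)\otimes_\ZZ (J_W)_H$ by instead invoking the injectivity result of Proposition \ref{thminj} to reduce to testing against every $\Phi^H$, then combining with the final clause of Proposition \ref{propnorm}; the paper's indicated route, via Proposition \ref{propphi} and Remark \ref{remnorm}, computes the two sides directly using the description $\im(\nu)=\N_H\bigwedge_G^r P$ rather than dualising. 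Both buy essentially the same thing here; the duality step is slightly cleaner but slightly heavier machinery, and it is adequately supported by the results earlier in \S\ref{algpreliminary}.

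Two small corrections to the write-up. First, when you invoke Proposition \ref{thminj} you should take $M=P$, not $M=P^H$: the conclusion of that proposition injects $(\bigcap_{G/H}^r M^H)\otimes J_H$ into $\Hom_G(\bigwedge_G^r\Hom_G(M,\ZZ[G]),\mathcal{J}_H)$, and the functionals $\Phi$ you wish to test against live in $\bigwedge_G^r\Hom_G(P,\ZZ[G])$, not $\bigwedge_G^r\Hom_G(P^H,\ZZ[G])$; with $M=P$ one indeed gets $M^H=P^H$ and (by Proposition \ref{propproj}) $\bigcap_{G/H}^r P^H=\bigwedge_{G/H}^r P^H$, which is the module you need. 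Second, the phrase ``Lemma \ref{lempsi} gives $\psi_i\in\mathcal{I}_i$'' should read $\im(\psi_i)\subset\mathcal{I}_i$; and in fact that containment alone already shows, as the paper observes just before Lemma \ref{lemcomm}, that $\bigwedge_{r<i\le d}\psi_i$ maps $\bigwedge_G^d P$ into $\mathcal{J}_W\bigwedge_G^r P$, so the appeal to implication (iii)$\Rightarrow$(i) of Proposition \ref{propnorm} is not needed for that point (though it does no harm).
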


\begin{proof}
We can prove this lemma by explicit computations,
using Proposition \ref{propformula}, Proposition \ref{propphi}, and Remark \ref{remnorm}.
\end{proof}

\begin{proof}[Proof of Theorem \ref{ltcmrs}]
By Remark \ref{remr} we may compute in
$(\bigwedge_{G/H}^{r}P^H) \otimes_\ZZ (J_W)_H$.
Using Corollary \ref{remzetars},  Lemma \ref{lem1},
Lemma \ref{lemcomm}, and Theorem \ref{zetars} in this order, we compute
\begin{eqnarray*}
(-1)^{r(r'-r)}
\Rec_W(\epsilon_{L/k,S,T}^{V'}) &=&
(-1)^{r(r'-r)}
\widetilde\Rec_W((-1)^{r'(d-r')}(\bigwedge_{r' <i\leq d}\psi_i^H)(\N_H^d z_b)) \\
&=& (-1)^{r(d-r)} \nu^{-1}(\mathcal{N}_H((\bigwedge_{r <i\leq d}\psi_i)(z_b))) \\
&=&   \nu^{-1}(\mathcal{N}_H(\epsilon_{K/k,S,T}^{V})).
\end{eqnarray*}
This completes the proof of Theorem \ref{ltcmrs}.
\end{proof}

\section{Conjectures of Darmon and of Gross} \label{dargro}

In this section we use Corollary \ref{cormrs} to prove a refined version of the conjecture formulated by Darmon in \cite{D} and to obtain important new evidence for a refined version of the `conjecture for tori' formulated by Gross in \cite{G}. 


\subsection{Darmon's Conjecture}\label{darmon sect}




We formulate a slightly modified and refined version of Damon's conjecture (\cite{D},\cite{MR}).

Let $L$ be a real quadratic field. Let $f$ be the conductor of $L$. Let $\chi$ be the Dirichlet character defined by
$$\chi : (\ZZ/f\ZZ)^\times=\Gal(\QQ(\mu_f)/\QQ) \longrightarrow \Gal(L/\QQ) \simeq \{ \pm 1\},$$
where the first map is the restriction map.
Fix a square-free positive integer $n$ which is coprime to $f$, and  let $K$ be the maximal real subfield of $L(\mu_n)$. Set $G:=\Gal(K/\Q)$ and $H:=\Gal(K/L)$. Put
$n_{\pm}:=\prod_{\ell | n, \chi(\ell)=\pm1}\ell$, and $\nu_{\pm}:=|\{ \ell | n_{\pm} \}|$ (in this section, $\ell $ always denotes a prime number).
We fix an embedding $\overline \Q \hookrightarrow \CC$. Define a cyclotomic unit by
$$\beta_n:=\N_{L(\mu_n)/K}(\prod_{\sigma \in \Gal(\Q(\mu_{nf})/\Q(\mu_n))}\sigma(1-\zeta_{nf})^{\chi(\sigma)}) \in K^\times,$$
where $\zeta_{nf}=e^{\frac{2\pi i}{nf}}$. Let $\tau$ be the generator of $G/H=\Gal(L/\Q)$.
Write $n_+=\ell_1 \cdots \ell_{\nu_+}$.
Note that $(1-\tau)\mathcal{O}_{L}[1/n]^\times$ is a free abelian group of rank $\nu_++1$ (see \cite[Lem. 3.2 (ii)]{MR}).
Take $u_0,\ldots,u_{\nu_+} \in \mathcal{O}_L[1/n]^\times$ so that $\{u_0^{1-\tau}, \ldots, u_{\nu_+}^{1-\tau}\}$ is a basis of $(1-\tau)\mathcal{O}_L[1/n]^\times$ and that $\det(\log|u_i^{1-\tau}|_{\lambda_j})_{0\leq i,j\leq \nu_+}>0$,
where each $\lambda_j$ ($1\leq j \leq \nu_+$) is a (fixed) place of $L$ lying above $\ell_j$, and $\lambda_0$ is the infinite place of $L$ determined by the embedding
$\overline \Q \hookrightarrow \CC$ fixed above. Define
$$R_n:=(\bigwedge_{1\leq i \leq \nu_+}({\rm rec}_{\lambda_i}(\cdot)-1))(u_0^{1-\tau}\wedge\cdots\wedge u_{\nu_+}^{1-\tau}) \in L^\times \otimes_\ZZ (J_{n_+})_H,$$
where
\[J_{n_+} := \begin{cases} (\prod_{i= 1}^{\nu_+}I(G_{\ell_i}),
&\text{ if $\nu_+ \neq 0$},\\
 \ZZ[H], &\text{ if $\nu_+=0$,}\end{cases}\]
where $G_{\ell_i}$ is the decomposition group of $\ell_i$ in $G$ (note that since $\ell_i$ splits in $L$, we have $G_{\ell_i} \subset H$), and $(J_{n_+})_H:=J_{n_+}/I(H)J_{n_+}$.
We set $h_n:=|{\rm{Pic}}(\mathcal{O}_L[\frac1n])|.$
For any element $a \in K^\times$, following Definition \ref{defnorm} we define
$$\mathcal{N}_H(a) := \sum_{\sigma\in H} \sigma a \otimes \sigma^{-1} \in K^\times \otimes_\ZZ \ZZ[H]/I(H)J_{n_+}.$$

Note that, since $K^\times/L^\times$ is $\ZZ$-torsion-free,
the natural map
$$(L^\times/\{\pm 1\})\otimes_\ZZ (J_{n_+})_H \longrightarrow (K^\times/\{\pm 1\})\otimes_\ZZ \ZZ[H]I(H)J_{n_+}$$
is injective.

Our refined Darmon's conjecture is formulated as follows.

\begin{theorem} \label{darconj} One has
$$\mathcal{N}_H(\beta_n)=-2^{\nu_-}h_nR_n \quad in \quad (L^\times/\{\pm 1\})\otimes_\ZZ (J_{n_+})_H.$$
\end{theorem}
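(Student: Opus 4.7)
The plan is to deduce Theorem \ref{darconj} from Corollary \ref{cormrs}, which establishes the validity of $\text{MRS}(K/L/\QQ,S,T,V,V')$ for $k=\QQ$, by explicitly identifying the relevant Rubin-Stark elements. Concretely, I would apply that corollary with $K/L/\QQ$ as in the theorem, $H = \Gal(K/L)$, $S = \{\infty\}\cup\{\ell : \ell\mid nf\}$, an auxiliary set $T$ of primes disjoint from $S$ chosen so that $\mathcal{O}_{K,S,T}^\times$ is $\ZZ$-torsion-free, $V = \{\infty\}$ (so $r=1$), and $V' = \{\infty, \ell_1, \ldots, \ell_{\nu_+}\}$ (so $r' = 1+\nu_+$, using that $\chi(\ell_i)=1$ means $\ell_i$ splits in $L$). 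Then $W = V'\setminus V = \{\ell_1,\ldots,\ell_{\nu_+}\}$.

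The core of the argument is the calculation of the two Rubin-Stark elements in this situation. For $\epsilon^V_{K/\QQ,S,T}$, an application of the classical Dirichlet formula for the first derivative at $s=0$ of $L_{\QQ,S,T}(\psi,s)$ for characters $\psi$ of $\Gal(K/\QQ)$, combined with the standard norm-compatible construction of cyclotomic elements in $K\subset\QQ(\mu_{nf})$, identifies this element with $\pm\beta_n^{\delta_T}$, where $\delta_T := \prod_{v\in T}(1-\N v\,\mathrm{Fr}_v^{-1})$; the $\chi$-twisted norm in the definition of $\beta_n$ matches exactly the character-by-character contribution. For $\epsilon^{V'}_{L/\QQ,S,T}$, the leading term $L^{\ast}_{\QQ,S,T}(\chi,0)$ at $s=0$ factors, via Dirichlet's $S$-class-number formula for $\mathcal{O}_L[1/n]$ applied to the nontrivial character $\chi$ of $\Gal(L/\QQ)$, as $2^{\nu_-}\cdot h_n\cdot\text{(regulator)}\cdot\delta_T$ up to sign, where the factor $2^{\nu_-}$ comes from the Euler factors $(1-\chi(\ell))=2$ at the $\nu_-$ primes $\ell\mid n_-$ that are inert in $L$; when combined with the wedge $\bigwedge (w_i-w_0)$ and expressed in the chosen basis $\{u_0^{1-\tau},\ldots,u_{\nu_+}^{1-\tau}\}$ of $(1-\tau)\mathcal{O}_L[1/n]^\times$ (whose determinant sign is pinned down by the positivity convention), the regulator cancels and one obtains $\epsilon^{V'}_{L/\QQ,S,T}$ as an explicit $\ZZ$-multiple of $u_0^{1-\tau}\wedge\cdots\wedge u_{\nu_+}^{1-\tau}$.

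Substituting both identifications into the identity provided by Corollary \ref{cormrs},
\[
\mathcal{N}_H(\epsilon^V_{K/\QQ,S,T}) \;=\; (-1)^{r(r'-r)}\,\nu\bigl(\Rec_W(\epsilon^{V'}_{L/\QQ,S,T})\bigr)
\]
with $(-1)^{r(r'-r)}=(-1)^{\nu_+}$, and computing $\Rec_W$ on the right-hand side via Proposition \ref{propformula}, gives an equality in $K^\times\otimes_\ZZ\ZZ[H]/I(H)J_{n_+}$ which, after descending to the torsion-free quotient $L^\times/\{\pm 1\}$ (on which $\delta_T$ acts uniformly on the two sides and hence cancels), reduces exactly to $\mathcal{N}_H(\beta_n) = -2^{\nu_-}h_n R_n$ in $(L^\times/\{\pm 1\})\otimes_\ZZ (J_{n_+})_H$. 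The overall sign $-1$ then combines the factor $(-1)^{\nu_+}$ from MRS with the signs arising in the two Rubin-Stark identifications.

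The main obstacle will be the precise sign and constant bookkeeping in the second Rubin-Stark computation: carefully extracting the factors $2^{\nu_-}$ from the Euler factors at the inert primes and $h_n$ from Dirichlet's formula, with the correct overall sign, requires delicate manipulation of the positivity convention on $\det(\log|u_i^{1-\tau}|_{\lambda_j})_{0\le i,j\le\nu_+}$, of the ordering of the places in $V'$, and of the interaction of the $T$-modification with $\mu(L)=\{\pm 1\}$. Once these signs are pinned down, however, the argument is a formal consequence of Corollary \ref{cormrs}.
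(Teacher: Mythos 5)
Your strategy --- deducing Theorem \ref{darconj} from Corollary \ref{cormrs} by identifying the Rubin--Stark elements $\epsilon^V_{K/\QQ,S,T}$ and $\epsilon^{V'}_{L/\QQ,S,T}$ with cyclotomic and Dirichlet data --- is the same as the paper's, and your choices of $S,V,V',W$ match the paper's exactly. However, the step in which you dispose of the auxiliary set $T$ does not work. The claim that one can ``descend to the torsion-free quotient $L^\times/\{\pm 1\}$ on which $\delta_T$ acts uniformly on both sides and hence cancels'' is not a valid operation: $\delta_T=\prod_{v\in T}(1-\N v\,\mathrm{Fr}_v^{-1})$ is a non-unit of $\ZZ[G]$, and multiplication by it on $(L^\times/\{\pm1\})\otimes_\ZZ(J_{n_+})_H$ is not injective, so it cannot be divided out. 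Nor can you retreat to a ``$T$-free'' version of the congruence, because without the $T$-modification the Rubin--Stark elements need not lie in the integral Rubin lattices and Conjecture \ref{mrsconj} has nothing to assert.

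The paper removes the $T$-dependence by a different device: it fixes a finite family $\mathscr{T}$ of admissible auxiliary sets together with elements $a_T\in\ZZ[G]$ satisfying $\sum_{T\in\mathscr{T}}a_T\delta_T=2$ (such data exist by \cite[Chap.~IV, Lem.~1.1]{tate}, since $\mu(K)=\{\pm1\}$), and then forms the $\ZZ[G]$-linear combination $\sum_{T\in\mathscr{T}}a_T$ of the MRS identities. This converts the various $\delta_T$'s into the scalar $2$, which is harmless on $L^\times/\{\pm1\}$. The same trick also repairs a secondary imprecision: your claimed identification $\epsilon^V_{K/\QQ,S,T}=\pm\beta_n^{\delta_T}$ is not quite right, since $\beta_n$ already carries the $\chi$-twist corresponding to the $(1-\tau)$-projection onto the $\Gal(L/\QQ)$-odd part. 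The relations actually used, from \cite[Lem.~4.6]{sano}, are $(1-\tau)\sum_{T\in\mathscr{T}}a_T\,\epsilon_{K,T}=\beta_n$ in $K^\times/\{\pm1\}$ and $(1-\tau)\sum_{T\in\mathscr{T}}a_T\,\epsilon_{L,T}=(-1)^{\nu_++1}2^{\nu_-}h_n\,(1-\tau)\,u_0\wedge\cdots\wedge u_{\nu_+}$; note that both the $(1-\tau)$-projection and the $\mathscr{T}$-summation are present, rather than a single $T$ and a factor $\delta_T$. Once these replacements are made, the rest of your computation goes through as you sketch it.
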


\begin{remark}
Let $I_n$ be the augmentation ideal of $\ZZ[\Gal(L(\mu_n)/L)]$. Note that there is a natural isomorphism
$$I_n^{\nu_+}/I_n^{\nu_++1}\otimes_\ZZ \ZZ[\frac12] \stackrel{\sim}{\longrightarrow} I(H)^{\nu_+}/I(H)^{\nu_++1} \otimes_\ZZ \ZZ[\frac12].$$
It is not difficult to see that the following statement is equivalent to \cite[Th. 3.9]{MR}:
$$\mathcal{N}_H(\beta_n)=-2^{\nu_-}h_nR_n \quad in \quad (L^\times/\{\pm 1\})\otimes_\ZZ I(H)^{\nu_+}/I(H)^{\nu_++1} \otimes_\ZZ \ZZ[\frac12]$$
(see \cite[Lem. 4.7]{sano}). Since there is a natural map $(J_{n_+})_H \longrightarrow I(H)^{\nu_+}/I(H)^{\nu_++1}$, Theorem \ref{darconj} refines \cite[Th. 3.9]{MR}.
Note also that, in the original Darmon's conjecture, the cyclotomic unit is defined by
$$\alpha_n:=\prod_{\sigma \in \Gal(\Q(\mu_{nf})/\Q(\mu_n))}\sigma(1-\zeta_{nf})^{\chi(\sigma)},$$
whereas our cyclotomic unit is $\beta_n=\N_{L(\mu_n)/K}(\alpha_n)$. Since cyclotomic units, as Stark elements, lie in real fields, so it is natural to
consider $\beta_n$. Thus, modifying the original Darmon's conjecture in the `$2$-part', we obtained Theorem \ref{darconj}, which does not
exclude the `$2$-part'.

\end{remark}

\begin{proof}[Proof of Theorem \ref{darconj}]
We show that Darmon's conjecture is a consequence of Conjecture \ref{mrsconj}, and use Corollary \ref{cormrs} to prove it. We fit notation in this section into that in \S \ref{sano conj}.  Set $S:=\{ \infty \}\cup\{ \ell |nf \}$.
Take a prime $v_0$ of $\Q$, which divides $f$. We denote by $w_1$ the infinite place of $K$ (and also $L$) which corresponds to the fixed embedding $\QQ \hookrightarrow \CC$. For $2\leq i \leq \nu_++1$, set $w_i:=\lambda_{i-1} $.
Let $T$ be a finite set of primes that is disjoint from $S$ and satisfying that $\mathcal{O}_{K,S,T}^\times$ is $\ZZ$-torsion-free. (In the sequel, we refer such a set of primes as `$T$'.) Since $K$ and $L$ are abelian over $\Q$, the Rubin-Stark conjecture
for $K/\Q$ and $L/\Q$ holds (see Remark \ref{knownrs} (iii)). Set $V:=\{\infty\}$ and $V':=\{ \infty, \ell_1,\ldots,\ell_{\nu_+}\}$. We denote
$\epsilon_{K,T}=\epsilon_{K/\QQ,S,T}^V \in \mathcal{O}_{K,S,T}^\times$ and
$\epsilon_{L,T}=\epsilon_{L/\QQ,S,T}^{V'} \in \bigcap_{G/H}^{\nu_++1}\mathcal{O}_{L,S,T}^\times$ for the Rubin-Stark elements, characterized by
$$\lambda_{K,S}(\epsilon_{K,T})=\theta_{K/\Q,S,T}^{(1)}(w_1-w_0),$$
$$\lambda_{L,S}(\epsilon_{L,T})=
\theta_{L/\Q,S,T}^{(\nu_++1)}\bigwedge_{1\leq i \leq \nu_++1}(w_i-w_0).$$
We take $\mathscr{T}$, a finite family of `$T$', such that
$$\sum_{T\in \mathscr{T}}a_T\delta_T=2$$
for some $a_T \in \ZZ[G]$, where $\delta_T:=\prod_{\ell \in T}(1-\ell {\rm{Fr}}_\ell^{-1})$ (see \cite[Chap. IV, Lem. 1.1]{tate}). By \cite[Lem. 4.6]{sano}, we have
$$(1-\tau)\sum_{T\in \mathscr{T}}a_T \epsilon_{K,T}= \beta_n \quad in \quad K^\times/\{\pm 1\},$$
(where $\tau \in \Gal(L/\QQ)$ is regarded as an element of $\Gal(K/\QQ(\mu_n)^+)$) and
$$(1-\tau)\sum_{T\in \mathscr{T}}a_T \epsilon_{L,T}=(-1)^{\nu_++1}2^{\nu_-}h_n(1-\tau) u_0\wedge\cdots\wedge u_{\nu_+} \quad in \quad \Q \bigwedge_{G/H}^{\nu_++1}\mathcal{O}_{L,S}^\times.$$
As in \S \ref{secmrs}, for $1<i \leq \nu_++1$ we denote by $\Rec_i$ the homomorphism
$$\Rec_i : \mathcal{O}_{L,S,T}^\times \longrightarrow (\mathcal{J}_{n_+})_H =J_{n_+} \ZZ[G]/I_HJ_{n_+} \ZZ[G]$$
defined by
$$\Rec_i(a)={\rm rec}_{\lambda_{i-1}}(a)-1 + \tau ({\rm rec}_{\lambda_{i-1}}(\tau a)-1).$$
$\bigwedge_{1<i \leq \nu_++1}\Rec_i$ induces a homomorphism
$$\bigcap_{G/H}^{\nu_++1}\mathcal{O}_{L,S,T}^\times \longrightarrow  (\bigcap_{G/H}^1 \mathcal{O}_{L,S,T}^\times) \otimes_\ZZ (J_{n_+})_H= \mathcal{O}_{L,S,T}^\times \otimes_\ZZ (J_{n_+})_H,$$
which we denote by $\Rec_{n_+}$.
We compute
\begin{eqnarray}
(1-\tau)\sum_{T\in \mathscr{T}}a_T\Rec_{n_+}(\epsilon_{L,T})
&=& \sum_{T\in \mathscr{T}}\Rec_{n_+}(a_T(1-\tau)\epsilon_{L,T}) \nonumber \\
&=& \sum_{T\in \mathscr{T}}(\bigwedge_{1\leq i \leq \nu_+}({\rm rec}_{\lambda_i}(\cdot)-1)))((1-\tau)^{\nu_++1}a_T\epsilon_{L,T}) \nonumber \\
&=& (\bigwedge_{1\leq i \leq \nu_+}({\rm rec}_{\lambda_i}(\cdot)-1))((-1)^{\nu_++1}2^{\nu_-}h_n u_0^{1-\tau}\wedge\cdots\wedge u_{\nu_+}^{1-\tau}) \nonumber \\
&=& (-1)^{\nu_++1}2^{\nu_-}h_n R_n. \nonumber
\end{eqnarray}
By Corollary \ref{cormrs}, we have
$$\mathcal{N}_H(\epsilon_{K,T})=(-1)^{\nu_+} \Rec_{n_+}(\epsilon_{L,T})$$
(note that the map $\nu$ in Conjecture \ref{mrsconj} is the natural inclusion map in this case.)
Hence, we have
\begin{eqnarray}
\mathcal{N}_H(\beta_n)&=&(1-\tau)\sum_{T\in\mathscr{T}}a_T\mathcal{N}_H(\epsilon_{K,T}) \nonumber \\
&=&(-1)^{\nu_+}(1-\tau)\sum_{T\in\mathscr{T}}a_T\Rec_{n_+}(\epsilon_{L,T}) \nonumber \\
&=&-2^{\nu_-}h_n R_n, \nonumber
\end{eqnarray}
as required.
\end{proof}

\subsection{Gross's conjecture for tori} \label{gro}
In this section we use Corollary \ref{cormrs} to obtain some new evidence in support of the `conjecture for tori' formulated by Gross in \cite{G}.

We review the formulation of Gross's conjecture for tori. We follow \cite[Conj. 7.4]{Hay}. Let $k$ be a global field, and $L/k$ be a quadratic extension. Let $\widetilde L/k$ be a finite abelian extension,
which is disjoint to $L$, and set $K:=L  \widetilde L$. Set $G:=\Gal(K/k)$, and $H:=\Gal(K/L)=\Gal(\widetilde L/k)$. Let $\tau$ be the generator of $G/H=\Gal(L/k)$.
Let $S$ be a non-empty finite set of places of $k$ such that $S_\infty(k) \cup S_{\rm ram}(K/k)\subset S$. Let $T$ be a finite set of places of $k$ that is disjoint from $S$ and satisfies that $\mathcal{O}_{K,S,T}^\times$ is $\ZZ$-torsion-free. Let $v_1,\ldots,v_{r'}$ be all places in $S$ which split in $L$. We assume $r' <|S|$.
Then, by \cite[Lem. 3.4 (i)]{R}, we see that $h_{k,S,T}:=|{\rm Cl}_S^T(k)|$ divides $h_{L,S,T}:=|{\rm Cl}_S^T(L)|$.
Take $u_1,\ldots,u_{r'}\in \mathcal{O}_{L,S,T}^\times$ such that $\{ u_1^{1-\tau},\ldots,u_{r'}^{1-\tau}\}$ is a basis of $(1-\tau)\mathcal{O}_{L,S,T}^\times$,
which is isomorphic to $ \ZZ^{\oplus r'}$, and $\det(-\log|u_i^{1-\tau}|_{w_j})_{1\leq i,j\leq r'}>0$,
where $w_j$ is a (fixed) place of $L$ lying above $v_j$. Put $W:=\{ v_1,\ldots,v_{r'} \}$. As in \S \ref{secmrs}, we define
\[J_{W} := \begin{cases} (\prod_{0<i \leq r'} I(G_{i})) \ZZ[H],
&\text{ if $W \neq \emptyset$},\\
 \ZZ[H], &\text{ if $W = \emptyset$,}\end{cases}\]
 where $G_i \subset H$ denotes the decomposition group of $v_i$, and $I(G_i)$ is the augmentation ideal of $\ZZ[G_i]$. Set
$$R_{S,T}:=\det({\rm{rec}}_{w_j}(u_i^{1-\tau})-1)_{1\leq i,j \leq r'} \in (J_{W})_H.$$
Let $\chi$ be the non-trivial character of $G/H$. The map
$$\ZZ[G]=\ZZ[H\times G/H] \longrightarrow \ZZ[H]$$
induced by $\chi$ is also denoted by $\chi$.

Gross's tori conjecture is formulated as follows.

\begin{conjecture} \label{tori conj}
$$\chi(\theta_{K/k,S,T}(0) )= 2^{|S|-1-r'}\frac{h_{L,S,T}}{h_{k,S,T}}R_{S,T} \quad in \quad (J_{W})_H. $$
\end{conjecture}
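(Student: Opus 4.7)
The plan is to derive Conjecture \ref{tori conj} from $\mathrm{MRS}(K/L/k, S, T, \emptyset, V')$, i.e., the refined Mazur--Rubin--Sano conjecture with $V = \emptyset$ and $V' = \{v_1,\ldots,v_{r'}\}$. By Theorem \ref{ltcmrs} this in turn follows from $\mathrm{LTC}(K/k)$, and hence is unconditional when $k = \QQ$ or $k$ is a global function field (Corollary \ref{cormrs}).

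First I would specialize MRS to the case $r = 0$. The sign $(-1)^{r(r'-r)}$ is then $1$, $\epsilon_{K/k,S,T}^{\emptyset}$ is exactly the Stickelberger element $\theta_{K/k,S,T}(0)$, and by the remark following Conjecture \ref{mrsconj} the operator $\mathcal{N}_H$ reduces to the natural projection $\ZZ[G] \longrightarrow (\mathcal{J}_W)_H$. The MRS equality therefore reads
$$\theta_{K/k,S,T}(0) \equiv \nu\bigl(\mathrm{Rec}_W(\epsilon_{L/k,S,T}^{V'})\bigr) \pmod{I_H \mathcal{J}_W}.$$

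Second, I would apply the non-trivial character $\chi$ of $G/H = \{1,\tau\}$ to both sides. Using the decomposition of Proposition \ref{propisom}, namely $(\mathcal{J}_W)_H \cong \ZZ[G/H] \otimes_{\ZZ} (J_W)_H$, the map $\chi$ extracts the $(-1)$-eigenspace of $\tau$ and thus lands in $(J_W)_H$, producing $\chi(\theta_{K/k,S,T}(0))$ on the left-hand side.

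Third, I would compute the right-hand side explicitly. By (\ref{chi rank}) the trivial character of $G/H$ contributes order of vanishing $|S|-1$ to $L_{k,S,T}(1,s)$ at $s=0$ while $\chi$ contributes exactly $r'$, so $\theta_{L/k,S,T}^{(r')} = L_{k,S,T}^{*}(\chi,0)\,e_\chi$ with $e_\chi = \tfrac12(1-\tau)$. Combining the defining identity $\lambda_{L,S}(\epsilon_{L/k,S,T}^{V'}) = \theta_{L/k,S,T}^{(r')}\bigwedge_{i}(w_i - w_0)$ with the analytic class number formula for the quadratic extension $L/k$, applied at $\chi$, identifies $(1-\tau)^{r'}\epsilon_{L/k,S,T}^{V'}$ with a scalar multiple of $u_1^{1-\tau}\wedge\cdots\wedge u_{r'}^{1-\tau}$, the scalar having the form $\pm 2^{|S|-1-r'}(h_{L,S,T}/h_{k,S,T})$ after cancellation of the $(u_i^{1-\tau})$-regulators. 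Evaluating $\mathrm{Rec}_W = \bigwedge_{i=1}^{r'}\mathrm{Rec}_i$ on this via Proposition \ref{propformula} collapses the resulting wedge to the determinant $R_{S,T} = \det(\mathrm{rec}_{w_j}(u_i^{1-\tau}) - 1)_{1 \le i,j \le r'}$, giving the stated formula in $(J_W)_H$.

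The principal technical obstacle is the careful bookkeeping of signs and of the factor $2^{|S|-1-r'}$: these must arise precisely from the interplay of the idempotent $e_\chi = \tfrac12(1-\tau)$, the $r'$-fold exterior product over $G/H$, the normalization $\mathcal{N}_H$ under the isomorphism of Proposition \ref{propisom}, and the $|S|-1$ order of vanishing of the trivial-character $L$-function that enters the equivariant class number formula for $L/k$. All these signs must conspire to cancel so that the final identity in $(J_W)_H$ holds signless, as stated in Conjecture \ref{tori conj}.
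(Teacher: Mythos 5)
Your proposal follows essentially the same route as the paper's proof of Theorem \ref{mrstori}: specialize ${\rm MRS}(K/L/k,S,T,\emptyset,W)$, note that $\mathcal{N}_H$ and $\nu$ become trivial when $r=0$, apply $\chi$ to pass $(1-\tau)$ through each $\Rec_i$, and then identify $(1-\tau)^{r'}\epsilon^{W}_{L/k,S,T}$ with the explicit scalar multiple of $u_1^{1-\tau}\wedge\cdots\wedge u_{r'}^{1-\tau}$. The one place you gloss is the last step: the identity
\[
(1-\tau)^{r'}\epsilon^{W}_{L/k,S,T}=2^{|S|-1-r'}\frac{h_{L,S,T}}{h_{k,S,T}}\,u_1^{1-\tau}\wedge\cdots\wedge u_{r'}^{1-\tau}
\]
is not a quick consequence of the class number formula to be re-derived on the spot; the paper takes it directly from Rubin's proof of his Theorem 3.5 in \cite{R} (the rank-$r'$ Stark conjecture for the quadratic extension $L/k$), and you should likewise cite it rather than absorb it into a single sentence. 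Otherwise the structure, the reduction via Theorem \ref{ltcmrs} and Corollary \ref{cormrs}, and the bookkeeping via Proposition \ref{propisom} all match the paper's argument.
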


\begin{remark}
The statement that the equality of Conjecture \ref{tori conj} holds in $\ZZ[H]/I(H)^{r'+1}$ is equivalent to \cite[Conj. 7.4]{Hay} (if we neglect the sign). Indeed, we see that
$$R_{S,T}=((\mathcal{O}_{L,S,T}^\times)^- : (1-\tau)\mathcal{O}_{L,S,T}^\times) R_H^-,$$
where $R_H^-$ is the `minus-unit regulator' defined in \cite[\S 7.2]{Hay} (where our $H$ is denoted by $G$). Since there is a natural map $(J_{W})_H \rightarrow \ZZ[H]/I(H)^{r'+1}$, Conjecture \ref{tori conj} refines \cite[Conj. 7.4]{Hay}.
\end{remark}

\begin{theorem} \label{mrstori}
Conjecture \ref{mrsconj} implies Conjecture \ref{tori conj}. In particular, Conjecture \ref{tori conj} is valid if $K$ is an abelian extension over $\QQ$ or $k$ is a global function field.
\end{theorem}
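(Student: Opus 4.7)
The plan is to derive Gross's formula by specializing Conjecture \ref{mrsconj} to the case $V=\emptyset$, $V'=W$, pushing the resulting congruence along the character map $\chi\colon\ZZ[G]\to\ZZ[H]$, and computing each side by hand using the hypothesis $[L:k]=2$.

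First, taking $V=\emptyset$ and $V'=W$ forces $r=0$ and $r'=|W|$; the Rubin--Stark element at $r=0$ is $\epsilon^{\emptyset}_{K/k,S,T}=\theta_{K/k,S,T}(0)$, the map $\mathcal{N}_H$ reduces to the natural projection $\ZZ[G]\to\ZZ[G]/I_H\mathcal{J}_W$, and $\nu$ is (via Proposition \ref{propisom}) the inclusion $(\mathcal{J}_W)_H\hookrightarrow\ZZ[G]/I_H\mathcal{J}_W$, so Conjecture \ref{mrsconj} asserts
\[
\theta_{K/k,S,T}(0)\equiv\Rec_W\bigl(\epsilon^W_{L/k,S,T}\bigr)\pmod{I_H\mathcal{J}_W}.
\]
Because $\chi(I_H\mathcal{J}_W)=I(H)J_W$ and $\chi(\mathcal{J}_W)=J_W$, the homomorphism $\chi$ descends to a map $\ZZ[G]/I_H\mathcal{J}_W\to\ZZ[H]/I(H)J_W$ sending $(\mathcal{J}_W)_H$ onto $(J_W)_H$, and applying it yields $\chi(\theta_{K/k,S,T}(0))=\chi(\Rec_W(\epsilon^W_{L/k,S,T}))$ in $(J_W)_H$.

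Next, for $u\in\mathcal{O}_{L,S,T}^\times$ one has $\tau(u^{1-\tau})=(u^{1-\tau})^{-1}$; setting $g:=\rec_{w_i}(u^{1-\tau})\in G_{v_i}\subset H$ and using $g^{-1}-1\equiv -(g-1)$ modulo $I(G_{v_i})^2\subset I(H)\mathcal{I}_i$ gives $\Rec_i(u^{1-\tau})\equiv(1-\tau)(g-1)\pmod{I(H)\mathcal{I}_i}$ and hence $\chi\Rec_i(u^{1-\tau})=2(g-1)$. Since $e_\chi u_i=2^{-1}u_i^{1-\tau}$ in $\QQ\mathcal{O}_{L,S,T}^\times$, Proposition \ref{propformula} together with these two facts produces the determinant identity
\[
\chi\bigl(\Rec_W\bigl((e_\chi u_1)\wedge\cdots\wedge(e_\chi u_{r'})\bigr)\bigr)=\det\bigl(\rec_{w_j}(u_i^{1-\tau})-1\bigr)_{1\le i,j\le r'}=R_{S,T}\quad\text{in }(J_W)_H,
\]
where the $2^{r'}$ contributed by the $r'$ copies of $(1-\tau)$ cancels against the $2^{-r'}$ built into the $e_\chi u_i$. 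Moreover, $\chi(1+\tau)=0$ implies that $\chi\circ\Rec_W$ annihilates any exterior product with a factor in $\mathcal{O}_{k,S,T}^\times$, so the contribution of $\epsilon^W_{L/k,S,T}$ is governed entirely by its $\chi$-component $e_\chi\epsilon^W_{L/k,S,T}$.

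The remaining step is to evaluate this $\chi$-component explicitly. Using the defining relation $\lambda_{L,S}(\epsilon^W_{L/k,S,T})=\theta^{(r')}_{L/k,S,T}\bigwedge_{i=1}^{r'}(w_i-w_0)$, the known validity of the Rubin--Stark conjecture for the quadratic extension $L/k$ (Remark \ref{knownrs}(ii)), and the $\chi$-part of the analytic class-number formula for $L/k$, one obtains
\[
e_\chi\,\epsilon^W_{L/k,S,T}=\pm\,2^{|S|-1-r'}\,\frac{h_{L,S,T}}{h_{k,S,T}}\cdot(e_\chi u_1)\wedge\cdots\wedge(e_\chi u_{r'}),
\]
the sign being the one built into the definitions via the chosen orderings. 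Combining this with the previous paragraph gives $\chi(\theta_{K/k,S,T}(0))=2^{|S|-1-r'}(h_{L,S,T}/h_{k,S,T})\,R_{S,T}$ in $(J_W)_H$, proving Conjecture \ref{tori conj}; the second assertion then follows from Corollary \ref{cormrs}. The main obstacle is precisely this final minus-part evaluation: the coefficient $2^{|S|-1-r'}h_{L,S,T}/h_{k,S,T}$ must be extracted by careful bookkeeping of the powers of $2$ arising from $(1-\tau)^{r'}=2^{r'}e_\chi$, from the index $[(\mathcal{O}_{L,S,T}^\times)^-:(1-\tau)\mathcal{O}_{L,S,T}^\times]$, and from the local contributions at the $|S|-r'$ non-split places of $S\setminus W$, all reconciled with the class-number ratio produced by the $\chi$-part of the analytic class-number formula.
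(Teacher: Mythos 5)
Your proposal follows essentially the same route as the paper's proof: specialize Conjecture \ref{mrsconj} to $V=\emptyset$, $V'=W$, apply the character map $\chi$ to the resulting identity in $(\mathcal{J}_W)_H$, recognize $\chi\circ\Rec_W$ as a determinant of local reciprocity terms (your computation of $\chi(\Rec_i(u^{1-\tau}))$ is a valid variant of the paper's cleaner observation that $\chi\circ\Rec_i = \rec_{w_i}((1-\tau)(\cdot))-1$, both holding modulo $I(G_i)^2\subset I_H\mathcal{I}_i$ and both yielding $R_{S,T}$), and then evaluate the $\chi$-component of the rank-$r'$ Rubin--Stark element for $L/k$. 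The one step you leave unresolved — the explicit formula
\[
(1-\tau)^{r'}\epsilon^W_{L/k,S,T} = 2^{|S|-1-r'}\,\frac{h_{L,S,T}}{h_{k,S,T}}\,u_1^{1-\tau}\wedge\cdots\wedge u_{r'}^{1-\tau},
\]
equivalent to your statement about $e_\chi\epsilon^W_{L/k,S,T}$ after absorbing a $2^{r'}$ — is not in fact an open bookkeeping problem but a citable result: Rubin carries out exactly this evaluation in the proof of \cite[Th.\ 3.5]{R}, and the paper simply cites it at that point. Once you supply that reference your argument is complete and matches the paper's.
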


\begin{proof}
First, note that the Rubin-Stark conjecture for $(K/k,S,T,\emptyset)$ and $(L/k,S,T,W)$ is true by Remark \ref{knownrs} (i) and (ii), respectively.
By Conjecture \ref{mrsconj}, we have
$$\theta_{K/k,S,T}(0)=\Rec_W(\epsilon_{L/k,S,T}^{W}) \quad in \quad (\mathcal{J}_{W})_H(=\ZZ[G/H]\otimes_\ZZ (J_{W})_H).$$
(Note that $\nu^{-1}(\mathcal{N}_H(\theta_{K/k,S,T}(0))) = \theta_{K/k,S,T}(0)$ in $(\mathcal{J}_{W})_H$ by \cite[Lem. 5.6 (iv)]{MR2}.)
Note that $\chi\circ \Rec_i={\rm{rec}}_{w_i}((1-\tau)(\cdot))-1$. So we have
$$\chi(\Rec_W(\epsilon_{L/k,S,T}^{W}))=(\bigwedge_{1\leq i \leq r'}({\rm{rec}}_{w_i}(\cdot)-1))((1-\tau)^{r'}\epsilon_{L/k,S,T}^{W}).$$
We know by the proof of \cite[Th. 3.5]{R} that
$$(1-\tau)^{r'}\epsilon_{L/k,S,T}^{W}=2^{|S|-1-r'}\frac{h_{L,S,T}}{h_{k,S,T}}u_1^{1-\tau} \wedge \cdots \wedge u_{r'}^{1-\tau}.$$
Hence we have
\begin{eqnarray}
\chi(\theta_{K/k,S,T}(0)) &=& \chi(\Rec_W(\epsilon_{L/k,S,T}^{W})) \nonumber \\
&=& (\bigwedge_{1\leq i \leq r'}({\rm{rec}}_{w_i}(\cdot)-1))((1-\tau)^{r'}
\epsilon_{L/k,S,T}^{W}) \nonumber \\
&=& 2^{|S|-1-r'}\frac{h_{L,S,T}}{h_{k,S,T}} (\bigwedge_{1\leq i \leq r'}({\rm{rec}}_{w_i}(\cdot)-1))(u_1^{1-\tau} \wedge \cdots \wedge u_{r'}^{1-\tau}) \nonumber \\
&=& 2^{|S|-1-r'} \frac{h_{L,S,T}}{h_{k,S,T}}R_{S,T}, \nonumber
\end{eqnarray}
as required.

Having now proved the first claim, the second claim follows directly from Corollary \ref{cormrs}.
\end{proof}

\begin{remark} \label{GreitherKucera}
\begin{rm} The strongest previous evidence in favour of the conjecture for tori is that obtained by Greither and Ku\v cera in \cite{GK0,GK}, in which it is referred to as the `Minus Conjecture' and studied in a slightly weaker form in order to remove any occurence of the auxiliary set $T$.
More precisely, by using rather different methods they were able to prove that this conjecture was valid in the case that $k=\QQ$, $K = FK^+$ where $F$ is imaginary quadratic of conductor $f$ and class number $h_F$ and $K^+/\QQ$ is tamely ramified, abelian of exponent equal to an odd prime $\ell$ and ramified at precisely $s$ primes $\{p_i\}_{1\le i\le s}$ each of which splits in $F/\QQ$; further, any of the following conditions are satisfied
\begin{itemize}
\item[$\bullet$] $s=1$ and $\ell \nmid f$ \cite[Th. 8.8]{GK0}, or
\item[$\bullet$] $s=2, \ell\nmid fh_F$ and either $K^+/\QQ$ is cyclic or $p_1$ is congruent to an $\ell$-th power modulo $p_2$ \cite[Th. 8.9]{GK0}, or
\item[$\bullet$] $\ell \ge 3(s+1)$ and $\ell\nmid h_F$ \cite[Th. 3.7]{GK}.
\end{itemize}

It is straightforward to show that the conjecture for tori implies their `Minus conjecture',
using \cite[Chap. IV, Lem. 1.1]{tate} to eliminate the dependence on `$T$'
(just as in the proof of Theorem \ref{darconj}). The validity of the `Minus conjecture' in the case $k = \QQ$ is thus also a consequence of Theorem \ref{mrstori}.
\end{rm}
\end{remark}

\section{Higher Fitting ideals of Selmer groups} \label{HigherFittH1}

In this section, we introduce a natural notion of `higher relative Fitting ideals' in \S \ref{DefinitionRelativeFitt},
and then study the higher Fitting ideals of
the transposed Selmer group $\mathcal{S}^{{\rm tr}}_{S,T}(\GG_{m/K})$.
In this way we prove Theorems \ref{MT2} and \ref{MC4} and Corollary \ref{CorRemark03}.

\subsection{Relative Fitting ideals} \label{DefinitionRelativeFitt}
In this subsection, we recall the definition of Fitting ideals and also introduce a natural notion of `higher relative Fitting ideals'.

Suppose that $R$ is a noetherian ring and $M$ is a finitely
generated $R$-module.
Take an exact sequence
\[ R^{\oplus m} \stackrel{f}{\rightarrow} R^{\oplus n}
\rightarrow M \rightarrow 0,\]
and denote by $A_{f}$
the matrix with $n$ rows and $m$ columns corresponding to $f$.
Then for $i \in \ZZ_{\geq 0}$
the $i$-th Fitting ideal of $M$, denoted by $\Fitt^{i}_{R}(M)$,
is defined to be the ideal generated by all $(n-i) \times (n-i)$
minors of $A_{f}$ if $0 \leq i <n$ and
$R$ if $i \geq n$.
In this situation we call $A_{f}$ a {\it relation matrix} of $M$.
These ideals do not depend on the choice of the above exact sequence
(see \cite[Chap. 3]{North}).
The usual notation is $\Fitt_{i,R}(M)$, but we use the above notation
which is consistent with the exterior power $\bigwedge_R^{i}M$.
If we can take a presentation
\[ R^{\oplus m} \stackrel{f}{\rightarrow} R^{\oplus n}
\rightarrow M \rightarrow 0\]
of $M$ with $m=n$, then we say
$M$ has a {\it quadratic presentation}.

We now fix a submodule $N$ of $M$, and non-negative integers $a$ and $b$. We write $\nu$ for the minimal number of generators of $N$.

If $b > \nu$, then we simply set
\[ \Fitt_{R}^{(a,b)}(M,N) := \Fitt^{a}_{R}(M/N).\]

However, if $b \leq \nu$ then we consider a relation matrix for $M$ of the form
$$A =
\left(
\begin{array}{cc}
A_{1} & A_{2} \\
0 & A_{3}
\end{array}
\right)$$
where $A_{1}$ is a relation matrix of $N$.
We suppose that $A_{1}$ is a matrix with $n_{1}$ rows and $m_{1}$ columns
and $A_{3}$ is a matrix with $n_{2}$ rows and $m_{2}$ columns.
We remove $b$ rows from among the first row to the $n_{1}$-th row of $A$ to get a
matrix $A'$, and remove $a$ rows from $A'$ to get $A''$.
We denote by $F(A'')$ the ideal generated by all $c \times c$ minors of
$A''$ where $c=n_{1}+n_{2}-a-b$ if $c>0$ and $F(A'')=R$ otherwise.
We consider all such $A''$ obtained from $A$ and then define the relative Fitting ideal by setting
\[ \Fitt_{R}^{(a,b)}(M,N):= \sum_{A''} F(A'').\]
By the standard method using the elementary operations of matrices
(see the proof of \cite[p.86, Th. 1]{North}), one can show that this sum does not depend on
the choice of relation matrix $A$.

The following result gives an alternative characterization of this ideal.

\begin{lemma} \label{LemmaRelFitt}
Let $X$ be an $R$-submodule of $M$ that is generated by $(a+b)$ elements
$x_{1},\ldots,x_{a+b}$ such that the elements $x_{1},\ldots,x_{b}$ belong to $N$.
Let $\mathcal{X}$ be the set of such $R$-submodules of $M$.
Then we have
$$\Fitt_{R}^{(a,b)}(M,N)=\sum_{X \in \mathcal{X}} \Fitt^{0}_{R}(M/X).$$
\end{lemma}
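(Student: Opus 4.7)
The plan is to prove the two inclusions of $\Fitt^{(a,b)}_R(M,N) = \sum_{X \in \mathcal{X}} \Fitt^0_R(M/X)$ separately. For $\Fitt^{(a,b)}_R(M,N) \subseteq \sum_{X \in \mathcal{X}} \Fitt^0_R(M/X)$ (assuming $b \le \nu$ so the matrix definition applies), I take a generator $\det(A''_{I^c})$, namely a $(n-a-b) \times (n-a-b)$ minor of the relation matrix $A$ supported on some $n-a-b$ columns of $A$ and on the complement of a set $I = \{i_1, \ldots, i_{a+b}\}$ of rows with $i_1, \ldots, i_b$ among the top $n_1$ rows. Writing $e_1, \ldots, e_n$ for the generators of the presentation, one has $e_{i_j} \in N$ for $j \le b$ (as the first $n_1$ generators generate $N$), so $X := \sum_{j=1}^{a+b} R e_{i_j}$ lies in $\mathcal{X}$; moreover $B := (A \mid e_{i_1} \mid \cdots \mid e_{i_{a+b}})$ presents $M/X$, and the $n \times n$ minor of $B$ using all $a+b$ standard-basis columns together with the same $n - a - b$ columns of $A$ equals $\pm \det(A''_{I^c})$ by a block-triangulation computation, so this generator lies in $\Fitt^0_R(M/X)$.

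For the reverse inclusion I argue by strong induction on $a+b$, with trivial base $a+b=0$. Given $X \in \mathcal{X}$ with generators $x_1, \ldots, x_{a+b}$ and $x_1, \ldots, x_b \in N$, I lift each $x_j$ with $j \le b$ to a column $v_j \in R^{n_1} \subset R^n$, producing a presentation $B := (A \mid V_1 \mid V_2)$ of $M/X$ in which $V_1$ is an $n \times b$ block vanishing on the bottom $n_2$ rows and $V_2$ is an $n \times a$ block. Every generator of $\Fitt^0_R(M/X)$ is an $n \times n$ minor of $B$ involving some $k_1 \le b$ columns of $V_1$, $k_2 \le a$ columns of $V_2$, and $n - k_1 - k_2$ columns of $A$. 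When $k_1 + k_2 = a+b$, Laplace expansion along the $V$-columns writes the minor as $\sum_I \pm \det(V_I) \det(A''_{I^c})$ with $|I| = a+b$; a further Laplace expansion of $\det(V_I)$ along its first $b$ columns, whose bottom $n_2$ entries vanish, forces $\det(V_I) = 0$ unless $|I \cap \{1, \ldots, n_1\}| \ge b$, and in that case $\det(A''_{I^c})$ is by definition a generator of $\Fitt^{(a,b)}_R(M,N)$.

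When instead $k_1 + k_2 < a+b$, the same minor is also a minor of the smaller presentation $B'' := (A \mid \text{used $V$-columns})$ of $M/X''$, where $X''$ is generated by the corresponding $x_j$'s and lies in the analogous set for parameters $(k_2, k_1)$ with $k_2 + k_1 < a+b$; the inductive hypothesis therefore gives $\Fitt^0_R(M/X'') \subseteq \Fitt^{(k_2, k_1)}_R(M,N) \subseteq \Fitt^{(a,b)}_R(M,N)$, the last containment because increasing either parameter enlarges the relative Fitting ideal (by Laplace-expanding any generator along one further row). The remaining case $b > \nu$ reduces to the classical identity $\Fitt^a_R(M/N) = \sum_Y \Fitt^0_R((M/N)/Y)$ with $Y$ running over $a$-generated submodules, via the surjection $M/X \twoheadrightarrow M/(N+X)$ together with the $a$-generation of $(N+X)/N$; any $Y$ is realised as some such $(N+X)/N$ since $b > \nu$ permits the choice $X \supseteq N$. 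The main technical point throughout is the vanishing of $\det(V_I)$ when $|I \cap \{1, \ldots, n_1\}| < b$, which is precisely what makes the ``at least $b$ top rows removed'' condition defining $\Fitt^{(a,b)}_R(M,N)$ appear naturally from the Laplace expansion.
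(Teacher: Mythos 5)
Your proof is correct, and the reverse inclusion uses a genuinely different approach from the paper's. For the forward inclusion both arguments take $X$ generated by those $e_{i_j}$ whose rows were deleted from $A$; you repackage this as a block computation of an $n \times n$ minor of $(A \mid e_{i_1} \mid \cdots \mid e_{i_{a+b}})$, but the idea is the same. For the reverse inclusion, the paper \emph{adds the $x_j$ as new generators (rows)} of $M$, getting a larger relation matrix $B$ that again has the required block form for the pair $(M,N)$; deleting the $x_j$-rows from $B$ gives a relation matrix $C$ of $M/X$, and the conclusion $\Fitt^0_R(M/X) \subseteq \Fitt^{(a,b)}_R(M,N)$ then follows by applying the \emph{definition} of the relative Fitting ideal to $B$ rather than $A$ -- i.e., it invokes the independence of $\Fitt^{(a,b)}_R(M,N)$ from the choice of relation matrix (which the paper cites from Northcott). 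You instead keep the original generating set, append lifts of the $x_j$ as \emph{relations (columns)}, and analyze an arbitrary $n\times n$ minor of $(A\mid V_1\mid V_2)$ by induction on $a+b$: when all $a+b$ new columns are used, the Laplace expansion along them makes the ``$\ge b$ deleted rows among the top $n_1$'' condition emerge automatically from the vanishing of the lower $n_2$ entries of $V_1$; when fewer columns are used you fall back to the inductive hypothesis plus a monotonicity claim for $\Fitt^{(a',b')}$. Your route is more elementary and self-contained (it never needs the Northcott-style well-definedness input), at the cost of the extra induction and the monotonicity lemma, which you state but do not spell out -- it does hold, but it uses the fact that $b\le\nu\le n_1$ so there are always enough top rows to Laplace-expand along. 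You also spell out the $b>\nu$ case, which the paper dismisses in a single sentence.
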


\begin{proof}
If $b > \nu$, both sides equal $\Fitt^{a}_{R}(M/N)$, so we may assume
$b \leq \nu$.
Let $e_{1},\ldots,e_{n}$ be the generators of $M$ corresponding to the
above matrix $A$ where $n=n_{1}+n_{2}$.
Suppose that $A''$ is obtained from $A$ by removing $(a+b)$ rows,
from the $i_{1}$-th row to
the $i_{a+b}$-th row with $1 \leq i_{1},\ldots,i_{b} \leq n_{1}$.
Let $X$ be a submodule of $M$ generated by $e_{i_{1}},\ldots,e_{i_{a+b}}$.
Then by definitions $X \in \mathcal{X}$ and $F(A'')=\Fitt^{0}_{R}(M/X)$.
This shows that the left hand side of the equation in Lemma \ref{LemmaRelFitt}
is in the right hand side.

On the other hand, suppose that $X$ is in $\mathcal{X}$ and
$x_{1},\ldots,x_{a+b}$ are generators of $X$.
Regarding $e_{1},\ldots,e_{n_{1}}$, $x_{1},\ldots,x_{b}$, $e_{n_{1}+1},\ldots,e_{n}$, $x_{b+1},\ldots,x_{a+b}$ as generators of
$M$, we have a relation matrix of $M$ of the form
$$B=\left(
\begin{array}{cccc}
A_{1} & B_{1} & A_{2} & B_{2}\\
0 & I_{b} & 0 & 0\\
0 & 0 & A_{3} & B_{3}\\
0 & 0 & 0 & I_{a}
\end{array}
\right)$$
where $I_{a}$, $I_{b}$ are the identity matrices of degree $a$, $b$,
respectively.
Then
$$C=\left(
\begin{array}{cccc}
A_{1} & B_{1} & A_{2} & B_{2}\\
0 & 0 & A_{3} & B_{3}\\
\end{array}
\right)$$
is a relation matrix of $M/X$.
Since $C$ is obtained from $B$ by removing $(a+b)$ rows in the way
of obtaining $A''$ from $A$,
it follows from the definition of the relative Fitting ideal that
$\Fitt^{0}_{R}(M/X) \subset \Fitt_{R}^{(a,b)}(M,N)$.
This shows the other inclusion.
\end{proof}

In the next result we record some useful properties of higher relative Fitting ideals.

\begin{lemma} \label{LemmaRelFitt2}\
\begin{itemize}
\item[(i)] {$\Fitt_{R}^{(a,b)}(M,N) \subset \Fitt_{R}^{a+b}(M)$.
}
\item[(ii)]{$\Fitt_{R}^{(a,0)}(M,N) = \Fitt_{R}^{a}(M)$.
}
\item[(iii)]{Suppose that there exists an exact sequence
$0 \rightarrow M' \rightarrow M \rightarrow R^{\oplus r} \rightarrow 0$ of $R$-modules
and that $N \subset M'$.
Then one has
\[ \Fitt_{R}^{(a,b)}(M,N) = \begin{cases} \Fitt_{R}^{(a-r,b)}(M',N), &\text{ if $a \geq r$,}\\
0, &\text{ otherwise.}\end{cases}\]
}
\item[(iv)]
{Assume that $M/N$ has a quadratic presentation.
Then one has
$$\Fitt_{R}^{(0,b)}(M,N) = \Fitt_{R}^{b}(N) \Fitt_{R}^{0}(M/N).$$
}
\end{itemize}
\end{lemma}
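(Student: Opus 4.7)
The plan is to prove the four assertions by alternating between two complementary descriptions of the relative Fitting ideal: for (i) and (ii) I would invoke the characterization provided by Lemma \ref{LemmaRelFitt} as $\Fitt_R^{(a,b)}(M,N)=\sum_{X\in\mathcal{X}}\Fitt^0_R(M/X)$, while for (iii) and (iv) I would work directly with the block-triangular relation matrix $A=\begin{pmatrix}A_1 & A_2\\ 0 & A_3\end{pmatrix}$ from the definition.

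For (i), Lemma \ref{LemmaRelFitt} reduces the claim to showing that whenever $X\subset M$ is generated by $a+b$ elements one has $\Fitt^0_R(M/X)\subset\Fitt^{a+b}_R(M)$. Starting from a presentation $R^m\stackrel{A}{\to}R^n\to M\to 0$ and choosing lifts $v_1,\dots,v_{a+b}\in R^n$ of generators of $X$, the quotient $M/X$ is presented by the augmented matrix $(A\mid v_1\cdots v_{a+b})$. Any $n\times n$ minor of this matrix, via Laplace expansion splitting the chosen columns into $n-j$ coming from $A$ and $j$ coming from the $v_i$'s, equals a signed sum of products of an $(n-j)\times(n-j)$ minor of $A$ (lying in $\Fitt^j_R(M)$) with a $j\times j$ minor of the $v_i$'s for some $0\le j\le a+b$, so the claim then follows from the nesting $\Fitt^0_R(M)\subset\Fitt^1_R(M)\subset\cdots$. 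For (ii) the inclusion $\subset$ is the case $b=0$ of (i); for $\supset$ I would take each $(n-a)\times(n-a)$ minor of $A$ using rows $I\subset[n]$ and let $X$ be the submodule generated by the $a$ basis vectors $\{e_i:i\notin I\}$, so that $M/X$ is presented by the submatrix of $A$ with rows in $I$ and hence $\Fitt^0_R(M/X)$ contains the chosen minor.

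For (iii), since $R^r$ is projective the exact sequence $0\to M'\to M\to R^r\to 0$ splits and $M\cong M'\oplus R^r$. Choosing a relation matrix of $M'$ in the prescribed block form with $A_N$ in the upper-left and extending by the free summand $R^r$ produces a relation matrix of $M$ with the same blocks together with $r$ additional zero rows at the bottom (and no additional columns), fitting the paper's convention with $A_3=\begin{pmatrix}A_{M'/N}\\ 0\end{pmatrix}$. When forming $A''$ as in the definition, any residual zero row makes every maximal minor vanish, so non-trivial contributions to $F(A'')$ require that all $r$ zero rows be removed during the $a$-step; this demands $a\ge r$ and leaves precisely the matrix used to compute $\Fitt_R^{(a-r,b)}(M',N)$, proving the first case. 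When $a<r$ at least one zero row must survive and hence $\Fitt_R^{(a,b)}(M,N)=0$.

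For (iv), the quadratic presentation of $M/N$ permits the choice $A_3=A_{M/N}$ with $m_2=n_2=n$, so with $a=0$ the matrix $A''$ takes the form $\begin{pmatrix}A_N' & A_{12}'\\ 0 & A_{M/N}\end{pmatrix}$ where $A_N'$ is $A_N$ with a chosen set of $b$ rows removed. The decisive step is Laplace expansion of a generic $c\times c$ minor of $A''$ along its last $n$ rows $(0\mid A_{M/N})$: the zero block forces the $n$ chosen columns from these rows to be precisely the last $n$ columns of $A''$, so that the $n$-row factor becomes $\det(A_{M/N})$ and the complementary factor reduces to an $(n_1-b)\times(n_1-b)$ minor of $A_N'$. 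Summing over all admissible $A''$ and over all such minors then yields
\[ \Fitt_R^{(0,b)}(M,N)=(\det A_{M/N})\cdot\Fitt^b_R(N)=\Fitt^0_R(M/N)\cdot\Fitt^b_R(N), \]
the case $b>\nu$ being immediate from the definition since both sides then equal $\Fitt^0_R(M/N)$ (as $\Fitt^b_R(N)=R$ in that range). The main obstacle throughout is to recognise how the zero blocks of the constructed relation matrices constrain the Laplace expansions and force the claimed factorizations; once this observation is in hand, all four computations reduce to standard manipulations.
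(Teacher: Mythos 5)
Your argument for part (iv) is exactly the paper's proof: choose $A_3$ square, note that any nonzero $c\times c$ minor of the block-upper-triangular $A''$ factors as $\det(A_3)$ times an $(n_1-b)\times(n_1-b)$ minor of the truncated top block, and sum over choices. For parts (i)--(iii) the paper simply asserts they ``follow directly from the definition,'' whereas you supply explicit arguments: (i) and (ii) via the alternative description $\Fitt_R^{(a,b)}(M,N)=\sum_{X}\Fitt^0_R(M/X)$ from Lemma \ref{LemmaRelFitt} together with the Laplace-expansion bound $\Fitt^0_R(M/X)\subset\Fitt^{a+b}_R(M)$, and (iii) by splitting $M\cong M'\oplus R^r$ and working with the $r$ extra zero rows directly in the definition. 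These fillings-in are correct (in particular, for $a<r$ one indeed always has $c=n_1+n_2'+r-a-b\ge 1$, so zero rows really do force $F(A'')=0$, and you correctly note the $b>\nu$ boundary case in (iv) via $\Fitt^b_R(N)=R$), and they are consistent with what the paper presumably has in mind; the detour through Lemma \ref{LemmaRelFitt} for (i)--(ii) is a perfectly reasonable alternative to unwinding the definition by hand.
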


\begin{proof} Claims (i), (ii) and (iii) follow directly from the definition of the higher relative Fitting ideal.
To prove claim (iv), we consider a relation matrix
$$A =
\left(
\begin{array}{cc}
A_{1} & A_{2} \\
0 & A_{3}
\end{array}
\right)$$
as above, where $A_{1}$ is a matrix with $n_{1}$ rows and
$A_{3}$ is a square matrix of $n_{2}$ rows.
Put $n=n_{1}+n_{2}$.
Then a matrix $A''$ obtained from $A$ as above is of the form
$$A'' =
\left(
\begin{array}{cc}
A''_{1} & A''_{2} \\
0 & A_{3}
\end{array}
\right).$$
This is a matrix with $(n-b)$ rows and so a nonzero $(n-b) \times (n-b)$ minor of $A''$ must be
$\det (A_{3})$ times a $(n_{1}-b) \times (n_{1}-b)$ minor of
$A''_{1}$. This implies the required conclusion.
\end{proof}

\subsection{Statement of the conjecture}
Let $K/k,G,S,T,V$ be as in \S \ref{secrs}.
For the element $\epsilon_{K/k,S,T}^V$, the Rubin-Stark conjecture
asserts that $\Phi(\epsilon_{K/k,S,T}^V)$ belongs to $\ZZ[G]$ for every $\Phi$ in $\bigwedge_G^r \Hom_G(\mathcal{O}_{K,S,T}^\times,\ZZ[G])$.

We next formulate a much stronger conjecture which describes the
arithmetic significance of the ideal generated by the elements
$\Phi(\epsilon_{K/k,S,T}^V)$ when $\Phi$ runs over $\bigwedge_G^r \Hom_G(\mathcal{O}_{K,S,T}^\times,\ZZ[G])$.

\begin{conjecture} \label{fitconj} One has an equality
$$\Fit_G^r(\mathcal{S}_{S,T}(\GG_{m/K}))=\{  \Phi(\epsilon_{K/k,S,T}^V)^\#  :  \Phi \in \bigwedge_G^r \Hom_G(\mathcal{O}_{K,S,T}^\times,\ZZ[G])\},$$
or equivalently (by Lemma \ref{dual fitting}),
$$\Fit_G^r(\mathcal{S}^{{\rm tr}}_{S,T}(\GG_{m/K}))=\{  \Phi(\epsilon_{K/k,S,T}^V)  :  \Phi \in \bigwedge_G^r \Hom_G(\mathcal{O}_{K,S,T}^\times,\ZZ[G])\}.$$
\end{conjecture}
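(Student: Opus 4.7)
The strategy is to prove the stated equality under the hypothesis of $\mathrm{LTC}(K/k)$, which is precisely the content of Theorem \ref{MT2}(i) already advertised in the introduction. By Lemma \ref{dual fitting} the two formulations in Conjecture \ref{fitconj} are equivalent, so I will work with the equality for $\mathcal{S}^{{\rm tr}}_{S,T}(\GG_{m/K})$, since the resolution of $D_{K,S,T}^{\bullet}$ built in \S 5.4 gives direct access to it.

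First I would fix the two-term representative $P \xrightarrow{\psi} F$ of $D_{K,S,T}^\bullet$ constructed in \S 5.4, with $F$ free of rank $d$ on the basis $\{b_i\}_{1 \le i \le d}$ and $\psi_i := b_i^{\ast} \circ \psi$. Under $\mathrm{LTC}(K/k)$ the projective module $P$ must in fact be free of rank $d$ (as used in the proof of Theorem \ref{ltcrs}), and the zeta element $z_{K/k,S,T}$ corresponds to a basis element $z_b \in \bigwedge_G^d P$. Inspection of the exact sequence (\ref{tateseq}) then identifies $\mathcal{S}^{{\rm tr}}_{S,T}(\GG_{m/K}) = \coker(\psi)$, so the matrix $A_\psi = (\psi_i(a_j))_{1 \le i,j \le d}$ (for a chosen basis $\{a_j\}$ of $P$) is a square presentation matrix. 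By the very definition of Fitting ideals, $\mathrm{Fitt}_G^r(\mathcal{S}^{{\rm tr}}_{S,T}(\GG_{m/K}))$ is the ideal generated by the $(d-r) \times (d-r)$ minors of $A_\psi$.

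Next I would relate the right-hand side of Conjecture \ref{fitconj} to these minors using Theorem \ref{zetars}, which gives the key identity
\[
\epsilon_{K/k,S,T}^V = (-1)^{r(d-r)}\bigl(\textstyle\bigwedge_{r < i \le d}\psi_i\bigr)(z_b) \ \in\ \bigcap_G^r \mathcal{O}_{K,S,T}^{\times} \subset \bigwedge_G^r P.
\]
Given any $\Phi \in \bigwedge_G^r \Hom_G(\mathcal{O}_{K,S,T}^\times, \ZZ[G])$, surjectivity of the restriction map $\bigwedge_G^r \Hom_G(P, \ZZ[G]) \twoheadrightarrow \bigwedge_G^r \Hom_G(\mathcal{O}_{K,S,T}^\times, \ZZ[G])$ (a consequence of Lemma \ref{lemr}(i), since $P/\mathcal{O}_{K,S,T}^\times$ embeds in $F$ and is therefore $\ZZ$-torsion-free) lets me lift $\Phi$ to $\widetilde\Phi \in \bigwedge_G^r \Hom_G(P, \ZZ[G])$ with $\Phi(\epsilon_{K/k,S,T}^V) = \widetilde\Phi(\epsilon_{K/k,S,T}^V)$. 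Using the composition law behind (\ref{extmap}), this value equals $\pm\bigl(\psi_{r+1}\wedge\cdots\wedge\psi_d \wedge \widetilde\Phi\bigr)(z_b)$, and Proposition \ref{propformula} (with $r = s = d$) computes this as the determinant of an explicit $d \times d$ matrix. A Laplace expansion along the $(d-r)$ rows coming from $\psi_{r+1},\ldots,\psi_d$ then writes $\Phi(\epsilon_{K/k,S,T}^V)$ as a $\ZZ[G]$-linear combination of $(d-r)\times(d-r)$ minors of $A_\psi$, establishing the inclusion ``$\supseteq$'' in the $\mathcal{S}^{{\rm tr}}$-formulation.

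For the reverse inclusion I would produce, for each pair of $(d-r)$-subsets $I,J \subseteq \{1,\ldots,d\}$, a lift $\widetilde\Phi_{I,J} \in \bigwedge_G^r \Hom_G(P,\ZZ[G])$ built from dual basis elements $a_l^{\ast}$ (taken over $l \notin J$), chosen so that the Laplace expansion collapses to a single minor $\det(\psi_{i}(a_{j}))_{i \in I,\, j \in J}$ up to sign; that such a $\widetilde\Phi_{I,J}$ descends, via restriction, to an element of $\bigwedge_G^r \Hom_G(\mathcal{O}_{K,S,T}^\times,\ZZ[G])$ is automatic. Since the set on the right-hand side of Conjecture \ref{fitconj} is stable under $\ZZ[G]$-linear combinations (the assignment $\Phi \mapsto \Phi(\epsilon)$ is $\ZZ[G]$-linear in $\Phi$), this yields all generators of $\mathrm{Fitt}_G^r(\mathcal{S}^{{\rm tr}}_{S,T}(\GG_{m/K}))$ and completes the proof. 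The main obstacle will be the careful bookkeeping of signs and indices in the Laplace expansion, and ensuring that the minors obtained in step four exhaust the defining generators of the Fitting ideal — the rest of the argument is a clean consequence of the representability of $D_{K,S,T}^\bullet$ by a square matrix under $\mathrm{LTC}(K/k)$ combined with Theorem \ref{zetars}.
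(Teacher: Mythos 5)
Your overall strategy matches the paper's proof of Theorem~\ref{ltcfit} closely: reduce to the $\mathcal{S}^{{\rm tr}}$-formulation via Lemma~\ref{dual fitting}, use the two-term representative $P\xrightarrow{\psi}F$ with $P$ free under ${\rm LTC}(K/k)$, express $\epsilon_{K/k,S,T}^V$ via Theorem~\ref{zetars}, lift test functionals through the surjection guaranteed by Lemma~\ref{lemr}(i), and compare with minors of the presentation matrix. The forward inclusion ``$\{\Phi(\epsilon^V)\}\subseteq\Fitt_G^r$'' is sound: your Laplace expansion writes $\Phi(\epsilon^V)$ as a $\ZZ[G]$-linear combination of $(d-r)\times(d-r)$ minors taken from the \emph{fixed} block of rows $\{r+1,\ldots,d\}$, and these are in particular generators of $\Fitt_G^r$.

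The gap is in the reverse inclusion. You claim to produce, for each pair of $(d-r)$-subsets $I,J$, an element $\Phi_{I,J}$ with $\Phi_{I,J}(\epsilon^V)=\pm\det(\psi_i(a_j))_{i\in I,\,j\in J}$. But the mechanism you describe cannot reach arbitrary $I$: since $\epsilon^V=(-1)^{r(d-r)}(\bigwedge_{r<i\le d}\psi_i)(z_b)$, the identity $\widetilde\Phi(\epsilon^V)=\pm(\psi_{r+1}\wedge\cdots\wedge\psi_d\wedge\widetilde\Phi)(z_b)$ means the $\psi$-rows entering the determinant are always $\{r+1,\ldots,d\}$, and choosing $\widetilde\Phi$ from dual-basis elements $a_l^\ast$ only varies the \emph{column} set $J$. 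Thus your construction produces exactly the minors with $I=\{r+1,\ldots,d\}$ and nothing else, leaving unaccounted-for every generating minor of $\Fitt_G^r$ that uses a row with index $\le r$. The missing ingredient is Lemma~\ref{lempsi}, which shows that $\psi_i=0$ for $1\le i\le r$ (because each $v_i\in V$ splits completely): this makes the first $r$ rows of the presentation matrix vanish, so every $(d-r)\times(d-r)$ minor with $I\neq\{r+1,\ldots,d\}$ is identically zero and $\Fitt_G^r$ is already generated by the minors you can reach. Without this observation the ideal on the left could \emph{a priori} be strictly larger, and the equality does not follow. Invoking Lemma~\ref{lempsi} at this point closes the gap and recovers the paper's argument exactly.
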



The following result shows that this conjecture refines the first half of the statement of Conjecture \ref{burnsconj}.

\begin{proposition}
For a finite set $\Sigma$ of places, we put
$\mathcal{J}_{\Sigma} = \prod_{v \in \Sigma} I(G_{v})\ZZ[G]$.
Assume Conjecture \ref{fitconj} is valid.
Then, for every
$\Phi \in \bigwedge_G^r\Hom_G(\mathcal{O}_{K,S,T}^\times,\ZZ[G])$ and
$v \in S \setminus V$, one has
$$\Phi (\epsilon_{K/k,S,T}^V) \in \mathcal{J}_{S \setminus (V \cup \{v\})}.$$
\end{proposition}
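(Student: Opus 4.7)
My plan is to reduce the desired containment to a Fitting-ideal computation and then analyse that ideal by means of the explicit presentation of $D_{K,S,T}^\bullet$ constructed in \S 5.4. Under the assumed validity of Conjecture \ref{fitconj}, the set $\{\Phi(\epsilon_{K/k,S,T}^V):\Phi \in \bigwedge_G^r\Hom_G(\mathcal{O}_{K,S,T}^\times,\ZZ[G])\}$ coincides with $\Fit_G^r(\mathcal{S}^{{\rm tr}}_{S,T}(\GG_{m/K}))$, so it suffices for me to show the inclusion
\[ \Fit_G^r(\mathcal{S}^{{\rm tr}}_{S,T}(\GG_{m/K})) \subset \mathcal{J}_{S\setminus(V\cup\{v\})}. \]

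To establish this inclusion I would exploit the freedom in the choice of distinguished place. Since the Rubin-Stark element $\epsilon_{K/k,S,T}^V$ is independent of the choice of $v_0 \in S\setminus V$ used in \S \ref{convention}, both sides of the putative equality in Conjecture \ref{fitconj} are independent of this choice, and I may therefore take $v_0 := v$. The construction in \S 5.4 then supplies the four-term exact sequence (\ref{tateseq})
\[ 0 \longrightarrow \mathcal{O}_{K,S,T}^\times \longrightarrow P \stackrel{\psi}{\longrightarrow} F \stackrel{\pi}{\longrightarrow} \mathcal{S}^{{\rm tr}}_{S,T}(\GG_{m/K}) \longrightarrow 0 \]
with $F$ a free $G$-module of rank $d$ on a basis $\{b_i\}_{1\le i \le d}$, labelled so that $\pi(b_i) \mapsto w_i - w_0$ in $X_{K,S}$ for $1 \le i \le n$, and with $P$ a finitely generated projective $G$-module. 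Since Fitting ideals commute with $p$-adic completion, Swan's Theorem permits me to work locally and to assume $P_p$ is free of rank $d$; fixing a $\ZZ_p[G]$-basis $\{c_j\}$ then represents $\psi$ by a matrix $A=(\psi_i(c_j))_{1\le i,j\le d}$, and because $\mathcal{S}^{{\rm tr}}_{S,T}(\GG_{m/K}) = \mathrm{coker}(\psi)$ the local Fitting ideal $\Fit_G^r(\mathcal{S}^{{\rm tr}}_{S,T}(\GG_{m/K}))_p$ is generated by all $(d-r)\times(d-r)$ minors of $A$.

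The decisive input is Lemma \ref{lempsi}, which asserts that $\psi_i = 0$ for $1 \le i \le r$ (since each $v_i \in V$ splits completely so that $\mathcal{I}_i = 0$) and that $\im(\psi_i) \subset \mathcal{I}_i$ for $r+1 \le i \le n$. The first $r$ rows of $A$ thus vanish, so every non-vanishing $(d-r)\times(d-r)$ minor of $A$ arises as the determinant of the submatrix on rows $r+1,\ldots,d$ and some choice of $d-r$ columns. Expanding such a determinant as a sum over permutations produces, in every term, one factor drawn from row $i$ for each $i$ with $r+1 \le i \le n$, and by Lemma \ref{lempsi} each such factor lies in the ideal $\mathcal{I}_i$. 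Consequently every such minor lies in the product $\prod_{i=r+1}^n \mathcal{I}_i$, which under the choice $v_0 = v$ is exactly $\mathcal{J}_{\{v_{r+1},\ldots,v_n\}} = \mathcal{J}_{S\setminus(V\cup\{v\})}$. Gluing the local containments yields the required global inclusion.

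The only technical matter is verifying that the passage from the local to the global Fitting ideal is valid, but this is routine since both sides of the inclusion are compatible with completion at each rational prime. I therefore do not anticipate any substantive obstacle beyond careful bookkeeping of the degrees and rows.
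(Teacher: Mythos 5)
Your argument is correct, but it follows a genuinely different route from the paper's own proof, so a comparison is worthwhile. You reduce, as the paper does, to showing $\Fitt^r_G(\mathcal{S}^{{\rm tr}}_{S,T}(\GG_{m/K}))\subset\mathcal{J}_{S\setminus(V\cup\{v\})}$, and then you attack this via the explicit two-term presentation $P\xrightarrow{\psi}F$ of \S 5.4 (with the distinguished place $v_0$ chosen to be $v$): after localizing at each prime $p$ so that $P_p$ is free of the same rank $d$ as $F_p$, the Fitting ideal is generated by the $(d-r)\times(d-r)$ minors, the first $r$ rows vanish by Lemma \ref{lempsi}, and since $n<d$ every surviving minor must use each of the rows $r+1,\ldots,n$, whose entries lie in $\mathcal{I}_{r+1},\ldots,\mathcal{I}_n$ respectively; expanding over permutations puts every term, and hence the whole determinant, in $\prod_{r<i\le n}\mathcal{I}_i=\mathcal{J}_{S\setminus(V\cup\{v\})}$, and the local containments glue because $\ZZ[G]/\mathcal{J}_{S\setminus(V\cup\{v\})}$ is a finitely generated abelian group. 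The paper instead proves the same Fitting-ideal inclusion purely module-theoretically, without any reference to the presentation of \S 5.4, to Lemma \ref{lempsi}, or to localization: it uses the canonical surjection $\mathcal{S}^{{\rm tr}}_{S,T}(\GG_{m/K})\twoheadrightarrow X_{K,S}$, the splitting $X_{K,S}\cong\ZZ[G]^{\oplus r}\oplus X_{K,S\setminus V}$ (valid because $Y_{K,V}$ is $G$-free when all places of $V$ split completely), and then the surjection $X_{K,S\setminus V}\twoheadrightarrow Y_{K,S\setminus(V\cup\{v\})}$, combined with the monotonicity of Fitting ideals under surjections and the identity $\Fitt^0_G(Y_{K,S\setminus(V\cup\{v\})})=\mathcal{J}_{S\setminus(V\cup\{v\})}$. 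The paper's proof is shorter, avoids any choice of $v_0$ or of bases, and needs no $p$-adic bookkeeping; your proof, while heavier, has the virtue of making the appearance of each decomposition group ideal $\mathcal{I}_i$ completely explicit through the reciprocity-map computation underlying Lemma \ref{lempsi}, which is the same mechanism that drives \S 5.6. Both are valid; you should just make sure to record explicitly that the $\ZZ_p[G]$-ranks of $P_p$ and $F_p$ coincide (this follows because $\QQ\mathcal{O}^\times_{K,S,T}\cong\QQ\mathcal{S}^{{\rm tr}}_{S,T}(\GG_{m/K})$ by the Dirichlet regulator and finiteness of ${\rm Cl}^T_S(K)$), as this is needed to speak of square minors in the first place.
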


\begin{proof}
It is sufficient to show that $\Fit_G^r(\mathcal{S}^{{\rm tr}}_{S,T}(\GG_{m/K})) \subset  \mathcal{J}_{S \setminus (V \cup \{v\})}$.
Since there is a canonical surjective homomorphism
$$\mathcal{S}^{{\rm tr}}_{S,T}(\GG_{m/K}) \longrightarrow X_{K,S} \simeq \ZZ[G]^{\oplus r}
\oplus X_{K,S\setminus V},$$
we have
$$\Fit_G^r(\mathcal{S}^{{\rm tr}}_{S,T}(\GG_{m/K})) \subset \Fit_G^r(X_{K,S})
=\Fit_G^0(X_{K,S\setminus V}).$$
The existence of the surjective homomorphism
$X_{K,S\setminus V} \rightarrow Y_{K,S\setminus(V\cup\{v\})}$ implies
that $\Fit_G^0(X_{K,S\setminus V})\subset
\Fit_G^0(Y_{K,S\setminus(V\cup\{v\})}) = \mathcal{J}_{S\setminus(V\cup\{ v\})}$.
This completes the proof.
\end{proof}

\subsection{The leading term conjecture implies Conjecture \ref{fitconj}} \label{FittH1can}

The following result combines with Lemma \ref{dual fitting} to imply the statement of Theorem \ref{MT2}(i).

\begin{theorem} \label{ltcfit}
${\rm LTC}(K/k)$ implies Conjecture \ref{fitconj}. In particular, Conjecture \ref{fitconj} is valid if either $K$ is an abelian extension over $\QQ$ or $k$ is a function field or $[K:k]\leq 2$.
\end{theorem}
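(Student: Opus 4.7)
The plan is to combine Theorem \ref{zetars}, which provides an explicit formula for $\epsilon_{K/k,S,T}^{V}$ in terms of the zeta element, with a direct computation of $\Fit_{G}^{r}(\mathcal{S}^{{\rm tr}}_{S,T}(\GG_{m/K}))$ via the presentation (\ref{tateseq}). The ``in particular'' claim will then follow at once from this and Remark \ref{knownltc}.

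Assume ${\rm LTC}(K/k)$. As noted in the proof of Theorem \ref{ltcrs}, the module $P$ in (\ref{tateseq}) is then $\ZZ[G]$-free of rank $d := {\rm rk}_{G}(F)$, and upon fixing a $\ZZ[G]$-basis $\{b_{j}\}_{1\le j\le d}$ of $P$ one has $z_{b} = u\cdot b_{1}\wedge \cdots \wedge b_{d}$ for a unit $u\in \ZZ[G]^{\times}$. Since $P\stackrel{\psi}{\longrightarrow}F \to \mathcal{S}^{{\rm tr}}_{S,T}(\GG_{m/K}) \to 0$ is a quadratic presentation, $\Fit_{G}^{r}(\mathcal{S}^{{\rm tr}}_{S,T}(\GG_{m/K}))$ is generated by the $(d-r)\times(d-r)$ minors of the matrix $M_{\psi}:=(\psi_{i}(b_{j}))_{1\le i,j\le d}$. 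The crucial structural input is Lemma \ref{lempsi}, which forces $\psi_{i}=0$ for $1\le i\le r$: this means that any non-vanishing $(d-r)\times(d-r)$ minor of $M_{\psi}$ must use precisely the rows $r+1,\ldots,d$, so the Fitting ideal is already generated by the determinants $\det(\psi_{r+i}(b_{\sigma(r+j)}))_{1\le i,j\le d-r}$ as $\sigma$ runs over $\mathfrak{S}_{d,r}$.

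Now given $\Phi\in \bigwedge_{G}^{r}\Hom_{G}(\mathcal{O}_{K,S,T}^{\times},\ZZ[G])$, Lemma \ref{lemr}(i) (applicable because $P/\mathcal{O}_{K,S,T}^{\times}$ embeds into the free module $F$ and is hence $\ZZ$-torsion-free) allows me to lift $\Phi$ to some $\widetilde\Phi = \widetilde f_{1}\wedge\cdots\wedge \widetilde f_{r}$ in $\bigwedge_{G}^{r}\Hom_{G}(P,\ZZ[G])$, and since $\epsilon_{K/k,S,T}^{V}\in \bigcap_{G}^{r}\mathcal{O}_{K,S,T}^{\times}\subset \bigwedge_{G}^{r}P$ by Lemma \ref{lemr}(ii) the values $\Phi(\epsilon_{K/k,S,T}^{V})$ and $\widetilde\Phi(\epsilon_{K/k,S,T}^{V})$ coincide. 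Combining Theorem \ref{zetars} with the composition rule for (\ref{extmap}) and then Proposition \ref{propformula} applied to the $d$-fold wedge $\widetilde f_{1}\wedge\cdots\wedge \widetilde f_{r}\wedge \psi_{r+1}\wedge\cdots\wedge \psi_{d}$ acting on $z_{b}$, I will obtain
$$\Phi(\epsilon_{K/k,S,T}^{V}) = u\sum_{\sigma \in \mathfrak{S}_{d,r}} \sgn(\sigma)\det(\widetilde f_{i}(b_{\sigma(j)}))_{1\le i,j\le r}\cdot \det(\psi_{r+i}(b_{\sigma(r+j)}))_{1\le i,j\le d-r},$$
in which the sign $(-1)^{r(d-r)}$ from Theorem \ref{zetars} cancels that produced by reordering the two wedge factors. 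This identity immediately yields $\{\Phi(\epsilon_{K/k,S,T}^{V}):\Phi\}\subseteq \Fit_{G}^{r}(\mathcal{S}^{{\rm tr}}_{S,T}(\GG_{m/K}))$. For the reverse inclusion, given any $\sigma_{0}\in \mathfrak{S}_{d,r}$, I specialise $\widetilde f_{i}:= b_{\sigma_{0}(i)}^{\ast}$ (dual basis functionals); the dual basis relations then force $\det(\widetilde f_{i}(b_{\sigma(j)}))_{ij}$ to vanish unless $\sigma = \sigma_{0}$ and to equal $1$ in that case, so only one term survives in the sum and $\Phi(\epsilon_{K/k,S,T}^{V}) = \sgn(\sigma_{0})\cdot u\det(\psi_{r+i}(b_{\sigma_{0}(r+j)}))$, realising each generator of the Fitting ideal up to the unit $u$. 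The main technical work will be the careful sign bookkeeping in the reordering and Laplace expansion, but no substantive new input is required beyond Theorem \ref{zetars}, Lemma \ref{lempsi} and Lemma \ref{lemr}.
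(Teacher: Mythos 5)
Your proposal is correct and follows essentially the same route as the paper's own proof: both use ${\rm LTC}(K/k)$ to make $P$ free with $z_b$ a basis of $\bigwedge_G^d P$, invoke Theorem \ref{zetars} together with Proposition \ref{propformula} to express $\Phi(\epsilon_{K/k,S,T}^V)$ through the minors $\det(\psi_i(b_{\sigma(j)}))_{r<i,j\leq d}$, use Lemma \ref{lempsi} (vanishing of the first $r$ rows) to identify $\Fit_G^r(\mathcal{S}^{{\rm tr}}_{S,T}(\GG_{m/K}))$ with the ideal generated by exactly these minors, lift $\Phi$ via Lemma \ref{lemr}, and obtain the reverse inclusion by specialising to dual-basis functionals $b_{\sigma(1)}^\ast\wedge\cdots\wedge b_{\sigma(r)}^\ast$, with the `in particular' claim coming from Remark \ref{knownltc}. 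The only differences are cosmetic (order of steps, sign bookkeeping, and your restriction to pure-wedge lifts, which is handled by linearity), so no further comment is needed.
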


\begin{proof}
The second claim is a consequence of Remark \ref{knownltc}.

To prove the first claim we assume the validity of ${\rm LTC}(K/k)$.
Then the module $P$ that occurs in the exact sequence (\ref{tateseq}) is free of rank $d$, as we noted before. Hence we may assume $P=F$. Let $z_b \in \bigwedge_G^d F$ be as in \S \ref{secltcrs}.
By ${\rm LTC}(K/k)$, $z_b$ is a $G$-basis of $\bigwedge_G^d F$. Write $z_b$ as
$$z_b=x \bigwedge_{1\leq i \leq d} b_i$$
with some $x \in \ZZ[G]^\times$. By Theorem \ref{zetars} and Proposition \ref{propformula}, we have
$$\epsilon_{K/k,S,T}^V=\pm x \sum_{\sigma \in \mathfrak{S}_{d,r} }{\rm{sgn}}(\sigma) \det(\psi_i(b_{\sigma(j)}))_{r< i,j\leq d}b_{\sigma(1)}\wedge\cdots\wedge b_{\sigma(r)}.$$
Take $\Phi \in \bigwedge_G^r \Hom_G(\mathcal{O}_{K,S,T}^\times,\ZZ[G])$. Since $F/\mathcal{O}_{K,S,T}^\times \simeq \im (\psi) \subset F$ is $\ZZ$-torsion-free, we know by Lemma \ref{lemr}(ii) that the map
$$\Hom_G(F,\ZZ[G]) \longrightarrow \Hom_G(\mathcal{O}_{K,S,T}^\times,\ZZ[G])$$
induced by the inclusion $\mathcal{O}_{K,S,T}^\times \subset F$ is surjective. Hence, we can take a lift $\widetilde \Phi$ of $\Phi$ to $\bigwedge_G^r \Hom_G(F,\ZZ[G])$. We have
\begin{eqnarray}
\Phi(\epsilon_{K/k,S,T}^V)&=&\pm x \sum_{\sigma \in \mathfrak{S}_{d,r} }{\rm{sgn}}(\sigma) \det(\psi_i(b_{\sigma(j)}))_{r< i,j\leq d}\widetilde \Phi(b_{\sigma(1)}\wedge\cdots\wedge b_{\sigma(r)}) \nonumber \\
&\in& \langle  \det(\psi_i(b_{\sigma(j)}))_{r< i,j\leq d} : \sigma \in \mathfrak{S}_{d,r} \rangle_G. \nonumber
\end{eqnarray}

We consider the matrix $A$ corresponding to the presentation
\[ F \rightarrow F \rightarrow \mathcal{S}^{{\rm tr}}_{S,T}(\GG_{m/K}) \rightarrow 0\]
which
comes from the exact sequence (\ref{tateseq}).
By Lemma \ref{lempsi}, $\psi_i=0$ for $1\leq i \leq r$.
If we write elements in $F$ as column vectors, this implies that
the $i$-th row of $A$ is zero for all $i$ such that $1\leq i \leq r$.
Hence we have
$$\Fit_G^r(\mathcal{S}^{{\rm tr}}_{S,T}(\GG_{m/K}))= \langle\det(\psi_i(b_{\sigma(j)}))_{r< i,j\leq d} : \sigma \in \mathfrak{S}_{d,r}\rangle_G.$$
Therefore, we get an inclusion
$$\{  \Phi(\epsilon_{K/k,S,T}^V)  :  \Phi \in \bigwedge_G^r \Hom_G(\mathcal{O}_{K,S,T}^\times,\ZZ[G])\} \subset \Fit_G^r(\mathcal{S}^{{\rm tr}}_{S,T}(\GG_{m/K})).$$

We obtain the reverse inclusion from
$$(b_{\sigma(1)}^\ast \wedge\cdots\wedge b_{\sigma(r)}^\ast)(\epsilon_{K/k,S,T}^V)= \pm
x \det(\psi_i(b_{\sigma(j)}))_{r<i,j\leq d}$$
and the fact that $x$ is a unit in $\ZZ[G]$.
\end{proof}

\subsection{The proof of Theorem \ref{MC4}} \label{ProofofCorollaryMC4}
For any $G$-module $M$ we write $M^*$ for the linear dual $\Hom_\ZZ(M,\ZZ)$
endowed with the natural contragredient action of $G$.
We also set $V' := V\cup \{v\}$.

We start with a useful technical observation.

\begin{lemma}\label{tech2} For each integer $i$ with $1\le i\le r$ let $\varphi_i$ be an element of $(\mathcal{O}_{K,S,T}^\times)^*$. Then for any given integer $N$ there is a subset $\{\varphi_i':1\le i\le r\}$ of $(\mathcal{O}_{K,S,T}^\times)^*$ which satisfies the following properties.
\begin{itemize}
\item[(i)] For each $i$ one has $\varphi_i' \equiv \varphi_i$ modulo $N\cdot (\mathcal{O}_{K,S,T}^\times)^*$.
\item[(ii)] The image in $(\mathcal{O}^\times_{K,V',T})^*$ of the submodule of $(\mathcal{O}_{K,S,T}^\times)^*$ that is generated by the set $\{\varphi_i': 1\le i\le r\}$ is free of rank $r$.
    \end{itemize}
\end{lemma}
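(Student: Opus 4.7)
I would base the proof on two structural features of the setup. Firstly, the restriction map
\[
\rho : (\mathcal{O}_{K,S,T}^\times)^{\ast} \longrightarrow (\mathcal{O}_{K,V',T}^\times)^{\ast}
\]
induced by the inclusion $\mathcal{O}_{K,V',T}^\times \subseteq \mathcal{O}_{K,S,T}^\times$ is surjective: the cokernel of this inclusion embeds, via the valuation maps at the finite places in $S_K\setminus V'_K$, into a free abelian group and so is $\ZZ$-torsion-free, which means $\mathrm{Ext}^1_{\ZZ}(-,\ZZ)$ of the cokernel vanishes. Secondly, $(\mathcal{O}_{K,V',T}^\times)^{\ast}$ is a finitely generated free abelian group whose $\ZZ$-rank $n$ satisfies $n\geq r$: since each place of $V$ splits completely in $K$ one has $|V_K|=r[K:k]$, and since $v\in S\setminus V$ there is at least one further place of $K$ above $v$ lying outside $V_K$, so Dirichlet's unit theorem gives $n\ge r[K:k]\ge r$.

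With these facts in hand I would fix a $\ZZ$-basis $\{e_j\}_{1\le j\le n}$ of $(\mathcal{O}_{K,V',T}^\times)^{\ast}$ and write $\rho(\varphi_i)=\sum_j c_{ij}e_j$ with $c_{ij}\in\ZZ$. For each $r$-tuple of integers $(m_1,\ldots,m_r)$, the surjectivity of $\rho$ allows us to choose a lift $\tilde{\delta}_i\in (\mathcal{O}_{K,S,T}^\times)^{\ast}$ of $m_i e_i$, and then to set
\[
\varphi'_i \;:=\; \varphi_i + N\tilde{\delta}_i.
\]
Condition (i) is immediate from this definition. To verify (ii) it suffices to choose the $m_i$ so that the images $\rho(\varphi'_i)=\rho(\varphi_i)+Nm_i e_i$ are $\ZZ$-linearly independent in the free module $(\mathcal{O}_{K,V',T}^\times)^{\ast}$, for then the submodule of $(\mathcal{O}_{K,S,T}^\times)^{\ast}$ generated by $\{\varphi'_i\}$ will have image a free $\ZZ$-submodule of rank exactly $r$.

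The key calculation is that the polynomial
\[
p(X_1,\ldots,X_r) \;:=\; \det\!\bigl(c_{ij}+N\delta_{ij}X_i\bigr)_{1\le i,j\le r} \;\in\; \ZZ[X_1,\ldots,X_r],
\]
which computes the determinant of the matrix obtained by projecting the $\rho(\varphi'_i)$ onto the basis vectors $e_1,\ldots,e_r$, is not the zero polynomial: a direct expansion shows that the coefficient of the monomial $X_1\cdots X_r$ equals $N^r$. Since a nonzero element of $\ZZ[X_1,\ldots,X_r]$ necessarily takes a nonzero value at some integer tuple (an easy induction on $r$), one may select $(m_1,\ldots,m_r)\in\ZZ^r$ with $p(m_1,\ldots,m_r)\ne 0$. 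The corresponding elements $\rho(\varphi'_i)$ then have $\ZZ$-linearly independent projections onto the first $r$ coordinates, and so are themselves $\ZZ$-linearly independent, giving (ii).

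The only delicate input in the argument is the rank inequality $n\ge r$, a direct consequence of the complete splitting of the $r$ places of $V$ in $K$; once this is secured, the rest of the proof is a perturbation-of-genericity argument whose only real content is the nonvanishing of the polynomial $p$. I therefore do not anticipate any serious obstacle.
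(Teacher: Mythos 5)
There is a genuine gap, and it lies in the meaning of ``free of rank $r$''. In this paper the dual $(\mathcal{O}_{K,S,T}^\times)^*$ is a $G$-module and ``the submodule generated by $\{\varphi_i'\}$'' is its $\ZZ[G]$-submodule; the lemma asserts that its image in $(\mathcal{O}^\times_{K,V',T})^*$ is a free $\ZZ[G]$-module of rank $r$, equivalently that the elements $\rho(\varphi_i')$ are linearly independent over $\QQ[G]$, i.e.\ that their $e_\chi$-components are linearly independent for \emph{every} character $\chi$ of $G$. This is exactly the strength needed in the application (the proof of Theorem \ref{MC4}): there one must know that $e_\chi\varphi_1',\dots,e_\chi\varphi_r'$ are independent for each $\chi$ with $r_{\chi,S}=r$, so that $e_r\QQ H^2(C^\bullet)$ vanishes. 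Your argument only arranges that $\rho(\varphi_1'),\dots,\rho(\varphi_r')$ are $\ZZ$-linearly independent, which is strictly weaker once $G\neq 1$: a single nonzero element of infinite order is $\ZZ$-independent but may be killed by some idempotent $e_\chi$, in which case the $\ZZ[G]$-module it generates is not free and the later argument collapses. (Also note that even with your reading, the image of the \emph{$G$-submodule} generated by the $\varphi_i'$ has $\ZZ$-rank up to $r|G|$, not $r$, so the sentence ``will have image a free $\ZZ$-submodule of rank exactly $r$'' does not match the object in the statement.)

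The defect is not merely in the bookkeeping but in the choice of perturbation directions: lifting integer multiples of an arbitrary $\ZZ$-basis $\{e_j\}$ of $(\mathcal{O}^\times_{K,V',T})^*$ cannot in general repair the $e_\chi$-components, since those particular $e_i$ may have trivial (or degenerate) $\chi$-components. The correct input, which your rank count $n\ge r$ over $\ZZ$ does not capture, is that the complete splitting of the $r$ places of $V$ in $K$ furnishes a free $\ZZ[G]$-submodule $\mathcal{F}\subseteq(\mathcal{O}^\times_{K,V',T})^*$ of rank $r$. One should perturb by $mN f_i$ where the $f_i\in(\mathcal{O}_{K,S,T}^\times)^*$ lift a $\ZZ[G]$-basis of $\mathcal{F}$ (using the surjectivity of $\rho$, which you did justify correctly), and then observe that, after composing with a $\QQ[G]$-equivariant projection onto $\QQ\mathcal{F}$, the resulting endomorphism is represented by a matrix $A+mNI_r$ over $\QQ[G]$; for $m$ large enough that $-mN$ is not an eigenvalue of any $e_\chi A$, this is invertible character by character, giving $\QQ[G]$-independence. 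This is the paper's argument, and it is the $G$-equivariant analogue of your determinant-nonvanishing step; your polynomial computation (coefficient $N^r$ of $X_1\cdots X_r$) is fine as far as it goes, but it only controls the single ``global'' determinant over $\ZZ$, not the family of determinants indexed by the characters of $G$.
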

\begin{proof} Our choice of $V$ implies that we may choose a free $G$-submodule $\mathcal{F}$ of $(\mathcal{O}^\times_{K,V',T})^*$ of rank $r$. We then choose a subset $\{f_i:1\le i\le r\}$ of $(\mathcal{O}_{K,S,T}^\times)^*$ which the natural surjection  $\rho:(\mathcal{O}_{K,S,T}^\times)^*\to (\mathcal{O}_{K,V ',T}^\times)^*$ sends to a basis of $\mathcal{F}$. For any integer $m$ we set $\varphi_{i,m} := \varphi_i + mN f_i$ and note it suffices to show that for any sufficiently large $m$ the elements $\{\rho(\varphi_{i,m}): 1\le i\le r\}$ are linearly independent over $\QQ[G]$.

Consider the composite homomorphism of $G$-modules $\mathcal{F} \to \QQ(\mathcal{O}^\times_{K,V',T})^*\to \QQ \mathcal{F}$ where the first arrow sends each $\rho(f_i)$ to $\rho(\varphi_{i,m})$ and the second is induced by a choice of $\QQ[G]$-equivariant section to the projection $\QQ (\mathcal{O}^\times_{K,V',T})^*\to \QQ ((\mathcal{O}^\times_{K,V',T})^*/\mathcal{F})$. Then, with respect to the basis $\{\rho(f_i):1\le i\le r\}$, this linear map is represented by a matrix of the form $A + mN  I_r$ for a matrix $A$ in ${\rm M}_r(\QQ[G])$ that is independent of $m$. In particular, if $m$ is large enough to ensure that $-mN$ is not an eigenvalue of $e_\chi A$ for any $\chi$, then the composite homomorphism is injective and so the elements $\{\rho(\varphi_{i,m}): 1\le i\le r\}$ are linearly independent over $\QQ[G]$, as required.\end{proof}

For each integer $i$ with $1\le i\le r$ let $\varphi_i$ be an element of $(\mathcal{O}_{K,S,T}^\times)^*$. Then, for any non-zero integer $N$ which belongs to ${\rm Fitt}^0_{G}({\rm Cl}(K))$ we choose homomorphisms $\varphi_i'$ as in Lemma \ref{tech2}. Then the congruences in Lemma \ref{tech2}(i) imply that
\[ (\bigwedge_{1\le i \le r}\varphi_i)(\epsilon^V_{K/k,S,T}) \equiv (\bigwedge_{1\le i\le r}\varphi'_i)(\epsilon^V_{K/k,S,T})\,\,\,{\rm modulo} \,\,N\cdot\ZZ[G].\]
Given this, Lemma \ref{tech2}(ii) implies that Theorem \ref{MC4} is true provided that it is true for all $\Phi$ of the form ${\bigwedge}_{1\le i\le r}\varphi_i$ where the images in $(\mathcal{O}^\times_{K,V',T})^*$ of the homomorphisms $\varphi_i$ span a free module of rank $r$.

We shall therefore assume in the sequel that $\Phi$ is of this form.

For each index $i$ we now choose a lift $\widetilde\varphi_i$ of $\varphi_i$ to
$\mathcal{S}_{S,T}(\mathbb{G}_{m/K})$ and then write $\mathcal{E}_\Phi$ for the $G$-module that is generated by the set $\{\widetilde\varphi_i:1\le i\le r\}$.

\begin{proposition}\label{ltc prop} If ${\rm LTC}(K/k)$ is valid, then for every $\Phi$ as above one has
\begin{equation*}\label{reduction} \Phi(\epsilon^V_{K/k,S,T})^{\#} \in
{\rm Fitt}_{G}^0(\mathcal{S}_{S,T}(\mathbb{G}_{m/K})/\mathcal{E}_\Phi).\end{equation*}\end{proposition}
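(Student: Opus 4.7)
The plan is to combine the explicit presentation of $\mathcal{S}_{S,T}(\GG_{m/K})$ that becomes available under ${\rm LTC}(K/k)$ with the formula for $\epsilon^V_{K/k,S,T}$ furnished by Theorem \ref{zetars}, and to identify $\Phi(\epsilon^V_{K/k,S,T})^{\#}$ (up to a unit in $\ZZ[G]$) with the essentially unique non-zero $d\times d$ minor of a suitable presentation matrix of $\mathcal{S}_{S,T}/\mathcal{E}_\Phi$.

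To set this up I would first invoke the argument in the proof of Theorem \ref{ltcfit}: under ${\rm LTC}(K/k)$ the module $P$ in (\ref{tateseq}) can be taken to equal the free module $F=\ZZ[G]^d$ with basis $\{b_i\}$, the zeta element satisfies $z_b = x\cdot b_1\wedge\cdots\wedge b_d$ with $x\in\ZZ[G]^\times$, and by Lemma \ref{lempsi} the matrix $A_\psi=(\psi_i(b_j))_{ij}$ has its first $r$ rows identically zero. Dualizing the two-term complex $[F\xrightarrow{\psi}F]$ as in Remark \ref{defseltr rem} yields a presentation $F^*\xrightarrow{\psi^*}F^*\to\mathcal{S}_{S,T}(\GG_{m/K})\to 0$, where $F^*=\Hom_\ZZ(F,\ZZ)$ carries the contragredient $G$-action. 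Under the identification of $F^*$ with $\Hom_G(F,\ZZ[G])$, which twists the $G$-action by the involution $\#$, the matrix of $\psi^*$ is $A_\psi^{{\rm tr},\#}$, and in particular its first $r$ \emph{columns} vanish. Since $F/\mathcal{O}_{K,S,T}^\times\cong\im(\psi)$ is $\ZZ$-torsion-free, Lemma \ref{lemr}(i) allows me to extend each $\varphi_i$ to a homomorphism $\tilde\varphi_i\in F^*$; the images of these extensions in $\coker(\psi^*)$ realize valid choices for the lifts $\widetilde\varphi_i$. I thereby obtain a presentation
\[ F^*\oplus\ZZ[G]^r\longrightarrow F^*\longrightarrow\mathcal{S}_{S,T}(\GG_{m/K})/\mathcal{E}_\Phi\longrightarrow 0 \]
whose $d\times(d+r)$ matrix has the block form $M=[A_\psi^{{\rm tr},\#}\mid C]$ with $C$ the $d\times r$ block that encodes the $\tilde\varphi_i$ in the dual basis. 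Because the first $r$ columns of $M$ are identically zero, the only $d\times d$ minor of $M$ that can possibly be non-zero is the one, $M'$ say, formed from the last $d-r$ columns of $A_\psi^{{\rm tr},\#}$ together with all $r$ columns of $C$, and so $\Fitt_{G}^{0}(\mathcal{S}_{S,T}(\GG_{m/K})/\mathcal{E}_\Phi)$ is the principal ideal generated by $\det(M')$.

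The main step is then a generalized Laplace expansion of $\det(M')$ along its block partition into $(d-r)$ and $r$ columns. After the routine sign tracking (which introduces a factor $(-1)^{r(d-r)}$ from the block reordering), this expansion yields
\[ \det(M')=(-1)^{r(d-r)}\sum_{\sigma\in\mathfrak{S}_{d,r}}\sgn(\sigma)\cdot\det(\psi_i(b_{\sigma(j)}))^{\#}_{r<i,\,j\le d}\cdot\widetilde\Phi(b_{\sigma(1)}\wedge\cdots\wedge b_{\sigma(r)})^{\#}, \]
where $\widetilde\Phi:=\bigwedge_{i}\tilde\varphi_{i}$; here the first determinant is the relevant minor of $A_\psi^{{\rm tr},\#}$ and the factor involving $\widetilde\Phi$ arises via Proposition \ref{propformula} from the $r\times r$ minors of $C$, the outer $\#$ reflecting the change from the $\ZZ$-linear to the $G$-linear description of each $\tilde\varphi_i$. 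Applying $\widetilde\Phi$ to the explicit expression for $\epsilon^V_{K/k,S,T}$ furnished by Theorem \ref{zetars} and comparing, one sees that $\Phi(\epsilon^V_{K/k,S,T})^{\#}=\pm(-1)^{r(d-r)}x^{\#}\cdot\det(M')$; since $x^{\#}$ is a unit in $\ZZ[G]$, this gives the required containment.

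The chief technical obstacle is exactly the bookkeeping here: matching the two incarnations of each $\tilde\varphi_i$ (as a $\ZZ$-linear map into $\ZZ$ and as a $G$-linear map into $\ZZ[G]$) so that the $\#$'s and the signs produced by the Laplace expansion align with those coming from Lemma \ref{dual fitting} and from the explicit formula in Theorem \ref{zetars}. Conceptually, no new ideas beyond those already used in the proof of Theorem \ref{ltcfit} are required.
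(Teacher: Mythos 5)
Your proposal is conceptually sound and would succeed, but it takes a genuinely different route from the paper. You work entirely with the explicit two-term presentation $F\xrightarrow{\psi}F$ of $R\Gamma_T((\mathcal{O}_{K,S})_\mathcal{W},\GG_m)$ available under ${\rm LTC}(K/k)$: you dualize to get a presentation of $\mathcal{S}_{S,T}(\GG_{m/K})$ by the matrix $A_\psi^{{\rm tr},\#}$, adjoin $r$ columns encoding lifts $\widetilde\varphi_i$ to obtain a presentation of the quotient by $\mathcal{E}_\Phi$, observe that the vanishing of the first $r$ columns (coming from Lemma \ref{lempsi}) forces $\Fitt_G^0(\mathcal{S}_{S,T}/\mathcal{E}_\Phi)$ to be principal and generated by the single surviving $d\times d$ minor $\det(M')$, and then expand $\det(M')$ by block Laplace so as to recognize it (up to a unit and a sign) as $\Phi(\epsilon_{K/k,S,T}^V)^{\#}$. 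The paper instead never writes down a presentation matrix: it constructs an exact triangle $\ZZ[G]^{\oplus r,\bullet}\xrightarrow{\theta}R\Gamma_{c,T}((\mathcal{O}_{K,S})_\mathcal{W},\ZZ)\to C^\bullet\to$ with $H^2(C^\bullet)=\mathcal{S}_{S,T}/\mathcal{E}_\Phi$, builds a commutative diagram of regulator maps to compare determinant lattices, invokes Bass's theorem to adjust basis elements by units in $e_r\ZZ_{(p)}[G]$, and only at the very end represents the $p$-localization of $C^\bullet$ by a square matrix $\delta$ and uses the containment $(\det\delta)\subset\Fitt^0$. Your approach is more elementary, avoids the idempotent and localization arguments, and incidentally yields the stronger statement that $\Fitt_G^0(\mathcal{S}_{S,T}/\mathcal{E}_\Phi)$ is globally principal with explicit generator; the paper's derived-categorical formulation fits more smoothly into the rest of the framework and generalizes more readily when the module $P$ in the Tate sequence is not free.

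Two small cautions. First, the lifts $\widetilde\varphi_i$ in the statement may a priori be arbitrary elements of $\mathcal{S}_{S,T}(\GG_{m/K})$ mapping to $\varphi_i$, so you should note explicitly that every such lift does arise as the image of some extension of $\varphi_i$ to $F^*$ (this follows from surjectivity of $F^*\to\mathcal{S}_{S,T}(\GG_{m/K})$, so it is fine, but it deserves a sentence). Second, your citation of Lemma \ref{lemr}(i) for the extension step is slightly off, since that lemma concerns $\Hom_G(-,\ZZ[G])$ rather than $\Hom_\ZZ(-,\ZZ)$; what you actually need is the surjectivity of $\Hom_\ZZ(F,\ZZ)\to\Hom_\ZZ(\mathcal{O}_{K,S,T}^\times,\ZZ)$, which holds because $F/\mathcal{O}_{K,S,T}^\times$ is $\ZZ$-free. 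The remaining sign and $\#$-twist bookkeeping that you defer does in fact close up as you claim — the identification $\Hom_\ZZ(F,\ZZ)\cong\Hom_G(F,\ZZ[G])$ introduces exactly one $\#$ per column of $C$, and the block-Laplace sign is precisely $(-1)^{r(d-r)}$ — but since this is the entire computational content of the argument, in a final write-up you should carry it out rather than assert it.
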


\begin{proof} We use the existence of an exact triangle in $D^{\rm p}(\ZZ[G])$ of the form
\begin{equation}\label{triangle} \ZZ[G]^{\oplus r,\bullet} \xrightarrow{\theta} R\Gamma_{c,T}((\mathcal{O}_{K,S})_\mathcal{W},\ZZ) \xrightarrow{\theta'} C^\bullet \to \ZZ[G]^{\oplus r,\bullet}[1]. \end{equation}
Here $\ZZ[G]^{\oplus r,\bullet}$ denotes the complex $\ZZ[G]^{\oplus r}[-1]\oplus \ZZ[G]^{\oplus r}[-2]$ and, after choosing an ordering $\{v_i:1\le i\le r\}$ of the places in $V$, the morphism $\theta$ is uniquely specified by the condition  that $H^1(\theta)$ sends each element $b_i$ of the canonical basis $\{b_i:1\le i\le r\}$ of $\ZZ[G]^{\oplus r}$ to $w_{i}^*$ in $(Y_{K,V})^*\subset (X_{K,S})^* = H^1_{c,T}((\mathcal{O}_{K,S})_\mathcal{W},\ZZ)$ and $H^2(\theta)$ sends each $b_i$ to $\widetilde\varphi_i$ in $\mathcal{S}_{S,T}(\mathbb{G}_{m/K})$.

Note that the long exact cohomology sequence of this triangle implies $C^\bullet$ is acyclic outside degrees one and two and identifies $H^1(C^\bullet)$ and $H^2(C^\bullet)$ with $(X_{K,S\setminus V})^*$ and $\mathcal{S}_{S,T}(\mathbb{G}_{m/K})/\mathcal{E}_\Phi$,
respectively.

In particular, if we now write $e_r$ for the idempotent of $\QQ[G]$ obtained as $\sum_{r_{\chi,S}=r} e_\chi$, then the space $e_r\QQ H^i(C^\bullet)$ vanishes for both $i = 1$ and $i = 2$. We may therefore choose a commutative diagram of $\mathbb{R}[G]$-modules
\begin{equation}\label{comm diag} \begin{CD}
0 @> >> \mathbb{R}[G]^{\oplus r} @>  H^1(\theta)>> \mathbb{R} H^1_{c,T}((\mathcal{O}_{K,S})_\mathcal{W},\ZZ) @> H^1(\theta')>> \RR H^1(C^\bullet) @> >> 0\\
@. @V \lambda_1 VV @V \lambda_2 VV @V\lambda_3VV\\
0 @> >> \mathbb{R}[G]^{\oplus r} @> H^2(\theta)>> \mathbb{R} H^2_{c,T}((\mathcal{O}_{K,S})_\mathcal{W},\ZZ) @> H^2(\theta')>> \RR H^2(C^\bullet) @> >> 0\end{CD}\end{equation}
such that $e_r \lambda_2  = e_r\lambda_{K,S}^* $.

This diagram combines with the triangle (\ref{triangle}) to imply that there is an equality of lattices
\begin{align}\label{intermediate} \vartheta_{\lambda_2}({\rm det}_G(R\Gamma_{c,T}((\mathcal{O}_{K,S})_\mathcal{W},\ZZ)))^{-1}
&= \det(\lambda_1)\cdot\vartheta_{\lambda_3}({\rm det}_G(C^\bullet))^{-1}.\end{align}

We now assume that the conjecture ${\rm LTC}(K/k)$ is valid. Then Proposition \ref{ltc prop2} implies that ${\rm det}_G(R\Gamma_{c,T}((\mathcal{O}_{K,S})_\mathcal{W},\ZZ))^{-1}$ is a free rank one $\ZZ[G]$-module and further that if we choose any basis $\xi$ for this module, then both $e_r \xi$ and $e_r\theta_{K/k,S,T}^*(0)^\#=\theta_{K/k,S,T}^{{(r)},\#}$ are bases of the $e_r\ZZ[G]$-module
\[ e_r \vartheta_{\lambda_2}({\rm det}_G(R\Gamma_{c,T}((\mathcal{O}_{K,S})_\mathcal{W},\ZZ)))^{-1} = e_r\vartheta_{\lambda_{K,S}^*}({\rm det}_G(R\Gamma_{c,T}((\mathcal{O}_{K,S})_\mathcal{W},\ZZ)))^{-1}.\]
Bass's Theorem  (cf. \cite[Chap. 7, (20.9)]{lam}) implies that for each prime $p$ the projection map $\ZZ_{(p)}[G]^\times \to e_r\ZZ_{(p)}[G]^\times$ is surjective. The above equality thus implies that the $\ZZ_{(p)}[G]$-module $\vartheta_{\lambda_2}({\rm det}_G(R\Gamma_{c,T}((\mathcal{O}_{K,S})_\mathcal{W},\ZZ)))^{-1}\otimes_\ZZ \ZZ_{(p)}$ has a basis $\xi_p$ for which one has $e_r \xi_p = e_r \theta_{K/k,S,T}^*(0)^\#=\theta_{K/k,S,T}^{{(r)},\#}$. For each prime $p$ the equality (\ref{intermediate}) therefore implies that
\begin{align}\label{contain} &e_r\vartheta_{\lambda_3}({\rm det}_G(C^\bullet))^{-1}\otimes_\ZZ \ZZ_{(p)}\\ =\, &{\rm det}(\lambda_1)^{-1} e_r\vartheta_{\lambda_2}({\rm det}_G(R\Gamma_{c,T}((\mathcal{O}_{K,S})_\mathcal{W},\ZZ)))^{-1}\otimes_\ZZ \ZZ_{(p)}\notag\\
=\, &\ZZ_{(p)}[G] \cdot e_r{\rm det}(\lambda_1)^{-1}\theta_{K/k,S,T}^{{(r)},\#}.\notag\end{align}

Now the commutativity of (\ref{comm diag}) implies that $e_r{\rm det}(\lambda_1)$ is equal to the determinant of the matrix which represents $e_r \lambda_{K,S}^*$ with respect to the bases $\{e_r w_i^\ast: 1\le i\le r\}$ and $\{e_r \varphi_i: 1\le i \le r\}$ and hence that
\[ e_r \bigwedge_{1\le i\le r}\lambda_{K,S}^*(w_i^*) = e_r {\rm det}(\lambda_1) \Phi.\]
Since the element $\epsilon_{K/k,S,T}^V$ is defined via the equality
\[ \theta_{K/k,S,T}^{(r)} \bigwedge_{1\le i\le r}\lambda_{K,S}^{-1}(w_i-w) = \epsilon_{K/k,S,T}^V,\]
one therefore has
\begin{eqnarray}\label{almost1}\Phi(\epsilon_{K/k,S,T}^V)^\#&=&
(e_r {\rm det}(\lambda_1))^{-1}\bigwedge_{1\le i\le r}
\lambda_{K,S}^*(w_i^*)(\theta_{K/k,S,T}^{{(r)},{\#}}  (\bigwedge_{1\le i\le r}
\lambda_{K,S}^{-1}(w_i-w)))\\
&=& (e_r {\rm det}(\lambda_1))^{-1}\theta_{K/k,S,T}^{(r),\#}\notag \\
&\in& e_r\vartheta_{\lambda_3}({\rm det}_G(C^\bullet))^{-1}\otimes_\ZZ \ZZ_{(p)}\notag
\end{eqnarray}
where the last containment follows from (\ref{contain}).

Now by the same reasoning as used in the proof of Lemma \ref{dual fitting}, we know that the $p$-localized complex $\ZZ_{(p)}\otimes C^\bullet$ is represented by a complex $P\xrightarrow{\delta} P$, where $P$ is a finitely generated free $\ZZ_{(p)}[G]$-module and the first term is placed in degree one. In particular, since for any character $\chi$ of $G$ the space $e_\chi\CC H^1(C^\bullet) = e_\chi\CC \ker(\delta)$ does not vanish if $e_\chi e_r = 0$,
one has
\begin{align}\label{there} e_r\vartheta_{\lambda_3}({\rm det}_G(C^\bullet))^{-1}_{(p)} &= \ZZ_{(p)}[G] e_r{\rm det}(\delta) \\
&= \ZZ_{(p)}[G] {\rm det}(\delta)\notag\\
&\subset {\rm Fitt}_{\ZZ_{(p)}[G]}^0((H^2_{c,T}((\mathcal{O}_{K,S})_\mathcal{W},\ZZ)/\mathcal{E}_\Phi)\otimes_\ZZ \ZZ_{(p)})\notag\\
&= {\rm Fitt}_{G}^0(H^2_{c,T}((\mathcal{O}_{K,S})_\mathcal{W},\ZZ)/\mathcal{E}_\Phi)\otimes_\ZZ \ZZ_{(p)}.\notag\end{align}
%
The inclusion here follows from the tautological exact sequence
\[ P \stackrel{\delta}{\longrightarrow} P \longrightarrow H^2(\ZZ_{(p)}\otimes C^\bullet) \longrightarrow 0\]
and the identification $H^2(\ZZ_{(p)}\otimes C^\bullet)
= (H^2_{c,T}((\mathcal{O}_{K,S})_\mathcal{W},\ZZ)/\mathcal{E}_\Phi)\otimes_\ZZ \ZZ_{(p)}.$

The claimed result now follows from (\ref{almost1}) and (\ref{there}).
\end{proof}

Now we proceed to the proof of Theorem \ref{MC4}.
The existence of a surjective homomorphism of $G$-modules $f:\mathcal{S}_{S,T}(\GG_{m/K}) \rightarrow
\mathcal{S}_{V' \cup S_\infty,T}(\GG_{m/K})$ (see Proposition \ref{new one}(ii)) combines with Proposition \ref{ltc prop} to imply that
\begin{equation}\label{contain2}\Phi(\epsilon^V_{K/k,S,T})^{\#} \in
{\rm Fitt}_{G}^0(\mathcal{S}_{V'\cup S_\infty,T}(\GG_{m/K})/f(\mathcal{E}_\Phi)).\end{equation}

This implies the first assertion of Theorem \ref{MC4} since the natural map
${\rm Cl}_{V'}^T(K)^\vee \rightarrow \mathcal{S}_{V'\cup S_\infty,T}(\GG_{m/K})$ induces an injection
\[ {\rm Cl}_{V'}^T(K)^\vee \rightarrow
\mathcal{S}_{V'\cup S_\infty,T}(\GG_{m/K})/f(\mathcal{E}_\Phi).\]

In addition, if $G$ is cyclic, then the latter injection combines with (\ref{contain2}) to imply that
$$\Phi(\epsilon^V_{K/k,S,T}) \in \Fitt_{G}^{0}({\rm Cl}_{V'}^T(K)^\vee)^{\#}
=\Fitt_{G}^{0}({\rm Cl}_{V'}^T(K)),$$
as claimed by the second assertion of Theorem \ref{MC4}.

This completes the proof of Theorem \ref{MC4}.

\subsection{The proof of Corollary \ref{CorRemark03}} \label{ProofofRemark}
Let $K/k$ be a CM-extension, $S=S_{\infty}(k)$, and $p$ an odd prime.
For a $\ZZ_{p}[G]$-module $M$, we denote by $M^{-}$ the
submodule on which the complex conjugation acts as $-1$.

Then, since complex conjugation acts trivially on
$\Hom_{\ZZ}(\mathcal{O}^\times_{K,S,T},\ZZ)\otimes \ZZ_p$, the exact sequence
$$
 0 \longrightarrow {\rm Cl}^T(K)^\vee \longrightarrow
\mathcal{S}_{S,T}(\GG_{m/K})
\longrightarrow \Hom_\ZZ(\mathcal{O}^\times_{K,S,T},\ZZ) \longrightarrow 0,$$
implies that in this case there is an equality
$$(({\rm Cl}^T(K) \otimes \ZZ_{p})^\vee)^-=
(\mathcal{S}_{S,T}(\GG_{m/K}) \otimes \ZZ_{p})^-.$$

In addition, in this case the containment of Proposition \ref{ltc prop} applies with $V$ empty (so $r=0$ and $\mathcal{E}_\Phi$ vanishes) to imply that
$$\theta_{K/k,S,T}(0)^{\#} \in {\rm Fitt}_{G}^0(
\mathcal{S}_{S,T}(\GG_{m/K})),$$
and hence one has
$$\theta_{K/k,S,T}(0)^{\#} \in \Fitt_{\ZZ_p[G]}^0((({\rm Cl}^T(K) \otimes \ZZ_{p})^{\vee})^-).$$
Since $\theta_{K/k,S,T}(0)$ lies in the minus component of $\ZZ_p[G]$, this is in turn equivalent to the required equality
$$\theta_{K/k,S,T}(0)^{\#} \in \Fitt_{\ZZ_p[G]}^0(({\rm Cl}^T(K) \otimes \ZZ_{p})^{\vee}).$$

This completes the proof of Corollary \ref{CorRemark03}.

\subsection{The higher relative Fitting ideals of the dual Selmer group} \label{HigherRelativeFitt}

We write $M_{\rm tors}$ for the $\ZZ$-torsion submodule of a $G$-module $M$ and abbreviate the higher relative Fitting ideal $\Fitt_{\ZZ[G]}^{(a,b)}(M,M_{{\rm tors}})$ to $\Fitt_{G}^{(a,b)}(M)$.

In this subsection, we study the ideals $\Fitt_{G}^{(r,i)}(\mathcal{S}^{{\rm tr}}_{S,T}(\GG_{m/K}))$ and, in particular, prove Theorem \ref{MT2}(ii). We note that the exact sequence (\ref{dual ses}) identifies $\mathcal{S}^{{\rm tr}}_{S,T}(\GG_{m/K})_{\rm tors}$ with the group ${\rm Cl}_S^T(K)$.

For each non-negative integer $i$ we define the set $\mathcal{V}_{i}$ as in Theorem \ref{MT2}(ii).

\begin{conjecture} \label{ConjectureRelativeFitt} For each non-negative integer $i$ one has an equality
$$\Fitt_{G}^{(r,i)}(\mathcal{S}^{{\rm tr}}_{S,T}(\GG_{m/K}))
=
\{  \Phi(\epsilon_{K/k,S \cup V',T}^{V \cup V'})  :  V' \in \mathcal{V}_{i} \
\mbox{and} \
\Phi \in \bigwedge_G^{r+i} \Hom_G(\mathcal{O}_{K,S \cup V',T}^\times,\ZZ[G])\}.$$
\end{conjecture}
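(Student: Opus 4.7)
The plan is to reduce the general case $i \ge 0$ to the already-proved case $i=0$ (namely the final formula of Theorem \ref{ltcfit}, or equivalently Theorem \ref{MT2}(i) combined with Lemma \ref{dual fitting}) by auxiliary enlargements of $S$ by a set $V' \in \mathcal{V}_i$ of completely-split primes. Since the validity of ${\rm LTC}(K/k)$ does not depend on the choice of $S$ or $T$, Theorem \ref{ltcfit} applies verbatim to the enlarged data $(K/k,S\cup V',T,V\cup V')$ and yields, for each $V'\in\mathcal{V}_{i}$, an equality
\[
\Fitt^{r+i}_G(\mathcal{S}^{\rm tr}_{S\cup V',T}(\GG_{m/K})) = \{\Phi(\epsilon^{V\cup V'}_{K/k,S\cup V',T}) : \Phi \in \bigwedge^{r+i}_G \Hom_G(\mathcal{O}^\times_{K,S\cup V',T}, \ZZ[G])\}.
\]

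The key geometric input is an exact sequence of $G$-modules
\[
0 \longrightarrow K_{V'} \longrightarrow \mathcal{S}^{\rm tr}_{S,T}(\GG_{m/K}) \longrightarrow \mathcal{S}^{\rm tr}_{S\cup V',T}(\GG_{m/K}) \longrightarrow Y_{K,V'} \longrightarrow 0,
\]
where $K_{V'}\subseteq \Cl_S^T(K)$ denotes the $\ZZ[G]$-submodule generated by the classes of the primes of $K$ lying above $V'$, and $Y_{K,V'}$ is free of rank $i$ over $\ZZ[G]$ (as $V'$ consists of completely-split primes). I will derive this sequence by applying the exact triangle of Proposition \ref{new one}(ii) with $S' = S\cup V'$, dualizing via $R\Hom_{\ZZ}(-,\ZZ)[-2]$ to pass to $R\Gamma_T((\mathcal{O}_{K,\cdot})_{\mathcal{W}},\GG_m)$, and computing that for each completely-split $v'$ and each place $w$ of $K$ above $v'$ the complex $R\Hom_{\ZZ}(R\Gamma((\kappa(w))_{\mathcal{W}},\ZZ),\ZZ)[-2]$ is concentrated in degrees $1$ and $2$ with cohomology $\ZZ[G/G_w] = \ZZ$ in each; summing over $w\in V'_K$ identifies both cohomology groups with $Y_{K,V'}$, and the connecting map $Y_{K,V'}\to \mathcal{S}^{\rm tr}_{S,T}$ can then be identified with $w\mapsto [w]\in \Cl_S^T(K)\subseteq\mathcal{S}^{\rm tr}_{S,T}$ via the exact sequence (\ref{dual ses}). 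Since $Y_{K,V'}\cong\ZZ[G]^{\oplus i}$, the short exact sequence $0\to \mathcal{S}^{\rm tr}_{S,T}/K_{V'}\to \mathcal{S}^{\rm tr}_{S\cup V',T}\to Y_{K,V'}\to 0$ combined with the $b=0$ case of Lemma \ref{LemmaRelFitt2}(iii) then yields
\[
\Fitt^{r+i}_G(\mathcal{S}^{\rm tr}_{S\cup V',T}) = \Fitt^r_G(\mathcal{S}^{\rm tr}_{S,T}/K_{V'}).
\]

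To assemble the two ingredients I invoke Chebotarev density applied to the ray class field of $K$ corresponding to $\Cl_S^T(K)$ (viewed as a Galois extension of $k$): for any $(x_1,\ldots,x_i)\in\Cl_S^T(K)^i$ one can find distinct primes $v'_1,\ldots,v'_i$ of $k$, each split completely in $K$ and disjoint from $S\cup T$, together with choices of primes $\mathfrak{P}_j$ of $K$ above $v'_j$ satisfying $[\mathfrak{P}_j]=x_j$ in $\Cl_S^T(K)$; hence every $\ZZ[G]$-submodule of $\Cl_S^T(K)$ with at most $i$ generators arises as $K_{V'}$ for some $V'\in\mathcal{V}_i$. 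Lemma \ref{LemmaRelFitt} allows one to group the submodules $X$ in the definition of $\Fitt^{(r,i)}_G(\mathcal{S}^{\rm tr}_{S,T})$ by the submodule $K'$ generated by the first $i$ of their generators, and summing separately over the remaining $r$ generators produces $\Fitt^r_G(\mathcal{S}^{\rm tr}_{S,T}/K')$; combining with the steps above gives
\[
\Fitt^{(r,i)}_G(\mathcal{S}^{\rm tr}_{S,T}) = \sum_{V'\in\mathcal{V}_i} \Fitt^r_G(\mathcal{S}^{\rm tr}_{S,T}/K_{V'}) = \sum_{V'\in\mathcal{V}_i}\Fitt^{r+i}_G(\mathcal{S}^{\rm tr}_{S\cup V',T}),
\]
which by the first display coincides with the set described in the conjecture. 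The principal technical obstacle is the second step: the careful derivation of the four-term exact sequence, and in particular the explicit identification of the image of the connecting homomorphism with $K_{V'}$, which requires tracing the functoriality of the constructions of \S\ref{wec} and the local computations at completely-split places through the dualisation procedure defining $R\Gamma_T((\mathcal{O}_{K,S})_{\mathcal{W}},\GG_m)$.
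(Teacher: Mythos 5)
Your argument is correct and is, at bottom, the same reduction the paper uses in its proof of Theorem~\ref{ThRelativeHigherFitt}: apply Theorem~\ref{ltcfit} to the enlarged data $(K/k,S\cup V',T,V\cup V')$, pass from the ordinary to the relative Fitting ideal via the lemmas of \S\ref{DefinitionRelativeFitt}, and use Chebotarev to realize every $\le i$-generated $G$-submodule of $\Cl_S^T(K)$ as $X_{V'}$ for some $V'\in\mathcal{V}_i$.

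The one genuine difference is organizational. The paper first kills the $Y_{K,V}$-part by passing to the submodule $\mathcal{S}^{\rm tr}_{S,T}(\GG_{m/K})'$ (the kernel of $\mathcal{S}^{\rm tr}_{S,T}\to X_{K,S}\to Y_{K,V}$), so that $\Fitt_G^{(r,i)}(\mathcal{S}^{\rm tr}_{S,T})$ becomes $\Fitt_G^{(0,i)}(\mathcal{S}^{\rm tr}_{S,T}(\GG_{m/K})')$; it then needs only the two short exact sequences from (\ref{dual ses}), with common quotient $X_{K,S\setminus V}$, and the isomorphism $\mathcal{S}^{\rm tr}_{S,T}(\GG_{m/K})'/X_{V'}\simeq \mathcal{S}^{\rm tr}_{S\cup V',T}(\GG_{m/K})'$. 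You instead keep the full Selmer modules and derive a four-term exact sequence $0\to K_{V'}\to \mathcal{S}^{\rm tr}_{S,T}\to\mathcal{S}^{\rm tr}_{S\cup V',T}\to Y_{K,V'}\to 0$ by dualizing Proposition~\ref{new one}(ii), then apply Lemma~\ref{LemmaRelFitt2}(iii) once more to strip off $Y_{K,V'}$. This costs you the extra work of identifying the connecting map (which you flag, rightly, as the main technical point), but it buys a cleaner justification of the isomorphism that the paper invokes tersely: the paper's displayed isomorphism is really a consequence of the functoriality of the Weil-\'etale triangle, which your derivation makes explicit. Both routes are equivalent in substance; yours is slightly longer but somewhat more self-contained at that point.

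Two small accuracy remarks. First, be careful that the image of the connecting map is a priori only abstractly isomorphic to $K_{V'}$; you need (as you note) the compatibility of the map $\mathcal{S}^{\rm tr}_{S,T}\to\mathcal{S}^{\rm tr}_{S\cup V',T}$ with the localization surjection $\Cl_S^T(K)\to\Cl^T_{S\cup V'}(K)$ on torsion submodules to conclude that the kernel equals $K_{V'}$ as a submodule. Second, in the Chebotarev step one should make sure one is producing places that split completely in $K$ \emph{and} hit prescribed classes in $\Cl_S^T(K)$: apply Chebotarev to the (Galois over $k$) ray class field $H_S^T(K)$ and choose Frobenius elements lying in $\Gal(H_S^T(K)/K)=\Cl_S^T(K)$ equal to the desired generators. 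This is exactly how the paper (and \cite[Lem.~3.1]{Ku5}) proceeds.
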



The following result is a generalization of Theorem \ref{ltcfit} in \S \ref{FittH1can}.

\begin{theorem} \label{ThRelativeHigherFitt} If ${\rm LTC}(K/k)$ is valid, then so is Conjecture \ref{ConjectureRelativeFitt}.
\end{theorem}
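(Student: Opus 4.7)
The approach is to reduce Conjecture \ref{ConjectureRelativeFitt} to the already-established case $i = 0$ (namely, Theorem \ref{ltcfit}) by applying it to the enlarged data $(K/k,\, S \cup V',\, T,\, V \cup V')$ for each $V' \in \mathcal{V}_i$. Since the places in $V \cup V'$ all split completely in $K$ and $V' \cap (S \cup T) = \emptyset$, Theorem \ref{ltcfit} applies with this data and, under the hypothesis that ${\rm LTC}(K/k)$ holds, yields
\[
\Fitt_G^{r+i}\bigl(\mathcal{S}^{{\rm tr}}_{S \cup V',\, T}(\GG_{m/K})\bigr) = \bigl\{\Phi(\epsilon^{V \cup V'}_{K/k,\, S \cup V',\, T}) : \Phi \in \bigwedge_G^{r+i}\Hom_G(\mathcal{O}^\times_{K,\, S \cup V',\, T},\, \ZZ[G])\bigr\}.
\]

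To relate the Fitting ideal above to a relative Fitting ideal of $\mathcal{S}^{{\rm tr}}_{S,T}(\GG_{m/K})$, one invokes the standard units/class-groups exact sequence (or, equivalently, applies $R\Hom_\ZZ(-,\ZZ)[-2]$ to the triangle of Proposition \ref{new one}(ii) and uses that the Weil-\'etale cohomology of each $\kappa(w)$ for $w$ above a completely split place is $\ZZ$ in degrees $0$ and $1$) to obtain the six-term exact sequence
\[
0 \to \mathcal{O}^\times_{K, S, T} \to \mathcal{O}^\times_{K, S \cup V', T} \to \ZZ[G]^{\oplus i} \to \mathcal{S}^{{\rm tr}}_{S, T}(\GG_{m/K}) \to \mathcal{S}^{{\rm tr}}_{S \cup V', T}(\GG_{m/K}) \to \ZZ[G]^{\oplus i} \to 0,
\]
where the identification $Y_{K,V'} \cong \ZZ[G]^{\oplus i}$ uses that each $u \in V'$ splits completely in $K$. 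Write $N_{V'}$ for the image of the middle connecting map; this is the $G$-submodule of ${\rm Cl}_S^T(K) \subset \mathcal{S}^{{\rm tr}}_{S,T}(\GG_{m/K})$ generated by the ideal classes of the $i$ primes of $K$ lying above $V'$. The induced short exact sequence $0 \to \mathcal{S}^{{\rm tr}}_{S,T}/N_{V'} \to \mathcal{S}^{{\rm tr}}_{S \cup V',T} \to \ZZ[G]^{\oplus i} \to 0$ and Lemma \ref{LemmaRelFitt2}(iii) together give
\[
\Fitt_G^{r+i}\bigl(\mathcal{S}^{{\rm tr}}_{S \cup V',\, T}(\GG_{m/K})\bigr) = \Fitt_G^r\bigl(\mathcal{S}^{{\rm tr}}_{S,\, T}(\GG_{m/K})/N_{V'}\bigr).
\]

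The final step is to establish
\[
\Fitt_G^{(r,i)}\bigl(\mathcal{S}^{{\rm tr}}_{S,T}(\GG_{m/K})\bigr) = \sum_{V' \in \mathcal{V}_i} \Fitt_G^r\bigl(\mathcal{S}^{{\rm tr}}_{S,T}(\GG_{m/K})/N_{V'}\bigr).
\]
The inclusion $\supset$ is a direct application of Lemma \ref{LemmaRelFitt}: any $r$-generator submodule of $\mathcal{S}^{{\rm tr}}_{S,T}/N_{V'}$ lifts to an $(r+i)$-generator submodule of $\mathcal{S}^{{\rm tr}}_{S,T}$ whose first $i$ generators lie in $N_{V'} \subset {\rm Cl}_S^T(K)$. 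For the converse, given a submodule $X = \langle c_1, \ldots, c_i, x_1, \ldots, x_r\rangle$ of $\mathcal{S}^{{\rm tr}}_{S,T}$ with $c_j \in {\rm Cl}_S^T(K)$, the Chebotarev density theorem (applied to the Hilbert class field of $K$ viewed as an extension of $k$) produces distinct rational primes $u_1, \ldots, u_i$ of $k$, none lying in $S \cup T$, each splitting completely in $K$ and each admitting a prime $\mathfrak{p}_j$ of $K$ above it of class $c_j$; setting $V' = \{u_1, \ldots, u_i\} \in \mathcal{V}_i$ gives $\langle c_1, \ldots, c_i\rangle_G \subset N_{V'}$, and the resulting surjection $\mathcal{S}^{{\rm tr}}_{S,T}/X \twoheadrightarrow \mathcal{S}^{{\rm tr}}_{S,T}/\langle N_{V'}, x_1, \ldots, x_r\rangle$ yields the required containment once Lemma \ref{LemmaRelFitt} is applied again.

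The main technical obstacle is the Chebotarev step in the final paragraph: one must simultaneously realize the prescribed classes $c_1, \ldots, c_i$ by primes above pairwise distinct rational primes $u_j$ that both split completely in $K$ and avoid $S \cup T$. This amounts to showing that each non-identity coset in $\Gal(H_K/K)$ (where $H_K$ is the appropriate ray-class field of $K$) is realized by Frobenius elements whose restriction to $\Gal(K/k)$ is trivial, which follows from the compatibility of restriction with class-field theory. Once this is secured, substituting the equality displayed in the first paragraph into the final displayed equality completes the proof of Conjecture \ref{ConjectureRelativeFitt}.
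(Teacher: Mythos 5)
Your proposal is correct and follows essentially the same route as the paper's own proof: both reduce to Theorem \ref{ltcfit} applied to the enlarged data $(K/k, S \cup V', T, V \cup V')$, both use the reduction step Lemma \ref{LemmaRelFitt2}(iii) to strip off free summands coming from $Y$-type modules, both identify the submodule $N_{V'}$ (the paper's $X_{V'}$) of ${\rm Cl}_S^T(K)$ generated by classes of primes above $V'$, and both invoke Chebotarev together with Lemma \ref{LemmaRelFitt} to express the higher relative Fitting ideal as a sum over $V' \in \mathcal{V}_i$. The only difference is cosmetic: the paper first passes to the auxiliary kernel $\mathcal{S}^{{\rm tr}}_{S,T}(\GG_{m/K})'$ to reduce $\Fitt^{(r,i)}$ to $\Fitt^{(0,i)}$ and only then applies Chebotarev, while you keep the original Selmer module and work directly with the quotients $\mathcal{S}^{{\rm tr}}_{S,T}(\GG_{m/K})/N_{V'}$.
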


\begin{proof}
We consider the composition of the two canonical homomorphisms
\[ \mathcal{S}^{{\rm tr}}_{S,T}(\GG_{m/K}) \rightarrow X_{K,S} \rightarrow Y_{K,V},\]
and denote its kernel by $\mathcal{S}^{{\rm tr}}_{S,T}(\GG_{m/K})'$.
By Lemma \ref{LemmaRelFitt2} (iii), we have
\begin{equation} \label{ProofThRelativeHigherFitt1}
\Fitt_{G}^{(r,i)}(\mathcal{S}^{{\rm tr}}_{S,T}(\GG_{m/K}))
=\Fitt_{G}^{(0,i)}(\mathcal{S}^{{\rm tr}}_{S,T}(\GG_{m/K})').
\end{equation}
We also note that the sequence (\ref{dual ses}) gives rise to an exact sequence of $G$-modules
\begin{equation} \label{ProofThRelativeHigherFitt11}0 \longrightarrow {\rm Cl}_S^T(K) \longrightarrow \mathcal{S}^{{\rm tr}}_{S,T}(\GG_{m/K})'
\longrightarrow X_{K,S \setminus V} \longrightarrow 0.\end{equation}

For $V' \in \mathcal{V}_{i}$, we denote by $\mathcal{S}^{{\rm tr}}_{S \cup V',T}(\GG_{m/K})'$ the kernel of the natural composition
\[ \mathcal{S}^{{\rm tr}}_{S \cup V',T}(\GG_{m/K}) \rightarrow X_{K,S \cup V'}
\rightarrow Y_{K,V \cup V'}\]
so that the following sequence is exact
$$0 \longrightarrow {\rm Cl}_{S \cup V'}^T(K) \longrightarrow
\mathcal{S}^{{\rm tr}}_{S \cup V',T}(\GG_{m/K})'
\longrightarrow X_{K,S \setminus V} \longrightarrow 0.$$

Let $X_{V'}$ be the subgroup of ${\rm Cl}_S^T(K)$
generated by the classes of
places of $K$ above $V'$ in ${\rm Cl}_S^T(K)$.
Since ${\rm Cl}_S^T(K)/X_{V'}={\rm Cl}_{S \cup V'}^T(K)$,
there is an isomorphism
$\mathcal{S}^{{\rm tr}}_{S,T}(\GG_{m/K})'/X_{V'} \simeq
\mathcal{S}^{{\rm tr}}_{S \cup V',T}(\GG_{m/K})'$.
By Chebotarev density theorem and Lemma \ref{LemmaRelFitt}, we obtain
\begin{eqnarray} \label{ProofThRelativeHigherFitt2}
\Fitt_{G}^{(0,i)}(\mathcal{S}^{{\rm tr}}_{S,T}(\GG_{m/K})')
&=&\sum_{V' \in  \mathcal{V'}_{i}}
\Fitt_{G}^{0}(\mathcal{S}^{{\rm tr}}_{S \cup V',T}(\GG_{m/K})') \nonumber\\
&=&\sum_{V' \in  \mathcal{V'}_{i}} \Fitt_{G}^{r+i}(\mathcal{S}^{{\rm tr}}_{S \cup V',T}(\GG_{m/K}))
\end{eqnarray}
where we used Lemma \ref{LemmaRelFitt2} (iii) again to get the last equality.

Now Theorem \ref{ThRelativeHigherFitt} follows from
(\ref{ProofThRelativeHigherFitt1}), (\ref{ProofThRelativeHigherFitt2}) and
Theorem \ref{ltcfit}.
\end{proof}

\begin{corollary} \label{MC11}
We assume that ${\rm LTC}(K/k)$ is valid and that the group $G=\Gal(K/k)$ is cyclic.
Then for each non-negative integer $i$ one has an equality

\begin{multline*}\Fitt_{G}^{i}({\rm Cl}_S^T(K))\Fitt_{G}^{0}(X_{K,S\setminus V})\\
=
\{  \Phi(\epsilon_{K/k,S \cup V',T}^{V \cup V'})  :  V' \in \mathcal{V}_{i} \
\mbox{and} \
\Phi \in \bigwedge_G^{r+i} \Hom_G(\mathcal{O}_{K,S \cup V',T}^\times,\ZZ[G])\}.\end{multline*}
\end{corollary}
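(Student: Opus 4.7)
The plan is to combine Theorem \ref{ThRelativeHigherFitt} with parts (iii) and (iv) of Lemma \ref{LemmaRelFitt2}, the non-trivial input being an explicit quadratic presentation of $X_{K,S\setminus V}$ over $\ZZ[G]$.

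By Theorem \ref{ThRelativeHigherFitt}, the set appearing on the right hand side of the stated equality is equal to $\Fitt_G^{(r,i)}(\mathcal{S}^{{\rm tr}}_{S,T}(\GG_{m/K}))$. Since $V$ consists of completely split places, $Y_{K,V}\cong \ZZ[G]^{\oplus r}$ is free, so the surjection $\mathcal{S}^{{\rm tr}}_{S,T}(\GG_{m/K}) \twoheadrightarrow Y_{K,V}$ is split with kernel $\mathcal{S}^{{\rm tr}}_{S,T}(\GG_{m/K})'$; Lemma \ref{LemmaRelFitt2}(iii) therefore identifies this ideal with $\Fitt_G^{(0,i)}(\mathcal{S}^{{\rm tr}}_{S,T}(\GG_{m/K})')$. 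In view of the exact sequence (\ref{ProofThRelativeHigherFitt11}), whose torsion subgroup is ${\rm Cl}_S^T(K)$ with quotient $X_{K,S\setminus V}$, the desired equality will then follow directly from Lemma \ref{LemmaRelFitt2}(iv) provided that one establishes the following.

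\textbf{Key claim.} When $G$ is cyclic, the $\ZZ[G]$-module $X_{K,S\setminus V}$ admits a quadratic presentation.

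To prove this, I would fix a generator $\sigma$ of $G$ and, writing $S \setminus V = \{v_0, v_{r+1}, \ldots, v_n\}$, set $d_j := [G : G_{v_j}]$, $\sigma_j := \sigma^{d_j}$ (a generator of $G_{v_j}$) and $\gamma_j := 1 + \sigma + \cdots + \sigma^{d_j - 1}$ for each $j \in \{0, r+1, \ldots, n\}$, so that $\gamma_j(\sigma - 1) = \sigma_j - 1$ in $\ZZ[G]$. Letting $e_j$ denote the canonical generator of the summand $\ZZ[G/G_{v_j}]$ of $Y_{K,S\setminus V}$, the goal is to verify that the elements
\[
z_0 := (\sigma - 1) e_0, \qquad z_j := e_j - e_0 \;\; (r < j \le n),
\]
generate $X_{K,S\setminus V}$ as a $\ZZ[G]$-module, and that all relations among them are consequences of $\gamma_0 z_0 = 0$ and $(\sigma_j - 1) z_j + \gamma_j z_0 = 0$ for $r < j \le n$. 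This yields a quadratic presentation of size $|S\setminus V|$ whose relation matrix is upper triangular with diagonal entries $\gamma_0, \sigma_{r+1} - 1, \ldots, \sigma_n - 1$.

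The main technical step, which I expect to be the hardest part of the argument, is ensuring that no relations are hidden. A clean way to do this is to identify the kernel $\widetilde K$ of the total augmentation map $\bigoplus_{j} \ZZ[G] f_j \to \ZZ$ (sending each $f_j$ to $1$) with $I_G \oplus \ZZ[G]^{n-r}$, under which $z_0$ corresponds to the element $\sigma - 1 \in I_G$ (whose only defining relation in $\widetilde K$ is $N_G (\sigma - 1) = 0$) and each $z_j$ corresponds to a standard basis vector of the free summand. The module $X_{K,S\setminus V}$ is then obtained from $\widetilde K$ by imposing the extra relations $(\sigma_0 - 1)f_0 = 0$ and $(\sigma_j - 1) f_j = 0$, which unwind to $\gamma_0 z_0 = 0$ and $(\sigma_j - 1)z_j + \gamma_j z_0 = 0$ respectively; the leftover relation $N_G z_0 = 0$ inherited from $\widetilde K$ is then redundant via the identity $N_G = N_{G_{v_0}} \gamma_0$. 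This confirms that the listed relations form a complete set, completing the verification of the key claim and hence of the corollary.
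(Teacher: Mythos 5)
Your proposal is correct and follows the paper's argument exactly: use Theorem \ref{ThRelativeHigherFitt} together with (\ref{ProofThRelativeHigherFitt1}) to reduce to $\Fitt_G^{(0,i)}(\mathcal{S}^{{\rm tr}}_{S,T}(\GG_{m/K})')$, then apply Lemma \ref{LemmaRelFitt2}(iv) to the exact sequence (\ref{ProofThRelativeHigherFitt11}), which requires that $X_{K,S\setminus V}$ admit a quadratic presentation. The one genuine value you add is a full verification of that last fact: the paper simply asserts ``Since $G$ is cyclic, the $G$-module $X_{K,S\setminus V}$ has a quadratic presentation'' with no argument, whereas you produce the presentation explicitly by first presenting the kernel $\widetilde K$ of the total augmentation as $I_G\oplus\ZZ[G]^{n-r}$ and then imposing the relations $(\sigma_j-1)f_j=0$, observing that the inherited relation $N_G z_0=0$ is absorbed into $\gamma_0 z_0=0$ via $N_G=N_{G_{v_0}}\gamma_0$, yielding an $(n-r+1)\times(n-r+1)$ upper-triangular relation matrix. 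This filled-in verification is correct and is a reasonable thing to include if one wanted the corollary's proof to be self-contained.
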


\begin{proof}
Since $G$ is cyclic, the $G$-module $X_{K,S\setminus V}$ has a quadratic presentation. We may therefore apply Lemma \ref{LemmaRelFitt2}(iv) to the exact sequence (\ref{ProofThRelativeHigherFitt11}) to obtain an equality
\[ \Fitt_{G}^{i}({\rm Cl}_S^T(K))\Fitt_{G}^{0}(X_{K,S\setminus V}) = \Fitt_{G}^{(0,i)}(\mathcal{S}^{{\rm tr}}_{S,T}(\GG_{m/K})').\]

Given this equality, the claimed result follows from Theorem \ref{ThRelativeHigherFitt} and the equality (\ref{ProofThRelativeHigherFitt1}).
\end{proof}

An application of Theorem \ref{ThRelativeHigherFitt} to
character components of ideal class groups will be given in \S \ref{HigherFittChar}.

\section{Gross-Stark conjecture and the cyclotomic $\ZZ_{p}$-extension} \label{grost}
In this section we prove Theorem \ref{MT4} and Corollary \ref{IntroConsequencesOfVentullo}. We thus assume throughout that $k$ is totally real, that $K$ is a CM-field, and that $K/k$ is a finite abelian extension with Galois group $G$.

We fix an odd prime number $p$  and write $j$ for the complex conjugation in $G$.
We denote by $K_{\infty}$ the cyclotomic $\ZZ_{p}$-extension of $K$ and for each natural number $n$ write
$K_{n}$ for the $n$-th layer of this extension.

\subsection{$\mathcal{L}$-invariants for arbitrary rank}
Let $S, T, V,r$ be as in \S \ref{secrs}. We use the convention in \S \ref{convention}, so we fix a labeling $S=\{v_0,v_1,\ldots,v_n\}$ so that $V=\{v_1,\ldots,v_r\}$.
We assume that $S$ contains all primes above $p$.
We also assume that each prime in $V$ is a prime above $p$.
Let $\log_{p}:\QQ_{p}^{\times} \rightarrow \ZZ_{p}$
be Iwasawa's logarithm, namely the logarithm normalized as
$\log_{p}(p)=0$.
Recall that we fixed a place $w_{i}$ of $K$ above $v_{i}$.
For $x \in K^{\times}$, we define
$$\Log_{{i}}(x)=-\sum_{\tau \in G} \log_{p}
(\N_{K_{w_{i}}/{\QQ_{p}}}(\tau x)) \tau^{-1} \in
\ZZ_{p}[G].$$
Using the normalized additive valuation $\ord_{w_{i}}$ of $w_{i}$, we define
$$\Ord_{{i}}(x)=\sum_{\tau \in G} \ord_{w_{i}}(\tau x) \tau^{-1} \in
\ZZ[G].$$
We define $\Log_{V}$ and $\Ord_{V}$ by
$$\Log_{V}=\bigwedge_{1\le i \le r} \Log_{{i}}:
\QQ\bigwedge^r_{G} \mathcal{O}_{K,S,T}^\times
\longrightarrow \QQ_{p}[G],$$
$$\Ord_{V}=\bigwedge_{1\le i\le r} \Ord_{{i}}:
\QQ\bigwedge^r_{G} \mathcal{O}_{K,S,T}^\times
\longrightarrow \QQ[G].$$

For any $\ZZ[G]$-module $M$, we define submodules $M^+$ and $M^-$ by setting
\[ M^{\pm} :=\{x \in M : j(x)=\pm x \}.\]
Since $p$ is odd, if $M$ is a $\ZZ_{p}[G]$-module or a $\QQ[G]$-module,
$M$ is decomposed as $M=M^{+} \oplus M^{-}$.
In this case, for $x \in M$ we write $x^{\pm}$ for the component of $x$
in $M^{\pm}$.

We consider $\mathcal{O}_{K,V,T}^{\times}$ which is a subgroup of
$\mathcal{O}_{K,S,T}^{\times}$, and
${\bigwedge}^r_{G} \mathcal{O}_{K,V,T}^\times$.
Since $(\QQ\mathcal{O}_{K,\{v\},T}^\times)^{-}$
is a free $\QQ[G]^{-}$-module of rank $1$ for each $v \in V$,
it is easy to check that
$(\QQ{\bigwedge}^r_{G} \mathcal{O}_{K,V,T}^\times)^{-}$
is a free $\QQ[G]^{-}$-module of rank $1$.
We see that for a nonzero divisor $x \in (\QQ{\bigwedge}^r_{G}
\mathcal{O}_{K,V,T}^\times)^{-}$,
$\Ord_{V}(x)$ is not a zero divisor.
We define $\mathcal{L}_{V}$ by
$$\mathcal{L}_{V}=\frac{\Log_{V}(x)}{\Ord_{V}(x)} \in \QQ_{p}[G]^-.$$
Then $\mathcal{L}_{V}$ does not depend on the choice of
$x$ since $(\QQ{\bigwedge}^r_{G}
\mathcal{O}_{K,V,T}^\times)^{-}$ is free of rank $1$
over $\QQ[G]^{-}$.
We note that if $r=1$, then this invariant coincides with the $\mathcal{L}$-invariant used by
Darmon, Dasgupta and Pollack in \cite{DDP}.

\subsection{}
We denote by $K_{\infty}/K$ the cyclotomic $\ZZ_{p}$-extension
and $K_{n}$ its $n$-th layer.
Put $\Gamma=\Gal(K_{\infty}/K)$.

For sufficiently large $n$, no place in $S$ splits completely in $K_{n}$.
We consider $\theta_{K_{n}/k,S,T}(0) \in \ZZ[\Gal(K_{n}/k)]$.
The system $\{\theta_{K_{n}/k,S,T}(0)\}_{n \gg 0}$ becomes a
projective system, and defines an element
$\theta_{K_{\infty}/k,S,T} \in \ZZ_{p}[[\Gal(K_{\infty}/k)]]$.
Since $\Gal(K_{\infty}/k) \otimes \ZZ_{p}$ is a $\ZZ_{p}$-module
of rank $1$, there is an intermediate field $F$ such that
$F_{\infty}=K_{\infty}$ and $F \cap k_{\infty}=k$, namely
$\Gal(K_{\infty}/k)$ is the product of $\Gal(F_{\infty}/F) \simeq \ZZ_{p}$
and $\Gal(F/k)$.

Let $\gamma_{F}$ be a generator of $\Gal(F_{\infty}/F)$.
By the correspondence $\gamma_{F} \leftrightarrow 1+t$, we
identify $\ZZ_{p}[[\Gal(K_{\infty}/k)]]$ with a power series ring
$\ZZ_{p}[\Gal(F/k)][[t]]$.

We regard $\theta_{K_{\infty}/k,S,T}$ as a power series in
$\ZZ_{p}[\Gal(F/k)][[t]]$, and then define a function
$L_{p, K/k, S, T}(s)$ on the $s$-plane by setting
$$L_{p, K/k, S, T}(s) = c_{K_{\infty}/K}((\theta_{K_{\infty}/k,S,T})
|_{ t=\gamma_{F} \kappa(\gamma_{F})^{s}-1})$$
for $s \in \ZZ_{p}$
where
$c_{K_{\infty}/K}: \ZZ_{p}[[\Gal(K_{\infty}/k)]] \rightarrow
\ZZ_{p}[\Gal(K/k)]$ is the restriction map and
$\kappa: \Gal(K_{\infty}/k) \rightarrow \ZZ_{p}^{\times}$
is the cyclotomic character. It is easy to check that $L_{p, K/k, S, T}(s)$ does not
depend on the choice of $F$ and $\gamma_{F}$.

This function $L_{p, K/k, S, T}(s)$ is the $p$-adic $L$-function of Deligne-Ribet.
More precisely, suppose that $\chi$ is a ($\overline \QQ_p$-valued) $1$-dimensional odd character of
the absolute Galois group of $k$.
We can decompose $\chi=\chi_{1} \chi_{2}$ such that
the field $F(\chi_{1})$ corresponding to $\chi_{1}$
(namely, $\chi_{1}$ induces a faithful character of $\Gal(F(\chi_{1})/k)$)
satisfies $F({\chi_{1}}) \cap k_{\infty} = k$ and
$\chi_{2}$ is a character of $\Gal(k_{\infty}/k)$.
Let $K$ be the field corresponding to the kernel of $\chi$.
We have $K_{\infty}=F(\chi_{1})_{\infty}$.
Consider the $\chi_{1}$-component
\[ \theta_{K_{\infty}/k,S,T}^{\chi_{1}}(t)=
\theta_{F(\chi_{1})_{\infty}/k,S,T}^{\chi_{1}}(t) \in \ZZ_{p}[\im (\chi_{1})][[t]]\]
of $\theta_{K_{\infty}/k,S,T}$.
Then the $p$-adic $L$-function of Deligne-Ribet $L_{p, S, T}(\chi^{-1}\omega, s)$
coincides with $\theta_{K_{\infty}/k,S,T}^{\chi_{1}}
(\chi_{2}(\gamma_{F})\kappa(\gamma_{F})^{s}-1)$.
In this sense, the `$\chi$-component' of $L_{p, K/k, S, T}(s)$
is the $p$-adic $L$-function of Deligne-Ribet.

The Gross-Stark conjecture predicts firstly that the order of vanishing at $s=0$ of
$L_{p, S, T}(\chi^{-1}\omega, s)$ is equal to
\[ r_\chi:=r_{\chi,S}=|\{v \in S : \chi(G_v)=1\}|.\]

It is known that the main conjecture for totally real fields proved by Wiles in \cite{Wiles} implies that
$s^{r_{\chi}}$ divides
$L_{p, S, T}(\chi^{-1}\omega, s)$.
Concerning the `leading term' at $s=0$ of $L_{p, S, T}(\chi^{-1}\omega, s)$, the Gross-Stark
conjecture then asserts the following.

\begin{conjecture} [The Gross-Stark conjecture; ${\rm GS}(K/k,S,T)$] \label{GrossStark}
We have
$$\lim_{s \rightarrow 0} L^{(r)}_{p, K/k, S, T}(s) = \mathcal{L}_{V} \theta_{K/k,S \setminus V,T}(0)$$
where $L^{(r)}_{p, K/k, S, T}(s)=s^{-r}L_{p, K/k, S, T}(s)$.
\end{conjecture}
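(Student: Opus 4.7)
The plan is to derive the Gross--Stark leading-term formula directly from the Mazur--Rubin--Sano congruence MRS applied along the cyclotomic $\ZZ_{p}$-tower $K_{\infty}/K$, combined with the classical description of the cyclotomic reciprocity map via Iwasawa's logarithm. Concretely, for each $n\ge 0$ I would apply MRS$(K_{n}/K/k,S,T,\emptyset,V)$: since the base rank is $0$, this conjecture predicts a congruence of the shape
\[
\theta_{K_{n}/k,S,T}(0) \equiv (-1)^{r}\bigl(\bigwedge_{v\in V}\mathrm{Rec}_{v}^{(n)}\bigr)(\epsilon_{K/k,S,T}^{V}) \pmod{I(H_{n})\mathcal{J}_{V}^{(n)}},
\]
where $H_{n}=\Gal(K_{n}/K)$ and $\mathrm{Rec}_{v}^{(n)}$ is the reciprocity map for the $v$-component of the $n$-th layer. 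This is the only input we need from MRS, and it is exactly what the augmentation filtration on $\ZZ_{p}[[\Gal(K_{\infty}/k)]]$ ``sees'' at order $r$.

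Next I would pass to the inverse limit $n\to\infty$ and identify its output with the higher derivatives of $L_{p,K/k,S,T}(s)$. After fixing $F\subset K$ with $F\cap k_{\infty}=k$, the system $\{\theta_{K_{n}/k,S,T}(0)\}_{n}$ assembles into $\theta_{K_{\infty}/k,S,T}\in \ZZ_{p}[\Gal(F/k)][[t]]$, and the substitution $t=\gamma_{F}\kappa(\gamma_{F})^{s}-1$ turns the $t^{r}$-part of the power series into the $r$-th $s$-derivative at $0$. Because every $v\in V$ lies above $p$ and splits completely in $K$, local class field theory for the cyclotomic tower $K_{v,\infty}/K_{v}$ identifies the composite $\varprojlim_{n}\mathrm{Rec}_{v}^{(n)}$ with $x\mapsto -\log_{p}(N_{K_{v}/\QQ_{p}}(x))$, up to the universal factor $\log_{p}(\kappa(\gamma_{F}))^{-1}$ coming from the identification $\Gal(K_{v,\infty}/K_{v})\simeq\ZZ_{p}$. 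Taking the product of these identifications along $V$, Step 1 transforms into
\[
\lim_{s\to 0}L^{(r)}_{p,K/k,S,T}(s) = \mathrm{Log}_{V}(\epsilon_{K/k,S,T}^{V})
\]
up to an explicit unit. The final step matches this with the predicted right-hand side $\mathcal{L}_{V}\cdot\theta_{K/k,S\setminus V,T}(0)$: by definition $\mathrm{Log}_{V}=\mathcal{L}_{V}\cdot\mathrm{Ord}_{V}$ on the free rank-one $\QQ[G]^{-}$-module $(\QQ\bigwedge^{r}_{G}\mathcal{O}_{K,V,T}^{\times})^{-}$, so it suffices to check $\mathrm{Ord}_{V}(\epsilon_{K/k,S,T}^{V})=\theta_{K/k,S\setminus V,T}(0)$ (up to the same archimedean constant $\prod_{v\in V}\log \mathrm{N}v$). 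This identity is immediate from the defining regulator formula (\ref{RSE}) combined with the Euler-factor splitting $\theta_{K/k,S,T}(s)=\theta_{K/k,S\setminus V,T}(s)\prod_{v\in V}(1-\mathrm{N}v^{-s})$ and a computation of its $r$-th derivative at $s=0$.

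The main obstacle is that MRS itself is unproven in the generality needed, so any unconditional attack has to proceed either by directly verifying MRS for the cyclotomic tower (this is essentially what is achieved in rank one by \cite{DDP} and \cite{ventullo} under their hypotheses) or by independently establishing ${\rm LTC}(K_{n}/k)$ and then invoking Theorem \ref{MT4}. A second, more technical, obstacle is matching the numerous normalizations: the sign conventions on $\mathrm{Rec}$, the choice of generator $\gamma_{F}$ and the factor $\log_{p}(\kappa(\gamma_{F}))$, Iwasawa's convention $\log_{p}(p)=0$, the $r$-fold wedge conventions, and the passage from the congruence modulo $I(H_{n})\mathcal{J}_{V}^{(n)}$ in Step 1 to a strict equality in the limit after division by a power of $t$. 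These normalization issues are largely absorbed by the $\mathcal{L}_{V}$-invariant being defined as a \emph{ratio} $\mathrm{Log}_{V}/\mathrm{Ord}_{V}$, but the bookkeeping in the $r>1$ case --- where higher-order terms of the power series contribute through a determinantal expansion of wedges of $\mathrm{Rec}$'s --- is the principal technical heart of the argument.
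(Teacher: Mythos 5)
Your proposal follows essentially the same route the paper takes in the proof of Proposition \ref{MRStoGS}: apply ${\rm MRS}(K_n/K/k,S,T)$ along the cyclotomic tower, pass to the limit in $\Lambda_{K_\infty}/(t_K^{r+1})$, use local class field theory to identify $\Rec_i$ with $\Log_i\cdot t_K\cdot(\log_p\kappa(\gamma_K))^{-1}$, and close with Remark \ref{Rem71} via $\Ord_V(\epsilon^V_{K/k,S,T})=\theta_{K/k,S\setminus V,T}(0)$. One small imprecision worth fixing: that last equality is exact, being Rubin's \cite[Prop. 5.2]{R} as invoked in Remark \ref{Rem71}, rather than an identity holding only ``up to the archimedean constant $\prod_{v\in V}\log\mathrm{N}v$'' --- the factor $\log\mathrm{N}v$ arising from the Dirichlet regulator at each $v\in V$ cancels exactly against the factor produced by differentiating the Euler factor $(1-\mathrm{N}v^{-s})$ at $s=0$, so no extra unit survives.
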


\begin{remark} \label{Rem71}
\begin{rm}
Let $\epsilon_{K/k,S,T}^V \in \bigcap_G^r \mathcal{O}_{K,S,T}^\times$ be
the Rubin-Stark element.
For a character $\chi$  of $G$, consider the $\chi$-component of
the above conjecture.
If $r<r_{\chi}$,
the both sides of the $\chi$-component of the above equation are zero.
So we assume that $r=r_{\chi}$.
Then the $\chi$-component $(\QQ\mathcal{O}_{K,V,T}^\times )^{\chi}$
coincides with the $\chi$-component
$(\QQ \mathcal{O}_{K,S,T}^\times )^{\chi}$,
so the $\chi$-component $\epsilon_{K/k,S,T}^{V,\chi}$ of
$\epsilon_{K/k,S,T}^V$ is in
$(\bigcap_G^r \mathcal{O}_{K,V,T}^\times)^{\chi}$ if we assume
the Rubin-Stark conjecture.
We know by \cite[Prop. 5.2]{R} that
$$\Ord_{V}(\epsilon_{K/k,S,T}^V)=\theta_{K/k,S \setminus V,T}(0).$$
Therefore, it follows from the definition of $ \mathcal{L}_{V}$ that
Conjecture \ref{GrossStark} is equivalent to
$$\Log_{V}(\epsilon_{K/k,S,T}^V)=
\lim_{s \rightarrow 0} L^{(r)}_{p, K/k, S, T}(s)$$
if we assume Conjecture \ref{rsconj}.
\end{rm}
\end{remark}

In the following, we consider the minus part of the $p$-part of our
conjectures.

By the minus part of the $p$-part of the leading term conjecture
${\rm LTC}(K/k)$ we mean the equality
\[ (\ZZ_{p}[G] z_{K/k,S,T})^- ={\det}_{\ZZ_p[G]}(D_{K,S,T}^\bullet \otimes \ZZ_{p})^-\]
and we denote it by ${\rm LTC}(K/k)_{p}^-$.

In the same way, we write ${\rm MRS}(K/L/k,S,T)_{p}^-$
to mean that $\mathcal{N}_H(\epsilon_{K/k,S,T}^V)^-$ belongs to $(\im (\nu) \otimes \ZZ_{p})^-$
and that the equality of
Conjecture \ref{mrsconj} is valid after projection to the group
$(\bigcap_{G}^r \mathcal{O}_{K,S,T}^\times \otimes_{\ZZ} \ZZ[H]/I(H)J_{W}
\otimes \ZZ_{p})^-$.

\begin{proposition} \label{MRStoGS}
Let $K_{\infty}/K$ be the cyclotomic $\ZZ_{p}$-extension and
$K_{n}$ the $n$-th layer.
If the conjecture ${\rm MRS}(K_{n}/K/k,S,T)^-_p$ is valid for
all sufficiently large $n$, then the conjecture
${\rm GS}(K/k,S,T)$ is also valid.
\end{proposition}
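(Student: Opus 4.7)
The plan is to apply ${\rm MRS}(K_n/K/k,S,T)^-_p$ at each finite level of the cyclotomic $\ZZ_p$-tower with the minimal choice of splitting sets, to assemble the resulting identities into a single statement in the completed Iwasawa algebra, and then to read off the Gross--Stark formula by expanding in $t = \gamma - 1$ and substituting $t = \kappa(\gamma)^s - 1$. Since the $p$-adic places in $V$ are totally ramified in $K_\infty/K$, no place in $S$ splits completely in $K_n$ for $n$ large, so we apply MRS for $K_n/K/k$ with top splitting set $V_{\rm top} = \emptyset$ and bottom splitting set $V'_{\rm top} = V$; this gives $r_{\rm top} = 0$, $W = V$, and the top Rubin--Stark element is simply $\theta_{K_n/k,S,T}(0)$. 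Writing $H_n := \Gal(K_n/K)$ and $\gamma$ for a compatible topological generator of $\Gal(K_\infty/K)$, total ramification forces $J_V = I(H_n)^r$, and the MRS identity reads
\[
\mathcal{N}_{H_n}(\theta_{K_n/k,S,T}(0)^-) \;=\; \nu\bigl(\Rec_V(\epsilon_{K/k,S,T}^{V,-})\bigr)
\]
in the minus part of $(\mathcal{J}_V)_{H_n}\otimes \ZZ_p \cong \ZZ_p[G/H_n]\otimes I(H_n)^r/I(H_n)^{r+1}$.

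Passing to the inverse limit over $n$, the module $\varprojlim_n I(H_n)^r/I(H_n)^{r+1}$ is canonically identified with the $r$-th graded piece $\ZZ_p\cdot t^r$ of the augmentation filtration on $\ZZ_p[[\Gal(K_\infty/K)]] \cong \ZZ_p[[t]]$, and the system $\{\mathcal{N}_{H_n}(\theta_{K_n/k,S,T}(0)^-)\}_n$ extracts precisely the coefficient of $t^r$ in the Iwasawa power series $\theta_{K_\infty/k,S,T}^-$ (after projection via $c_{K_\infty/K}$ to $\ZZ_p[G]^-$). For the right-hand side, local class field theory, with the standard normalization $\kappa(\mathrm{rec}_{w_i}(x)) = \N_{k_{v_i}/\QQ_p}(x)^{-1}$ for $x \in K_{w_i}^\times = k_{v_i}^\times$, gives in $\ZZ_p[[t]]$ the expansion
\[
\mathrm{rec}_{w_i}(x) - 1 \;=\; -\frac{\log_p(\N_{k_{v_i}/\QQ_p}(x))}{\log_p(\kappa(\gamma))}\cdot t \;+\; O(t^2),
\]
which summed over $\tau \in \Gal(K/k)$ translates into $\Rec_i(x) = \log_p(\kappa(\gamma))^{-1}\,\Log_i(x)\cdot t + O(t^2)$. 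The Leibniz-type formula of Proposition \ref{propformula} then yields
\[
\Rec_V(\epsilon_{K/k,S,T}^{V,-}) \;=\; \log_p(\kappa(\gamma))^{-r}\,\Log_V(\epsilon_{K/k,S,T}^{V,-})\cdot t^r \;+\; O(t^{r+1}),
\]
and substituting $t = \kappa(\gamma_F)^s - 1 = s\log_p(\kappa(\gamma)) + O(s^2)$ in the definition of $L_{p,K/k,S,T}(s)$ converts the equality of $t^r$-coefficients into
\[
\lim_{s\to 0} s^{-r}L_{p,K/k,S,T}(s) \;=\; \Log_V(\epsilon_{K/k,S,T}^V),
\]
which, in view of Rubin's formula $\Ord_V(\epsilon_{K/k,S,T}^V) = \theta_{K/k,S\setminus V,T}(0)$ and Remark \ref{Rem71}, is precisely Conjecture \ref{GrossStark}.

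The principal obstacle will be the second step: rigorously verifying that $\mathcal{N}_{H_n}$ and $\nu$, whose targets depend on $n$, do interpolate coherently so as to extract the $t^r$-component of $\theta_{K_\infty/k,S,T}^-$, and that this extraction is compatible with $c_{K_\infty/K}$. This will require a careful comparison of the isomorphism $(\mathcal{J}_V)_{H_n} \simeq \ZZ[G/H_n]\otimes (J_V)_{H_n}$ of Proposition \ref{propisom} with the canonical identification $I(H_n)^r/I(H_n)^{r+1} \simeq H_n^{\otimes r}$, together with its compatibility as $n \to \infty$. Standard sign-tracking in the local reciprocity law and a character-by-character reduction via Remark \ref{Rem71} should then close the argument.
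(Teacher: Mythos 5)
Your argument is correct and follows essentially the same route as the paper's proof: apply ${\rm MRS}(K_n/K/k,S,T)^-_p$ at each layer with top splitting set empty and bottom set $V$, pass to the inverse limit in the Iwasawa algebra, convert the leading $t$-coefficient of $\Rec_V$ into $\Log_V$ via the identity $\kappa(\mathrm{rec}_{w_i}(x)) = \N_{K_{w_i}/\QQ_p}(x)^{-1}$, substitute into the $p$-adic $L$-series, and conclude via Rubin's formula $\Ord_V(\epsilon^V_{K/k,S,T})=\theta_{K/k,S\setminus V,T}(0)$ and Remark~\ref{Rem71}. The paper spells out the auxiliary field $F$ and the change of variable $t_K = g(t)$ that you gloss over, but the structure and all key observations coincide with yours.
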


\begin{proof}
For sufficiently large $n$, no place in $S$ splits completely, so
$r$ for $K_{n}/k$ should be zero and
$\epsilon_{K_{n}/k,S,T}^\emptyset = \theta_{K_{n}/k,S,T}(0)$.
We put $\Gamma=\Gal(K_{\infty}/K)$, and take a generator $\gamma_{K}$
of $\Gal(K_{\infty}/K)$.
We also put $\Lambda_{K_{\infty}}=\ZZ_{p}[[\Gal(K_{\infty}/k)]]$, and
$t_{K}=\gamma_{K}-1 \in \Lambda_{K_{\infty}}$.

Then ${\rm MRS}(K_{n}/K/k,S,T)^-_p$ implies that
$\theta_{K_{n}/k,S,T}(0)$ belongs to $t_{K}^{r} \ZZ_{p}[\Gal(K_{n}/k)]$ and satisfies
\[ \theta_{K_{n}/k,S,T}(0)= \Rec_V(\epsilon_{K/k,S,T}^{V}) \in \ZZ_{p}[\Gal(K_{n}/k)]/t_{K}^{r+1}\ZZ_{p}[\Gal(K_{n}/k)].\]
Taking the limit of the above equality, we obtain
$$\theta_{K_{\infty}/k,S,T}=
\Rec_V(\epsilon_{K/k,S,T}^{V}) \ \ \mbox{in} \ \
\Lambda_{K_{\infty}}/(t_{K}^{r+1}).$$
Put $G=\Gal(K/k)$.
For $v_{i} \in V$, let
\[ \Rec_{i}:\mathcal{O}_{K,S,T}^{\times}
\rightarrow t_{K}\Lambda_{K_{\infty}}/t_{K}^{2}\Lambda_{K_{\infty}}\]
be the map
$\Rec_{i}(a)=\sum_{\tau \in G}\tau^{-1}(\rec_{w_{i}}(\tau a)-1)$
defined in \S \ref{secmrs} where
$w_{i}$ is the fixed place of $K$ above $v_{i}$.
For an element $\sigma=\gamma_{K}^{c} \in \Gamma$ with $c \in \ZZ_{p}$, we
define $\log_{\gamma_{K}}(\sigma)=c$.
We have
\begin{eqnarray*}
\Rec_{i}(a) &=& \sum_{\tau \in G}\tau^{-1}
\log_{\gamma_{K}}(\rec_{w_{i}}(\tau a))t_{K} \ {\rm mod} \ t_{K}^{2}\\
&=& \sum_{\tau \in G}\tau^{-1}
\log_{p}\kappa(\rec_{w_{i}}(\tau a))t_{K}(\log_{p}\kappa(\gamma_{K}))^{-1}
\ {\rm mod} \ t_{K}^{2}
\end{eqnarray*}
where $\kappa: \Gal(K_{\infty}/k) \rightarrow \ZZ_{p}^{\times}$
is the cyclotomic character as above.

By local class field theory, we know that
$\kappa(\rec_{w_{i}}(x))=\N_{K_{w_{i}}/{\QQ_{p}}}(x)^{-1}$ for any
$x \in K_{w_{i}}^{\times}$.
Therefore, $$\Rec_{i}(a)=-\sum_{\tau \in G}\tau^{-1}
\log_{p}\N_{K_{w_{i}}/{\QQ_{p}}}(\tau a)t_{K}(\log_{p}\kappa(\gamma_{K}))^{-1}
=\Log_{i}(a)t_{K}(\log_{p}\kappa(\gamma_{K}))^{-1}.$$
This implies that
$$\lim_{t_{K} \rightarrow 0} t_K^{-r}\theta_{K_{\infty}/k,S,T}
=\lim_{t_{K} \rightarrow 0} t_K^{-r}\Rec_V(\epsilon_{K/k,S,T}^{V})
=\frac{\Log_{V}(\epsilon_{K/k,S,T}^{V})}{(\log_{p}\kappa(\gamma_{K}))^{r}}
 \in \ZZ_{p}[G].$$

We take $F$ and $t$ in the same manner when we
defined $L_{p,K/k,S,T}(s)$,
and write $t_{K}=\gamma_{K}-1=g(t) \in \Lambda_{K_{\infty}}=\ZZ_{p}[\Gal(F/k)][[t]]$.

Suppose that $K_{i}=F_{n}$ for some $n$ and $i$.
Then we can take $\gamma_{F}^{p^{n}}=\gamma_{K}^{p^{i}}$.
Therefore, there is $\sigma \in \Gal(K_{\infty}/k)$ with finite order
such that $\gamma_{K}=\sigma \gamma_{F}^{p^{n-i}}$.
Thus, we have $g(t)=\sigma(1+t)^{p^{n-i}}-1$ and
$g(t_{F}\kappa(\gamma_{F})^{s}-1)=\gamma_{K} \kappa(\gamma_{K})^{s}-1$.
We regard $t_K^{-r}\theta_{K_{\infty}/k,S,T}$ as an element of
$\ZZ_{p}[\Gal(F/k)][[t]]$, then we have
$$c_{K_{\infty}/K}((t_K^{-r}\theta_{K_{\infty}/k,S,T})|_{
t=\gamma_{F}\kappa(\gamma_{F})^{s}-1})=
\frac{L_{p, K/k, S, T}(s)}
{(\kappa(\gamma_{K})^{s}-1)^{r}}.$$
Therefore, we obtain
$$
\lim_{t_{K} \rightarrow 0} t_K^{-r}\theta_{K_{\infty}/k,S,T}
=\lim_{s \rightarrow 0} \frac{L_{p, K/k, S, T}(s)}
{(\kappa(\gamma_{K})^{s}-1)^{r}}
=\frac{L^{(r)}_{p, K/k, S, T}(0)}{(\log_{p}\kappa(\gamma_{K}))^{r}}.$$
Comparing the above two equations, we have
$$L^{(r)}_{p, K/k, S, T}(0)=\Log_{V}(\epsilon_{K/k,S,T}^V).$$
By Remark \ref{Rem71}, we get the conclusion.
\end{proof}

We can now prove both Theorem \ref{MT4} and Corollary \ref{IntroConsequencesOfVentullo}.

\subsection{The proof of Theorem \ref{MT4}} The fact that claim (i) implies claim (ii) follows directly from Theorem \ref{ltcmrs}.

We next recall (from Remark \ref{remltc}) that the validity of ${\rm LTC}(K_{n}/k)^-_p$ implies that of ${\rm LTC}(F/k)^-_p$ for
each intermediate field $F$ of $K_{n}/k$. The fact that claim (ii) implies claim (iii) therefore follows from
Proposition \ref{MRStoGS}.

Finally, the fact that claim (iii) implies claim (i) was shown by the first author in \cite[Cor. 3.9]{burns2} (and see \cite{aze2} for a more general result in this direction).

This completes the proof of Theorem \ref{MT4}.

\subsection{The proof of Corollary \ref{IntroConsequencesOfVentullo}} Under the hypothesis that at most one $p$-adic place of $k$ splits in $K/K^+$, Gross has used Brumer's $p$-adic analogue of Baker's Theorem to prove that the order of vanishing at $s=0$ of
$L_{p, S, T}(\chi^{-1}\omega, s)$ is equal to the predicted value $|\{v \in S : \chi(G_v)=1\}|$ (see \cite[Prop. 2.13]{Gp}).

In addition, under the same hypothesis on $K/K^+$ Ventullo has recently proved the validity of the conjecture ${\rm GS}(K/k,S,T)$ in \cite{ventullo}.

Given these facts, the result of Corollary \ref{IntroConsequencesOfVentullo} follows directly from Theorem \ref{MT4}.

\section{Higher Fitting ideals of character components of class groups} \label{HigherFittChar}

In this section, as an application of
Theorem \ref{ThRelativeHigherFitt}, we study the higher Fitting ideals of
character components of class groups.

\subsection{General abelian extensions}
We suppose that $K/k$ is a finite abelian extension as in \S \ref{HigherFittH1}.
We take and fix an odd prime $p$ in this section.
We put $A_{S}^{T}(K)={\rm Cl}_{S}^{T}(K) \otimes \ZZ_{p}$,
$A^{T}(K)={\rm Cl}^T(K) \otimes \ZZ_{p}$, and
$A(K)={\rm Cl}(K) \otimes \ZZ_{p}$.

We take a character $\chi$ of $G=\Gal(K/k)$.
Throughout this section, we assume that {\it the order of $\chi$ is prime to $p$}.

We decompose $G=\Delta_{K} \times \Gamma_{K}$ where $| \Delta_{K}|$ is prime
to $p$ and $\Gamma_{K}$ is a $p$-group.
By our assumption $\chi$ is regarded as a character of $\Delta_{K}$.
For any $\ZZ_{p}[\Delta_{K}]$-module $M$, we define the $\chi$-component $M^{\chi}$ by setting
\[ M^{\chi} :=M \otimes_{\ZZ_{p}[\Delta_{K}]} \mathcal{O}_{\chi}\]
where
$\mathcal{O}_{\chi}=\ZZ_{p}[\im (\chi)]$ on which $\Delta_{K}$ acts via $\chi$.
This is an exact functor from the category of $\ZZ_{p}[\Delta_{K}]$-modules to
that of $\mathcal{O}_{\chi}$-modules.

Let $k_{\chi}$ be the subfield of $K$ corresponding to the kernel of $\chi$,
namely, $\chi$ induces a faithful character of $\Gal(k_{\chi}/k)$.
Put $K_{(\Delta)}:=K^{\Gamma_{K}}$, then $k_{\chi} \subset K_{(\Delta)}$.
We also put
$\Delta_{K,\chi}:=\Gal(K_{(\Delta)}/k_{\chi})$ which is a subgroup
of $\Delta_{K}$.
We consider $K(\chi):=K^{\Delta_{K,\chi}}$, then
$\Gal(K(\chi)/k_{\chi})=\Gamma_{K}$.
We consider $A_{S}^{T}(K)^{\chi}$ which we regard as an
$\mathcal{O}_{\chi}[\Gamma_{K}]$-module.
By the standard norm argument, we know the natural map
$A_{S}^{T}(K(\chi))^{\chi} \to
A_{S}^{T}(K)^{\chi}$ is bijective, so when we consider
the $\chi$-component $A_{S}^{T}(K)^{\chi}$, we may assume that
$\chi$ is a faithful character of $\Delta_K$ by replacing $K$ with $K(\chi)$.
In the following, we assume this.
We write $\chi(v) \neq 1$ if the decomposition group of $\Delta_{K}$ at $v$ is
non-trivial.

We denote the $\chi$-component of $\epsilon_{K/k,S,T}^{V}$
by $\epsilon_{K/k,S,T}^{V, \chi} \in
((\bigcap_{G}^r
\mathcal{O}_{K,S,T}^\times) \otimes \ZZ_{p})^{\chi}$.
Let $\mathcal{V}_{i}$ be the set as in Theorem \ref{MT2}(ii)
for $i \geq 0$.

Finally we assume that the following condition is satisfied
\begin{itemize}
\item[$(*)$] any ramifying place $v$ of $k$ in $K$ does not split completely in
$K_{(\Delta)}$.
\end{itemize}

\begin{theorem} \label{ThCharacterComponent1}
Let $r$ be the number of the infinite places of $k$ that split completely in $K$.
We assume that $\chi \neq 1$ is a faithful character of $\Delta_{K}$, and
consider the $\chi$-component of the class group $A^{T}(K)^{\chi}$ which
is an $\mathcal{O}_{\chi}[\Gamma_{K}]$-module.
We assume that the $\chi$-component of ${\rm LTC}(K/k)$ is valid and that the condition $({{*}})$ is satisfied.

Then for any non-negative integer $i$ one has an equality
$$\Fitt_{\mathcal{O}_{\chi}[\Gamma_{K}]}^{i}(A^T(K)^{\chi})
=
\{  \Phi(\epsilon_{K/k,S \cup V',T}^{V \cup V', \chi})  :  V' \in \mathcal{V}_{i} \
\mbox{and} \
\Phi \in \bigwedge_{\mathcal{O}_\chi[\Gamma_K]}^{r+i} \mathcal{H}_{\chi} \}$$
where $S=S_{\infty}(k) \cup S_{\rm ram}(K/k)$ and $\mathcal{H}_{\chi}=
\Hom_{\mathcal{O}_{\chi}[\Gamma_{K}]}((\mathcal{O}_{K,S \cup V',T}^\times
\otimes \ZZ_{p})^{\chi},\mathcal{O}_{\chi}[\Gamma_{K}])$.
\end{theorem}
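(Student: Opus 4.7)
The plan is to deduce the stated equality by taking $\chi$-components of the exact sequence (\ref{dual ses}) and applying the $\chi$-level analogue of Theorem \ref{ThRelativeHigherFitt}.

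First I would apply $-\otimes_{\ZZ_p[\Delta_K]}\mathcal{O}_\chi$ to the $p$-completion of (\ref{dual ses}) to obtain the short exact sequence of $\mathcal{O}_\chi[\Gamma_K]$-modules
\[ 0 \longrightarrow A_S^T(K)^\chi \longrightarrow \mathcal{S}^{{\rm tr}}_{S,T}(\GG_{m/K})^\chi \longrightarrow X_{K,S}^\chi \longrightarrow 0.\]
Since $\chi\neq 1$, the natural inclusion $X_{K,S}\hookrightarrow Y_{K,S}$ becomes an isomorphism on $\chi$-parts. The decomposition $Y_{K,S}=\bigoplus_{v\in S}\ZZ[G/G_v]$, combined with the faithfulness of $\chi$ and condition $(*)$ (which forces $\chi(v)\neq 1$ for every $v\in S_{\rm ram}(K/k)$), shows that the only surviving summands are those with $v\in V$, each having trivial decomposition group in $G$. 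Hence $X_{K,S}^\chi$ is a free $\mathcal{O}_\chi[\Gamma_K]$-module of rank $r$. The same vanishing of $Y_{K,S_{\rm ram}}^\chi$ identifies $A^T(K)^\chi$ with $A_S^T(K)^\chi$, and hence with the torsion submodule of $\mathcal{S}^{{\rm tr}}_{S,T}(\GG_{m/K})^\chi$, via the canonical surjection $A^T(K)\to A_S^T(K)$.

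Next, applying Lemma \ref{LemmaRelFitt2}(iii) to the preceding sequence (taking $N$ to be the torsion submodule) gives
\[ \Fitt^{(r,i)}_{\mathcal{O}_\chi[\Gamma_K]}(\mathcal{S}^{{\rm tr}}_{S,T}(\GG_{m/K})^\chi) = \Fitt^{(0,i)}_{\mathcal{O}_\chi[\Gamma_K]}(A^T(K)^\chi, A^T(K)^\chi).\]
A direct Laplace-expansion argument, combined with the alternative description of relative Fitting ideals furnished by Lemma \ref{LemmaRelFitt}, shows that the right-hand side coincides with the ordinary $i$-th Fitting ideal $\Fitt^i_{\mathcal{O}_\chi[\Gamma_K]}(A^T(K)^\chi)$, since summing $\Fitt^0(A^T(K)^\chi/X)$ over all $i$-generated submodules $X$ recovers precisely the ideal of $(n-i)\times(n-i)$-minors of any fixed relation matrix.

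It then remains to establish the $\chi$-component of Theorem \ref{ThRelativeHigherFitt}. That theorem rests on Theorem \ref{ltcfit}, whose proof invokes LTC only to deduce freeness of the projective module $P$ in the representing sequence (\ref{tateseq}); this step adapts verbatim under the assumed validity of the $\chi$-component of ${\rm LTC}(K/k)$, and the subsequent matrix-theoretic computation of $\Fitt^{r+i}(\mathcal{S}^{{\rm tr}}_{S\cup V',T}(\GG_{m/K}))^\chi$ from $\epsilon_{K/k,S\cup V',T}^{V\cup V',\chi}$ proceeds identically at the $\chi$-level. Combined with the exact-functor compatibility of Fitting ideals and of the Hom modules $\Hom_G(\mathcal{O}_{K,S\cup V',T}^\times,\ZZ[G])$ with $-\otimes_{\ZZ_p[\Delta_K]}\mathcal{O}_\chi$ (under which $\bigwedge_G^{r+i}\Hom_G(\mathcal{O}_{K,S\cup V',T}^\times,\ZZ[G])$ gets identified with $\bigwedge_{\mathcal{O}_\chi[\Gamma_K]}^{r+i}\mathcal{H}_\chi$), this yields the desired equality. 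The main point to check carefully is the compatibility of the \emph{relative} Fitting ideal with the $\chi$-component functor; this reduces to lifting $i$-generated submodules of $A^T(K)^\chi$ to $\Delta_K$-isotypic submodules of $A^T(K)\otimes\ZZ_p$ via the decomposition $\ZZ_p[\Delta_K]=\prod_\psi\mathcal{O}_\psi$, and requires careful bookkeeping with Lemma \ref{LemmaRelFitt} rather than any essentially new idea.
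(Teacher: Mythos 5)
Your proposal follows exactly the route of the paper's own (very terse) proof: identify $(X_{K,S}\otimes\ZZ_p)^\chi$ as a free $\mathcal{O}_\chi[\Gamma_K]$-module of rank $r$ via condition $(*)$ and faithfulness of $\chi$, reduce the relative Fitting ideal $\Fitt^{(r,i)}$ to the ordinary $\Fitt^i$ of the torsion submodule $A^T(K)^\chi=A_S^T(K)^\chi$ using Lemmas \ref{LemmaRelFitt} and \ref{LemmaRelFitt2}, and then invoke the $\chi$-component of Theorem \ref{ThRelativeHigherFitt}. Your additional care about the compatibility of (relative) Fitting ideals and the Rubin-Stark formalism with the exact functor $-\otimes_{\ZZ_p[\Delta_K]}\mathcal{O}_\chi$ is a reasonable elaboration of a step the paper leaves implicit, but it introduces no new idea and involves no deviation from the published argument.
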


\begin{proof}
Since $v \in S_{\rm ram}(K/k)$ does not split completely in $K_{(\Delta)}$, one has
$\chi(v) \neq 1$ and hence $(Y_{K,S_{\rm ram}}\otimes \ZZ_{p})^{\chi}=0$.
As $\chi \neq 1$, we therefore also have $(X_{K,S_{\rm ram}}\otimes \ZZ_{p})^{\chi}
=(Y_{K,S_{\rm ram}}\otimes \ZZ_{p})^{\chi}=0$.
Hence $(X_{K,S} \otimes \ZZ_{p})^{\chi} =
(Y_{K,S_{\infty}}\otimes \ZZ_{p})^{\chi}$ is isomorphic to $\mathcal{O}_{\chi}[\Gamma_{K}]^{\oplus r}$.
This implies that
\[ \Fitt_{\mathcal{O}_{\chi}[\Gamma_{K}]}^{(r,i)}
((\mathcal{S}^{{\rm tr}}_{S,T}(\GG_{m/K}) \otimes \ZZ_{p})^{\chi}), A_S^T(K)^{\chi})
=\Fitt_{\mathcal{O}_{\chi}[\Gamma_{K}]}^{i}(A_S^T(K)^{\chi})\]
and so the claim follows from Theorem \ref{ThRelativeHigherFitt}. \end{proof}

In the case $K=k_{\chi}$, the condition $({{*}})$ is automatically satisfied.
We denote the group $A^T(k_{\chi})^{\chi}$ by $(A^T)^{\chi}$, which is determined only
by $\chi$.

\begin{corollary} \label{CorCharacterComponent1}
Let $\chi$ be a non-trivial linear character of $k$ of order prime to $p$,
and $r$ the number of the archimedean places of $k$ that split completely
in $k_{\chi}$.
We assume the $\chi$-component of ${\rm LTC}(k_{\chi}/k)$ to be valid.
Then for any non-negative integer $i$ one has an equality
$$\Fitt_{\mathcal{O}_{\chi}}^{i}((A^T)^{\chi})
=
\{  \Phi(\epsilon_{k_{\chi}/k,S \cup V',T}^{V \cup V', \chi})  :  V' \in \mathcal{V}_{i} \
\mbox{and} \
\Phi \in \bigwedge_{\mathcal{O}_\chi}^{r+i} \mathcal{H}_{\chi} \}$$
where $S=S_{\infty}(k) \cup S_{\rm ram}(k_\chi/k)$ and
$\mathcal{H}_{\chi}=\Hom_{\mathcal{O}_{\chi}}
((\mathcal{O}_{k_{\chi},S \cup V',T}^\times \otimes \ZZ_{p})^{\chi},
\mathcal{O}_{\chi})$.
\end{corollary}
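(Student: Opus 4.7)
The plan is to derive this corollary as a direct specialization of Theorem \ref{ThCharacterComponent1} to the case $K = k_\chi$; essentially no new work is required, only verification that each hypothesis of the theorem is either automatic in this case or is hypothesized here.

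First I would observe that, because $\chi$ is by definition a faithful character of $\Gal(k_\chi/k)$ and has order prime to $p$, the Galois group $G = \Gal(k_\chi/k)$ itself has order prime to $p$. Consequently, in the canonical decomposition $G = \Delta_K \times \Gamma_K$ employed in \S \ref{HigherFittH1}, the $p$-part $\Gamma_K$ is trivial, so that $\Delta_K = G$ and $\chi$ is a faithful character of $\Delta_K$. In particular $\mathcal{O}_\chi[\Gamma_K]$ collapses to $\mathcal{O}_\chi$, which identifies the $i$-th Fitting ideal appearing in the conclusion of Theorem \ref{ThCharacterComponent1} with that appearing in the statement of the corollary, and one has $A^T(k_\chi)^\chi = (A^T)^\chi$.

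Next I would verify that the condition $({*})$ required by Theorem \ref{ThCharacterComponent1} holds automatically here. Since $\Gamma_K = 1$ we have $K_{(\Delta)} = K^{\Gamma_K} = K$, and any place $v$ of $k$ ramifying in $K$ has non-trivial decomposition subgroup in $\Gal(K/k)$ (it contains the non-trivial inertia group) and hence certainly does not split completely in $K_{(\Delta)} = K$.

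Finally, the only non-trivial hypothesis of Theorem \ref{ThCharacterComponent1}, namely the validity of the $\chi$-component of $\mathrm{LTC}(k_\chi/k)$, is assumed in the statement of the corollary. Applying the theorem to $K = k_\chi$ then yields the displayed equality verbatim. There is no real obstacle to this argument: the corollary is simply the special case in which $\chi$ is faithful on the whole Galois group $G$, for which the reduction-to-$K(\chi)$ step in the general theorem is vacuous.
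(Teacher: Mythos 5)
Your proposal is correct and matches the paper's own treatment: the corollary is obtained exactly as the specialization of Theorem \ref{ThCharacterComponent1} to $K=k_{\chi}$, where faithfulness and the prime-to-$p$ order of $\chi$ force $\Gamma_{K}=1$ (so $\mathcal{O}_{\chi}[\Gamma_{K}]=\mathcal{O}_{\chi}$ and $K_{(\Delta)}=K$), the condition $({*})$ holds automatically since a ramified place has non-trivial decomposition group, and the remaining hypothesis is the assumed $\chi$-component of ${\rm LTC}(k_{\chi}/k)$. No gap.
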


\subsection{The order of a character component of CM abelian extensions}
 In this subsection, we assume that $k$ is totally real, $K$ is a CM-field,
and $\chi$ is an odd character. In this case, we can compute
the right hand side of Theorem \ref{ThCharacterComponent1} more explicitly.
First of all, note that $r=0$ in this case.

We first consider the case $K=k_{\chi}$ and $i=0$.
When $S=S_{\infty}(k) \cup S_{\rm ram}(k_\chi/k)$, we denote the $L$-function
$L_{k,S,T}(\chi^{-1},s)$ by $L_{k}^T(\chi^{-1},s)$.
When $T$ is empty, we denote
$L_{k}^T(\chi^{-1},s)$ by $L_{k}(\chi^{-1},s)$.
In this case, we know
$$\epsilon_{k_{\chi}/k,S,T}^{\emptyset, \chi}=\theta_{k_{\chi}/k,S,T}(0)^{\chi}
=L_{k}^T(\chi^{-1},0)$$
(see \S \ref{secmrs}).
Therefore, Corollary \ref{CorCharacterComponent1} with $i=0$ implies

\begin{corollary} \label{CorCharacterComponent2}
Let $k$ be totally real, and $\chi$ a one dimensional odd character of $k$
of order prime to $p$.
We assume the $\chi$-component of ${\rm LTC}(k_{\chi}/k)$ to be valid.
\begin{itemize}
\item[(i)]{ One has
$| (A^T)^{\chi}| = |\mathcal{O}_{\chi}/L_{k}^T(\chi^{-1},0)|$. }
\item[(ii)]{ Let ${\rm Cl}(k_{\chi})$ be the ideal class group of $k_{\chi}$,
$A(k_{\chi})={\rm Cl}(k_{\chi}) \otimes \ZZ_{p}$, and $A^{\chi}=A(k_{\chi})^{\chi}$.
We denote by
$\omega$ the Teichm\"{u}ller character giving the Galois
action on $\mu_{p}$, the group of $p$-th roots of unity,
and by $\mu_{p^{\infty}}(k(\mu_{p}))$
the group of roots of unity of $p$-power order in $k(\mu_{p})$.
Then one has
$$|A^{\chi} |=
\left\{
\begin{array}{ll}
|\mathcal{O}_{\chi}/L_{k}(\chi^{-1},0)|  & \mbox{if} \ \chi \neq \omega, \\
|\mathcal{O}_{\chi}/(|\mu_{p^{\infty}}(k(\mu_{p}))|L_{k}(\chi^{-1},0))| &
\mbox{if} \ \chi=\omega.
\end{array}\right.
$$}
\end{itemize}
\end{corollary}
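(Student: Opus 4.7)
The plan is to deduce both assertions from Corollary \ref{CorCharacterComponent1} applied with $V=\emptyset$, $i=0$ and $r=0$ (the last because $k_\chi$ is CM over the totally real field $k$, so no archimedean place of $k$ splits completely in $k_\chi$). With these data $\mathcal{V}_0 = \{\emptyset\}$ and $\bigwedge^0_{\mathcal{O}_\chi}\mathcal{H}_\chi = \mathcal{O}_\chi$, so recalling that $\epsilon_{k_\chi/k,S,T}^{\emptyset,\chi} = \theta_{k_\chi/k,S,T}(0)^\chi = L_k^T(\chi^{-1},0)$, Corollary \ref{CorCharacterComponent1} gives
\[
\Fitt^0_{\mathcal{O}_\chi}((A^T)^\chi) = L_k^T(\chi^{-1},0)\,\mathcal{O}_\chi.
\]
Since $\chi$ has order prime to $p$, $\mathcal{O}_\chi$ is the integer ring of an unramified extension of $\QQ_p$, and hence a discrete valuation ring; as the zeroth Fitting ideal of a finite torsion module over such a ring has index equal to the order of the module, assertion (i) follows at once.

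For (ii) I would pass from $A^T(k_\chi)$ to $A(k_\chi)$ using the standard exact sequence
\[
0 \to \mathcal{O}_{k_\chi,T}^\times \to \mathcal{O}_{k_\chi}^\times \to \bigoplus_{w \in T_{k_\chi}}\kappa(w)^\times \to \Cl^T(k_\chi) \to \Cl(k_\chi) \to 0.
\]
Tensoring with $\ZZ_p$ and taking $\chi$-components (which is exact, as $\chi$ has order prime to $p$), two simplifications collapse the left-hand terms. First, since $k_\chi$ is CM over totally real $k$, complex conjugation acts trivially on $Y_{k_\chi,S_\infty(k_\chi)}$, and the Dirichlet regulator isomorphism combined with the oddness of $\chi$ forces $(\mathcal{O}_{k_\chi}^\times \otimes \ZZ_p)^\chi$ to coincide with its torsion subgroup $(\mu_{p^\infty}(k_\chi))^\chi$. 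Second, the torsion-freeness hypothesis on $\mathcal{O}_{k_\chi,T}^\times$ lets the same argument give $(\mathcal{O}_{k_\chi,T}^\times \otimes \ZZ_p)^\chi = 0$. What remains is the short exact sequence
\[
0 \to (\mu_{p^\infty}(k_\chi))^\chi \to \Bigl(\bigoplus_{w \in T_{k_\chi}}\kappa(w)^\times \otimes \ZZ_p\Bigr)^\chi \to (A^T)^\chi \to A^\chi \to 0.
\]

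The key step is to compute the order of the middle term. Applying the resolution (\ref{T seq}) of $(\mathbb{F}_{T_{k_\chi}}^\times)^\vee$, tensoring with $\ZZ_p$ and taking $\chi$-components (noting that no $v \in T$ ramifies in $k_\chi$, so $G_v$ is cyclic generated by Frobenius), the differential becomes diagonal with entries $1-\N v\,\chi({\rm Fr}_v)$; combining this with the identity $|M|=|M^\vee|$ and the fact that Pontryagin duality swaps the $\chi$- and $\chi^{-1}$-components yields
\[
\Bigl|\Bigl(\bigoplus_{w \in T_{k_\chi}}\kappa(w)^\times \otimes \ZZ_p\Bigr)^\chi\Bigr| = \prod_{v\in T}\bigl|\mathcal{O}_\chi/(1-\chi^{-1}({\rm Fr}_v)\N v)\bigr|.
\]
Combining this with (i), the factorization $L_k^T(\chi^{-1},0) = \prod_{v\in T}(1-\chi^{-1}({\rm Fr}_v)\N v)\cdot L_k(\chi^{-1},0)$, and the multiplicativity of indices in the DVR $\mathcal{O}_\chi$, eliminates the Euler factors at $T$ and produces the identity
\[
|A^\chi| = \bigl|\mathcal{O}_\chi/L_k(\chi^{-1},0)\bigr|\cdot|(\mu_{p^\infty}(k_\chi))^\chi|.
\]

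To conclude I analyze the final factor. The group $\mu_{p^\infty}(k_\chi)$ is cyclic, and $\Delta_{k_\chi}$ acts on it via the cyclotomic character, which on a group of order prime to $p$ factors through the Teichm\"uller character $\omega$. The $\chi$-component is therefore nonzero precisely when $\chi$ agrees with this action character, and faithfulness of $\chi$ on $\Delta_{k_\chi}$ reduces that case to $k_\chi = k(\mu_p)$ with $\chi=\omega$; then $\mathcal{O}_\chi = \ZZ_p$ and the component equals $\mu_{p^\infty}(k(\mu_p))$, while otherwise it is trivial. Substituting the two cases into the displayed identity produces the formulas of (ii). The main obstacle is the duality bookkeeping in the middle-term order computation: the resolution (\ref{T seq}) presents the Pontryagin dual of $\mathbb{F}_{T_{k_\chi}}^\times$ rather than $\mathbb{F}_{T_{k_\chi}}^\times$ itself, and one must carefully track the character twists in passing between a module and its dual in order to land on the correct principal ideal $(1-\chi^{-1}({\rm Fr}_v)\N v)$ of $\mathcal{O}_\chi$.
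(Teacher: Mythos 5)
Your proof of (i) follows the paper exactly: apply Corollary \ref{CorCharacterComponent1} with $V=\emptyset$, $i=r=0$, note that $\epsilon_{k_\chi/k,S,T}^{\emptyset,\chi}=\theta_{k_\chi/k,S,T}(0)^\chi=L_k^T(\chi^{-1},0)$, and use that $\mathcal{O}_\chi$ is a DVR.

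For (ii) you take a genuinely different route from the paper. The paper does not work with a general $T$; instead it invokes the Chebotarev density theorem to choose $T=\{v\}$ judiciously in each case. When $\chi\neq\omega$ the place $v$ is chosen prime to $p$ with $\N v\not\equiv\chi(\mathrm{Fr}_v)\pmod p$, so that the Euler factor $1-\chi^{-1}(\mathrm{Fr}_v)\N v$ is a unit in $\mathcal{O}_\chi$; then both the middle term of the exact sequence and the passage from $L_k^T$ to $L_k$ are trivial. When $\chi=\omega$ the place $v$ is chosen to split completely in $k(\mu_p)$ with $\ord_p(\N v-1)=\ord_p|\mu_{p^\infty}(k(\mu_p))|$, so that the map $\mu_{p^\infty}(k_\chi)\to(\bigoplus_{w\mid v}\kappa(w)^\times\otimes\ZZ_p)^\chi$ is onto and again $(A^T)^\chi=A^\chi$, while the Euler factor contributes exactly $|\mu_{p^\infty}|$. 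Your argument instead keeps $T$ generic, computes $\bigl|\bigl(\bigoplus_{w\in T_{k_\chi}}\kappa(w)^\times\otimes\ZZ_p\bigr)^\chi\bigr|$ explicitly from the resolution (\ref{T seq}) (correctly tracking the $\chi\leftrightarrow\chi^{-1}$ twist under Pontryagin duality), and then cancels the Euler factors against those in $L_k^T(\chi^{-1},0)=\prod_{v\in T}(1-\chi^{-1}(\mathrm{Fr}_v)\N v)\,L_k(\chi^{-1},0)$. This yields the intermediate identity $|A^\chi|=|\mathcal{O}_\chi/L_k(\chi^{-1},0)|\cdot|(\mu_{p^\infty}(k_\chi))^\chi|$, from which both cases follow by analyzing the last factor. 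The trade-off: the paper's proof is shorter and avoids the duality bookkeeping entirely, whereas yours makes the Euler-factor cancellation visible, works uniformly in $T$, and does not require the Chebotarev choices. Both are correct; the one delicate point in your version, which you flagged yourself, is that the resolution (\ref{T seq}) presents $(\mathbb{F}_{T_{k_\chi}}^\times)^\vee$ rather than $\mathbb{F}_{T_{k_\chi}}^\times$, and one must use $|M^\chi|=|(M^\vee)^{\chi^{-1}}|$ together with $|\mathcal{O}_\chi/x|=|\mathcal{O}_{\chi^{-1}}/\bar{x}|$ to land on $1-\chi^{-1}(\mathrm{Fr}_v)\N v$ rather than its conjugate.
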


\begin{proof} Claim (i) is an immediate consequence of Corollary \ref{CorCharacterComponent1} and
a remark before this corollary. We shall now prove claim (ii).

When $\chi \neq \omega$, we take a finite place $v$ such that
$v$ is prime to $p$ and
${\N}v \not \equiv \chi({\rm Fr}_v)$ (mod $p$).
We put $T=\{v\}$.
Then $(A^T)^{\chi}=A^{\chi}$ and
$\ord_{p}L_{k}^T(\chi^{-1},0)=\ord_{p}L_{k}(\chi^{-1},0)$.
Therefore, claim (i) implies the equality in claim (ii).

When $\chi=\omega$, using Chebotarev density theorem
we take a finite place $v$ such that
$v$ splits completely in $k_{\chi}=k_{\omega}=k(\mu_{p})$ and
$\ord_{p}|\mu_{p^{\infty}}(k_{\chi})|=\ord_{p}({\N}v-1)$.
We take $T=\{v\}$, then
we also have $(A^T)^{\chi}=A^{\chi}$ from the exact sequence
$$\mu_{p^{\infty}}(k_{\chi}) \longrightarrow (\bigoplus_{w \mid v}
\kappa(w)^{\times} \otimes \ZZ_{p} )^{\chi}
\longrightarrow (A^T)^{\chi} \longrightarrow A^{\chi} \longrightarrow 0$$
where $w$ runs over all places of $k_{\chi}$ above $v$.
Therefore, claim (ii) follows from claim (i) in this case, too.
\end{proof}

\begin{corollary} \label{CorCharacterComponent3}
Assume that at most one $p$-adic place $\mathfrak{p}$ of $k$
satisfies $\chi(\mathfrak p)=1$.
Then the same conclusion as Corollary \ref{CorCharacterComponent2} holds.
\end{corollary}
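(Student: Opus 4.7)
The plan is to establish the $\chi$-component of ${\rm LTC}(k_\chi/k)$ under the present weaker hypothesis, whereupon Corollary \ref{CorCharacterComponent2} immediately yields both claimed formulas. The key point is that the argument of Corollary \ref{CorCharacterComponent2} uses only the $\chi$-isotypic piece of the leading term conjecture, and its deduction of (ii) from (i) (via a judicious choice of $T$ to kill the $\chi$-part of $\mu_{p^\infty}(k(\mu_p))$) carries over verbatim.

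To verify the $\chi$-component of LTC$(k_\chi/k)$, I would apply Theorem \ref{MT4} with $K = k_\chi$, which is a CM field since $\chi$ is odd. The theorem reduces the minus $p$-part of LTC along the cyclotomic $\ZZ_p$-extension to two inputs: (a) the Deligne-Ribet $p$-adic $L$-function of $\chi$ has the order of vanishing at $s = 0$ conjectured by Gross; and (b) the Gross-Stark leading-term conjecture (Conjecture \ref{GrossStark}) holds for all intermediate CM subfields of $(k_\chi)_\infty/k$.

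Under our hypothesis the Gross-predicted order of vanishing equals $|\{\mathfrak p \mid p : \chi(\mathfrak p) = 1\}|$, which is at most one. In the case of no trivial zero, condition (a) is vacuous and Wiles's main conjecture for totally real fields \cite{Wiles} delivers (b) on the $\chi$-component directly. In the case of a single trivial zero, condition (a) is a theorem of Gross \cite[Prop. 2.13]{Gp} deduced from Brumer's $p$-adic analogue of Baker's Theorem, while (b) is precisely the recent theorem of Ventullo \cite{ventullo}, building on Darmon-Dasgupta-Pollack \cite{DDP}, in the rank-one trivial zero case. With both inputs in hand, Theorem \ref{MT4} produces the required $\chi$-component of LTC$(k_\chi/k)$, and Corollary \ref{CorCharacterComponent2} then gives the desired order formulas (the $\chi=\omega$ case receiving the extra $|\mu_{p^\infty}(k(\mu_p))|$ factor from the standard sequence relating $(A^T)^\chi$ and $A^\chi$).

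The principal technical obstacle is the $p$-adic $\mu$-invariant hypothesis carried implicitly through Theorem \ref{MT4} (and visible in Corollary \ref{IntroConsequencesOfVentullo}): one needs the $\chi$-component of the $\mu$-invariant of the Iwasawa module over $(k_\chi)_\infty/k_\chi$ to vanish in order for the equivalence in Theorem \ref{MT4} to apply. I would handle this either by isolating the $\chi$-isotypic contribution (where the trivial-zero analysis of Ventullo controls the $\mu$-invariant in conjunction with the main conjecture) or by recognising it as a standing background assumption inherited from the Iwasawa-theoretic framework of \S\ref{grost}; this is the step requiring the most care in writing out the full proof.
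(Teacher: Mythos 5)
Your approach is essentially the paper's: reduce to the $\chi$-component of ${\rm LTC}(k_\chi/k)$, verify it via Theorem~\ref{MT4} with inputs from Gross's argument on order of vanishing (via Brumer's $p$-adic Baker theorem) and Ventullo's proof of the rank-one Gross--Stark conjecture, and then invoke Corollary~\ref{CorCharacterComponent2}; the paper's (unstated) proof follows the same route, in parallel with the argument written out for Corollary~\ref{CorHigherFittCM1} and Corollary~\ref{IntroConsequencesOfVentullo}.

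Your observation about the $\mu$-invariant is a correct and rather sharp catch. Theorem~\ref{MT4} does carry the vanishing of the cyclotomic $p$-adic $\mu$-invariant as a standing hypothesis, and indeed the version of this result stated in the Introduction (the unnumbered corollary before Corollary~\ref{ThCharacterComponent02}) explicitly lists it; the statement of Corollary~\ref{CorCharacterComponent3} in the body omits it. This is best read as a standing background assumption inherited from the framework, as your second proposed resolution suggests, rather than something one can remove. Your first proposed resolution (isolating the $\chi$-isotypic contribution and controlling the $\mu$-invariant via Ventullo together with the main conjecture) would need a genuine argument and is not supplied by the references cited; you are right to flag it as the step requiring care. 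One smaller slip: in the case with no trivial zero you say Wiles's main conjecture delivers the Gross--Stark leading-term statement, but that case needs no such input---when $r_\chi = 0$ the Gross--Stark conjecture reduces to the interpolation property of the Deligne--Ribet $p$-adic $L$-function, and the nonvanishing $L_p(0)\neq 0$ follows from that interpolation together with $L(\chi^{-1},0)\neq 0$ and the nonvanishing of the Euler factors when $\chi(\mathfrak p)\neq 1$ for all $\mathfrak p \mid p$.
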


\begin{remark}
\begin{rm}
We note that the formula on $A^{\chi}$ in Corollary \ref{CorCharacterComponent2} has
not yet been proved in general even in such semi-simple case (namely the case
that the order of $\chi$ is prime to $p$).
If no $p$-adic place $\mathfrak{p}$ satisfies $\chi(\mathfrak p)=1$,
this is an immediate consequence of the main conjecture proved by Wiles \cite{Wiles}.
Corollary \ref{CorCharacterComponent3} claims that this holds
even if $|\{\mathfrak{p}: \text{$p$-adic place such that }\chi(\mathfrak p)=1\}|=1$.
\end{rm}
\end{remark}

\subsection{The structure of the class group of a CM field} \label{strclassgroup}

Now we consider a general CM-field $K$ over a totally real number field $k$
(in particular, we do not assume that $K=k_{\chi}$).

We assume the condition $({{*}})$ stated just prior to Theorem \ref{ThCharacterComponent1}.

We fix a strictly positive integer $N$. Suppose that $v$ is a place of $k$ such that $v$ is prime to $p$,
$v$ splits completely in $K$
and there is a cyclic extension $F(v)/k$ of degree $p^{N}$,
which is unramified outside $v$ and in which
$v$ is totally ramified.
(Note that $F(v)$ is not unique.)
We denote by $\mathcal{S}(K)$ the set of such places $v$ and recall that $\mathcal{S}(K)$ is infinite (see \cite[Lem. 3.1]{Ku5}).

Suppose now that $V=\{v_{1},\ldots,v_{t}\}$ is a subset of $\mathcal{S}(K)$
consisting of $t$ distinct places.
We take a cyclic extension $F(v_{j})/k$ as above, and put
$F=F(v_{1}) \cdots  F(v_{t})$ the compositum of
fields $F(v_{j})$.
In particular, $F$ is totally real. We denote by $\mathcal{F}_{t,N}$ the set of all fields $F$ constructed
in this way.
When $t=0$, we define $\mathcal{F}_{0,N}=\{k\}$.

We set
\[ H:=\Gal(KF/K)\cong \Gal(F/k)\cong \prod_{j=1}^{t} \Gal(F(v_{j})/k),\]
where the first (restriction) isomorphism is due to the fact that $K \cap F=k$ and the second to the fact that each extension $F(v_{j})/k$ is totally ramified at $v_j$ and unramified at all other places.

We fix a generator $\sigma_{j}$ of $\Gal(F(v_{j})/k)$ and
set $S_{j}:=\sigma_{j}-1 \in \ZZ[\Gal(KF/k)]$. Noting that $\Gal(KF/k)=G \times H$ where
$G=\Gal(K/k)$, for each element $x$ of $\ZZ[\Gal(KF/k)]=\ZZ[G][H]$ we write $x=\sum x_{n_{1},\ldots,n_{t}} S_{1}^{n_{1}}\cdots S_{t}^{n_{t}}$
where each $x_{n_{1},\ldots,n_{t}}$ belongs to $\ZZ[G]$. We then define a map
\[ \varphi_{V}: \ZZ[\Gal(KF/k)] \rightarrow \ZZ/p^{N}[G]\]
by sending $x$ to $x_{1,\ldots,1}$ modulo $p^{N}$ and we note that this map is a well-defined homomorphism of $G$-modules.

We consider $\theta_{KF/k,S \cup V,T}(0) \in \ZZ[\Gal(KF/k)]$.
We define $\Theta^{i}_{N,S,T}(K/k)$ to be the ideal of
$\ZZ/p^{N}[G]$ generated by all
$\varphi_{V}(\theta_{KF/k,S \cup V,T}(0)) \in \ZZ/p^{N}[G]$
where $F$ runs over $\mathcal{F}_{t,N}$ such that $t \leq i$.
We note that we can compute $\theta_{KF/k,S \cup V,T}(0)$, and hence also
$\varphi_{V}(\theta_{KF/k,S \cup V,T}(0))$, numerically. Taking $F=k$, we know that $\theta_{K/k,S,T}(0)$ mod $p^{N}$ is in
$\Theta^{i}_{N,S,T}(K/k)$ for any $i \geq 0$.

We set $\mathcal{F}_{N}:=\bigcup_{t \geq 0} \mathcal{F}_{t,N}$.

For any abelian extension $M/k$, if $S=S_{\infty}(k) \cup S_{\rm ram}(M/k)$
and $T$ is the empty set, we write
$\theta_{M/k}(0)$ for $\theta_{M/k,S,T}(0)$.

We take a character $\chi$ of $\Delta_{K}$
such that $\chi \neq \omega$, at first.
We take $S=S_{\infty}(k) \cup S_{\rm ram}(K/k)$ and $T=\emptyset$.
In this case, we know that the $\chi$-component
$\theta_{KF/k}(0)^{\chi}$ is integral, namely is
in $O_{\chi}[\Gamma_{K} \times H]$.
We simply denote
the $\chi$-component
$\Theta^{i}_{N,S,\emptyset}(K/k)^{\chi}$
by $\Theta^{i}_{N}(K/k)^{\chi}$ ($\subset O_{\chi}[\Gamma_{K}]$).
This ideal $\Theta^{i}_{N}(K/k)^{\chi}$ coincides with the
higher Stickelberger ideal
$\Theta_{i,K}^{(\delta,N),\chi}$ defined in \cite[\S 8.1]{Ku5}.

When $\chi=\omega$, we assume that $K=k(\mu_{p^{m}})$ for some $m \geq 1$.
By using the Chebotarev density theorem we can choose a place $v$ which satisfies all of the following conditions
\begin{itemize}
\item[(i)]{ $v$ splits completely in $k(\mu_{p})/k$, }
\item[(ii)]{ each place above $v$ of $k(\mu_{p})$ is inert in $K/k(\mu_{p})$, and }
\item[(iii)]{ each place $w$ of $K$ above $v$ satisfies
$\ord_{p}|\mu_{p^{\infty}}(K)|=\ord_{p}({\N}w-1)$.}
\end{itemize}
We set $T:=\{v\}$.
We consider the $\omega$-component
$\Theta^{i}_{N,S,\{v\}}(K/k)^{\omega}$, which
we denoted by $\Theta^{i}_{N,\{v\}}(K/k)^{\omega}$

\begin{theorem} \label{ThHigherFittCM}
Let $K/k$ be a finite abelian extension, $K$ a CM-field, and
$k$ totally real.
Suppose that $\chi$ is an odd faithful character of $\Delta_{K}$, and
consider the $\chi$-component of the class group $A(K)^{\chi}$ which
is an $\mathcal{O}_{\chi}[\Gamma_{K}]$-module.
We assume the condition $({{*}})$ stated just prior to Theorem \ref{ThCharacterComponent1} and the validity of
the $\chi$-component of ${\rm LTC}(FK/k)$ for every field $F$ in $\mathcal{F}_{N}$.
\begin{itemize}
\item[(i)]{ Suppose that $\chi \neq \omega$.
For any integer $i \geq 0$, we have
$$\Fitt_{\mathcal{O}_{\chi}[\Gamma_{K}]/p^{N}}^{i}(A(K)^{\chi} \otimes \ZZ/p^{N})
= \Theta^{i}_{N}(K/k)^{\chi}.$$}
\item[(ii)]{ We assume that $K=k(\mu_{p^{m}})$ for some $m \geq 1$.
For $\chi=\omega$, using a place $v$ as above, we have
$$\Fitt_{\mathcal{O}_{\omega}[\Gamma_{K}]/p^{N}}^{i}(A(K)^{\omega} \otimes \ZZ/p^{N})
= \Theta^{i}_{N,\{v\}}(K/k)^{\omega}$$
for any $i \geq 0$. }
\end{itemize}
\end{theorem}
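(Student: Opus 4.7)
The plan is to deduce the theorem from Theorem \ref{ThCharacterComponent1} (applied to $K/k$) combined with the implication from LTC to MRS provided by Theorem \ref{ltcmrs}, by ``specializing'' along a family of auxiliary cyclic $p^{N}$-extensions $F$ of $k$ and applying $\varphi_{V'}$ to the resulting MRS identity. First I apply Theorem \ref{ThCharacterComponent1} to $K/k$: the case $F=k$ of the hypothesis gives ${\rm LTC}(K/k)^\chi$, and combined with condition $(*)$ and the fact that $r=0$ (since $k$ is totally real and $K$ is CM) this yields, for $S = S_{\infty}(k)\cup S_{\rm ram}(K/k)$ and suitably chosen $T$,
\[
\Fitt^i_{\mathcal{O}_\chi[\Gamma_K]}(A^T(K)^\chi) = \bigl\langle \Phi(\epsilon_{K/k,S\cup V',T}^{V',\chi}) : V' \in \mathcal{V}_i,\ \Phi \in {\bigwedge}^{i}_{\mathcal{O}_\chi[\Gamma_K]} \mathcal{H}_\chi\bigr\rangle.
\]
The choices of $T$ prescribed in (i) and (ii) yield $A^T(K)^\chi \otimes \ZZ/p^N = A(K)^\chi \otimes \ZZ/p^N$, and a Chebotarev-type argument allows one to restrict $V'$ to subsets of $\mathcal{S}(K)$ without changing the image of the ideal modulo $p^N$, since any $v$ splitting completely in $K$ can be replaced by an element of $\mathcal{S}(K)$ with the same Frobenius class in $\Gal(K(\mu_{p^N})/k)$.

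Next I translate each $\Phi(\epsilon_{K/k,S\cup V',T}^{V',\chi})$ into a $\varphi_{V'}$-image of a Stickelberger element. Fix $V' = \{v_1,\ldots,v_t\} \subset \mathcal{S}(K)$ with $t\le i$ and $F = F(v_1)\cdots F(v_t) \in \mathcal{F}_{t,N}$; by hypothesis ${\rm LTC}(FK/k)^\chi$ holds, so Theorem \ref{ltcmrs} gives the $\chi$-part of ${\rm MRS}(FK/K/k,S\cup V',T,\emptyset,V')$. Setting $H = \Gal(FK/K)$ and using $\epsilon_{FK/k,S\cup V',T}^{\emptyset} = \theta_{FK/k,S\cup V',T}(0)$, the MRS identity reads
\[
\theta_{FK/k,S\cup V',T}(0)^\chi = \pm\, \nu\bigl(\Rec_{V'}(\epsilon_{K/k,S\cup V',T}^{V',\chi})\bigr) \pmod{I(H)J_{V'}}.
\]
Applying $\varphi_{V'}$ to this identity converts the right-hand side into $\pm\,\Phi_F(\epsilon_{K/k,S\cup V',T}^{V',\chi}) \bmod p^N$, where $\Phi_F = \bigwedge_{j=1}^{t} \phi_{F(v_j)}$ and each $\phi_{F(v_j)}\colon (\mathcal{O}^\times_{K,S\cup V',T})^\chi \to \mathcal{O}_\chi[\Gamma_K]/p^N$ is the composite of inclusion into $K_{w_j}^\times$ with the local reciprocity map for $F(v_j)/k$ and the identification $\Gal(F(v_j)/k) \cong \ZZ/p^N$ sending $\sigma_j$ to $1$.

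The final step, which I expect to be the main obstacle, is a density argument: as $F(v_j)$ ranges over all totally-ramified cyclic $p^N$-extensions of $k$ unramified outside $v_j$, the associated maps $\phi_{F(v_j)}$ should generate, as an $\mathcal{O}_\chi[\Gamma_K]/p^N$-module, the reduction modulo $p^N$ of $\Hom_{\mathcal{O}_\chi[\Gamma_K]}((\mathcal{O}^\times_{K,S\cup V',T})^\chi,\mathcal{O}_\chi[\Gamma_K])$. Via local class field theory at $v_j$ and Pontryagin duality, this reduces to showing that the set of admissible $F(v_j)$'s captures every surjection $k_{v_j}^\times \to \ZZ/p^N$ compatible with the necessary global ray class constraints; the technical difficulty will be to extend the analogous DVR-level argument from \cite{Ku5} to the group ring $\mathcal{O}_\chi[\Gamma_K]/p^N$ setting with careful control of the $\Gamma_K$-action on units. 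Once this density is in hand, the ideal generated by the $\varphi_{V'}(\theta_{FK/k,S\cup V',T}(0))^\chi$ as $(V',F)$ varies is by definition $\Theta^i_N(K/k)^\chi$, which then equals the ideal generated by the $\Phi_F(\epsilon_{K/k,S\cup V',T}^{V',\chi}) \bmod p^N$, which by the first paragraph equals $\Fitt^i_{\mathcal{O}_\chi[\Gamma_K]/p^N}(A(K)^\chi \otimes \ZZ/p^N)$, completing both (i) and (ii).
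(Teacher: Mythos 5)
Your overall skeleton is the same as the paper's (reduce to Theorem \ref{ThCharacterComponent1}, use Theorem \ref{ltcmrs} to get Conjecture \ref{mrsconj} for $FK/K/k$, and convert reciprocity-map evaluations of Rubin--Stark elements into $\varphi_{V'}(\theta_{FK/k}(0))$), but the step you yourself flag as "the main obstacle" is both unproven and, as you formulate it, false. For a \emph{fixed} place $v_j$, every map $\phi_{F(v_j)}$ obtained by varying the cyclic extension $F(v_j)/k$ factors through the semi-local component of $K^\times$ at $v_j$ (essentially through $(\bigoplus_{w\mid v_j}\kappa(w)^\times\otimes\ZZ/p^{N})^{\chi}$, since $F(v_j)/k$ is tame at $v_j$). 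Such maps span only a very small submodule of $\Hom_{\mathcal{O}_\chi[\Gamma_K]}((\mathcal{O}_{K,S\cup V',T}^\times\otimes\ZZ_p)^\chi,\mathcal{O}_\chi[\Gamma_K])\otimes\ZZ/p^{N}$, whose source has large $\ZZ_p$-rank; no refinement of the "DVR-level argument" of \cite{Ku5} will make them generate the whole Hom-module for fixed $v_j$. Likewise, your first-paragraph claim that one may restrict $V'$ to subsets of $\mathcal{S}(K)$ merely by matching Frobenius classes in $\Gal(K(\mu_{p^{N}})/k)$ is unjustified: Rubin--Stark elements attached to different auxiliary place sets are genuinely different elements of different unit lattices, and the ideals of their $\Phi$-values mod $p^{N}$ are not controlled by Frobenius classes alone.

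The correct mechanism, which the paper uses, is dual to yours: one keeps $V\in\mathcal{V}_i$ and $\Phi=\varphi_1\wedge\cdots\wedge\varphi_i$ arbitrary, and chooses \emph{new} places $v_j'\in\mathcal{S}(K)$, depending on the $\varphi_j$, by the Chebotarev-type result \cite[Lem. 10.1]{Ku5}, so that (a) $[w_j']=[w_j]$ in $A(K)^\chi$, (b) $\phi_{v_j'}$ agrees with $\varphi_j$ modulo $p^{N}$ on the relevant unit group, and (c) $\phi_{v_m'}(x_j)=0$ for $m\neq j$, where $(x_j)=w_j(w_j')^{-1}$. Conditions (a) and (c) are precisely what allow one to replace $\epsilon_{K/k,S\cup V,T}^{V,\chi}$ by $\epsilon_{K/k,S\cup V',T}^{V',\chi}$ in the evaluation (modulo $\theta_{K/k}(0)^\chi$ when $i=1$), so that the places carrying the reciprocity maps coincide with the places defining the Rubin--Stark element; only then does MRS for $FK/K/k$ with $F=F(v_1')\cdots F(v_i')$ identify the value with $\varphi_{V'}(\theta_{FK/k}(0)^\chi)\in\Theta^i_N(K/k)^\chi$. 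This place-moving argument (together with the injectivity of $(\mathcal{O}_{K,S\cup V,T}^\times\otimes\ZZ/p^{N})^\chi\to(K^\times\otimes\ZZ/p^{N})^\chi$, which is the real reason for the choice of $T=\{v\}$ in case (ii)) is the missing content of your proposal; the converse inclusion $\Theta^i_N\subseteq\Fitt^i$ then follows, as in your second paragraph, by applying MRS directly to each $F\in\mathcal{F}_{t,N}$ and invoking Theorem \ref{ThCharacterComponent1}.
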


\begin{proof}
 We first prove claim (i). Since the image of
$\theta_{KF/k}(0)$ in $\ZZ[G]$
is a multiple of $\theta_{K/k}(0)$,
$\Theta^{0}_{N}(K/k)^{\chi}$ is a principal ideal
generated by $\theta_{K/k}(0)^{\chi}$.
Therefore, this theorem for $i=0$ follows from Theorem \ref{ThCharacterComponent1}.

Now suppose that $i>0$.
For a place $v \in \mathcal{S}(K)$, we take
a place $w$ of $K$ above $v$.
Put $H(v)=\Gal(F(v)K/K)=\Gal(F(v)/k) \simeq \ZZ/p^{N}$.
We take a generator $\sigma_{v}$ of $H(v)$ and fix it.
We define $\phi_{v}$ by
\begin{eqnarray*}
\phi_{v}: K^{\times}
&\stackrel{\Rec_{v}}{\longrightarrow} &(I(H(v))\ZZ[\Gal(F(v)K/k)]/
I(H(v))^{2}\ZZ[\Gal(F(v)K/k)])\\
&=&\ZZ[G] \otimes_\ZZ
I(H(v))/I(H(v))^{2} \simeq \ZZ/p^{N}[G]
\end{eqnarray*}
Here, the last isomorphism is defined by $\sigma_{v}-1 \mapsto 1$,
and $\Rec_{v}$ is defined by
$$\Rec_{v}(a)=\sum_{\tau \in G}\tau^{-1}({\rm{rec}}_{w}(\tau a)-1)$$
as in \S \ref{secmrs} by using the reciprocity map
${\rm rec}_w: K_{w}^{\times} \rightarrow H(v)$ at $w$.
Taking the $\chi$-component of $\phi_{v}$, we obtain
$$\phi_{v}: (K^{\times} \otimes \ZZ/p^{N})^{\chi}
\longrightarrow \mathcal{O}_{\chi}[\Gamma_{K}]/p^{N},$$
which we also denote by $\phi_{v}$.

We take $S=S_{\infty}(k) \cup S_{\rm ram}(K/k)$, $T=\emptyset$,
and $V=\{v_{1},\ldots,v_{i}\} \in \mathcal{V}_{i}$.
Suppose that $\Phi=\varphi_{1} \wedge\cdots\wedge
\varphi_{i}$ where
$$\varphi_{j} \in \mathcal{H}_{\chi}=
\Hom_{\mathcal{O}_{\chi}[\Gamma_{K}]}((\mathcal{O}_{K,S \cup V}^\times
\otimes \ZZ_{p})^{\chi},\mathcal{O}_{\chi}[\Gamma_{K}]/p^{N})$$
for $j=1,\ldots,i$.
We take a place $w_{j}$ of $K$ above $v_{j}$ for $j$
such that $1 \leq j \leq i$.
We denote by $[w_{j}]$ the class of $w_{j}$ in $A(K)^{\chi}$.

By \cite[Lem. 10.1]{Ku5}, for each integer $j=1$,...,$i$ we can choose a place $v_{j}' \in \mathcal{S}(K)$
 that satisfies all of the following conditions;
\begin{itemize}
\item[(a)]{ $[w_{j}']=[w_{j}]$ in $A(K)^{\chi}$ where $w_{j}'$ is a place of $K$
above $v_{j}'$,}
\item[(b)]{ $\varphi_{j}(x)=\phi_{v_{j}'}(x)$ for any $x \in
(\mathcal{O}_{K,S \cup V}^\times \otimes \ZZ/p^{N})^{\chi}$,}
\item[(c)]{ $\phi_{v_{m}'}(x_{j})=0$ for any $j$ and $m$ such that $j \neq m$
where $x_{j}$ is the element in $(\mathcal{O}_{K,S \cup V \cup V'}^\times \otimes \ZZ_{p})^\chi$
whose prime decomposition is $(x_{j})=w_{j}(w_{j}')^{-1}$ (the existence of $x_{j}$
follows from (a)).}
\end{itemize}
Here, we used the fact that the natural map
$(\mathcal{O}_{K,S \cup V}^\times \otimes \ZZ/p^{N})^{\chi} \rightarrow
(K^{\times} \otimes \ZZ/p^{N})^{\chi}$ is injective.

Set $V'=\{v_{1}',\ldots,v_{i}'\}$.
By property (b), we have
$$\Phi(\epsilon_{K/k,S \cup V,\emptyset}^{V, \chi})=
(\phi_{v_{1}'}\wedge\cdots\wedge \phi_{v_{i}'})
(\epsilon_{K/k,S \cup V,\emptyset}^{V, \chi}).$$
It follows from property (c) that if $i>1$, then
$$(\phi_{v_{1}'}\wedge\cdots\wedge \phi_{v_{i}'})
(\epsilon_{K/k,S \cup V,\emptyset}^{V, \chi}) =
(\phi_{v_{1}'}\wedge\cdots\wedge \phi_{v_{i}'})
(\epsilon_{K/k,S \cup V',\emptyset}^{V', \chi}) \in \mathcal{O}_{\chi}[\Gamma_{K}]/p^{N},$$
whilst for $i=1$ one has
\[ \phi_{v_{1}'}(\epsilon_{K/k,S \cup V,\emptyset}^{V, \chi})
\equiv \phi_{v_{1}'}(\epsilon_{K/k,S \cup V',\emptyset}^{V', \chi})
\,\,({\rm mod }\,\,\theta_{K/k}(0)^{\chi}).\]

Set $F=F(v_{1}')\cdots F(v_{i}')$ and $H=\Gal(FK/K)=\Gal(F/k)$.
Then as in \S\ref{secmrs} we can define
$\Rec_{V'}(\epsilon_{K/k,S \cup V',\emptyset}^{V', \chi})
\in \ZZ[G] \otimes (J_{V'})_{H}$.
Let $\varphi_{V'}:\ZZ[G \times H] \rightarrow
\ZZ/p^{N}[G]$ be the
homomorphism defined before Theorem \ref{ThHigherFittCM} by
using the generators $\sigma_{v_{i}'}$ we fixed.
This $\varphi_{V'}$ induces a homomorphism
\[ \ZZ[G \times H]/I(H)^{i+1}\ZZ[G \times H]
= \ZZ[G] \otimes \ZZ[H]/I(H)^{i+1}
\rightarrow \ZZ/p^{N}[G]\]
and we also denote the composite homomorphism
\[ \ZZ[G] \otimes (J_{V'})_{H} \rightarrow
\ZZ[G] \otimes \ZZ[H]/I(H)^{i+1}
\stackrel{\varphi_{V'}}{\rightarrow} \ZZ/p^{N}[G]\]
by $\varphi_{V'}$.

Then by the definitions of these homomorphisms, we have
$$(\phi_{v_{1}'}\wedge\cdots\wedge \phi_{v_{i}'})
(\epsilon_{K/k,S \cup V',\emptyset}^{V', \chi})=
\varphi_{V'}(\Rec_{V'}(\epsilon_{K/k,S \cup V',\emptyset}^{V', \chi})).$$
By Conjecture \ref{mrsconj} which is a theorem under our assumptions
(Theorem \ref{ltcmrs}), we get
$$\varphi_{V'}(\Rec_{V'}(\epsilon_{K/k,S \cup V',\emptyset}^{V', \chi}))
=\varphi_{V'}(\theta_{KF/k}(0)^{\chi}).$$
Combining the above equations, we get
$$\Phi(\epsilon_{K/k,S \cup V,\emptyset}^{V, \chi}) \equiv
\varphi_{V'}(\theta_{KF/k}(0)^{\chi}) \ (\mbox{mod} \
\theta_{K/k}(0)^{\chi}).$$
Since $\varphi_{V'}(\theta_{KF/k}(0)^{\chi})$, $\theta_{K/k}(0)^{\chi}$
are in $\Theta^{i}_{N}(K/k)^{\chi}$, we get
$\Phi(\epsilon_{K/k,S \cup V,\emptyset}^{V, \chi}) \in
\Theta^{i}_{N}(K/k)^{\chi}$.
It follows from
Theorem \ref{ThCharacterComponent1} that the left hand side of the equation in
Theorem \ref{ThHigherFittCM} (i) is in the right hand side.

On the other hand, suppose that $F$ is in $\mathcal{F}_{t,N}$
with $t \leq i$, and that
$V=\{v_{1},\ldots,v_{t}\}$ is the set of ramifying place in $F/k$.
As above, by Theorem \ref{ltcmrs} we have
$$\varphi_{V}(\theta_{KF/k}(0)^{\chi})=
\varphi_{V}(\Rec_{V}(\epsilon_{K/k,S \cup V,\emptyset}^{V, \chi}))
=(\phi_{v_{1}}\wedge\cdots\wedge \phi_{v_{t}})
(\epsilon_{K/k,S \cup V,\emptyset}^{V, \chi}).$$
Therefore, by Theorem \ref{ThCharacterComponent1} we have
${\varphi_V}(\theta_{KF/k}(0)^{\chi}) \in
\Fitt_{\mathcal{O}_{\chi}[\Gamma_{K}]/p^{N}}^{t}(A(K)^{\chi} \otimes \ZZ/p^{N})$.
Since $\Fitt_{\mathcal{O}_{\chi}[\Gamma_{K}]/p^{N}}^{t}(A(K)^{\chi} \otimes \ZZ/p^{N})
\subset \Fitt_{\mathcal{O}_{\chi}[\Gamma_{K}]/p^{N}}^{i}
(A(K)^{\chi} \otimes \ZZ/p^{N})$,
we get
$$\varphi_{V}(\theta_{KF/k}(0)^{\chi}) \in
\Fitt_{\mathcal{O}_{\chi}[\Gamma_{K}]/p^{N}}^{i}(A(K)^{\chi} \otimes \ZZ/p^{N}).$$
Thus, the right hand side of the equation in
Theorem \ref{ThHigherFittCM} (i) is in the left hand side.

We can prove claim (ii) by the same method.
The condition on $v$ is used to show the injectivity of
the natural homomorphism
$(\mathcal{O}_{K,S \cup V,T}^\times \otimes \ZZ/p^{N})^{\omega} \rightarrow
(K^{\times} \otimes \ZZ/p^{N})^{\omega}$
with $T=\{v\}$.
\end{proof}

\begin{corollary} \label{CorHigherFittCM1}
Let $K/k$ and $\chi$ be as in Theorem \ref{ThHigherFittCM}.
We assume the condition $({{*}})$ stated just prior to Theorem \ref{ThCharacterComponent1}
and that there is at most one place $\mathfrak{p}$ of $k$
above $p$ such that $\chi(\mathfrak p)=1$.
Then the same conclusion as Theorem \ref{ThHigherFittCM} holds.
\end{corollary}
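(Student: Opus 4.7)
The plan is to deduce Corollary \ref{CorHigherFittCM1} from Theorem \ref{ThHigherFittCM} by verifying the $\chi$-component of ${\rm LTC}(FK/k)$ for every $F\in\mathcal{F}_N$ under the weaker hypothesis on $p$-adic places. The route is to apply Corollary \ref{IntroConsequencesOfVentullo} (in its $\chi$-component form) to the cyclotomic $\ZZ_p$-extension $(FK)_\infty/k$ and then restrict to the $n=0$ layer: this gives ${\rm LTC}(FK/k)^-_p$, whose $\chi$-component is precisely what Theorem \ref{ThHigherFittCM} requires for this $F$.

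First I would observe that for each $F\in\mathcal{F}_N$, the compositum $FK$ is a finite abelian CM-extension of $k$ (with $F$ totally real and $K$ CM), and that $\Delta_{FK}=\Delta_{K}$ because $F/k$ is a $p$-extension. Moreover, since every auxiliary prime $v_j$ used to build $F$ lies in $\mathcal{S}(K)$ and is therefore prime to $p$, the extension $F/k$ is unramified at every $p$-adic place of $k$, so the decomposition group at any $p$-adic $\mathfrak p$ in $\Gal(FK/k)$ is the product of its decomposition group in $\Delta_K\times\Gamma_K$ and the (unramified) Frobenius contribution in $\Gal(F/k)$. Combined with the faithfulness and oddness of $\chi$ on $\Delta_K$, this allows the hypothesis ``at most one $p$-adic $\mathfrak p$ with $\chi(\mathfrak p)=1$'' to transfer to the $FK$-setting and to translate into: at most one $p$-adic place of $k$ splits in $FK/(FK)^+$ inside the $\chi$-eigenspace. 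This last condition is exactly what Ventullo's theorem from \cite{ventullo} (leading-term Gross-Stark) and Brumer's $p$-adic version of Baker's theorem (order-of-vanishing part of Gross) require in the $\chi$-component.

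Next I would invoke the $\chi$-component of Corollary \ref{IntroConsequencesOfVentullo}, applied to $(FK)_\infty/k$ (the $p$-adic $\mu$-invariant of $(FK)_\infty/FK$ inherits from $K_\infty/K$ by the standard norm argument, since $FK/K$ is a finite $p$-extension unramified at $p$; this vanishing is the implicit hypothesis carried over from Corollary \ref{ThCharacterComponent02}). This yields the minus-$p$-part of ${\rm LTC}((FK)_n/k)$ for every $n\geq 0$; taking $n=0$ and passing to the $\chi$-component produces the $\chi$-component of ${\rm LTC}(FK/k)$. Running this for every $F\in\mathcal{F}_N$ establishes all the hypotheses of Theorem \ref{ThHigherFittCM}, from which the Fitting-ideal equalities of Corollary \ref{CorHigherFittCM1} follow (including the modified statement at $\chi=\omega$ with the auxiliary $T=\{v\}$).

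The main obstacle will be the translation in the first step: verifying that ``$\chi(\mathfrak p)=1$ for at most one $p$-adic $\mathfrak p$'' forces the precise splitting condition needed to apply Ventullo's theorem in the $\chi$-component for each $(FK)_\infty/(FK^+)_\infty$, rather than only at the full Galois level. This amounts to tracking decomposition subgroups through the decomposition $\Gal((FK)_\infty/k) = \Delta_K\times \Gal(F/k)\times\Gamma_K\times\Gal((FK)_\infty/FK)$ and exploiting the oddness and faithfulness of $\chi$ to identify splitting in the minus direction with $\chi$-triviality. A subsidiary point to check is that the order-of-vanishing portion of Gross's conjecture (used in Theorem \ref{MT4}) holds uniformly for all odd finite-order characters of $\Gal((FK)_\infty/k)$ whose $\Delta_K$-part is $\chi$, which again follows from the same $p$-adic splitting hypothesis combined with Brumer's theorem as in \cite[Prop. 2.13]{Gp}.
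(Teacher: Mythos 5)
Your proposal is correct and follows the same route as the paper: the paper's own proof simply notes that the $\chi$-component of ${\rm LTC}(FK/k)$ holds for all $F\in\mathcal{F}_N$ by combining Theorem \ref{MT4} with Gross's Brumer--Baker argument and Ventullo's theorem ``as in the proof of Corollary \ref{IntroConsequencesOfVentullo},'' which is precisely what you do. Your additional care in tracking decomposition groups through $\Gal(FK/k)=\Delta_K\times\Gamma_K\times\Gal(F/k)$ (using that $F/k$ is unramified at $p$, hence $\Delta_{FK}=\Delta_K$ and the $\chi$-hypothesis transfers verbatim) and your recognition that one must work character-by-character rather than invoke the full-Galois statement of Corollary \ref{IntroConsequencesOfVentullo} directly are both correct and flesh out what the paper leaves implicit.
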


\begin{proof} It suffices to note that, under the stated conditions, the $\chi$-component of ${\rm LTC}(FK/k)$ is valid
for each $F$ in $\mathcal{F}_{N}$. This follows by combining Theorem \ref{MT4} with an argument of Gross and the main result of
Ventullo in \cite{ventullo} (as in the proof of Corollary \ref{IntroConsequencesOfVentullo} given above).
\end{proof}

To give an example of Corollary \ref{CorHigherFittCM1} we suppose that $K$ is the $m$-th layer of the
 cyclotomic $\ZZ_{p}$-extension of $K_{(\Delta)}$ for some strictly positive integer $m$, and assume that $\chi(\mathfrak p) \neq 1$
for any $\mathfrak{p} \mid p$.

Then this assumption implies that the condition $({{*}})$ is satisfied and so all of the assumptions in Corollary \ref{CorHigherFittCM1}
are satisfied. Therefore, by taking the projective limit of the conclusion,
Corollary \ref{CorHigherFittCM1} implies the result of the second author in \cite[Th. 2.1]{Ku5}.

In this sense, Corollary \ref{CorHigherFittCM1} is a natural generalization
of the main result in \cite{Ku5}.

\vspace{5mm}

To state our final result we now set
\[ \Theta^{i}(K/k)^{\chi}={\lim\limits_{\longleftarrow}}_{N}
\Theta^{i}_{N}(K/k)^{\chi} \subseteq \mathcal{O}_{\chi}[\Gamma_{K}].\]

Then Theorem \ref{ThHigherFittCM} implies that ${\rm Fitt}^{i}_{O_{\chi}[\Gamma]}(A(K)^{\chi})=\Theta^{i}(K/k)^{\chi}$.

Let $k_{\chi}$ be the field corresponding to the kernel of $\chi$ as in
Corollary \ref{CorCharacterComponent2}.
We denote $\Theta^{i}(k_{\chi}/k)^{\chi}$ by $\Theta^{i,\chi}$, which
is an ideal of $\mathcal{O}_{\chi}$.
For $\chi=\omega$, we denote
${\lim\limits_{\longleftarrow}}_{N} \Theta^{i}_{N,\{v\}}(k_{\chi}/k)^{\omega}$
by $\Theta^{i,\omega}$.

Then Corollary \ref{CorHigherFittCM1} implies the following result, which
is a generalization of the main result of the second author in \cite{Ku3}.

\begin{corollary} \label{CorHigherFittCM2}
Set $A^{\chi}:=({\rm Cl}(k_{\chi}) \otimes \ZZ_{p})^{\chi}$ as in Corollary \ref{CorCharacterComponent2}.
 Assume that there is at most one $p$-adic place $\mathfrak{p}$ of $k$
such that $\chi(\mathfrak p)=1$ and that the $p$-adic Iwasawa $\mu$-invariant of
$K$ vanishes.

Then there is an isomorphism of $\mathcal{O}_{\chi}$-modules of the form $A^{\chi} \simeq \bigoplus_{i \geq 1} \Theta^{i,\chi}/\Theta^{i-1,\chi}.$
\end{corollary}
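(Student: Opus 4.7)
The plan is to deduce the statement from Corollary \ref{CorHigherFittCM1} applied to the extension $k_{\chi}/k$ (so that $K$ in that corollary is $k_{\chi}$), combined with the structure theorem for finite torsion modules over the discrete valuation ring $\mathcal{O}_{\chi}=\ZZ_{p}[\im(\chi)]$. In particular, the idea is to translate the Fitting-ideal identities predicted by Corollary \ref{CorHigherFittCM1} (valid modulo $p^{N}$ for every $N$) into an exact description of the elementary divisors of $A^{\chi}$.

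First I would verify that the hypotheses of Corollary \ref{CorHigherFittCM1} apply to $k_{\chi}/k$ with the given $\chi$. Since $\chi$ is by definition a faithful character of $\Gal(k_{\chi}/k)$, the group $\Gamma_{K}$ of Corollary \ref{CorHigherFittCM1} is trivial and the ring $\mathcal{O}_{\chi}[\Gamma_{K}]$ reduces to $\mathcal{O}_{\chi}$. Moreover, $K_{(\Delta)}=k_{\chi}$, so condition $(*)$ reduces to the assertion that any place ramifying in $k_{\chi}/k$ has nontrivial inertia, which is automatic. The hypothesis on $p$-adic places (together with the assumed vanishing of the cyclotomic $\mu$-invariant, which is needed to apply Ventullo's theorem inside Corollary \ref{CorHigherFittCM1}) then yields for every $N\geq 1$ and every $i\geq 0$ an equality
$$\Fitt_{\mathcal{O}_{\chi}/p^{N}}^{i}\bigl(A^{\chi}\otimes\ZZ/p^{N}\bigr)=\Theta_{N}^{i,\chi}$$
when $\chi\neq\omega$, and the analogous equality with $\Theta_{N,\{v\}}^{i,\omega}$ for $\chi=\omega$ (using the auxiliary place $v$ introduced before Theorem \ref{ThHigherFittCM}). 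Since $A^{\chi}$ is a finite abelian $p$-group, for every $N$ large enough one has $A^{\chi}\otimes\ZZ/p^{N}=A^{\chi}$ and $\Fitt_{\mathcal{O}_{\chi}/p^{N}}^{i}(A^{\chi})=\Fitt_{\mathcal{O}_{\chi}}^{i}(A^{\chi})+p^{N}\mathcal{O}_{\chi}$. Taking the inverse limit over $N$, and using the fact that an ideal of $\mathcal{O}_{\chi}$ is recovered from its reductions modulo $p^{N}$, produces the identity $\Fitt_{\mathcal{O}_{\chi}}^{i}(A^{\chi})=\Theta^{i,\chi}$ for every $i\geq 0$.

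To finish, I would invoke the structure theorem for finite modules over the discrete valuation ring $\mathcal{O}_{\chi}$. Writing $A^{\chi}\simeq\bigoplus_{j=1}^{k}\mathcal{O}_{\chi}/\pi^{n_{j}}$ for a uniformizer $\pi$ and integers $n_{1}\geq\cdots\geq n_{k}>0$, a direct calculation gives $\Fitt_{\mathcal{O}_{\chi}}^{i}(A^{\chi})=(\pi^{n_{i+1}+\cdots+n_{k}})$ for $0\leq i<k$ and $\Fitt_{\mathcal{O}_{\chi}}^{i}(A^{\chi})=\mathcal{O}_{\chi}$ for $i\geq k$. Combined with the previous step this yields
$$\Theta^{i,\chi}/\Theta^{i-1,\chi}\simeq\mathcal{O}_{\chi}/\pi^{n_{i}}$$
for each $i\geq 1$, with the convention $n_{i}=0$ when $i>k$. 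Summing over $i\geq 1$ therefore reconstructs the elementary divisor decomposition of $A^{\chi}$ and delivers the desired (non-canonical) isomorphism of $\mathcal{O}_{\chi}$-modules.

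The substantive input is, of course, Corollary \ref{CorHigherFittCM1} itself, which is already at hand. The remaining steps (passage to the inverse limit and the structure theorem) are routine once one has verified that $A^{\chi}$ is finite and that $\mathcal{O}_{\chi}$ is a discrete valuation ring; the only mild subtlety is keeping track of the $T$-modification when $\chi=\omega$, where the choice of $v$ ensures that $(A^{T})^{\omega}=A^{\omega}$ so that the argument applies verbatim to $\Theta^{i,\omega}$ defined via $\Theta_{N,\{v\}}^{i}(k_{\chi}/k)^{\omega}$.
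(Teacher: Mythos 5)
Your proof is correct and follows essentially the same route the paper intends (the paper simply says that Corollary \ref{CorHigherFittCM1} ``implies'' the result and alludes to the elementary-divisor isomorphism in the Introduction): specialise Corollary \ref{CorHigherFittCM1} to $K=k_\chi$ (where $\Gamma_K$ is trivial and $(*)$ holds automatically since ramified places never split completely in $k_\chi$), pass from the $\ZZ/p^N$-statements to $\Fitt^i_{\mathcal{O}_\chi}(A^\chi)=\Theta^{i,\chi}$ by the inverse limit used to define $\Theta^{i,\chi}$, and invoke the structure theorem over the DVR $\mathcal{O}_\chi$. Your handling of the $\chi=\omega$ case via the auxiliary $T=\{v\}$ and of the $\mu$-invariant hypothesis (needed for Ventullo's input inside Corollary \ref{CorHigherFittCM1}) matches the paper's intent.
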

\

\noindent{}{\bf Acknowledgements} It is a pleasure for the first author to thank Dick Gross for much encouragement at an early stage of this general project and, in addition, to thank Cornelius Greither for stimulating discussions.
The second author would like to thank Cornelius Greither for discussions with him on various topics related to the subjects in this paper.
He also thanks Kazuya Kato and Karl Rubin for their beautiful ideas to define several zeta elements, and for stimulating conversations and discussions with them.
The third author would like to thank Kazuya Kato for his interest in the works of the third author and for encouragement.

\end{document}